\documentclass[11pt,twoside]{article}
\usepackage{amsmath, amsthm, amssymb}
\usepackage{amsmath,amssymb,amsfonts}
\usepackage{tikz-cd}
\usepackage{enumitem}
\usepackage{hyphenat}
\usepackage[hidelinks]{hyperref}
\usepackage[all,cmtip]{xy}
\usepackage{tocloft}
\usepackage{etoolbox}

\usepackage{enumitem}
\usepackage{mathtools}
\usepackage[margin=1.25in, top=1in]{geometry}
\usepackage{mathrsfs}
\usepackage{fancyhdr}

\newcommand{\shorttitle}{A Non-graded Koszul Duality and Its Applications}
\newtheorem{conjecture}{Conjecture}
\newtheorem{theorem}{Theorem}[section]
\newtheorem{lemma}[theorem]{Lemma}
\newtheorem{corollary}[theorem]{Corollary}
\newtheorem{proposition}[theorem]{Proposition}
\theoremstyle{definition}
\newtheorem{definition}[theorem]{Definition}
\newtheorem{example}[theorem]{Example}
\theoremstyle{remark}
\newtheorem{remark}[theorem]{Remark}

\begin{document}

\title{A Non-graded Koszul Duality and Its Applications}
\author{\scshape M. Bouhada}
\date{Dedicated to the memory of I.~Gel'fand.}
\maketitle

\begin{abstract}
Let \(\Lambda\) be a finite-dimensional Koszul algebra with Koszul dual
\(\Lambda^!\). We establish derived Koszul dualities at the level of bounded
derived categories, both in the graded setting
\(\mathsf{D}^{b}(\Lambda\textup{-gmod})\) and in the ungraded setting
\(\mathsf{D}^{b}(\Lambda\textup{-mod})\), without imposing finiteness
conditions on \(\Lambda^!\).

We first prove a graded derived Koszul duality for every finite-dimensional
Koszul algebra, with no Noetherian or coherence assumptions on the Koszul
dual. We then show that the bounded derived category
\(\mathsf{D}^{b}(\Lambda\textup{-mod})\) can be reconstructed from the graded
theory as the triangulated hull of an orbit category. This yields a genuinely
non-graded derived Koszul duality.

We further establish singular and dg refinements of these dualities. For
Iwanaga--Gorenstein Koszul algebras, this gives a stable Koszul duality for
graded Gorenstein-projective modules and their ungraded counterparts, providing
a non-graded form of the Bernstein--Gel'fand--Gel'fand correspondence.

As applications, we obtain new descriptions of the bounded derived categories
\(\mathsf{D}^{b}(\mathcal{O}_{\lambda})\) for all integral blocks of category
\(\mathcal{O}\), including singular blocks, thereby closing a gap left open in
the work of Beilinson, Ginzburg, and Soergel. We also establish analogous
dualities for certain categories of perverse sheaves arising in geometric
representation theory. Finally, we formulate conjectural descriptions of
bounded derived and singularity categories of finite-dimensional graded
algebras in terms of dg orbit categories.
\end{abstract}
\medskip

\noindent \textbf{2020 Mathematics Subject Classification.}
Primary 16E30, 18G35, 16S37; Secondary 18E30, 18E35, 16G20, 16E45.

\medskip

\noindent \textbf{Keywords.}
Koszul duality,
dg Koszul duality,
dg orbit categories,
triangulated hulls,
derived categories,
singularity categories,
BGG correspondence,
finite-dimensional graded algebras, quadratic algebras,
Koszul algebras,
stable category of Gorenstein-projective modules.

\vspace{1em}
\tableofcontents

\vspace{2em}

\section{Introduction}

Koszul algebras, introduced by Priddy~\cite{46} following the seminal work of
Koszul on the cohomology of Lie algebras~\cite{39}, occupy a central position in
modern algebra, representation theory, and algebraic geometry. A defining
feature of this class of algebras is the existence of a highly structured
duality theory, namely \emph{Koszul duality}, which relates a Koszul algebra to
its Koszul dual through deep homological correspondences.

The origins of this theory go back to the work of
Bernstein--Gel'fand--Gel'fand, culminating in the celebrated
\emph{BGG correspondence}~\cite{9}, which identifies the stable category of
finite-dimensional graded modules over the exterior algebra with the bounded
derived category of coherent sheaves on projective space. This correspondence
has had far-reaching consequences; see, for instance,~\cite{21,22,23}.
From a geometric viewpoint, Koszul duality is closely tied to the structure of
derived categories of coherent sheaves. Beilinson~\cite{5} showed that the
bounded derived category of coherent sheaves on projective space is equivalent
to the bounded derived category of modules over a finite-dimensional Koszul
algebra, while Kapranov~\cite{32} extended this perspective to Grassmannians
and other homogeneous spaces.

A major conceptual advance was achieved by Beilinson, Ginzburg, and
Soergel~\cite{7}, who formulated Koszul duality in the framework of derived
categories. In particular, for a finite-dimensional Koszul algebra
\(\Lambda\) whose Koszul dual \(\Lambda^{!}\) is left Noetherian, they
obtained a triangulated equivalence
\[
\mathsf{D}^{b}\!\bigl(\Lambda^{!}\textup{-}\mathrm{Fg}^{\mathbb{Z}}\bigr)
\;\simeq\;
\mathsf{D}^{b}\!\bigl(\Lambda\textup{-gmod}\bigr).
\]
Subsequently, Martínez--Villa and Saorín~\cite{41} weakened these assumptions
by replacing Noetherianity with coherence.

Despite these developments, two fundamental problems remain open in general.
First, there is, to the best of our knowledge, no non-graded derived Koszul
duality describing the bounded derived category
\(\mathsf{D}^{b}(\Lambda\textup{-mod})\) of a finite-dimensional Koszul algebra
in terms of its Koszul dual. Second, even in the graded setting, it is not
known in general whether the bounded derived category
\(\mathsf{D}^{b}(\Lambda\textup{-gmod})\) admits such a description when the
Koszul dual \(\Lambda^{!}\) is infinite-dimensional and fails to satisfy
finiteness conditions such as Noetherianity or coherence.

The purpose of the present paper is to address both of these problems.
For every finite-dimensional Koszul algebra \(\Lambda\), we establish a
graded derived  Koszul duality without imposing any finiteness conditions on
the Koszul dual algebra \(\Lambda^{!}\), and we use this graded theory to
construct a genuinely non-graded derived Koszul duality. Thus the present work
extends the classical theory in two distinct directions: it removes the usual
finiteness assumptions from the graded setting, and it provides a new
description of the ungraded bounded derived category in terms of the Koszul
dual algebra.

The key idea is to reconstruct the bounded derived category of
finite-dimensional modules from graded Koszul duality by means of differential
graded orbit constructions and their triangulated hulls. This yields a
conceptual framework in which the non-graded bounded derived category arises
naturally from the graded theory.

\medskip

\noindent\textbf{Main results.}
Let \(\Lambda\) be a finite-dimensional Koszul algebra.

Our first main result establishes a graded derived Koszul duality without any
finiteness assumptions on \(\Lambda^{!}\). More precisely, we construct a
triangulated equivalence
\[
\mathsf{D}^b\bigl(\Lambda^{!}\textup{-Cop}^{\mathbb{Z}}\bigr)
\;\xrightarrow{\ \sim\ }\;
\mathsf{D}^b\bigl(\Lambda\textup{-gmod}\bigr),
\]
where \(\Lambda^{!}\textup{-Cop}^{\mathbb{Z}}\) denotes the category of
locally finite-dimensional graded coperfect \(\Lambda^{!}\)-modules.
In particular, this identifies \(\mathsf{D}^{b}(\Lambda\textup{-gmod})\)
entirely in terms of the Koszul dual algebra.

Moreover, this equivalence admits a singular refinement, yielding a graded
singular Koszul duality
\[
\mathsf{D}^{b}\!\Bigl(
\Lambda^{!}\textup{-Cop}^{\mathbb{Z}}
\big/
\Lambda^{!}\textup{-gmod}
\Bigr)
\;\xrightarrow{\ \sim\ }\;
\mathsf{D}_{\mathrm{sg}}\!\bigl(\Lambda\textup{-gmod}\bigr).
\]
Here \(\Lambda^{!}\textup{-Cop}^{\mathbb{Z}}/\Lambda^{!}\textup{-gmod}\) denotes the exact
quotient category, which may be viewed as a generalization of the dual analogue of the tails category
associated with \(\Lambda^{!}\).

When \(\Lambda\) is Iwanaga--Gorenstein, this further induces a graded
Bernstein--Gel'fand--Gel'fand type correspondence
\[
\mathsf{D}^{b}\!\Bigl(
\Lambda^{!}\textup{-Cop}^{\mathbb{Z}}
\big/
\Lambda^{!}\textup{-gmod}
\Bigr)
\;\xrightarrow{\ \sim\ }\;
\Lambda\textup{-}\underline{\mathrm{Gproj}}^{\mathbb{Z}}.
\]

Our second main result provides a non-graded derived Koszul duality. More
precisely, we construct a triangulated equivalence
\[
\mathrm{H}^{0}\!\Bigl(
\operatorname{pretr}\Bigl(
\mathsf{D}^{b}_{\mathrm{dg}}\!\bigl(\Lambda^{!}\textup{-Cop}^{\mathbb{Z}}\bigr)
/\langle 1\rangle[-1]
\Bigr)
\Bigr)
\;\xrightarrow{\ \sim\ }\;
\mathsf{D}^{b}\!\bigl(\Lambda\textup{-mod}\bigr).
\]
Thus the bounded derived category of finite-dimensional \(\Lambda\)-modules is
recovered functorially from the graded Koszul dual side.

We also establish a singular analogue:
\[
\mathrm{H}^{0}\!\left(
\operatorname{pretr}\!\left(
\mathsf{D}^{b}_{\mathrm{dg}}
\bigl(
\Lambda^{!}\textup{-Cop}^{\mathbb{Z}}
/
\Lambda^{!}\textup{-gmod}
\bigr)
/
\langle 1\rangle[-1]
\right)
\right)
\;\xrightarrow{\ \sim\ }\;
\mathsf{D}_{\mathrm{sg}}\!\bigl(\Lambda\textup{-mod}\bigr).
\]

When \(\Lambda\) is Iwanaga--Gorenstein, this yields a non-graded form of
stable Koszul duality and recovers, in particular, a non-graded analogue of
the Bernstein--Gel'fand--Gel'fand correspondence.

These dualities have several applications. First, they provide new
descriptions of the bounded derived categories
\(\mathsf{D}^{b}(\mathcal{O}_{\lambda})\) for all integral blocks of the
Bernstein--Gel'fand--Gel'fand category \(\mathcal{O}\), including singular
blocks, thereby extending the classical picture to the non-graded setting.

Second, we obtain analogous derived Koszul dualities for certain categories of
perverse sheaves arising in geometric representation theory.

Finally, the preceding results suggest a broader picture. Motivated by the dg
orbit descriptions established in this paper, we formulate conjectural
descriptions of bounded derived categories and singularity categories of
finite-dimensional graded algebras in terms of dg orbit categories and their
triangulated hulls, and we provide supporting evidence for this perspective.

\medskip

\noindent\textbf{Organization of the paper.}
Section~2 fixes notation and reviews the necessary background. Section~3
develops graded derived Koszul duality and its singular and dg refinements.
Section~4 treats the non-graded setting and establishes the main results.
Section~5 presents applications to category \(\mathcal{O}\) and perverse
sheaves. Section~6 studies explicit classes of algebras, and Section~7
formulates conjectural extensions and supporting evidence.
\section*{Acknowledgements}

This work was completed in 2021; its preparation for publication was delayed
due to personal circumstances and other commitments.

The author dedicates this work to the memory of I.\,M.~Gel'fand, whose
profound contributions have had a lasting influence on modern mathematics.
The author first encountered Gel'fand’s work during graduate studies in
Morocco through functional analysis, in particular Banach algebras, and later
through homological algebra via the monograph \emph{Methods of Homological Algebra}
by S.\,I.~Gel'fand and Y.\,I.~Manin.
\section{ Preliminaries and Background}

In this section we recall some basic material and fix the notation used
throughout the paper.

Let \(k\) be a field. We consider \(\mathbb{Z}_{\ge 0}\)-graded
\(k\)-algebras of the form
\[
\Lambda = kQ/I,
\]
where \(Q\) is a finite quiver and \(I\subset kQ\) is a homogeneous ideal.
The grading on \(\Lambda\) is induced by the path-length grading on the
path algebra \(kQ\).

The ideal \(I\) is not assumed to be admissible; in particular, the algebra
\(\Lambda\) may be either finite-dimensional or locally finite-dimensional.
This level of generality is required for our purposes. Although in most
applications \(\Lambda\) will be finite-dimensional and Koszul, its Koszul
dual \(\Lambda^{!}\) need not be finite-dimensional. Nevertheless,
\(\Lambda^{!}\) is locally finite-dimensional in the sense that each
homogeneous component \(\Lambda^{!}_{n}\) is finite-dimensional for
\(n\ge 0\). Equivalently, for all vertices \(i,j\in Q_{0}\) the graded
component \(e_{j}\Lambda_{n}e_{i}\) is finite-dimensional.

We view such algebras as small \(k\)-linear graded categories whose
objects are the vertices of \(Q\).
For vertices \(i,j\in Q_{0}\), the morphism space
\[
\Lambda(i,j)=e_{j}\Lambda e_{i}
\]
is the \(k\)-vector space spanned by paths from \(i\) to \(j\) modulo the
relations in \(I\), endowed with the grading induced from \(kQ\).

\subsection{Modules and their categories}

A quiver \(Q\) is a quadruple
\[
Q=(Q_{0},Q_{1},s,t),
\]
where \(Q_{0}\) is a finite set of vertices, \(Q_{1}\) is a finite set of arrows,
and \(s,t\colon Q_{1}\to Q_{0}\) assign to each arrow \(\alpha\in Q_{1}\) its
source \(s(\alpha)\) and target \(t(\alpha)\), respectively.

\medskip

We say that \(Q\) is \emph{well directed} if there exists a total order on \(Q_0\)
such that every arrow connects consecutive vertices and all arrows are oriented
in the same direction with respect to this order. In this case, \(Q_0\) can be
identified with a finite subset of \(\mathbb{Z}\). In particular, once a direction
(either increasing or decreasing) is fixed, no arrow between two consecutive vertices
is oriented oppositely. It follows that a well directed quiver contains no oriented
cycles. In the infinite setting, the same condition is often referred to as
\emph{gradability}; see~\cite{2,3}.
\medskip

The \emph{path algebra} \(kQ\) of a quiver \(Q\) over \(k\) is the
\(k\)-algebra whose underlying vector space has a basis consisting of all
paths in \(Q\), including the trivial paths \(e_x\) at vertices \(x\in Q_0\).
Multiplication is given by concatenation of composable paths and is defined
to be zero otherwise.

\medskip

An ideal \(I\subset kQ\) is said to be \emph{admissible} if there exists an integer
\(m\ge 2\) such that
\[
(kQ)^{m}\subseteq I\subseteq (kQ)^{2},
\]
where \((kQ)^{n}\) denotes the ideal generated by all paths of length at least \(n\).
The quotient algebra
\[
\Lambda = kQ/I
\]
is called the \emph{bound path algebra} associated with the bound quiver \((Q,I)\).
If \(I\) is homogeneous with respect to the path-length grading, then \(\Lambda\)
inherits a natural \(\mathbb{Z}\)-grading.

If, in addition, \(Q\) is well directed, then \(\Lambda\) has finite global dimension.

\medskip

In view of the categorical constructions appearing throughout this paper, it is convenient
to adopt a functorial perspective on modules. Let \(\Lambda = kQ/I\) be as above. Then
\(\Lambda\) admits a complete set of pairwise orthogonal primitive idempotents
\(\{e_x \mid x \in Q_0\}\).

A (left) \(\Lambda\)\nobreakdash-module \(M\) may be identified with a
representation of the bound quiver \((Q,I)\), or equivalently with a
\(k\)-linear functor
\[
M \colon \Lambda \longrightarrow \mathrm{Vect}_k.
\]
Explicitly, \(M\) assigns to each vertex \(x \in Q_0\) a \(k\)-vector space
\(M(x)\cong e_x M\), and to each morphism \(\rho \in \Lambda(x,y)\) a
\(k\)-linear map
\[
M(\rho) \colon M(x) \longrightarrow M(y),
\]
such that \(M(\rho\sigma)=M(\rho)\circ M(\sigma)\) whenever the composition
is defined, and such that the ideal \(I\) acts trivially.

We shall freely use this identification and write \(M(x)\) in place of \(e_x M\).

A morphism \(f\colon M \to N\) of \(\Lambda\)-modules is a natural transformation,
that is, a family of \(k\)-linear maps
\[
f(x)\colon M(x)\longrightarrow N(x), \qquad x\in Q_0,
\]
such that for every \(\rho \in \Lambda(x,y)\) one has
\[
f(y)\circ M(\rho)=N(\rho)\circ f(x).
\]

In the graded setting, we replace \(\mathrm{Vect}_{k}\) by
\(\mathrm{GrVect}_{k}\), the category of \(\mathbb{Z}\)-graded
\(k\)-vector spaces and degree-preserving linear maps.
A \(\mathbb{Z}\)-graded \(\Lambda\)-module may therefore be regarded
as a \(k\)-linear functor
\[
M\colon \Lambda \longrightarrow \mathrm{GrVect}_{k},
\]
which assigns to each vertex \(x\in Q_{0}\) a graded vector space
\[
M(x)=\bigoplus_{n\in\mathbb{Z}} M_{n}(x).
\]

For each homogeneous morphism \(\rho\in \Lambda(x,y)\) of degree \(d\),
the induced map
\[
M(\rho)\colon M(x)\longrightarrow M(y)
\]
is homogeneous of degree \(d\); that is,
\[
M(\rho)\bigl(M_{n}(x)\bigr)\subseteq M_{n+d}(y)
\qquad \text{for all } n\in\mathbb{Z}.
\]

In particular, if \(\alpha\colon x\to y\) is an arrow of the quiver \(Q\),
then \(\alpha\) has degree \(1\). Consequently the induced morphism
\[
M(\alpha)\colon M(x)\longrightarrow M(y)
\]
satisfies
\[
M(\alpha)\bigl(M_{n}(x)\bigr)\subseteq M_{n+1}(y)
\qquad \text{for all } n\in\mathbb{Z}.
\]

Thus the arrows of the quiver induce linear maps between the graded
components, and by summing over all vertices we obtain a sequence
(not necessarily a complex) of finite-dimensional vector spaces
and linear maps
\[
\cdots
\longrightarrow
\bigoplus_{z\in Q_{0}} M_{n-1}(z)
\longrightarrow
\bigoplus_{x\in Q_{0}} M_{n}(x)
\longrightarrow
\bigoplus_{y\in Q_{0}} M_{n+1}(y)
\longrightarrow
\cdots .
\]

This observation will play an important role in the construction of the
derived quadratic functor; see Section~3.
A morphism \(f\colon M\to N\) of graded \(\Lambda\)-modules is a natural transformation whose
components have degree \(0\), equivalently
\(f(x)\bigl(M_{n}(x)\bigr)\subseteq N_{n}(x)\) for all \(x\in Q_{0}\) and \(n\in\mathbb{Z}\), and
\(f(y)\circ M(\rho)=N(\rho)\circ f(x)\) for all \(\rho\in\Lambda(x,y)\).  
\begin{remark}
It is often convenient to view a graded \(\Lambda\)-module as a module over a certain
infinite-dimensional algebra, usually denoted by \(\Lambda^{\mathbb Z}\),
associated with the graded algebra \(\Lambda\).

More precisely, let \(Q^{\mathbb Z}\) be the quiver whose set of vertices is
\[
(Q^{\mathbb Z})_0=(\Lambda^{\mathbb Z})_0
=\{(x,n)\mid x\in Q_0,\; n\in\mathbb Z\}.
\]
If \(\alpha:x\to y\) is an arrow in \(Q\), then for each \(n\in\mathbb Z\)
there is an arrow
\[
(\alpha,n):(x,n)\longrightarrow (y,n+1)
\]
in \(Q^{\mathbb Z}\).

For illustration, let \(\Lambda\) be the graded algebra associated with the quiver
\[
\begin{tikzcd}[row sep=large, column sep=large]
y \arrow[loop left, distance=8mm, "\beta"] & 
\arrow[l,swap,"\alpha"] 
x \arrow[loop right, distance=8mm, "\gamma"]
\end{tikzcd}
\]
with relations \(\beta^2=0\), \(\gamma^2=0\), and \(\beta\alpha=\alpha\gamma\).
Then the quiver \(Q^{\mathbb Z}\) of \(\Lambda^{\mathbb Z}\) is given by
\[
\begin{tikzcd}
\cdots & (x,-3) \arrow[l,swap,"{(\gamma,-3)}"] \arrow[dl,swap,"{(\alpha,-3)}"] 
& (x,-2) \arrow[l,swap,"{(\gamma,-2)}"] \arrow[dl,swap,"{(\alpha,-2)}"] 
& (x,-1) \arrow[l,swap,"{(\gamma,-1)}"] \arrow[dl,swap,"{(\alpha,-1)}"] 
& (x,0) \arrow[l,swap,"{(\gamma,0)}"] \arrow[dl,swap,"{(\alpha,0)}"] 
& (x,1) \arrow[l,swap,"{(\gamma,1)}"] \arrow[dl,swap,"{(\alpha,1)}"] 
& \cdots \\
\cdots & (y,-3) \arrow[l,swap,"{(\beta,-3)}"] 
& (y,-2) \arrow[l,swap,"{(\beta,-2)}"] 
& (y,-1) \arrow[l,swap,"{(\beta,-1)}"] 
& (y,0) \arrow[l,swap,"{(\beta,0)}"] 
& (y,1) \arrow[l,swap,"{(\beta,1)}"] 
& \cdots
\end{tikzcd}
\]
with relations
\[
(\beta,n)(\beta,n-1)=0,\qquad
(\gamma,n)(\gamma,n-1)=0,\qquad
(\beta,n)(\alpha,n-1)=(\alpha,n)(\gamma,n-1)
\]
for all \(n\in\mathbb Z\).

In this way, \(\Lambda^{\mathbb Z}\) may be regarded as a \(k\)-linear category
whose objects are the vertices \((x,n)\). For two objects \((x,n)\) and
\((y,m)\), the morphism space
\[
\Lambda^{\mathbb Z}\bigl((x,n),(y,m)\bigr)
\]
is the \(k\)-vector space spanned by the paths in \(Q^{\mathbb Z}\) from
\((x,n)\) to \((y,m)\) of length \(m-n\), modulo the relations in
\(I^{\mathbb Z}\).

Under this identification, a graded \(\Lambda\)-module
\(M=\bigoplus_{n\in\mathbb Z}M_n\) may be viewed as a
\(\Lambda^{\mathbb Z}\)-module, that is, as a \(k\)-linear functor
\[
M:\Lambda^{\mathbb Z}\longrightarrow \mathrm{Vect}_k,
\]
such that \(M(x,n)\) is a \(k\)-vector space for every \((x,n)\in(Q^{\mathbb Z})_0\),
and for each arrow \((\alpha,n):(x,n)\to(y,n+1)\) there is a linear map
\[
M(\alpha,n):M(x,n)\longrightarrow M(y,n+1).
\]
Thus a graded \(\Lambda\)-module may be identified with a representation of the
bound quiver \((Q^{\mathbb Z},I^{\mathbb Z})\). Note that the quiver
\(Q^{\mathbb Z}\) is infinite and locally directed.

Under this correspondence, one naturally obtains sequences of the form
\[
\cdots
\longrightarrow
\bigoplus_{z\in Q_0}M(z,n-1)
\longrightarrow
\bigoplus_{x\in Q_0}M(x,n)
\longrightarrow
\bigoplus_{y\in Q_0}M(y,n+1)
\longrightarrow
\cdots.
\]

Such constructions play an important role in the theory of Galois coverings
\cite{2,3} and in the study of Koszul duality for graded categories and
graded algebras arising from locally finite gradable quivers \cite{14,42}.
However, we shall not make use of this viewpoint in the present paper, and
instead work directly with graded modules, regarded as graded functors.
\end{remark}
\medskip

We shall use the following notation for categories of modules:
\begin{itemize}
  \item \(\Lambda\textup{-}\mathrm{mod}\): the category of finite-dimensional
  \(\Lambda\)-modules, that is, modules \(M\) such that each component
  \(M(x)\) is finite-dimensional and \(M(x)=0\) for all but finitely many
  vertices \(x\in Q_{0}\);

  \item \(\Lambda\textup{-}\mathrm{gmod}\): the category of finite-dimensional
  \(\mathbb{Z}\)-graded \(\Lambda\)-modules;

  \item \(\Lambda\textup{-}\mathrm{GMod}\): the category of locally
  finite-dimensional \(\mathbb{Z}\)-graded \(\Lambda\)-modules; that is,
  graded modules \(M=\bigoplus_{r\in\mathbb{Z}} M_r\) such that, for each
  vertex \(x\in Q_0\) and each \(r\in\mathbb{Z}\), the component
  \(M_r(x)\) is finite-dimensional.
\end{itemize}

A graded module \(M\in \Lambda\textup{-}\mathrm{GMod}\) is said to be
\emph{right bounded} (or \emph{bounded above}) if there exists
\(m\in\mathbb{Z}\) such that \(M_n=0\) for all \(n>m\), and
\emph{left bounded} (or \emph{bounded below}) if there exists
\(m\in\mathbb{Z}\) such that \(M_n=0\) for all \(n<m\).
Since objects of \(\Lambda\textup{-}\mathrm{GMod}\) are locally
finite-dimensional, a graded module \(M\) is finite-dimensional
if and only if it is both left bounded and right bounded.

We denote by \(\Lambda\textup{-}\mathrm{GMod}^{-}\) (respectively
\(\Lambda\textup{-}\mathrm{GMod}^{+}\)) the full subcategory of
right bounded (respectively left bounded) graded modules.

\medskip

Both \(\Lambda\textup{-}\mathrm{GMod}\) and \(\Lambda\textup{-}\mathrm{gmod}\) admit
the \emph{grading shift} autoequivalences \(\langle i\rangle\) for \(i\in\mathbb{Z}\).
For a graded module \(M\) and a vertex \(x\in Q_{0}\), the shifted module is given by
\[
M\langle i\rangle(x)=\bigoplus_{n\in\mathbb{Z}} M_{n+i}(x).
\]
Note that the grading shift used in \cite{7} is opposite to the convention adopted here.

\medskip

We fix notation for the graded projective, injective, and simple modules over \(\Lambda\).
Let
\[
\mathbb{D}=\operatorname{Hom}_{k}(-,k)
\]
denote the \(k\)-linear dual, endowed with the standard grading
\((\mathbb{D}V)_{i}=\operatorname{Hom}_{k}(V_{-i},k)\).
For each vertex \(x\in Q_{0}\) and each \(r\in\mathbb{Z}\), set
\[
P_{x}\langle r\rangle:=\Lambda(x,-)\langle r\rangle,\qquad
I_{x}\langle r\rangle:=\mathbb{D}\!\bigl(\Lambda^{\mathrm{op}}(x,-)\bigr)\langle r\rangle,
\]
and let
\[
S_{x}\langle r\rangle
\]
denote the degree shift of the simple top of \(P_x\).
All these modules are indecomposable.

\medskip

If \(\Lambda\) is locally finite-dimensional, we denote by
\(\Lambda\textup{-}\mathrm{Proj}^{\mathbb{Z}}\) (resp.\
\(\Lambda\textup{-}\mathrm{Inj}^{\mathbb{Z}}\))
the full additive subcategory of locally finite-dimensional graded projective
(resp.\ graded injective) \(\Lambda\)-modules; that is,
finite direct sums of modules of the form \(P_x\langle r\rangle\)
(resp.\ \(I_x\langle r\rangle\)), where \(x\in Q_0\) and \(r\in\mathbb{Z}\).

If \(\Lambda\) is finite-dimensional, then every graded projective and graded
injective \(\Lambda\)-module is finite-dimensional. In this case we write
\(\Lambda\textup{-}\mathrm{proj}^{\mathbb{Z}}\) and
\(\Lambda\textup{-}\mathrm{inj}^{\mathbb{Z}}\) for the corresponding full
subcategories.

Upon forgetting the grading on \(\Lambda\), we write
\(\Lambda\textup{-}\mathrm{proj}\) (resp.\ \(\Lambda\textup{-}\mathrm{inj}\))
for the category of finite-dimensional projective (resp.\ injective)
\(\Lambda\)-modules.

\medskip

Any \(\mathbb{Z}\)-graded \(\Lambda\)-module may be regarded as an
(ungraded) \(\Lambda\)-module by forgetting the grading. This defines
the canonical \emph{forgetful functor}
\[
F\colon \Lambda\textup{-}\mathrm{gmod}\longrightarrow \Lambda\textup{-}\mathrm{mod}.
\]

\medskip

We record the following standard observation for later use.

\begin{proposition}
Let \(M\) and \(N\) be finite-dimensional \(\mathbb{Z}\)-graded
\(\Lambda\)-modules. Then the forgetful functor
\[
F\colon \Lambda\textup{-}\mathrm{gmod}\longrightarrow \Lambda\textup{-}\mathrm{mod}
\]
induces a natural isomorphism of \(k\)-vector spaces
\[
\bigoplus_{r\in\mathbb{Z}}
\operatorname{Hom}_{\Lambda\textup{-}\mathrm{gmod}}\!\bigl(M,N\langle r\rangle\bigr)
\;\xrightarrow{\ \sim\ }\;
\operatorname{Hom}_{\Lambda\textup{-}\mathrm{mod}}\!\bigl(M,N\bigr).
\]
In particular, the functor \(F\) is faithful.
\end{proposition}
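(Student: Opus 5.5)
The plan is to decompose an ungraded homomorphism \(f\colon M\to N\) into homogeneous pieces of every degree and to check that these pieces are exactly the graded homomorphisms \(M\to N\langle r\rangle\).

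\emph{Step 1: defining the map.} A morphism \(g\colon M\to N\langle r\rangle\) in \(\Lambda\textup{-}\mathrm{gmod}\) is a family of degree-zero maps \(g(x)\colon M_n(x)\to N_{n+r}(x)\) commuting with the action of \(\Lambda\). Forgetting the grading, \(F(g)\) is a \(\Lambda\)-linear map \(M\to F(N\langle r\rangle)=F(N)\), since the underlying ungraded modules of \(N\) and \(N\langle r\rangle\) coincide. So I define
\[
\Phi\bigl((g_r)_r\bigr)=\sum_r F(g_r).
\]
This sum is finite because elements of a direct sum have only finitely many nonzero components. The map \(\Phi\) is clearly \(k\)-linear and natural in \(M\) and \(N\).

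\emph{Step 2: injectivity.} The map \(F(g_r)\) sends \(M_n\) into \(N_{n+r}\). If \(\sum_r F(g_r)=0\), restrict to \(M_n(x)\) and project onto \(N_{n+r}(x)\); only the term \(g_r\) contributes, so \(g_r=0\) on every component. Hence every \(g_r\) vanishes.

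\emph{Step 3: surjectivity.} This is the main point. Given an ungraded morphism \(f\colon M\to N\), define for each \(r\) the map \(f_r\) on \(M_n(x)\) as \(f(x)\) restricted to \(M_n(x)\) followed by projection onto \(N_{n+r}(x)\). Then:
\begin{itemize}
\item Each \(f_r\) is \(\Lambda\)-linear. For a homogeneous \(\rho\in\Lambda(x,y)\) of degree \(d\), both \(M(\rho)\) and \(N(\rho)\) shift degree by \(d\). Comparing homogeneous components of \(f(y)M(\rho)=N(\rho)f(x)\) in degree \(n+d+r\) gives \(f_r(y)M(\rho)=N(\rho)f_r(x)\). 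Linearity for arbitrary \(\rho\) follows by decomposing \(\rho\) into homogeneous parts, which is possible because \(I\) is homogeneous.
\item Only finitely many \(f_r\) are nonzero. Since \(M\) and \(N\) are finite-dimensional, both are graded in a finite range of degrees, say \(M_n\neq 0\) only for \(a\le n\le b\) and \(N_m\neq0\) only for \(c\le m\le e\). Then \(f_r\neq0\) forces \(c-b\le r\le e-a\). This is the only place finite-dimensionality is used; for locally finite modules one would get a product rather than a sum.
\item The pieces recover \(f\): we have \(f=\sum_r f_r\), since any vector in \(N(x)\) is the sum of its homogeneous components.
\end{itemize}
Thus \((f_r)_r\) is a preimage of \(f\), and \(\Phi\) is an isomorphism.

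\emph{Faithfulness.} Restricting \(\Phi\) to the summand \(r=0\) gives exactly the action of \(F\) on \(\operatorname{Hom}_{\Lambda\textup{-}\mathrm{gmod}}(M,N)\). A restriction of an injective map is injective, so \(F\) is faithful.
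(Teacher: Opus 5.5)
Your proof is correct and complete. The paper gives no argument of its own here (it records the proposition as a standard observation), and your approach is the standard one it alludes to: split an ungraded map into homogeneous components \(f_r\colon M_n\to N_{n+r}\), consistent with the paper's convention \((N\langle r\rangle)_n=N_{n+r}\), and use the finite-dimensionality of \(M\) and \(N\) to ensure only finitely many \(f_r\) are nonzero.
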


\begin{remark}
Proposition~2.2 is often expressed by saying that the forgetful functor
\(F\) exhibits \(\Lambda\textup{-}\mathrm{gmod}\) as a
\emph{\(G\)-Galois precovering} of \(\Lambda\textup{-}\mathrm{mod}\),
where \(G\) denotes the group generated by the grading-shift
autoequivalence.
\end{remark}
\subsection{Koszul Algebras and Their Duals}
The aim of this subsection is to recall basic facts on Koszul algebras and the quadratic dual of a quadratic algebra. We then formulate the notion of Koszulity and record the standard homological characterizations used throughout the paper. For further background, we refer the reader to~\cite{7,14,41,42}.

\medskip

Let \(\Lambda = kQ/I\) be a locally finite-dimensional graded algebra.
We say that \(\Lambda\) is \emph{quadratic} if \(I\) is generated in degree \(2\),
that is, by \(k\)-linear combinations of paths of length \(2\).
Write \(V = (kQ)_{2}\) for the \(k\)-vector space spanned by all paths of
length \(2\) in \(kQ\), and set \(I_{2} = I \cap V\) for the subspace of
quadratic relations.

Let \(Q^{\mathrm{op}}\) be the opposite quiver and
\(V^{\mathrm{op}} = (kQ^{\mathrm{op}})_{2}\) the space of paths of length \(2\)
in \(Q^{\mathrm{op}}\).
Fix a basis \(\{\gamma_{i}\}\) of \(V\) with dual basis
\(\{\gamma_{i}^{\mathrm{op}}\} \subset V^{\mathrm{op}}\), and consider the
nondegenerate pairing
\[
\langle - , - \rangle \colon V \times V^{\mathrm{op}} \longrightarrow k,
\qquad
\langle \gamma_{i}, \gamma_{j}^{\mathrm{op}} \rangle = \delta_{ij}.
\]

The \emph{quadratic dual} is the graded algebra
\[
\Lambda^{!} = kQ^{\mathrm{op}} / \langle I_{2}^{\perp} \rangle,
\]
where \(I_{2}^{\perp} \subset V^{\mathrm{op}}\) denotes the orthogonal complement
of \(I_{2}\) with respect to the pairing \(\langle - , - \rangle\).
Even if \(\Lambda\) is finite-dimensional, the algebra \(\Lambda^{!}\) is in
general only locally finite-dimensional, and need not be finite-dimensional.

\medskip

\begin{definition}
Let $\Lambda=\bigoplus_{i\ge 0}\Lambda_i$ be a locally finite-dimensional
graded algebra.

We say that $\Lambda$ is \emph{Koszul} if its degree-zero part $\Lambda_0$
(which is necessarily semisimple) admits a linear graded projective resolution,
that is, an exact complex
\[
\cdots \longrightarrow P^{-2}
\xrightarrow{\,d^{-2}\,}
P^{-1}
\xrightarrow{\,d^{-1}\,}
P^0
\longrightarrow \Lambda_0
\longrightarrow 0,
\]
such that, for each $n\ge 0$, the graded projective module $P^{-n}$
is generated in degree $n$.

Equivalently, every simple graded $\Lambda$-module $S_x$ ($x\in Q_0$)
admits a linear graded projective resolution.

A Koszul algebra is necessarily quadratic. Moreover, if $\Lambda$ is Koszul,
then its quadratic dual $\Lambda^{!}$ is again Koszul and there is a canonical
graded isomorphism
\[
\Lambda^{!}\cong
\bigoplus_{n\ge 0}\operatorname{Ext}^n_\Lambda(\Lambda_0,\Lambda_0),
\qquad
(\Lambda^{!})^{!}\cong \Lambda .
\]
\end{definition}

The next criterion will be invoked in the proof of our main result on graded derived  Koszul duality.

\begin{proposition}
\(\Lambda\) is Koszul if and only if the graded extension groups satisfy
\[
\operatorname{Ext}^{n}_{\Lambda\textup{-gmod}}\!\bigl(S_{x},\,S_{y}\langle -i\rangle\bigr)=0
\quad\text{for all }x,y\in Q_{0}\text{ and all }i\neq n.
\]
\end{proposition}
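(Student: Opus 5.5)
We prove the standard "diagonal Ext" criterion by comparing a minimal graded projective resolution of each simple \(S_x\) with the graded Ext groups into simples. Throughout, \(\Lambda=kQ/I\) is locally finite-dimensional and positively graded with \(\Lambda_0=\prod_{x}ke_x\) semisimple. Each \(S_x\) therefore admits a \emph{minimal} graded projective resolution
\[
\cdots\to P^{-2}\to P^{-1}\to P^{0}\to S_x\to 0,
\]
obtained inductively by taking projective covers of the successive syzygies. Minimality means \(d(P^{-n})\subseteq \Lambda_{\ge1}P^{-n+1}\). Each term decomposes as \(P^{-n}=\bigoplus_{y,j}P_y\langle -j\rangle^{m^n_{y,j}}\), with finitely many summands in each degree by local finiteness. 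In this decomposition \(P_y\langle -j\rangle\) is generated in degree \(j\), given the shift convention \(M\langle i\rangle_n=M_{n+i}\).

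The first key step is the standard identity
\[
\dim_k\operatorname{Ext}^{n}_{\Lambda\textup{-gmod}}\bigl(S_x,S_y\langle -i\rangle\bigr)=m^n_{y,i}.
\]
To prove it, apply \(\operatorname{Hom}_{\Lambda\textup{-gmod}}(-,S_y\langle -i\rangle)\) to the minimal resolution. A morphism \(P_z\langle -j\rangle\to S_y\langle -i\rangle\) is nonzero only if \(z=y\) and \(j=i\), in which case the Hom space is \(k\). By minimality, every map in the resulting Hom complex vanishes, because \(\Lambda_{\ge1}\) annihilates the simple target. Hence the cohomology in degree \(n\) equals \(\operatorname{Hom}(P^{-n},S_y\langle -i\rangle)\), which has dimension \(m^n_{y,i}\). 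We should remark that the graded Ext may be computed with locally finite (rather than finite-dimensional) projective resolutions. This holds because \(\Lambda\textup{-gmod}\) embeds into \(\Lambda\textup{-GMod}\) and Ext in the larger category agrees, for instance via Yoneda Ext or by dimension shifting with the projectives \(P_y\langle r\rangle\). If \(\Lambda\) is finite-dimensional this point is automatic.

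With this identity, both implications follow. Suppose \(\Lambda\) is Koszul. Then each \(S_x\) has a linear resolution, i.e.\ one with \(P^{-n}\) generated in degree \(n\). A linear resolution is automatically minimal, since the differential maps generators of degree \(n\) into degree \(n\) of \(P^{-n+1}\), which lies in \(\Lambda_{1}P^{-n+1}\subseteq\Lambda_{\ge1}P^{-n+1}\). So \(m^n_{y,i}=0\) for \(i\neq n\), and the Ext vanishing follows. Conversely, suppose the Ext vanishing holds. Then \(m^n_{y,i}=0\) for \(i\neq n\), so the minimal resolution of each \(S_x\) is linear. Taking the direct sum over \(x\) of these resolutions gives a linear resolution of \(\Lambda_0=\bigoplus_x S_x\), which is Koszulity by Definition 2.4 and its equivalent formulation via simples.

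The main point requiring care is the existence of minimal graded resolutions, together with the identification of Ext with the cohomology of Hom from them, in the locally finite setting. For this we would use that \(\Lambda\) is nonnegatively graded, and that the syzygies of \(S_x\) are bounded below and locally finite. It follows that graded Nakayama applies: for a bounded-below module \(M\), the map \(P(M/\Lambda_{\ge1}M)\to M\) is a projective cover. Iterating this construction gives the minimal resolution degree by degree.
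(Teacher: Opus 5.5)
Your proof is correct and is essentially the paper's argument. A minimal graded projective resolution of \(S_x\) makes all differentials of \(\operatorname{Hom}_{\Lambda\textup{-gmod}}(-,S_y\langle -i\rangle)\) vanish, so \(\operatorname{Ext}^n\cong\operatorname{Hom}(P^{-n},S_y\langle -i\rangle)\), and vanishing for \(i\neq n\) is exactly linearity of that resolution.

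Your added points fill in steps the paper leaves implicit: a linear resolution is automatically minimal, and for infinite-dimensional \(\Lambda\) the Ext groups in \(\Lambda\textup{-gmod}\) can be computed from locally finite resolutions. That second point is best justified via the exact truncation \(X\mapsto X/X_{>b}\) rather than by dimension shifting, since the \(P_y\langle r\rangle\) then lie outside \(\Lambda\textup{-gmod}\).
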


\begin{proof}
Let \(x,y\in Q_0\), and let
\[
\cdots \to P^{-2}\to P^{-1}\to P^{0}\to S_x\to 0
\]
be a minimal graded projective resolution of \(S_x\).

Applying the functor
\[
\operatorname{Hom}_{\Lambda\textup{-gmod}}(-,S_y\langle -i\rangle)
\]
yields a cochain complex whose \(n\)-th term is
\[
\operatorname{Hom}_{\Lambda\textup{-gmod}}(P^{-n},S_y\langle -i\rangle).
\]

Since the resolution is minimal, one has
\[
\operatorname{Im}(d^{-n}) \subseteq \operatorname{rad} P^{-n+1}
\quad\text{for all } n\ge 1.
\]
It follows that all differentials in the induced Hom complex vanish (see, for example, \cite[Cor.~2.5.4]{8}). Hence
\[
\operatorname{Ext}^n_{\Lambda\textup{-gmod}}(S_x,S_y\langle -i\rangle)
\cong
\operatorname{Hom}_{\Lambda\textup{-gmod}}(P^{-n},S_y\langle -i\rangle).
\]

If \(\Lambda\) is Koszul, then \(P^{-n}\) is generated in degree \(n\), whereas \(S_y\langle -i\rangle\) is concentrated in degree \(i\). Therefore,
\[
\operatorname{Hom}_{\Lambda\textup{-gmod}}(P^{-n},S_y\langle -i\rangle)=0
\quad\text{for all } i\neq n,
\]
since there are no nonzero homogeneous morphisms of degree \(0\) between these modules.

Conversely, assume that
\[
\operatorname{Ext}^n_{\Lambda\textup{-gmod}}(S_x,S_y\langle -i\rangle)=0
\quad\text{for all } i\neq n.
\]
Then
\[
\operatorname{Hom}_{\Lambda\textup{-gmod}}(P^{-n},S_y\langle -i\rangle)=0
\quad\text{for all } i\neq n.
\]
This implies that the top of \(P^{-n}\) is concentrated in degree \(n\), and hence \(P^{-n}\) is generated in degree \(n\). Therefore the resolution is linear, and \(\Lambda\) is Koszul.
\end{proof}
Recall that if \(A=\bigoplus_{i\ge 0}A_i\) is a positively graded, locally finite-dimensional algebra with
semisimple degree-zero part \(A_0\), then
\[
\operatorname{gldim}A \;=\; \sup\{\, n\ge 0 \mid \operatorname{Ext}^n_A(A_0,A_0)\neq 0 \,\},
\]
see, e.g., \cite{19}.
\medskip

We shall use the following standard characterization of finite-dimensional Koszul algebras in terms of the homological dimension of their Koszul duals.
\begin{theorem}

Assume that \(\Lambda\) is Koszul. Then \(\Lambda\) is finite\mbox{-}dimensional if and only if
\(\Lambda^{!}\) has finite global dimension.
\end{theorem}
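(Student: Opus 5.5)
We combine the global dimension formula recalled just before the statement with the isomorphism \(\Lambda\cong(\Lambda^{!})^{!}\cong\bigoplus_{n\ge0}\operatorname{Ext}^n_{\Lambda^{!}}(\Lambda^{!}_0,\Lambda^{!}_0)\) from Definition~2.4.

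First we justify the inputs. Since \(\Lambda\) is Koszul, Definition~2.4 says that \(\Lambda^{!}\) is Koszul and \((\Lambda^{!})^{!}\cong\Lambda\). Hence we may apply the Ext description to the Koszul algebra \(\Lambda^{!}\), obtaining a graded isomorphism
\[
\Lambda\;\cong\;\bigoplus_{n\ge 0}\operatorname{Ext}^n_{\Lambda^{!}}\bigl(\Lambda^{!}_0,\Lambda^{!}_0\bigr),
\]
under which \(\Lambda_n\) corresponds to \(\operatorname{Ext}^n_{\Lambda^{!}}(\Lambda^{!}_0,\Lambda^{!}_0)\). Next, \(\Lambda^{!}=kQ^{\mathrm{op}}/\langle I_2^{\perp}\rangle\) is positively graded and locally finite-dimensional, and \(\Lambda^{!}_0=\prod_{x}k e_x\) is semisimple. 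So the recalled formula applies:
\[
\operatorname{gldim}\Lambda^{!}=\sup\{n\ge 0\mid \operatorname{Ext}^n_{\Lambda^{!}}(\Lambda^{!}_0,\Lambda^{!}_0)\neq 0\}=\sup\{n\mid \Lambda_n\neq 0\}.
\]

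Now we prove both directions. Each \(\Lambda_n\) is finite-dimensional, because \(Q\) has finitely many arrows, so there are finitely many paths of each length. Therefore \(\Lambda\) is finite-dimensional if and only if \(\Lambda_n=0\) for all \(n\gg0\), that is, if and only if the supremum above is finite. Equivalently, \(\Lambda\) is finite-dimensional if and only if \(\operatorname{gldim}\Lambda^{!}<\infty\).

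The main obstacle is keeping track of degrees. We must check that the canonical isomorphism really matches internal degree \(n\) of \(\Lambda\) with cohomological degree \(n\) on the Ext side. This holds because the isomorphism is given by Yoneda products of linear extensions: Koszulity of \(\Lambda^{!}\) places \(\operatorname{Ext}^n\) on the diagonal, in internal degree \(n\), by Proposition~2.5. We also need that Ext over \(\Lambda^{!}\)-GMod agrees with ungraded Ext for these modules. This is the case because \(\Lambda^{!}_0\) has a resolution by finitely generated graded projectives (again Proposition~2.5), so a minimal resolution computes both.

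A secondary subtlety is the side convention. The formula for \(\operatorname{gldim}\) holds for left modules. Koszulity is left–right symmetric here, since \(\Lambda^{!}\) is obtained from \(Q^{\mathrm{op}}\), so this causes no problem.
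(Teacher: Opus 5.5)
Your proposal is correct and follows essentially the same route as the paper. Both arguments identify $\Lambda_n$ with $\operatorname{Ext}^n_{\Lambda^{!}}(\Lambda^{!}_0,\Lambda^{!}_0)$ via $(\Lambda^{!})^{!}\cong\Lambda$, then read off the equivalence from the recalled formula for $\operatorname{gldim}$ together with finite-dimensionality of each $\Lambda_n$. Your extra checks (matching internal degree with cohomological degree, and graded versus ungraded Ext) make explicit what the paper leaves implicit, without changing the argument.
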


\begin{proof}
Since \(\Lambda\) is Koszul, there are canonical graded isomorphisms of Ext\mbox{-}algebras
\[
\Lambda^{!}\;\cong\;\bigoplus_{n\ge 0}\operatorname{Ext}^{n}_{\Lambda}(\Lambda_{0},\Lambda_{0})
\qquad\text{and}\qquad
\Lambda\;\cong\;\bigoplus_{n\ge 0}\operatorname{Ext}^{n}_{\Lambda^{!}}(\Lambda^{!}_{0},\Lambda^{!}_{0}).
\tag{$\ast$}
\]
If \(\Lambda\) is finite\mbox{-}dimensional, then only finitely many graded pieces \(\Lambda_{n}\)
are nonzero; by \((\ast)\) this implies
\(\operatorname{Ext}^{n}_{\Lambda^{!}}(\Lambda^{!}_{0},\Lambda^{!}_{0})=0\) for all \(n\gg 0\).
Hence \(\operatorname{gldim}(\Lambda^{!})<\infty\).

Conversely, suppose \(\operatorname{gldim}(\Lambda^{!})<\infty\).
Then \(\operatorname{Ext}^{n}_{\Lambda^{!}}(\Lambda^{!}_{0},\Lambda^{!}_{0})=0\) for all \(n\gg 0\),
so by \((\ast)\) the graded algebra \(\Lambda\) has only finitely many
nonzero homogeneous components. Since each \(\Lambda_{n}\) is finite\mbox{-}dimensional,
it follows that \(\Lambda\) is finite\mbox{-}dimensional.

This proves the equivalence.
\end{proof}

\medskip

\subsection{The Bounded Derived Category of an Exact Category}

The aim of this subsection is to recall the construction of the bounded derived
category of an exact category in the sense of Quillen. Our exposition follows
\cite{16,34}; for general background on triangulated categories we refer to
\cite{29,44}. The necessity of this discussion stems from the fact that, in the
Koszul dualities considered in this paper, one is naturally led to work with
bounded derived categories of certain exact categories which are, in general,
not abelian. Nevertheless, all exact categories under consideration admit
fully faithful embeddings into abelian categories, thereby allowing the use of
the standard tools of homological algebra.

Recall that a category is \emph{additive} if it is preadditive (that is, each
\(\operatorname{Hom}\)-set is an abelian group and composition is bilinear),
admits a zero object, and has all finite biproducts. An \emph{abelian} category
is an additive category in which every morphism admits both a kernel and a
cokernel, and in which every monomorphism (resp.\ epimorphism) is the kernel
(resp.\ cokernel) of some morphism. Equivalently, every morphism factors as an
epimorphism followed by a monomorphism, and the class of short exact sequences
is stable under both pushouts and pullbacks.

\begin{definition}
Let \(\mathcal{B}\) be an additive category. An \emph{exact structure} on \(\mathcal{B}\) is a specified class \(\mathcal{S}\) of composable pairs
\[
X \xrightarrow{\;\iota\;} Y \xrightarrow{\;\pi\;} Z
\]
in \(\mathcal{B}\), called \emph{conflations}, such that \(\iota\) is a kernel of \(\pi\) and \(\pi\) is a cokernel of \(\iota\).
Morphisms occurring as the first (resp.\ second) map in a conflation are called \emph{inflations}
(resp.\ \emph{deflations}); these coincide with the admissible monomorphisms (resp.\ admissible epimorphisms) determined by \(\mathcal{S}\).

The class \(\mathcal{S}\) is required to satisfy the following axioms:
\begin{itemize}
  \item[\textbf{(S0)}] For every object \(X\in\mathcal{B}\), the identity morphism \(\mathrm{id}_X\) is both an inflation and a deflation.
  \item[\textbf{(S1)}] Inflations (resp.\ deflations) are closed under composition.
  \item[\textbf{(S2)}] Pushouts of inflations along arbitrary morphisms in \(\mathcal{B}\) exist and are inflations.
  \item[\textbf{(S2$^{\mathrm{op}}$)}] Pullbacks of deflations along arbitrary morphisms in \(\mathcal{B}\) exist and are deflations.
\end{itemize}

A pair \((\mathcal{B},\mathcal{S})\) satisfying these axioms is called an \emph{exact category}.
\end{definition}

\begin{definition}
Let \(\mathcal{C}\) be an abelian category. A full additive subcategory \(\mathcal{B}\subseteq\mathcal{C}\) is
an \emph{exact category in the sense of Quillen} if it is endowed with an exact structure
\(\mathcal{S}\) such that every conflation
\[
X \xrightarrow{\;\iota\;} Y \xrightarrow{\;\pi\;} Z \in \mathcal{S}
\]
is a short exact sequence in \(\mathcal{C}\); equivalently, the sequence
\(0 \to X \xrightarrow{\;\iota\;} Y \xrightarrow{\;\pi\;} Z \to 0\) is exact in \(\mathcal{C}\).
\end{definition}
\begin{definition}
An exact category \((\mathcal{B},\mathcal{S})\) is \emph{weakly idempotent complete} if every
retraction (i.e.\ split epimorphism) in \(\mathcal{B}\) admits a kernel, or equivalently, every
coretraction (i.e.\ split monomorphism) admits a cokernel. Explicitly:
\begin{itemize}
  \item A morphism \(r\colon B\to C\) is a \emph{retraction} if there exists \(s\colon C\to B\) with
        \(r\circ s=\mathrm{id}_{C}\). Weak idempotent completeness requires that such \(r\) admit a kernel in \(\mathcal{B}\).
  \item Dually, a morphism \(c\colon A\to B\) is a \emph{coretraction} if there exists \(t\colon B\to A\) with
        \(t\circ c=\mathrm{id}_{A}\). Weak idempotent completeness requires that such \(c\) admit a cokernel in \(\mathcal{B}\).
\end{itemize}
\end{definition}
\begin{definition}
Let \((\mathcal{B},\mathcal{S})\) be an exact category in the sense of Quillen, regarded as a full
additive subcategory of an abelian category \(\mathcal{C}\) (with the exact structure induced from \(\mathcal{C}\)).
We say that \(\mathcal{B}\) satisfies the \emph{two–out–of–three property with respect to \(\mathcal{C}\)} if,
for every short exact sequence in \(\mathcal{C}\)
\[
0 \longrightarrow A \longrightarrow B \longrightarrow C \longrightarrow 0,
\]
whenever two of the objects \(A,B,C\) lie in \(\mathcal{B}\), the remaining one also lies in \(\mathcal{B}\).
\end{definition}
\begin{lemma}
Let \((\mathcal{B},\mathcal{S})\)  be an exact category in the sense of Quillen, viewed as a full subcategory of an ambient abelian category \(\mathcal{C}\) that realizes its exact structure. If \(\mathcal{B}\) satisfies the two–out–of–three property for short exact sequences in \(\mathcal{C}\), then \(\mathcal{B}\) is weakly idempotent complete.
\end{lemma}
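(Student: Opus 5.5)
We take a retraction \(r\colon B\to C\) in \(\mathcal{B}\), with \(s\colon C\to B\) satisfying \(r\circ s=\mathrm{id}_C\), and construct its kernel inside \(\mathcal{B}\) by first forming it in the ambient abelian category \(\mathcal{C}\). Since \(\mathcal{C}\) is abelian, \(r\) has a kernel \(K\xrightarrow{\,i\,}B\) in \(\mathcal{C}\). Because \(r\) has a right inverse, it is an epimorphism in \(\mathcal{C}\), so
\[
0\longrightarrow K\xrightarrow{\;i\;} B\xrightarrow{\;r\;} C\longrightarrow 0
\]
is a short exact sequence in \(\mathcal{C}\). In fact it is split exact, with \(B\cong K\oplus C\) in \(\mathcal{C}\), though we will not need this.

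The key step is to apply the two–out–of–three property. Here \(B\) and \(C\) lie in \(\mathcal{B}\), so \(K\in\mathcal{B}\). Since \(\mathcal{B}\) is a full subcategory of \(\mathcal{C}\), the morphism \(i\) lies in \(\mathcal{B}\).

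We then check that \(i\) is a kernel of \(r\) in \(\mathcal{B}\). Let \(f\colon X\to B\) be a morphism with \(X\in\mathcal{B}\) and \(r f=0\). The universal property in \(\mathcal{C}\) gives a unique \(g\colon X\to K\) with \(ig=f\). By fullness, \(g\) is a morphism of \(\mathcal{B}\), and its uniqueness in \(\mathcal{B}\) follows from uniqueness in \(\mathcal{C}\), since \(i\) is a monomorphism.

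The dual statement is handled the same way. For a coretraction \(c\colon A\to B\) with \(tc=\mathrm{id}_A\), the map \(c\) is a monomorphism, so its cokernel \(B\to Q\) in \(\mathcal{C}\) fits into a short exact sequence \(0\to A\to B\to Q\to 0\). Two–out–of–three gives \(Q\in\mathcal{B}\), and fullness shows that \(B\to Q\) is a cokernel of \(c\) in \(\mathcal{B}\). Alternatively, this half can be skipped, since the definition states that the two conditions are equivalent.

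No step is a genuine obstacle. The only point needing care is that kernels and cokernels computed in \(\mathcal{C}\) remain kernels and cokernels in \(\mathcal{B}\), and this is exactly what fullness guarantees once the objects are known to lie in \(\mathcal{B}\).
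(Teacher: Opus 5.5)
Your proof is correct and follows the same route as the paper, whose proof is only the one-line remark that weak idempotent completeness follows from the two–out–of–three property. You supply the omitted details: form the kernel of the split epimorphism in \(\mathcal{C}\), place it in \(\mathcal{B}\) via two–out–of–three, and transfer the universal property to \(\mathcal{B}\) by fullness.
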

\begin{proof}
The weak idempotent completeness of \(\mathcal{B}\) follows from the two–out–of–three property.
\end{proof}
From now on, fix an exact category \(\mathcal{B}\) (in the sense of Quillen), viewed as a full subcategory of an ambient abelian category \(\mathcal{C}\) that realizes its exact structure. We assume throughout that \(\mathcal{B}\) satisfies the two–out–of–three property for short exact sequences in \(\mathcal{C}\); we henceforth suppress explicit reference to the chosen exact structure.

\begin{definition}
Let \(\mathcal{B}\) be an exact category.

A (cochain) complex in \(\mathcal{B}\) is a sequence
\[
X^\bullet=\bigl(\cdots \to X^{n-1}\xrightarrow{d^{\,n-1}}X^{n}\xrightarrow{d^{\,n}}X^{n+1}\to\cdots\bigr)
\quad\text{with } d^{\,n}\!\circ d^{\,n-1}=0\ \text{for all }n\in\mathbb{Z}.
\]
Write \(\mathsf{C}(\mathcal{B})\) for the category of all complexes in \(\mathcal{B}\), and set
\[
\mathsf{C}^{-}(\mathcal{B})=\{\,X^\bullet\in\mathsf{C}(\mathcal{B})\mid X^{n}=0\ \text{for }n\gg 0\,\},\qquad
\mathsf{C}^{+}(\mathcal{B})=\{\,X^\bullet\in\mathsf{C}(\mathcal{B})\mid X^{n}=0\ \text{for }n\ll 0\,\},
\]
and \(\mathsf{C}^{b}(\mathcal{B})=\mathsf{C}^{-}(\mathcal{B})\cap \mathsf{C}^{+}(\mathcal{B})\) for the full subcategory of bounded complexes.
For a complex \(X^\bullet \in \mathsf{C}(\mathcal{B})\) and an integer \(m\in\mathbb{Z}\), the shift \(X[m]\) is defined by
\[
(X[m])^{n} = X^{n+m},
\qquad
d_{X[m]}^{\,n} = (-1)^{m}\, d_{X}^{\,n+m}.
\]

View \(\mathcal{B}\) as a full subcategory of the ambient abelian category \(\mathcal{C}\).
For a complex \(X^\bullet\), set
\[
Z^{n}(X^\bullet):=\ker\!\bigl(d^{\,n}\colon X^{n}\to X^{n+1}\bigr),\qquad
B^{n}(X^\bullet):=\operatorname{Im}\!\bigl(d^{\,n-1}\colon X^{n-1}\to X^{n}\bigr)
\]
(taken in \(\mathcal{C}\)). The \(n\)-th cohomology object is
\[
H^{n}(X^\bullet)\;:=\;Z^{n}(X^\bullet)\big/ B^{n}(X^\bullet)\ \in\ \mathcal{C},
\]
which lies in \(\mathcal{B}\) whenever \(Z^{n}(X^\bullet)\) and \(B^{n}(X^\bullet)\) do (e.g.\ under the two–out–of–three hypothesis).

Given \(X^\bullet\in\mathsf{C}(\mathcal{B})\), define the good (smart) truncations by
\[
(\tau_{\le n}X)^{i}=
\begin{cases}
X^{i}, & i<n,\\[2pt]
\ker(d^{\,n}), & i=n,\\[2pt]
0, & i>n,
\end{cases}
\qquad
(\tau_{\ge n}X)^{i}=
\begin{cases}
0, & i<n,\\[2pt]
X^{n}/\operatorname{Im}(d^{\,n-1}), & i=n,\\[2pt]
X^{i}, & i>n,
\end{cases}
\]
with differentials induced from \(X^\bullet\).

These truncations fit into a short exact sequence of complexes
\[
0\longrightarrow \tau_{\le n}X^\bullet \longrightarrow X^\bullet \longrightarrow \tau_{\ge n+1}X^\bullet \longrightarrow 0.
\]
Consequently, they give rise to a distinguished triangle
\[
\tau_{\le n}X^\bullet \longrightarrow X^\bullet \longrightarrow \tau_{\ge n+1}X^\bullet \longrightarrow (\tau_{\le n}X^\bullet)[1].
\]

A complex \(X^\bullet\in\mathsf{C}(\mathcal{B})\) is \emph{acyclic} if it is exact as a complex in \(\mathcal{C}\).

Let \(\mathsf{K}(\mathcal{B})\) be the homotopy category of complexes in \(\mathcal{B}\).
We write \(\mathsf{K}^{-}(\mathcal{B})\), \(\mathsf{K}^{+}(\mathcal{B})\), and \(\mathsf{K}^{b}(\mathcal{B})\) for the full triangulated subcategories of complexes that are bounded above, bounded below, and bounded on both sides, respectively.

Let \(\mathsf{Ac}^{b}(\mathcal{B})\subseteq \mathsf{K}^{b}(\mathcal{B})\) denote the full triangulated subcategory of acyclic complexes. Since \(\mathcal{B}\) is weakly idempotent complete, \(\mathsf{Ac}^{b}(\mathcal{B})\) is thick, i.e.\ it is closed under direct summands.

The \emph{bounded derived category} of \(\mathcal{B}\) is the Verdier quotient
\[
\mathsf{D}^{b}(\mathcal{B})\;:=\;\mathsf{K}^{b}(\mathcal{B})\,/\,\mathsf{Ac}^{b}(\mathcal{B}),
\]
which is a triangulated category, with shift induced by \([1]\).
Moreover, since \(\mathcal{B}\) is weakly idempotent complete, a morphism in
\(\mathsf{D}^{b}(\mathcal{B})\) is an isomorphism if and only if its mapping cone
is acyclic; see~\cite[Remark~10.19]{16}.
\end{definition}

Assume now that \(\mathcal{B}\) is an abelian category, and denote by \(\operatorname{Proj}(\mathcal{B})\) (resp.\ \(\operatorname{Inj}(\mathcal{B})\)) the full subcategory of projective (resp.\ injective) objects of \(\mathcal{B}\).

\smallskip
\(\bullet\) Assume that \(\mathcal{B}\) has enough projectives. Then every object
\(X^\bullet\in \mathsf{D}^{b}(\mathcal{B})\) admits a projective resolution, that is,
there exists a quasi\mbox{-}isomorphism
\[
P^\bullet \longrightarrow X^\bullet
\]
with \(P^\bullet\) a bounded above complex of projective objects and with bounded
cohomology. Choose \(a\in \mathbb{Z}\) such that
\[
H^{i}(X^\bullet)=0 \qquad \text{for all } i<a.
\]
Then the induced morphism
\[
\tau_{\ge a}P^\bullet \longrightarrow \tau_{\ge a}X^\bullet
\]
is a quasi\mbox{-}isomorphism. In particular, since \(\tau_{\ge a}P^\bullet\cong X^\bullet\)
in \(\mathsf{D}^{b}(\mathcal{B})\),
this construction, together with the distinguished
triangle above, yields a canonical triangulated equivalence
\[
\mathfrak{T}\colon
\mathsf{K}^{-,b}\!\bigl(\operatorname{Proj}(\mathcal{B})\bigr)
\xrightarrow{\ \sim\ }
\mathsf{D}^{b}(\mathcal{B}).
\]

Moreover, if \(P^\bullet\to X^\bullet\) is such a projective resolution and
\(Y^\bullet\in \mathsf{D}^{b}(\mathcal{B})\), then there is a natural isomorphism
\[
\operatorname{Hom}_{\mathsf{K}^{-}(\mathcal{B})}\!\bigl(P^\bullet,Y^\bullet\bigr)
\;\cong\;
\operatorname{Hom}_{\mathsf{D}^{b}(\mathcal{B})}\!\bigl(X^\bullet,Y^\bullet\bigr).
\]

\smallskip
\(\bullet\) Dually, if \(\mathcal{B}\) has enough injectives, then every \(Y^\bullet\in \mathsf{D}^{b}(\mathcal{B})\) admits an
injective coresolution \(Y^\bullet\!\to I^\bullet\) with \(I^\bullet\) bounded below and with bounded cohomology.
Choose \(b\in\mathbb{Z}\) with \(H^{i}(Y^\bullet)=0\) for all \(i>b\). Then the induced morphism
\[
\tau_{\le b}Y^\bullet \longrightarrow \tau_{\le b}I^\bullet
\]
is a quasi\nobreakdash-isomorphism, with \(\tau_{\le b}I^\bullet\in
\mathsf{D}^{b}\!\bigl(\mathcal{B}\bigr)\).
This yields a canonical triangulated equivalence
\[ \mathfrak{s}:\;
\mathsf{K}^{+,b}\!\bigl(\operatorname{Inj}(\mathcal{B})\bigr)\;\xrightarrow{\ \sim\ }\;\mathsf{D}^{b}(\mathcal{B}),
\]
and, for \(X^\bullet\in \mathsf{D}^{b}(\mathcal{B})\),
\[
\operatorname{Hom}_{\mathsf{K}^{+}(\mathcal{B})}\!\bigl(X^\bullet,\,I^\bullet\bigr)\;\cong\;
\operatorname{Hom}_{\mathsf{D}^{b}(\mathcal{B})}\!\bigl(X^\bullet,\,I^\bullet\bigr).
\]

\smallskip
\noindent\(\bullet\)
If \(\mathcal{B}\) has finite global dimension and enough projectives (resp.\ enough injectives).
Then every object of \(\mathsf{D}^{b}(\mathcal{B})\) admits a representative by a bounded complex of projectives
(resp.\ by a bounded complex of injectives), and we have a triangulated equivalence
\[
\mathsf{K}^{b}\!\bigl(\operatorname{Proj}(\mathcal{B})\bigr)\;\xrightarrow{\ \sim\ }\; \mathsf{D}^{b}(\mathcal{B})
\qquad
\bigl(\text{resp. }\;
\mathsf{K}^{b}\!\bigl(\operatorname{Inj}(\mathcal{B})\bigr)\;\xrightarrow{\ \sim\ }\; \mathsf{D}^{b}(\mathcal{B})\bigr)
\] 
\begin{definition}
Let \(\mathcal{B}\) be an exact category.
An object \(I\in\mathcal{B}\) is said to be \emph{strongly injective} if it is injective both as an object of \(\mathcal{B}\) and as an object of \(\mathcal{C}\).
\end{definition}

The next proposition and definition record elementary observations tailored to
the exact categories that arise throughout this paper.

\begin{proposition}\label{prop:strong-inj-ff}
Let \(\mathcal{B}\) be an exact category with enough injectives, and suppose
that every injective object of \(\mathcal{B}\) is \emph{strongly injective}.
Then the canonical functor
\[
\mathsf{D}^{b}(\mathcal{B}) \longrightarrow \mathsf{D}^{b}(\mathcal{C}),
\]
induced by the inclusion \(\mathcal{B}\hookrightarrow \mathcal{C}\), is fully
faithful. In particular, there is a triangulated equivalence
\[
\mathsf{K}^{+,b}\!\bigl(\operatorname{Inj}(\mathcal{B})\bigr)
\;\xrightarrow{\ \sim\ }\;
\mathsf{D}^{b}(\mathcal{B}).
\]

If, moreover, \(\mathcal{B}\) has finite homological dimension, then there is a
triangulated equivalence
\[
\mathsf{K}^{b}\!\bigl(\operatorname{Inj}(\mathcal{B})\bigr)
\;\xrightarrow{\ \sim\ }\;
\mathsf{D}^{b}(\mathcal{B}).
\]
\end{proposition}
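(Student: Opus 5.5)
The plan is to compute morphism spaces in both categories through injective coresolutions and compare them. Fix \(X^\bullet, Y^\bullet \in \mathsf{D}^{b}(\mathcal{B})\); we may take them to be bounded complexes in \(\mathcal{B}\).

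\textbf{Step 1: coresolutions in \(\mathcal{B}\).} Since \(\mathcal{B}\) has enough injectives, the two–out–of–three property allows us to build a coresolution \(Y^\bullet \to I^\bullet\) inside \(\mathcal{B}\). At each stage we embed a cokernel into an injective of \(\mathcal{B}\) by an inflation, and the new cokernel stays in \(\mathcal{B}\). The result is a bounded below complex of injectives of \(\mathcal{B}\), and the map is a quasi-isomorphism in \(\mathcal{C}\), because conflations are short exact sequences in \(\mathcal{C}\). Applying the smart truncation \(\tau_{\le b}\) at an upper bound \(b\) for the cohomology, as in the general discussion above, gives the equivalence \(\mathsf{K}^{+,b}(\operatorname{Inj}\mathcal{B})\simeq\mathsf{D}^{b}(\mathcal{B})\) together with
\[
\operatorname{Hom}_{\mathsf{D}^{b}(\mathcal{B})}(X^\bullet,Y^\bullet[n])\cong \operatorname{Hom}_{\mathsf{K}(\mathcal{B})}(X^\bullet,I^\bullet[n]).
\]
The one point needing care is that the standard argument (the mapping cone of \(X\to I\) is acyclic, so it has no maps to \(I\)) must be carried out with acyclicity in the exact sense. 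Because \(\mathcal{B}\) satisfies two–out–of–three, the cycle objects of a complex acyclic in \(\mathcal{C}\) lie in \(\mathcal{B}\). Hence acyclic complexes in \(\mathcal{C}\) are glued from conflations of \(\mathcal{B}\), and the usual lemma applies: a bounded below complex of injectives receives only null-homotopic maps from an acyclic complex.

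\textbf{Step 2: the same formula in \(\mathsf{D}^{b}(\mathcal{C})\).} By hypothesis each \(I^p\) is injective in \(\mathcal{C}\), and \(Y^\bullet\to I^\bullet\) is a quasi-isomorphism in \(\mathcal{C}\). Therefore \(I^\bullet\) is a bounded below injective coresolution of \(Y^\bullet\) in \(\mathcal{C}\) as well. The standard fact that \(\operatorname{Hom}_{\mathsf{K}(\mathcal{C})}(A,J)\cong\operatorname{Hom}_{\mathsf{D}(\mathcal{C})}(A,J)\) for bounded below complexes \(J\) of injectives then gives
\[
\operatorname{Hom}_{\mathsf{D}^{b}(\mathcal{C})}(X^\bullet,Y^\bullet[n])\cong \operatorname{Hom}_{\mathsf{K}(\mathcal{C})}(X^\bullet,I^\bullet[n]).
\]
This works in \(\mathsf{D}^{+}(\mathcal{C})\), and \(\mathsf{D}^{b}(\mathcal{C})\to\mathsf{D}^{+}(\mathcal{C})\) is fully faithful. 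Since \(\mathcal{B}\) is a full subcategory of \(\mathcal{C}\), homotopy classes of maps \(X\to I[n]\) agree in \(\mathsf{K}(\mathcal{B})\) and \(\mathsf{K}(\mathcal{C})\). The two isomorphisms are compatible with the canonical functor, because both are induced by the quasi-isomorphism \(Y\to I\). This proves full faithfulness.

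\textbf{Step 3: finite homological dimension.} If \(\mathcal{B}\) has finite homological dimension \(d\), the cokernel at stage \(d\) of the coresolution is itself injective, by the usual dimension-shifting argument using \(\operatorname{Ext}_{\mathcal{B}}^{>d}=0\). Stopping the coresolution there gives a bounded complex of injectives. Hence \(\mathsf{K}^{b}(\operatorname{Inj}\mathcal{B})\to\mathsf{D}^{b}(\mathcal{B})\) is essentially surjective, and it is fully faithful by Step 1.

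\textbf{Main obstacle.} I expect the main difficulty to be Step 1, namely checking that the usual "maps from an acyclic complex to a bounded below complex of injectives are null-homotopic" argument is valid in the exact category. I would handle it by using two–out–of–three to split acyclic complexes into conflations and then extending maps degree by degree, using injectivity of each \(I^p\) in \(\mathcal{B}\).
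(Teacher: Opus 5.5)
Your argument is correct, but it takes a different route from the paper. The paper's proof is one line: it cites Keller's comparison theorem \cite[Theorem~12.1]{34}, which gives full faithfulness of $\mathsf{D}^{b}(\mathcal{B})\to\mathsf{D}^{b}(\mathcal{C})$ from a cofinality condition on conflations. Strong injectivity is exactly what supplies that condition: embed $B'\in\mathcal{B}$ by an inflation into a strongly injective $I$, then extend $B'\to I$ over any monomorphism $B'\hookrightarrow C$ in $\mathcal{C}$. The paper does not spell this out, and it says nothing about the two equivalences with homotopy categories of injectives.

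You instead compute both Hom spaces from a single coresolution $Y^\bullet\to I^\bullet$. By strong injectivity, this is a coresolution in $\mathcal{B}$ and in $\mathcal{C}$ at the same time. Your route is self-contained, makes compatibility with the canonical functor evident, and also proves the $\mathsf{K}^{+,b}$ and $\mathsf{K}^{b}$ equivalences. Keller's criterion is more general, since it needs no injectives at all.

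One statement should be qualified. The cycles of a complex that is exact in $\mathcal{C}$ lie in $\mathcal{B}$ only when the complex is bounded on at least one side, since you must induct from the bounded end. For example, take $\Lambda=k[x]/(x^{2})$ and $\mathcal{B}=\Lambda\textup{-proj}$, which satisfies two-out-of-three. The unbounded complex whose differentials are all multiplication by $x$ is exact, but its cycles are $\cong k\notin\mathcal{B}$. Only bounded-below complexes occur in your argument, so this is harmless. Applied to the cone of $Y^\bullet\to I^\bullet$, the bounded-below version also gives $Z^{b}(I^\bullet)\in\mathcal{B}$ for $b\gg 0$. Your truncation $\tau_{\le b}I^\bullet$ in Step~1 needs this, as does the identification of $\operatorname{Hom}_{\mathsf{D}^{b}(\mathcal{B})}(X^\bullet,Y^\bullet[n])$ with $\operatorname{Hom}_{\mathsf{K}(\mathcal{B})}(X^\bullet,I^\bullet[n])$, and you should state it explicitly.
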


\begin{proof}
The full faithfulness follows from \cite[Theorem~12.1]{34}.
\end{proof}
\begin{definition}
Let \( \mathcal{C} \) be an abelian category. A full subcategory \( \mathcal{A} \subseteq \mathcal{C} \) is called a \emph{Serre subcategory} if for every short exact sequence
\[
0 \to A' \to A \to A'' \to 0
\]
in \( \mathcal{C} \), the object \( A \) lies in \( \mathcal{A} \) if and only if both \( A' \) and \( A'' \) lie in \( \mathcal{A} \). That is, \( \mathcal{A} \) is closed under subobjects, quotients, and extensions in \( \mathcal{C} \).
\end{definition}
In what follows, a triple $(\mathcal{A},\mathcal{B},\mathcal{C})$ will denote a chain of full inclusions
\[
\mathcal{A}\subseteq \mathcal{B}\subseteq \mathcal{C},
\]
where $\mathcal{C}$ is an abelian category, $\mathcal{B}$ is a full exact subcategory of $\mathcal{C}$ in the sense of Quillen, having enough strongly injective objects and satisfying the two-out-of-three property for short exact sequences in $\mathcal{C}$, and $\mathcal{A}$ is a full subcategory of $\mathcal{B}$ which is a Serre subcategory of $\mathcal{C}$. For background on the localization of categories we refer to \cite{24,27}.
\medskip

\begin{definition}
Let \( (\mathcal{A}, \mathcal{B}, \mathcal{C}) \) be a triple. The \emph{quotient category} \( \mathcal{B}/\mathcal{A} \) is defined as follows.

\medskip

The objects of \( \mathcal{B}/\mathcal{A} \) are those of \( \mathcal{B} \). A morphism from \( X \) to \( Y \) is represented by an equivalence class of diagrams (called \emph{roofs})
\[
X \xrightarrow{f} Z \xleftarrow{s} Y,
\]
where \( f, s \in \operatorname{Hom}_{\mathcal{B}} \) and both \( \ker(s) \) and \( \operatorname{coker}(s) \) belong to \( \mathcal{A} \). The equivalence class of such a roof is denoted by \( s^{-1}f \in \operatorname{Hom}_{\mathcal{B}/\mathcal{A}}(X, Y) \).

\medskip

Two roofs
\[
X \xrightarrow{f_1} Z_1 \xleftarrow{s_1} Y \quad \text{and} \quad X \xrightarrow{f_2} Z_2 \xleftarrow{s_2} Y
\]
represent the same morphism in \( \mathcal{B}/\mathcal{A} \) if there exists a third roof
\[
X \xrightarrow{f_3} Z_3 \xleftarrow{s_3} Y,
\]
together with morphisms \( u' \colon Z_1 \to Z_3 \) and \( v'' \colon Z_2 \to Z_3 \) in \( \mathcal{B} \), such that the following diagram commutes in \( \mathcal{B} \):
\[
\xymatrix{
& Z_1 \ar[d]^{u'} & \\
X \ar[ru]^{f_1} \ar[r]^{f_3} \ar[rd]_{f_2} & Z_3 & Y \ar[lu]_{s_1} \ar[l]_{s_3} \ar[ld]^{s_2} \\
& Z_2 \ar[u]_{v''} &
}
\]

\medskip

Composition in \( \mathcal{B}/\mathcal{A} \) is defined as follows. Given morphisms \( s^{-1}f \colon X \to Y \) and \( t^{-1}g \colon Y \to Z \), represented respectively by the roofs
\[
X \xrightarrow{f} M \xleftarrow{s} Y \quad \text{and} \quad Y \xrightarrow{g} N \xleftarrow{t} Z,
\]
their composition is represented by the roof
\[
X \xrightarrow{u \circ f} L \xleftarrow{t \circ v} Z,
\]
where \( u \colon M \to L \) and \( v \colon N \to L \) are morphisms in \( \mathcal{B} \) such that the following diagram commutes:
\[
\xymatrix{
& & L & \\
& M \ar[ur]^u & & N \ar[ul]_v \\
X \ar[ur]^f & & Y \ar[ul]_s \ar[ur]^g & & Z \ar[ul]_t
}
\]
and \(  \ker(v), \operatorname{coker}(v) \in \mathcal{A} \). The composition is then defined by the equivalence class \( (t \circ v)^{-1}(u \circ f) \in \operatorname{Hom}_{\mathcal{B}/\mathcal{A}}(X, Z) \). 

\medskip

The category \( \mathcal{B}/\mathcal{A} \) is additive, and the canonical quotient functor
\[
Q \colon \mathcal{B} \longrightarrow \mathcal{B}/\mathcal{A}
\]
is additive.

\medskip

The exact inclusion \( \mathcal{B} \hookrightarrow \mathcal{C} \), together with the assumption that \( \mathcal{B} \) satisfies the two-out-of-three property for short exact sequences in \( \mathcal{C} \), induces a fully faithful exact functor
\[
\mathcal{B}/\mathcal{A} \hookrightarrow \mathcal{C}/\mathcal{A},
\]
such that the following diagram of exact functors commutes:
\[
\begin{tikzcd}
\mathcal{B} \arrow[r, hook] \arrow[d, "Q"'] & \mathcal{C} \arrow[d, "Q"] \\
\mathcal{B}/\mathcal{A} \arrow[r, hook] & \mathcal{C}/\mathcal{A}
\end{tikzcd}
\]
where the vertical arrows denote the canonical quotient functors. It follows that \( \mathcal{B}/\mathcal{A} \) inherits an exact structure from \( \mathcal{C}/\mathcal{A} \), and thus defines an exact category in the sense of Quillen.
\end{definition}
\begin{proposition}
Let \( (\mathcal{A}, \mathcal{B}, \mathcal{C}) \) be a triple. Then the quotient category \( \mathcal{B}/\mathcal{A} \) is weakly idempotent complete.
\end{proposition}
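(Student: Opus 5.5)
We reduce weak idempotent completeness of \(\mathcal{B}/\mathcal{A}\) to the corresponding property of the abelian quotient \(\mathcal{C}/\mathcal{A}\). Since \(\mathcal{A}\) is a Serre subcategory of \(\mathcal{C}\), the Gabriel quotient \(\mathcal{C}/\mathcal{A}\) is abelian and the quotient functor \(Q\colon\mathcal{C}\to\mathcal{C}/\mathcal{A}\) is exact. By the preceding definition, \(\mathcal{B}/\mathcal{A}\) sits fully faithfully inside \(\mathcal{C}/\mathcal{A}\), and its exact structure is induced from there. It therefore suffices to show two things: every retraction \(r\colon X\to Y\) in \(\mathcal{B}/\mathcal{A}\) has a kernel in \(\mathcal{C}/\mathcal{A}\) which is isomorphic to an object of \(\mathcal{B}/\mathcal{A}\), and this kernel, so taken, is a kernel inside \(\mathcal{B}/\mathcal{A}\). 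The latter is automatic from full faithfulness.

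\textbf{Step 1: the kernel as a summand.} Let \(r\) have a section \(s\) with \(rs=\mathrm{id}_Y\). Then \(e=\mathrm{id}_X-sr\) is an idempotent of \(X\) in \(\mathcal{C}/\mathcal{A}\). Since \(\mathcal{C}/\mathcal{A}\) is abelian, \(\ker r=\operatorname{Im} e\) exists and \(X\cong \ker r\oplus Y\) there. The task is to show that \(\ker r\) lies in the essential image of \(\mathcal{B}\).

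\textbf{Step 2: lifting to \(\mathcal{C}\).} Represent \(r\) by a roof \(X\xrightarrow{f}Z\xleftarrow{t}Y\), where \(\ker t,\operatorname{coker} t\in\mathcal{A}\). Consider \(K:=\ker(f)\) computed in \(\mathcal{C}\). Because \(Q\) is exact and \(Q(t)\) is invertible, \(Q(K)\cong\ker(Q(t)^{-1}Q(f))=\ker r\). So \(\ker r\) is the image of an object of \(\mathcal{C}\), and we must replace \(K\) by an object of \(\mathcal{B}\) with the same image.

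The plan is to use \(f\) to build a short exact sequence in \(\mathcal{C}\) with two terms in \(\mathcal{B}\), and then invoke the two-out-of-three property. The image \(\operatorname{Im} f\subseteq Z\) has cokernel \(\operatorname{coker} f\). Since \(r\) is a split epimorphism in \(\mathcal{C}/\mathcal{A}\), \(Q(\operatorname{coker} f)=\operatorname{coker}(Q f)=0\), so \(\operatorname{coker} f\in\mathcal{A}\subseteq\mathcal{B}\). The sequence \(0\to\operatorname{Im} f\to Z\to\operatorname{coker} f\to 0\) then has \(Z,\operatorname{coker} f\in\mathcal{B}\), so two-out-of-three gives \(\operatorname{Im} f\in\mathcal{B}\). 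Applying it once more to \(0\to K\to X\to\operatorname{Im} f\to 0\) gives \(K\in\mathcal{B}\). Hence \(\ker r\cong Q(K)\) lies in \(\mathcal{B}/\mathcal{A}\).

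\textbf{Step 3: conclusion and dual case.} Full faithfulness of \(\mathcal{B}/\mathcal{A}\hookrightarrow\mathcal{C}/\mathcal{A}\) implies that \(Q(K)\to X\) is a kernel of \(r\) in \(\mathcal{B}/\mathcal{A}\). The dual statement for coretractions, namely cokernels of split monomorphisms, follows by the symmetric argument. Alternatively, it follows from the equivalence of the two formulations recorded in the definition of weak idempotent completeness.

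\textbf{Main obstacle.} The delicate point is Step 2: showing that \(\operatorname{coker} f\in\mathcal{A}\) from the mere fact that \(Q(f)\) is an epimorphism. This is where the Serre hypothesis enters. It uses the standard characterization that a morphism of \(\mathcal{C}\) becomes epic in \(\mathcal{C}/\mathcal{A}\) exactly when its cokernel lies in \(\mathcal{A}\). I would check this carefully against the roof description of morphisms given above, and make sure that the chosen representative roof has codomain \(Z\) in \(\mathcal{B}\), which the definition guarantees.
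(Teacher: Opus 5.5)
Your proof is correct and follows essentially the same strategy as the paper. Both pass to the abelian quotient $\mathcal{C}/\mathcal{A}$, lift the kernel of the retraction to $\mathcal{C}$ by means of a roof, and land in $\mathcal{B}$ via the two-out-of-three property.

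The only difference is which morphism gets lifted. You represent the retraction itself by a roof and take the $\mathcal{C}$-kernel of its numerator $f$, so that $\operatorname{coker} f\in\mathcal{A}$ follows immediately from exactness of $Q$. The paper instead represents the kernel inclusion by a roof $(\alpha,\beta)$. It must then also argue that $\operatorname{coker}(\alpha)$, being isomorphic in $\mathcal{C}/\mathcal{A}$ to an object of $\mathcal{B}$, itself lies in $\mathcal{B}$. Your route avoids that step.
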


\begin{proof}
Let \( f \colon Y \to X \) be a retraction in \( \mathcal{B}/\mathcal{A} \). Since \( \mathcal{B}/\mathcal{A} \) is a full exact subcategory of the abelian category \( \mathcal{C}/\mathcal{A} \), the morphism \( f \) may be completed to a short exact sequence
\[
0 \longrightarrow \ker(f) \xrightarrow{g} Y \xrightarrow{f} X \longrightarrow 0
\]
in \( \mathcal{C}/\mathcal{A} \).

The monomorphism \( g \) can be represented by a roof in \( \mathcal{C} \),
\[
\ker(f) \xrightarrow{\alpha} Z \xleftarrow{\beta} Y,
\]
where \( \ker(\beta), \operatorname{coker}(\beta), \ker(\alpha) \in \mathcal{A} \). Consider the short exact sequence in \( \mathcal{C} \),
\[
0 \longrightarrow \operatorname{Im}(\alpha) \xrightarrow{i} Z \xrightarrow{p} \operatorname{coker}(\alpha) \longrightarrow 0.
\]
Since \( Y \in \mathcal{B} \) and \( \mathcal{B} \) satisfies the two-out-of-three property, it follows that \( Z \in \mathcal{B} \).

We obtain a commutative diagram in \( \mathcal{C}/\mathcal{A} \):
\[
\begin{tikzcd}
0 \arrow[r] & \operatorname{Im}(\alpha) \arrow[r, "i"] \arrow[d, "a"] & Z \arrow[r, "p"] \arrow[d, "\beta^{-1}"] & \operatorname{coker}(\alpha) \arrow[r] \arrow[d, "\gamma"] & 0 \\
0 \arrow[r] & \ker(f) \arrow[r, "g"] & Y \arrow[r, "f"] & X \arrow[r] & 0
\end{tikzcd}
\]
where \( a \) denotes the inverse, in \( \mathcal{C}/\mathcal{A} \), of the canonical map \( \ker(f) \to \operatorname{Im}(\alpha) \). By construction, \( \alpha = i \circ a^{-1} \), so that
\[
g = \beta^{-1} \circ \alpha = \beta^{-1} \circ i \circ a^{-1}.
\]
Hence the two short exact sequences are isomorphic in \( \mathcal{C}/\mathcal{A} \). Since \( \operatorname{coker}(\alpha) \cong X \) in \( \mathcal{C}/\mathcal{A} \), and \( X \in \mathcal{B} \), the two-out-of-three property implies \( \operatorname{coker}(\alpha) \in \mathcal{B} \) as well.

We therefore obtain a short exact sequence
\[
0 \longrightarrow \operatorname{Im}(\alpha) \longrightarrow Z \longrightarrow \operatorname{coker}(\alpha) \longrightarrow 0
\]
in \( \mathcal{B} \), whose image in \( \mathcal{B}/\mathcal{A} \) is isomorphic to the short exact sequence
\[
0 \longrightarrow \ker(f) \longrightarrow Y \longrightarrow X \longrightarrow 0.
\]
This proves that the retraction \( f \) admits a kernel in \( \mathcal{B}/\mathcal{A} \). Thus \( \mathcal{B}/\mathcal{A} \) is weakly idempotent complete.
\end{proof}
\medskip

The exact functor
\[
Q \colon \mathcal{B} \longrightarrow \mathcal{B}/\mathcal{A}
\]
extends canonically to a triangulated functor
\[
Q_* \colon \mathsf{D}^b(\mathcal{B}) \longrightarrow \mathsf{D}^b(\mathcal{B}/\mathcal{A})
\]
on bounded derived categories. Its kernel is precisely the full thick triangulated subcategory
\[
\mathsf{D}^b_{\mathcal{A}}(\mathcal{B}) := \left\{ X^\bullet \in \mathsf{D}^b(\mathcal{B}) \,\middle|\, H^n(X^\bullet) \in \mathcal{A} \text{ for all } n \in \mathbb{Z} \right\}.
\]

\medskip

The following result will be used in the sequel in the study of graded singular
Koszul duality; compare with~\cite[Theorem~3.2]{43}.

\begin{theorem}
Let \((\mathcal{A},\mathcal{B},\mathcal{C})\) be a triple satisfying
\[
\mathsf{D}^b(\mathcal{A}) \;\cong\; \mathsf{D}^b_{\mathcal{A}}(\mathcal{B}).
\]
Then the canonical triangulated functor
\[
Q_* \colon \mathsf{D}^b(\mathcal{B}) \longrightarrow \mathsf{D}^b(\mathcal{B}/\mathcal{A})
\]
induces a triangulated equivalence
\[
\mathsf{D}^b(\mathcal{B}) \big/ \mathsf{D}^b_{\mathcal{A}}(\mathcal{B})
\;\xrightarrow{\;\sim\;}
\mathsf{D}^b(\mathcal{B}/\mathcal{A}).
\]
\end{theorem}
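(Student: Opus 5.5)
The plan is to follow the strategy of Miyachi's theorem (compare \cite[Theorem~3.2]{43}): show that \(Q_*\) is essentially surjective and that it is a Verdier localization with kernel \(\mathsf{D}^b_{\mathcal{A}}(\mathcal{B})\). Since \(Q_*\) kills \(\mathsf{D}^b_{\mathcal{A}}(\mathcal{B})\) (the quotient functor \(Q\) is exact and sends objects of \(\mathcal{A}\) to zero), the universal property of the Verdier quotient gives a triangulated functor
\[
\overline{Q}_*\colon \mathsf{D}^b(\mathcal{B})/\mathsf{D}^b_{\mathcal{A}}(\mathcal{B})\longrightarrow \mathsf{D}^b(\mathcal{B}/\mathcal{A}).
\]
It then suffices to prove that \(\overline{Q}_*\) is essentially surjective and fully faithful.

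Essential surjectivity is the easy step. The objects of \(\mathcal{B}/\mathcal{A}\) coincide with those of \(\mathcal{B}\), so it is enough to lift bounded complexes. A morphism of \(\mathcal{B}/\mathcal{A}\) is a roof \(s^{-1}f\). By induction on the length of a bounded complex, I will replace each object by a suitable \(Z\) (which lies in \(\mathcal{B}\) by the two-out-of-three property, as in the proof of the weak idempotent completeness proposition above) so that all differentials become genuine morphisms of \(\mathcal{B}\). The conditions \(d^2=0\) only hold up to maps factoring through \(\mathcal{A}\); these are corrected by passing to images and quotients inside \(\mathcal{C}\), which again stay in \(\mathcal{B}\) by two-out-of-three. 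Alternatively, because both categories are triangulated and generated by objects in degree zero, it suffices to show that the essential image is a triangulated subcategory containing \(\mathcal{B}/\mathcal{A}\) and closed under cones of morphisms between stalk complexes. This reduces to lifting a single morphism \(s^{-1}f\), and that lifting is realized by the roof \(X\to Z\leftarrow Y\) in \(\mathsf{D}^b(\mathcal{B})\), where \(s\) becomes invertible in the Verdier quotient because \(\operatorname{cone}(s)\) has cohomology \(\ker s,\operatorname{coker} s\in\mathcal{A}\).

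Full faithfulness is the main obstacle, and this is where the hypothesis \(\mathsf{D}^b(\mathcal{A})\simeq\mathsf{D}^b_{\mathcal{A}}(\mathcal{B})\) enters. I would first compare with the abelian situation \(\mathsf{D}^b(\mathcal{C})/\mathsf{D}^b_{\mathcal{A}}(\mathcal{C})\simeq\mathsf{D}^b(\mathcal{C}/\mathcal{A})\), which is Miyachi's theorem for Serre subcategories. I would then use that \(\mathcal{B}/\mathcal{A}\hookrightarrow\mathcal{C}/\mathcal{A}\) is fully faithful and exact, and that \(\mathcal{B}\) has enough strongly injective objects, so that Proposition~\ref{prop:strong-inj-ff} makes \(\mathsf{D}^b(\mathcal{B})\to\mathsf{D}^b(\mathcal{C})\) fully faithful. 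What still has to be checked is that the induced functor \(\mathsf{D}^b(\mathcal{B})/\mathsf{D}^b_{\mathcal{A}}(\mathcal{B})\to \mathsf{D}^b(\mathcal{C})/\mathsf{D}^b_{\mathcal{A}}(\mathcal{C})\) is fully faithful. By Verdier's criterion, it is enough to show that every morphism \(X\to A\) with \(X\in\mathsf{D}^b(\mathcal{B})\) and \(A\in\mathsf{D}^b_{\mathcal{A}}(\mathcal{C})\) factors through an object of \(\mathsf{D}^b_{\mathcal{A}}(\mathcal{B})\).

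This factorization is the delicate point. I would use the hypothesis to represent \(A\) by a bounded complex of objects of \(\mathcal{A}\subseteq\mathcal{B}\), via \(\mathsf{D}^b(\mathcal{A})\to\mathsf{D}^b_{\mathcal{A}}(\mathcal{C})\), and then observe that such a complex already lies in \(\mathsf{D}^b_{\mathcal{A}}(\mathcal{B})\). That the hypothesis gives this representation is the step I am least sure of; it may need to be added as an assumption or proved from the injective-resolution description of \(\mathsf{D}^b(\mathcal{B})\). Once this is in place, the composite \(\mathsf{D}^b(\mathcal{B})/\mathsf{D}^b_{\mathcal{A}}(\mathcal{B})\to\mathsf{D}^b(\mathcal{C}/\mathcal{A})\) is fully faithful, and it agrees with \(\overline{Q}_*\) followed by the map \(\mathsf{D}^b(\mathcal{B}/\mathcal{A})\to\mathsf{D}^b(\mathcal{C}/\mathcal{A})\). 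Together with essential surjectivity, this shows that \(\overline{Q}_*\) is full, faithful and dense, hence a triangulated equivalence.
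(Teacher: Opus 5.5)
There is a genuine gap in your final deduction. Granting the step you flag, you correctly show that the composite $\mathsf{D}^b(\mathcal{B})/\mathsf{D}^b_{\mathcal{A}}(\mathcal{B})\to\mathsf{D}^b(\mathcal{C}/\mathcal{A})$ is fully faithful. You also correctly note that it equals $J\circ\overline{Q}_*$, where $J\colon\mathsf{D}^b(\mathcal{B}/\mathcal{A})\to\mathsf{D}^b(\mathcal{C}/\mathcal{A})$ is induced by the inclusion.

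From this you only get that $\overline{Q}_*$ is faithful. To conclude that $\overline{Q}_*$ is full you need $J$ to be faithful, and essential surjectivity of $\overline{Q}_*$ cannot replace this. For example, the one-object categories $k\hookrightarrow k[x]/(x^2)\twoheadrightarrow k$ give a dense, non-full first functor whose composite with the second is the identity.

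Faithfulness of $J$ is a genuine derived-level statement. That $\mathcal{B}/\mathcal{A}\hookrightarrow\mathcal{C}/\mathcal{A}$ is fully faithful and exact says nothing about whether $\operatorname{Hom}_{\mathsf{D}^b(\mathcal{B}/\mathcal{A})}(X,Y[n])$ agrees with the corresponding group in $\mathsf{D}^b(\mathcal{C}/\mathcal{A})$. This is exactly the extra ingredient in the paper's proof: it checks Keller's criterion \cite[Theorem~12.1]{34} for $\mathcal{B}/\mathcal{A}\subseteq\mathcal{C}/\mathcal{A}$, so that $J$ is fully faithful. Concretely:
\begin{enumerate}
\item A monomorphism $X\to Y$ in $\mathcal{C}/\mathcal{A}$ with $X\in\mathcal{B}$ is represented by a morphism $X'\to Y/Y'$ in $\mathcal{C}$, with $X/X'$, $Y'$ and its kernel in $\mathcal{A}$.
\item Dividing out the kernel gives a genuine monomorphism $X_0\hookrightarrow Y_0$ with $X_0\in\mathcal{B}$, by two-out-of-three.
\item Embed $X_0$ into a strongly injective $I$ of $\mathcal{B}$ and extend along $X_0\hookrightarrow Y_0$; this is possible since $I$ is injective in $\mathcal{C}$. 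The resulting $Y\to I$ composes with $X\to Y$ to an inflation of $\mathcal{B}/\mathcal{A}$.
\end{enumerate}
Together with extension-closedness of $\mathcal{B}/\mathcal{A}$ in $\mathcal{C}/\mathcal{A}$ (again by two-out-of-three), this makes $J$ fully faithful. Only then does $\overline{Q}_*$ become fully faithful.

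Your essential-surjectivity step, placed first and called easy, does not work as written. The essential image of a triangulated functor is closed under cones only if the functor is full. Morphisms between stalk complexes in $\mathsf{D}^b(\mathcal{B}/\mathcal{A})$ include all $\operatorname{Ext}^n_{\mathcal{B}/\mathcal{A}}$-classes, not just roofs $s^{-1}f$. Building a general bounded complex from stalks also needs lifts of morphisms from a stalk complex into an arbitrary complex. The fix is to reverse the order. Once $\overline{Q}_*$ is fully faithful, its essential image is a triangulated subcategory containing every stalk complex, since objects of $\mathcal{B}/\mathcal{A}$ are those of $\mathcal{B}$. These stalk complexes generate $\mathsf{D}^b(\mathcal{B}/\mathcal{A})$ via brutal truncations, so $\overline{Q}_*$ is dense.

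Finally, the point you flagged as delicate is automatic. Since $\mathcal{A}\subseteq\mathcal{B}$ and the essential image of the fully faithful functor $\mathsf{D}^b(\mathcal{B})\to\mathsf{D}^b(\mathcal{C})$ is closed under cones, truncation triangles show that every object of $\mathsf{D}^b_{\mathcal{A}}(\mathcal{C})$ already lies in $\mathsf{D}^b_{\mathcal{A}}(\mathcal{B})$. Verdier's factorization condition then holds trivially by factoring through the target itself. The paper obtains the same identification $\mathsf{D}^b_{\mathcal{A}}(\mathcal{B})\simeq\mathsf{D}^b_{\mathcal{A}}(\mathcal{C})$ from the hypothesis.
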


\begin{proof}
Recall that, since \(\mathcal{C}\) is abelian and \(\mathcal{A}\) is a Serre subcategory of \(\mathcal{C}\), it follows from~\cite[Theorem~3.2]{43} that there is a triangulated equivalence
\[
\mathsf{D}^b(\mathcal{C}) \big/ \mathsf{D}^b_{\mathcal{A}}(\mathcal{C})
\;\xrightarrow{\;\sim\;}
\mathsf{D}^b(\mathcal{C}/\mathcal{A}).
\]

Moreover, since
\[
\mathsf{D}^b(\mathcal{A}) \;\cong\; \mathsf{D}^b_{\mathcal{A}}(\mathcal{B}),
\]
we also have
\[
\mathsf{D}^b(\mathcal{A}) \;\cong\; \mathsf{D}^b_{\mathcal{A}}(\mathcal{C}).
\]
It follows that there is a fully faithful functor
\[
\mathsf{D}^b(\mathcal{B}) \big/ \mathsf{D}^b_{\mathcal{A}}(\mathcal{B})
\;\hookrightarrow\;
\mathsf{D}^b(\mathcal{C}) \big/ \mathsf{D}^b_{\mathcal{A}}(\mathcal{C}).
\]

Furthermore, since \((\mathcal{A},\mathcal{B},\mathcal{C})\) is a triple, it follows from the proof of Proposition~2.17 that condition~(C2) of~\cite[Theorem~12.1]{34} is satisfied for both \(\mathcal{B}/\mathcal{A}\) and \(\mathcal{C}/\mathcal{A}\). Consequently, there is a fully faithful functor
\[
\mathsf{D}^b(\mathcal{B}/\mathcal{A})
\;\hookrightarrow\;
\mathsf{D}^b(\mathcal{C}/\mathcal{A}).
\]

We thus obtain a commutative diagram of triangulated categories
\[
\begin{tikzcd}[column sep=large, row sep=large]
\mathsf{D}^b(\mathcal{B})
\arrow[r]
\arrow[d, hook]
&
\mathsf{D}^b(\mathcal{B}) \big/ \mathsf{D}^b_{\mathcal{A}}(\mathcal{B})
\arrow[r, "\sim"]
\arrow[d, hook]
&
\mathsf{D}^b(\mathcal{B}/\mathcal{A})
\arrow[d, hook]
\\
\mathsf{D}^b(\mathcal{C})
\arrow[r]
&
\mathsf{D}^b(\mathcal{C}) \big/ \mathsf{D}^b_{\mathcal{A}}(\mathcal{C})
\arrow[r, "\sim"]
&
\mathsf{D}^b(\mathcal{C}/\mathcal{A})
\end{tikzcd}
\]
\end{proof}

Let $\Lambda$ be a graded algebra. We use the following notation:
\begin{itemize}
\item $\mathsf{D}^{b}(\Lambda\textup{-mod})$ denotes the bounded derived category of finite-dimensional left $\Lambda$-modules;
\item $\mathsf{D}^{b}(\Lambda\textup{-gmod})$ denotes the bounded derived category of finite-dimensional $\mathbb{Z}$-graded left $\Lambda$-modules;
\item $\mathsf{D}^{b}(\Lambda\textup{-GMod})$ denotes the bounded derived category of locally finite-dimensional $\mathbb{Z}$-graded left $\Lambda$-modules.
\end{itemize}

\noindent
The associated singularity categories are the Verdier quotients
\[
\mathsf{D}_{\textup{sg}}(\Lambda\textup{-gmod})
\;:=\;
\frac{\mathsf{D}^{b}(\Lambda\textup{-gmod})}
     {\mathsf{K}^{b}(\Lambda\textup{-proj}^{\mathbb{Z}})},
\qquad
\mathsf{D}_{\textup{sg}}(\Lambda\textup{-mod})
\;:=\;
\frac{\mathsf{D}^{b}(\Lambda\textup{-mod})}
     {\mathsf{K}^{b}(\Lambda\textup{-proj})},
\]
where \(\mathsf{K}^{b}(\Lambda\textup{-proj}^{\mathbb{Z}})\) (resp.\ \(\mathsf{K}^{b}(\Lambda\textup{-proj})\)) denotes the bounded homotopy category of graded (resp.\ ungraded) projective \(\Lambda\)\nobreakdash-modules.

\medskip

A module \(M\in \Lambda\textup{-mod}\) is called \emph{Gorenstein--projective} if it occurs as the
\(0\)\nobreakdash-th syzygy of a totally acyclic complex \(P^{\bullet}\) of projective
\(\Lambda\)\nobreakdash-modules. That is, there exists an exact complex
\[
P^{\bullet}=\bigl(\cdots \longrightarrow P^{-1} \longrightarrow P^{0} \longrightarrow P^{1} \longrightarrow \cdots\bigr)
\]
with each \(P^{i}\) projective, such that \(M \cong \ker(P^{0}\to P^{1})\) and the complex
\[
\operatorname{Hom}_{\Lambda}(P^{\bullet},\Lambda)
\]
is exact.
We denote by \(\Lambda\textup{-}\underline{\textup{Gproj}}\) the stable category of
Gorenstein--projective \(\Lambda\)\nobreakdash-modules, and by
\(\Lambda\textup{-}\underline{\textup{Gproj}}^{\mathbb{Z}}\) its graded analogue.

\begin{definition}
A finite\mbox{-}dimensional \(k\)\nobreakdash-algebra \(\Lambda\) is called
\emph{Iwanaga–Gorenstein} if \(\Lambda\) has finite injective dimension on both
sides as a module over itself, that is,
\[
\operatorname{injdim}_{\Lambda}\Lambda<\infty
\qquad\text{and}\qquad
\operatorname{injdim}_{\Lambda^{\mathrm{op}}}\Lambda<\infty.
\]
\end{definition}

\begin{definition}
Let \(\Lambda\) be an Iwanaga--Gorenstein algebra.
A (left) \(\Lambda\)\nobreakdash-module \(M\) is called
\emph{maximal Cohen--Macaulay} if
\[
\operatorname{Ext}^{i}_{\Lambda}(M,\Lambda)=0
\quad \text{for all } i>0.
\]

In the graded setting, a graded module
\(M\in \Lambda\textup{-gmod}\) is called maximal Cohen--Macaulay if
\[
\bigoplus_{j\in\mathbb Z}
\operatorname{Ext}^{i}_{\Lambda\textup{-gmod}}
\bigl(M,\Lambda\langle j\rangle\bigr)=0
\qquad
\text{for all } i>0,
\]
where the extension groups are computed in the category of graded
\(\Lambda\)-modules.
\end{definition}

\noindent
If \(\Lambda\) is Iwanaga--Gorenstein, then the (graded) Gorenstein--projective
\(\Lambda\)\nobreakdash-modules coincide with the (graded) maximal Cohen--Macaulay
\(\Lambda\)\nobreakdash-modules.

\medskip

By a theorem of Buchweitz~\cite{15}, and its geometric reformulation due to
Orlov~\cite{45}, there are natural triangulated equivalences
\[
\mathsf{D}_{\textup{sg}}(\Lambda\textup{-mod})
\;\xrightarrow{\ \sim\ }\;
\Lambda\textup{-}\underline{\textup{Gproj}},
\qquad
\mathsf{D}_{\textup{sg}}(\Lambda\textup{-gmod})
\;\xrightarrow{\ \sim\ }\;
\Lambda\textup{-}\underline{\textup{Gproj}}^{\mathbb{Z}}.
\]

\subsection{Differential Graded Categories}
This subsection collects the basic notions on differential graded (dg) categories
that will be used in the sequel. Our aim is to realize, for a finite-dimensional
Koszul algebra, both the bounded derived category and the singularity category
as triangulated hulls of suitably defined dg orbit categories. For completeness,
we briefly recall the necessary constructions, including pretriangulated hulls,
triangulated hulls, and dg orbit categories. Standard references for this material
include~\cite{11, 18, 35, 36}.

Throughout, all categories are assumed to be \(k\)-linear, that is, their Hom
spaces are \(k\)-vector spaces and composition is \(k\)-bilinear. All functors
are assumed to be \(k\)-linear.
\begin{definition}
A \emph{differential graded category} (dg category) \(\mathcal{A}\) over \(k\) consists of:
\begin{itemize}
\item a class of objects \(\mathrm{Ob}(\mathcal{A})\);
\item for any \(X,Y\in\mathrm{Ob}(\mathcal{A})\), a cochain complex of \(k\)-vector spaces
\[
\operatorname{Hom}_{\mathcal{A}}(X,Y)
=\bigl(\operatorname{Hom}^{\bullet}_{\mathcal{A}}(X,Y),d\bigr),
\qquad
\operatorname{Hom}^{\bullet}_{\mathcal{A}}(X,Y)=\bigoplus_{n\in\mathbb{Z}}\operatorname{Hom}^{n}_{\mathcal{A}}(X,Y),
\]
where \(d\colon \operatorname{Hom}^{n}_{\mathcal{A}}(X,Y)\to \operatorname{Hom}^{n+1}_{\mathcal{A}}(X,Y)\) satisfies \(d^{2}=0\);
\item for any \(X,Y,Z\), a degree-\(0\) \(k\)-bilinear composition
\[
\circ:\ \operatorname{Hom}^{m}_{\mathcal{A}}(Y,Z)\otimes \operatorname{Hom}^{n}_{\mathcal{A}}(X,Y)\longrightarrow
\operatorname{Hom}^{m+n}_{\mathcal{A}}(X,Z),
\]
which is associative and satisfies the graded Leibniz rule
\[
d(f\circ g)=d(f)\circ g+(-1)^{m}f\circ d(g),
\]
for \(f\in \operatorname{Hom}^{m}_{\mathcal{A}}(Y,Z)\) and \(g\in \operatorname{Hom}^{n}_{\mathcal{A}}(X,Y)\);
\item for each object \(X\), a unit \(\mathrm{id}_{X}\in \operatorname{Hom}^{0}_{\mathcal{A}}(X,X)\) with \(d(\mathrm{id}_{X})=0\), such that
\(\mathrm{id}_{Y}\circ f=f=f\circ \mathrm{id}_{X}\) for all \(f\in \operatorname{Hom}^{\bullet}_{\mathcal{A}}(X,Y)\).
\end{itemize}

The \emph{homotopy category} \(H^{0}(\mathcal{A})\) has the same objects as \(\mathcal{A}\), and morphisms
\[
\operatorname{Hom}_{H^{0}(\mathcal{A})}(X,Y):=H^{0}\!\bigl(\operatorname{Hom}_{\mathcal{A}}(X,Y)\bigr),
\]
that is, degree-\(0\) closed morphisms modulo degree-\((-1)\) boundaries.
We shall denote by \(\mathrm{dgcat}\) the category of small dg categories.
\end{definition}

The following example provides the basic model for the dg categories considered in this work.
\begin{example}
Let \(\mathcal{A}\) be an additive category. The dg category \(C_{\mathrm{dg}}(\mathcal{A})\) has as objects
the (cochain) complexes
\[
A^\bullet=\bigl(A^{i},d_{A}^{i}\bigr)_{i\in\mathbb{Z}}
\quad\text{over }\mathcal{A}.
\]
For two complexes \(A^\bullet,B^\bullet\), the morphism complex is
\[
\operatorname{Hom}^{n}_{C_{\mathrm{dg}}(\mathcal{A})}(A^\bullet,B^\bullet)
:=\prod_{i\in\mathbb{Z}}\operatorname{Hom}_{\mathcal{A}}\!\bigl(A^{i},B^{i+n}\bigr),
\]
whose degree-\(n\) elements are families \(f=(f^{i})_{i\in\mathbb{Z}}\) with
\(f^{i}\colon A^{i}\to B^{i+n}\). The differential is given by
\[
(df)^{i}:=d_{B}^{\,i+n}\circ f^{i}-(-1)^{n}f^{i+1}\circ d_{A}^{\,i}
\qquad(i\in\mathbb{Z}),
\]
and composition is defined componentwise: for
\(f\in\operatorname{Hom}^{n}(B^\bullet,C^\bullet)\) and \(g\in\operatorname{Hom}^{m}(A^\bullet,B^\bullet)\),
\[
(f\circ g)^{i}:=f^{\,i+m}\circ g^{\,i}\in\operatorname{Hom}^{n+m}(A^\bullet,C^\bullet).
\]
With these structures, the graded Leibniz rule holds:
\[
d(f\circ g)=df\circ g+(-1)^{n}f\circ dg.
\]

The dg category \(C_{\mathrm{dg}}(\mathcal{A})\) is naturally equipped with the dg shift functor \([1]\).
On objects,
\[
A^\bullet[1]^{\,i}:=A^{i+1},
\qquad
d^{\,i}_{A^\bullet[1]}:=-\,d_{A}^{\,i+1}.
\]
On morphisms, \([1]\) sends a homogeneous morphism \(f\in\operatorname{Hom}^{n}(A^\bullet,B^\bullet)\) to
\([1]f\in\operatorname{Hom}^{n}(A^\bullet[1],B^\bullet[1])\) given by
\[
([1]f)^{i}:=(-1)^{n}\,f^{i+1}
\;\colon\;
A^\bullet[1]^{\,i}=A^{i+1}\longrightarrow B^{i+1+n}=B^\bullet[1]^{\,i+n}.
\]
(In particular, on the level of mapping complexes this is the convention that \([1]\) sends a
homogeneous morphism of degree \(n\) to \((-1)^{n}f\).)

If one works in a graded context, \(C_{\mathrm{dg}}(\mathcal{A})\) is also equipped with the grading shift
\(\langle 1\rangle\), and \([1]\) and \(\langle 1\rangle\) commute in the evident way.

The homotopy category \(H^{0}\!\bigl(C_{\mathrm{dg}}(\mathcal{A})\bigr)\) is obtained by taking degree-zero
cohomology of morphism complexes. Concretely,
\begin{align*}
Z^{0}\!\bigl(\operatorname{Hom}(A^\bullet,B^\bullet)\bigr)
&=\Bigl\{\,f=(f^{i})\ \Big|\ d_{B}^{\,i}\circ f^{i}=f^{i+1}\circ d_{A}^{\,i}\ \text{for all }i\,\Bigr\},\\
B^{0}\!\bigl(\operatorname{Hom}(A^\bullet,B^\bullet)\bigr)
&=\Bigl\{\, (dh)^{i}=d_{B}^{\,i-1}\circ h^{i}+h^{i+1}\circ d_{A}^{\,i}\ \Big|\ h^{i}\colon A^{i}\to B^{\,i-1}\,\Bigr\},
\end{align*}
so that
\[
H^{0}\!\Bigl(\operatorname{Hom}_{C_{\mathrm{dg}}(\mathcal{A})}(A^\bullet,B^\bullet)\Bigr)
\;\cong\;\operatorname{Hom}_{K(\mathcal{A})}(A^\bullet,B^\bullet),
\]
where \(K(\mathcal{A})\) denotes the homotopy category of complexes over \(\mathcal{A}\).
\end{example}
\medskip

\begin{definition}
Let \(\mathcal{A}\) and \(\mathcal{B}\) be dg categories.
A \emph{dg functor} \(F\colon \mathcal{A}\to \mathcal{B}\) consists of a map on objects
\(X\mapsto F(X)\) together with, for every \(X,Y\in \mathrm{Ob}(\mathcal{A})\), a morphism of
cochain complexes
\[
F_{X,Y}\colon \operatorname{Hom}_{\mathcal{A}}(X,Y)\longrightarrow
\operatorname{Hom}_{\mathcal{B}}\bigl(F(X),F(Y)\bigr),
\]
such that \(F\) preserves units and composition. Equivalently, for all composable morphisms
\(X\xrightarrow{g} Y\xrightarrow{f} Z\) one has
\[
F(\mathrm{id}_{X})=\mathrm{id}_{F(X)},\qquad
F(f\circ g)=F(f)\circ F(g),
\]
and \(F_{X,Y}\) commutes with differentials.

\medskip 

A dg functor \(F\colon \mathcal{A}\to \mathcal{B}\) is called a \emph{quasi\mbox{-}equivalence} if:
\begin{enumerate}\itemsep=0.35em
\item for all \(X,Y\in \mathrm{Ob}(\mathcal{A})\), the morphism \(F_{X,Y}\) is a quasi\mbox{-}isomorphism; and
\item the induced functor on homotopy categories
\[
H^{0}(F)\colon H^{0}(\mathcal{A})\longrightarrow H^{0}(\mathcal{B})
\]
is essentially surjective.
\end{enumerate}
We denote by \(\operatorname{Ho}(\mathrm{dgcat})\) the localization of \(\mathrm{dgcat}\) with respect to quasi\mbox{-}equivalences.
\end{definition}

\medskip 

A \emph{(left) dg module} over \(\mathcal{A}\) is a dg functor
\[
M\colon \mathcal{A}\longrightarrow \mathsf{C}_{\mathrm{dg}}(k),
\]
where \(\mathsf{C}_{\mathrm{dg}}(k)\) denotes the dg category of cochain complexes of \(k\)-vector spaces.
Morphisms of dg \(\mathcal{A}\)-modules are dg natural transformations, and the resulting dg
category is denoted \(\mathcal{A}\textup{-}\mathrm{DGMod}\).
\smallskip
 A dg \(\mathcal{A}\)-module \(M\) is \emph{representable} if there exists
an object \(X\in\mathcal{A}\) and an isomorphism of dg modules
\[
M \;\cong\; \mathcal{A}(X,-).
\]

\smallskip
 A morphism \(\eta\colon M\to N\) of dg \(\mathcal{A}\)-modules is a
\emph{quasi-isomorphism} if, for every \(Y\in\mathcal{A}\), the map of complexes
\[
\eta_{Y}\colon M(Y)\longrightarrow N(Y)
\]
is a quasi-isomorphism (i.e.\ induces an isomorphism on cohomology in every degree).

\smallskip
 A dg \(\mathcal{A}\)-module \(M\) is \emph{quasi-representable} if there exists
\(X\in\mathcal{A}\) and a quasi-isomorphism of dg modules
\[
M \xrightarrow{\ \simeq\ } \mathcal{A}(X,-).
\]

\begin{definition}
A dg category \(\mathcal{A}\) is \emph{pretriangulated} if the class of representable
dg \(\mathcal{A}\)\nobreakdash-modules is closed under shifts and mapping cones. 

In this case \(H^{0}(\mathcal{A})\) carries a canonical triangulated structure in which the shift is induced by \([1]\) on modules and distinguished triangles arise from mapping\hyp cone sequences.
\end{definition}

Following Bondal–Kapranov \cite{11}, we recall the explicit construction of the pretriangulated hull of a dg category.

\begin{definition}
Let $\mathcal{A}$ be a dg category. The \emph{pretriangulated hull} of $\mathcal{A}$, denoted by $\operatorname{pretr}(\mathcal{A})$, is the dg category of \emph{twisted complexes} over $\mathcal{A}$. An object of $\operatorname{pretr}(\mathcal{A})$ is a pair $(A,q)$, where 
\[
A \;=\; \bigoplus_{i=1}^{n} A_{i}[r_{i}]
\]
is a finite direct sum with $A_{i} \in \mathcal{A}$ and $r_{i} \in \mathbb{Z}$, together with a strictly upper triangular matrix $q = (q_{ij})$ whose entries are degree~$1$ maps
\[
q_{ij} \;\in\; \operatorname{Hom}_{\mathcal{A}}^{\,r_{j} - r_{i} + 1}(A_{j},A_{i})
\]
satisfying the Maurer--Cartan equation
\[
d q \;+\; q^{2} \;=\; 0.
\]
A morphism $f : (A,q) \to (A',q')$ in $\operatorname{pretr}(\mathcal{A})$ is given by a matrix $f = (f_{ij})$ with components $f_{ij} \in \operatorname{Hom}_{\mathcal{A}}(A_{j},A'_{i})$. The differential of $f$ is defined by
\[
d_{\operatorname{pretr}}(f) \;=\; d_{\mathcal{A}}(f) \;+\; q' \circ f \;-\; (-1)^{|f|} f \circ q,
\]
and composition is induced from that of $\mathcal{A}$.

There is a canonical fully faithful dg embedding
\[
\mathcal{A} \hookrightarrow \operatorname{pretr}(\mathcal{A}),
\qquad
A \longmapsto (A[0], 0),
\]
which induces a fully faithful functor
\[
H^{0}(\mathcal{A}) \longrightarrow H^{0}\bigl( \operatorname{pretr}(\mathcal{A}) \bigr).
\]
The essential image of this functor generates $H^{0}\bigl( \operatorname{pretr}(\mathcal{A}) \bigr)$ under shifts and cones. Consequently, $H^{0}\bigl( \operatorname{pretr}(\mathcal{A}) \bigr)$ acquires a canonical triangulated structure~\cite{11}. If $\mathcal{A}$ is already pretriangulated, then the induced functor
\[
H^{0}(\mathcal{A}) \xrightarrow{\ \sim\ } H^{0}\bigl( \operatorname{pretr}(\mathcal{A}) \bigr)
\]
is an equivalence of triangulated categories.

The \emph{triangulated hull} of $\mathcal{A}$ is defined to be the triangulated category
\[
H^{0}\bigl( \operatorname{pretr}(\mathcal{A}) \bigr).
\]
If $\mathcal{T}$ is a triangulated category, then a triangulated equivalence
\[
H^{0}(\mathcal{A}) \xrightarrow{\ \sim\ } \mathcal{T}
\]
with $\mathcal{A}$ pretriangulated is called a \emph{dg enhancement} of $\mathcal{T}$.
\end{definition}

\medskip 

The pretriangulated hull satisfies the following universal property; see~\cite{11,35,36}.

\begin{proposition}
Let \(\mathcal{A}\) be a dg category, and let \(\operatorname{pretr}(\mathcal{A})\) denote its pretriangulated hull. For any pretriangulated dg category \(\mathcal{T}\), every dg functor
\[
F\colon \mathcal{A}\longrightarrow \mathcal{T}
\]
extends uniquely to a dg functor
\[
\widetilde{F}\colon \operatorname{pretr}(\mathcal{A})\longrightarrow \mathcal{T}
\]
such that the diagram
\[
\begin{tikzcd}
\mathcal{A} \arrow[hookrightarrow]{r}{\iota} \arrow{dr}[swap]{F} & \operatorname{pretr}(\mathcal{A}) \arrow[dashed]{d}{\widetilde{F}} \\
& \mathcal{T}
\end{tikzcd}
\]
commutes.
\end{proposition}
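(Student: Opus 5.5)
The extension will be constructed explicitly on twisted complexes; uniqueness will come from the fact that every object and morphism of \(\operatorname{pretr}(\mathcal{A})\) is built from \(\mathcal{A}\) by formal shifts, sums and twisting. Since \(\mathcal{T}\) is pretriangulated, I will work in a model where shifts and cones in \(\mathcal{T}\) are realized by objects of \(\mathcal{T}\) itself. Concretely, via the Yoneda embedding I identify \(\mathcal{T}\) with its image in \(\mathcal{T}\textup{-}\mathrm{DGMod}\), or more precisely in the dg category of right modules, so that representables are closed under shifts and cones there. Because a pretriangulated hull of a pretriangulated category is canonically quasi-equivalent to itself, the cleanest route is as follows. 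First I will extend \(F\) to \(\operatorname{pretr}(F)\colon \operatorname{pretr}(\mathcal{A})\to \operatorname{pretr}(\mathcal{T})\). Then I will compose with a "totalization" dg functor \(\operatorname{Tot}\colon \operatorname{pretr}(\mathcal{T})\to \mathcal{T}\).

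\textbf{Step 1 (functoriality of pretr).} Define \(\operatorname{pretr}(F)(\bigoplus A_i[r_i],q)=(\bigoplus F(A_i)[r_i],F(q))\), where \(F\) is applied entrywise. The Maurer--Cartan equation \(dq+q^2=0\) is preserved because \(F\) commutes with differentials and composition. On morphisms, apply \(F\) entrywise to the matrices. The formula \(d_{\operatorname{pretr}}(f)=d f+q'f-(-1)^{|f|}fq\) is visibly preserved. This is a dg functor extending \(F\), and any dg functor extending \(F\) that is compatible with the formal twisted-complex structure must be of this form.

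\textbf{Step 2 (totalization).} For \(\mathcal{T}\) pretriangulated I will define \(\operatorname{Tot}\) by induction on the length \(n\) of a twisted complex. The strict upper triangularity of \(q\) exhibits \((A,q)\) as an iterated cone: the last summand \(A_n[r_n]\) maps by a closed degree-0 map into the twisted complex on the first \(n-1\) summands. I choose representing objects for shifts and cones in \(\mathcal{T}\); these exist because representables are closed under shifts and cones. This gives an object \(\operatorname{Tot}(A,q)\) together with a closed degree-zero isomorphism of dg modules \(\mathcal{T}(-,\operatorname{Tot}(A,q))\cong \operatorname{Tot}^{\mathrm{mod}}(A,q)\). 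Here \(\operatorname{Tot}^{\mathrm{mod}}\) is the canonical totalization in dg modules, which is manifestly a dg functor. I then transport the morphism action through these isomorphisms and Yoneda.

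\textbf{Step 3 (uniqueness).} Suppose \(G\) is another extension. It must preserve the formal cone and shift structure, that is, the canonical inclusions and projections of twisted complexes. Then \(G\) agrees with \(\operatorname{Tot}\circ \operatorname{pretr}(F)\) on each \(A_i[r_i]\). By induction on length, using the universal property of cones in the dg-module sense, it therefore agrees everywhere.

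The main obstacle is Step 2 and Step 3 at the strict level: cones in \(\mathcal{T}\) are determined only up to unique closed isomorphism. Hence "unique" must be read either up to the choice of these representing objects, or, more properly, uniquely in \(\operatorname{Ho}(\mathrm{dgcat})\), that is, up to dg natural isomorphism. I would handle this by stating uniqueness in that sense and citing \cite{11,35} for the coherence of the choices.
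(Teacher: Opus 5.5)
The paper gives no argument of its own for this proposition; it only cites \cite{11,35,36}. Your route is the standard Bondal--Kapranov argument behind those citations: apply \(\operatorname{pretr}(F)\), then compose with a dg functor \(\operatorname{pretr}(\mathcal{T})\to\mathcal{T}\) that is inverse, up to closed degree-zero isomorphism, to the embedding. That inverse comes from the closure of representables under shifts and cones and the dg Yoneda lemma.

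Three adjustments are needed. First, fix your choices so that \((T[0],0)\mapsto T\) with identity isomorphisms; otherwise the triangle commutes only up to isomorphism, not on the nose. Second, Step~2 needs the strict fact that \(\mathcal{T}\to\operatorname{pretr}(\mathcal{T})\) is essentially surjective up to closed degree-zero isomorphism. This holds under the paper's definition of pretriangulated. Your opening remark that the hull is ``canonically quasi-equivalent'' to \(\mathcal{T}\) is too weak to justify it, since a quasi-equivalence would only give a quasi-functor.

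Third, your caveat on uniqueness is correct and necessary. Transporting an extension along a closed degree-zero isomorphism at any object outside \(\iota(\mathcal{A})\) generally produces a different strict extension, so the literal ``uniquely'' fails. What your inductive shift/cone argument actually proves is uniqueness up to a unique closed degree-zero natural isomorphism that restricts to the identity on \(\mathcal{A}\).
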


\begin{remark}
Any dg functor \(F\colon \mathcal{A}\to\mathcal{B}\) induces a dg functor
\[
\operatorname{pretr}(F)\colon \operatorname{pretr}(\mathcal{A})\longrightarrow \operatorname{pretr}(\mathcal{B}),
\]
and hence an additive functor
\[
H^{0}(F)\colon H^{0}(\mathcal{A})\longrightarrow H^{0}(\mathcal{B})
\]
between the associated homotopy categories. If, in addition, both \(\mathcal{A}\) and \(\mathcal{B}\) are pretriangulated, then \(H^{0}(F)\) is triangulated; see~\cite{11}.
\end{remark}
We now recall the notion of dg quotient in the sense of Drinfeld~\cite{18}, and restrict our attention to those quotients relevant for our purposes; see, for instance,~\cite{36}.
\begin{definition}
Let \(\mathcal{C}\) be a dg category and let \(X\in \mathcal{C}\). We say that \(X\) is
\emph{contractible} if its identity morphism is null-homotopic, i.e.\ if there exists a homogeneous
endomorphism
\[
h\in \operatorname{Hom}^{-1}_{\mathcal{C}}(X,X)
\quad\text{such that}\quad
d(h)=\mathrm{id}_{X}.
\]
Equivalently, the complex \(\operatorname{Hom}_{\mathcal{C}}(Y,X)\) is acyclic for every \(Y\in\mathcal{C}\),
and likewise \(\operatorname{Hom}_{\mathcal{C}}(X,Y)\) is acyclic for every \(Y\in\mathcal{C}\). In particular,
\(X\) becomes a zero object in the homotopy category \(H^{0}(\mathcal{C})\).
\end{definition}
\begin{definition}
Let \(\mathcal{A}\) be a small dg category and let \(\mathcal{B}\subset \mathcal{A}\) be a full dg subcategory.
The \emph{dg quotient} of \(\mathcal{A}\) by \(\mathcal{B}\), denoted
\[
\mathcal{A}/\mathcal{B},
\]
is the dg category obtained from \(\mathcal{A}\) by freely adjoining, for every object \(B\in\mathcal{B}\),
a homogeneous morphism
\[
\varepsilon_{B}\colon B\longrightarrow B
\quad\text{of degree }-1
\]
such that
\[
d(\varepsilon_{B})=\mathrm{id}_{B}.
\]
Equivalently, \(\mathcal{A}/\mathcal{B}\) is characterized (up to quasi-equivalence) by the universal property
that the canonical dg functor \(\pi\colon \mathcal{A}\to \mathcal{A}/\mathcal{B}\) sends every object of
\(\mathcal{B}\) to a contractible object, and for any dg category \(\mathcal{C}\) the induced functor
\[
\pi^{*}\colon \operatorname{Fun}_{\mathrm{dg}}(\mathcal{A}/\mathcal{B},\mathcal{C})
\longrightarrow
\bigl\{F\in \operatorname{Fun}_{\mathrm{dg}}(\mathcal{A},\mathcal{C})\mid F(\mathcal{B}) \text{ consists of contractible objects}\bigr\}
\]
is an equivalence of categories.
\end{definition}
\begin{remark}
Let $\mathcal{A}$ be a dg category and let $\mathcal{B}\subseteq \mathcal{A}$ be a full dg subcategory. 
Denote by $\pi\colon \mathcal{A}\to \mathcal{A}/\mathcal{B}$ the dg quotient functor in the sense of Drinfeld~\cite{18}.
Then $\mathrm{H}^{0}\!\bigl(\operatorname{pretr}(\mathcal{A}/\mathcal{B})\bigr)$ is triangulated, and
$\mathrm{H}^{0}(\pi)$ is exact and annihilates $\mathrm{H}^{0}(\mathcal{B})$. Hence $\mathrm{H}^{0}(\pi)$ factors through the
Verdier quotient and induces a triangulated functor
\[
\mathrm{H}^{0}\!\bigl(\operatorname{pretr}(\mathcal{A})\bigr)\Big/\mathrm{H}^{0}\!\bigl(\operatorname{pretr}(\mathcal{B})\bigr)
\longrightarrow
\mathrm{H}^{0}\!\bigl(\operatorname{pretr}(\mathcal{A}/\mathcal{B})\bigr).
\]
Moreover, under mild hypotheses (e.g.\ over a field, or after replacing $\mathcal{A}$ by a homotopically flat
resolution), Drinfeld shows that this induced functor is an equivalence; see \cite[Section~3.3--3.5]{18}.
\end{remark}
\begin{definition}
Let \(\mathcal{E}\) be an exact category, assumed weakly idempotent complete and with enough projectives.

\smallskip
\emph{(1) dg bounded derived category.}
Write \(\mathsf{C}_{\mathrm{dg}}^{b}(\mathcal{E})\) for the dg category of bounded cochain complexes in \(\mathcal{E}\),
and let \(\mathcal{A}_{\mathrm{dg}}^{b}(\mathcal{E})\subset \mathsf{C}_{\mathrm{dg}}^{b}(\mathcal{E})\) be the full dg subcategory of
acyclic complexes (i.e.\ those quasi\mbox{-}isomorphic to \(0\)). The \emph{dg bounded derived category} is the
Drinfeld dg quotient
\[
\mathsf{D}_{\mathrm{dg}}^{b}(\mathcal{E})
\;:=\;
\mathsf{C}_{\mathrm{dg}}^{b}(\mathcal{E})\,\big/\,\mathcal{A}_{\mathrm{dg}}^{b}(\mathcal{E}).
\]
Its homotopy category recovers the usual bounded derived category:
\[
H^{0}\!\left(\mathsf{D}_{\mathrm{dg}}^{b}(\mathcal{E})\right)\;\cong\;\mathsf{D}^{b}(\mathcal{E}).
\]

\smallskip
\emph{(2) dg singularity category.}
Let \(\mathsf{Proj}(\mathcal{E})\) denote the full subcategory of projective objects of \(\mathcal{E}\).
Write \(\mathsf{C}_{\mathrm{dg}}^{-,b}(\mathsf{Proj}(\mathcal{E}))\) for the dg category of bounded\mbox{-}above complexes
of projectives with bounded cohomology, and \(\mathsf{C}_{\mathrm{dg}}^{b}(\mathsf{Proj}(\mathcal{E}))\) for its bounded
sub-dg\mbox{-}category. The \emph{dg singularity category} is the dg quotient
\[
\mathsf{D}_{\mathrm{sg}}^{\mathrm{dg}}(\mathcal{E})
\;:=\;
\mathsf{C}_{\mathrm{dg}}^{-,b}(\mathsf{Proj}(\mathcal{E}))
\,\big/\,\mathsf{C}_{\mathrm{dg}}^{b}(\mathsf{Proj}(\mathcal{E})).
\]
Its homotopy category is the classical singularity category:
\[
H^{0}\!\left(\mathsf{D}_{\mathrm{sg}}^{\mathrm{dg}}(\mathcal{E})\right)
\;\cong\;
\mathsf{D}^{b}(\mathcal{E})\,\big/\,\mathsf{K}^{b}\!\bigl(\mathsf{Proj}(\mathcal{E})\bigr).
\]
\end{definition}

We shall now recall the notion of Galois coverings for triangulated categories.
For subsequent use, we refer to~\cite{2} for a thorough discussion in the derived
and \(k\)\nobreakdash-linear frameworks.

\begin{definition}
Let \( G \) be a group, and let \( \mathcal{T}' \) and \( \mathcal{T} \) be triangulated categories. A \emph{\( G \)-Galois covering} consists of a triangle functor
\[
F \colon \mathcal{T}' \longrightarrow \mathcal{T}
\]
together with an action of \( G \) on \( \mathcal{T}' \) by  autoequivalences \( g \mapsto g^* \), subject to the following conditions:
\begin{enumerate}
    \item For each \( g \in G \), there exists a natural isomorphism \( F \circ g^* \cong F \);
    \item For all objects \( X, Y \in \mathcal{T}' \), the canonical map
    \[
    \bigoplus_{g \in G} \operatorname{Hom}_{\mathcal{T}'}(X, g^* Y) \longrightarrow \operatorname{Hom}_{\mathcal{T}}(F(X), F(Y)),
    \]
    induced by the functor \( F \), is an isomorphism;
    \item The functor \( F \) is dense.
\end{enumerate}
\end{definition}

\begin{remark}
If the functor \(F\) is not dense, we refer to \(F\) as a \(G\)\nobreakdash-Galois \emph{precovering}. The isomorphism condition on morphism spaces implies that \(F\) is faithful, so faithfulness is automatic. By contrast, it is rather uncommon for a \(k\)\nobreakdash-linear functor to be a genuine \(G\)\nobreakdash-Galois covering; see~\cite{10,26} for instances in the module\hyp theoretic setting. Nevertheless, such coverings do arise in special situations, for example in the derived categories of radical\hyp square\hyp zero algebras (see~\cite{3}).
\end{remark}

The preceding discussion motivates the introduction of orbit categories and their differential\hyp graded counterparts. We briefly recall these constructions; see~\cite{35} for details.
\begin{definition}
Let \(\mathcal{T}\) be a triangulated category and let
\(F\colon \mathcal{T}\xrightarrow{\sim}\mathcal{T}\) be an autoequivalence. The
\emph{orbit category} \(\mathcal{T}/F\) is the \(k\)\nobreakdash-linear category defined as follows:
\begin{itemize}
\item \(\mathrm{Ob}(\mathcal{T}/F)=\mathrm{Ob}(\mathcal{T})\).
\item For \(X,Y\in\mathcal{T}\),
\[
\operatorname{Hom}_{\mathcal{T}/F}(X,Y)\;:=\;\bigoplus_{n\in\mathbb{Z}}
\operatorname{Hom}_{\mathcal{T}}\!\bigl(X,\,F^{n}Y\bigr),
\]
the external direct sum; thus a morphism \(f\colon X\to Y\) in \(\mathcal{T}/F\) is a family
\(f=(f_{n})_{n\in\mathbb{Z}}\) with \(f_{n}\in\operatorname{Hom}_{\mathcal{T}}(X,F^{n}Y)\) and
\(f_{n}=0\) for all but finitely many \(n\).
\item Composition is the convolution induced by \(F\): for
\(f=(f_{n})\in\operatorname{Hom}_{\mathcal{T}/F}(X,Y)\) and
\(g=(g_{m})\in\operatorname{Hom}_{\mathcal{T}/F}(Y,Z)\), set
\[
(g\circ f)_{\ell}\;:=\;\sum_{m+n=\ell} F^{n}(g_{m})\circ f_{n}
\quad\in\operatorname{Hom}_{\mathcal{T}}\!\bigl(X,\,F^{\ell}Z\bigr),
\]
a finite sum by construction.
\end{itemize}
There is a canonical \(k\)\nobreakdash-linear functor \(\pi\colon \mathcal{T}\to\mathcal{T}/F\) that is the identity on objects and sends
\(u\in\operatorname{Hom}_{\mathcal{T}}(X,Y)\) to the family whose only nonzero component is \(u\) in degree \(0\).
\end{definition}

\begin{remark}
The canonical projection functor \(\pi\colon \mathcal{T}\to \mathcal{T}/F\) is a \(G\)\nobreakdash-Galois covering, where \(G=\langle F\rangle\) denotes the cyclic group generated by the autoequivalence \(F\).
\end{remark}

The orbit category is characterized by the following universal property.

\begin{proposition}
Let \(\mathcal{T}\) be a triangulated category, and let \(F\colon \mathcal{T}\xrightarrow{\sim}\mathcal{T}\) be an exact autoequivalence. Then the orbit category \(\mathcal{T}/F\), together with the canonical projection functor \(\pi\colon \mathcal{T}\to \mathcal{T}/F\), satisfies the following universal property:
\[
\begin{tikzcd}[column sep=large]
\mathcal{T} \arrow{r}{\pi} \arrow[swap]{dr}[below left]{G} & \mathcal{T}/F \arrow[dashed]{d}{\overline{G}} \\
& \mathcal{A}
\end{tikzcd}
\]
For any additive category \(\mathcal{A}\) and any additive functor \(G\colon \mathcal{T}\to \mathcal{A}\) such that \(G\circ F\cong G\), there exists a unique additive functor
\[
\overline{G}\colon \mathcal{T}/F \longrightarrow \mathcal{A}
\]
which is full and faithful and makes the diagram commute up to natural isomorphism.

\smallskip

Moreover, if \(G\) is dense, then \(\overline{G}\) is an equivalence of additive categories.
\end{proposition}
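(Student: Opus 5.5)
The plan is to construct \(\overline{G}\) directly from a fixed natural isomorphism \(\varphi\colon G\circ F\xrightarrow{\sim} G\), and then verify functoriality, uniqueness, and the fidelity and density claims. First I will extend \(\varphi\) to a coherent family of isomorphisms \(\varphi^{(n)}\colon G\circ F^{n}\xrightarrow{\sim} G\) for all \(n\in\mathbb{Z}\).
For \(n\ge 0\) this is done inductively by
\[
\varphi^{(n+1)}_Y:=\varphi^{(n)}_Y\circ \varphi_{F^{n}Y}.
\]
For \(n<0\) I will use a quasi-inverse of \(F\), fixing adjunction isomorphisms \(F F^{-1}\cong \mathrm{id}\) so that \(F^{n}\) is defined for every integer \(n\). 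The key identity to establish is the cocycle relation
\[
\varphi^{(m+n)}_Z=\varphi^{(n)}_Z\circ \varphi^{(m)}_{F^{n}Z},
\]
modulo the identifications \(F^{m}F^{n}\cong F^{m+n}\). This follows by induction from naturality of \(\varphi\).

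Next, on objects \(\overline{G}\) is \(X\mapsto G(X)\). On a morphism \(f=(f_n)\colon X\to Y\) of \(\mathcal{T}/F\) I set
\[
\overline{G}(f):=\sum_{n}\varphi^{(n)}_Y\circ G(f_n),
\]
which is a finite sum. Additivity is clear, and \(\overline{G}\circ\pi=G\) holds on the nose because \(\varphi^{(0)}=\mathrm{id}\).

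For compatibility with composition, take \(g=(g_m)\colon Y\to Z\). The term \(\varphi^{(m+n)}_Z\circ G(F^{n}g_m)\circ G(f_n)\) must equal \(\varphi^{(m)}_Z G(g_m)\circ\varphi^{(n)}_Y G(f_n)\). This follows from naturality of \(\varphi^{(n)}\) applied to \(g_m\), namely \(\varphi^{(n)}_{F^mZ}\circ GF^{n}(g_m)=G(g_m)\circ\varphi^{(n)}_Y\), combined with the cocycle relation. This bookkeeping is the main technical point, and the step most likely to hide sign or identification errors when \(n\) or \(m\) is negative. I would first treat \(F\) as a strict automorphism to fix ideas, and then transport the argument along the chosen coherence isomorphisms.

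For uniqueness, observe that in \(\mathcal{T}/F\) every morphism is a finite sum of composites \(\theta^{(n)}_Y\circ\pi(f_n)\). Here \(\theta^{(n)}_Y\colon F^{n}Y\to Y\) is the canonical isomorphism in the orbit category, namely the identity of \(F^{n}Y\) placed in degree \(n\). Any \(\overline{G}'\) with \(\overline{G}'\pi\cong G\), compatible with the given \(\varphi\), must send \(\theta^{(n)}_Y\) to \(\varphi^{(n)}_Y\). Hence \(\overline{G}'\) agrees with \(\overline{G}\) up to natural isomorphism.

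Full faithfulness cannot hold for arbitrary \(G\); for instance \(G=0\) is a counterexample. I therefore read the statement as including the Galois-precovering hypothesis, namely that \(G\) induces isomorphisms
\[
\bigoplus_{n}\operatorname{Hom}_{\mathcal{T}}(X,F^{n}Y)\xrightarrow{\sim}\operatorname{Hom}_{\mathcal{A}}(GX,GY).
\]
Under this hypothesis, the formula defining \(\overline{G}\) is exactly this map, so \(\overline{G}\) is fully faithful. If moreover \(G\) is dense, then \(\overline{G}\) is dense, since \(\pi\) is the identity on objects. A fully faithful dense additive functor is an equivalence of additive categories, which gives the final assertion.
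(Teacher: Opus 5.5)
The paper gives no proof of this proposition. It is recalled from Keller~\cite{35}, and it is used (e.g.\ in the proof of Theorem~2.42) only in the form ``the functor \(\mathcal{T}/F\to\mathcal{T}'\) induced by a Galois precovering is fully faithful''.

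Your construction \(\overline{G}(f)=\sum_n\varphi^{(n)}_Y\circ G(f_n)\) is the standard one, and your verification is correct. The composition check reduces, as you say, to naturality of \(\varphi^{(n)}\) at \(g_m\) plus \(\varphi^{(m+n)}_Z=\varphi^{(m)}_Z\circ\varphi^{(n)}_{F^mZ}\), which is your cocycle relation with \(m\) and \(n\) renamed.

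Both of your amendments to the statement are correct and necessary. Your formula shows that \(\overline{G}\) is fully faithful \emph{if and only if} \(G\) is a Galois precovering with respect to the isomorphisms \(\varphi^{(n)}\). So full faithfulness is a property of the pair \((G,\varphi)\), and uniqueness, even up to isomorphism, holds only once \(\varphi\) is fixed. A minimal example:
\begin{itemize}
\item Take \(\mathcal{T}=\mathsf{D}^{b}(k\textup{-mod})\) and \(F=\mathrm{id}\), so that \(\operatorname{Hom}_{\mathcal{T}/F}(X,Y)=\operatorname{Hom}_{\mathcal{T}}(X,Y)\otimes_k k[t,t^{-1}]\), and let \(G=\pi\).
\item Both \(\varphi=\mathrm{id}_{\pi}\) and \(\varphi=\theta\), with \(\theta_Y=\mathrm{id}_Y\otimes t\), are isomorphisms \(G\circ F\cong G\).
\item The first yields \(f\otimes t^{n}\mapsto f\otimes 1\), which kills \(1-t\in\operatorname{End}_{\mathcal{T}/F}(k)\). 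The second yields the identity functor.
\item Both satisfy \(\overline{G}\circ\pi=G\) on the nose, yet they are not isomorphic, since only one is faithful.
\end{itemize}

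Two minor points.

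First, for \(m,n\ge 0\) your cocycle relation needs no naturality at all. It is just the unwinding \(\varphi^{(n)}_Z=\varphi_Z\circ\varphi_{FZ}\circ\cdots\circ\varphi_{F^{n-1}Z}\). Naturality, together with the triangle identities of the chosen adjoint equivalence, enters only for negative exponents and in the composition step.

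Second, the coherence bookkeeping you postpone for non-strict \(F\) is really a defect of the paper's Definition~2.34, whose composition formula presupposes \(F^{n}F^{m}=F^{n+m}\). You can bypass it with Keller's model \(\operatorname{colim}_{p}\bigoplus_{n\ge 0}\operatorname{Hom}_{\mathcal{T}}(F^{n}X,F^{p}Y)\). It involves only nonnegative powers of \(F\), for which \(F^{a}F^{b}=F^{a+b}\) holds strictly. There you set \(\overline{G}(f)=\varphi^{(p)}_Y\circ G(f)\circ(\varphi^{(n)}_X)^{-1}\) for \(f\colon F^{n}X\to F^{p}Y\). Compatibility with the transition maps \(f\mapsto F(f)\) is then a single application of naturality of \(\varphi\).
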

\begin{remark}
The orbit category \(\mathcal{T}/F\) is additive but, in general, does not carry a triangulated structure compatible with that of \(\mathcal{T}\). A standard way to remedy this is to pass to its triangulated hull. On the dg level, this is achieved by considering the \emph{dg orbit category}, whose homotopy category provides a canonical triangulated envelope of \(\mathcal{T}/F\); see~\cite{35}.
\end{remark}
\begin{definition}
Let \(\mathcal{A}\) be a dg category and \(F\colon \mathcal{A}\to\mathcal{A}\) a dg functor. The \emph{dg orbit category} \(\mathcal{A}/F\) is the dg category with
\[
\mathrm{Ob}(\mathcal{A}/F)=\mathrm{Ob}(\mathcal{A}),\qquad
\operatorname{Hom}_{\mathcal{A}/F}(X,Y)
:= \operatorname*{colim}_{p\in\mathbb{N}}
\bigoplus_{n=0}^{p}\operatorname{Hom}_{\mathcal{A}}\!\bigl(F^{n}X,\,F^{p}Y\bigr),
\] where the colimit is taken with respect to the transition maps induced by the dg functoriality of \( F \).
The differential on \( \operatorname{Hom}_{\mathcal{A}/F}(X,Y) \) is inherited from that of \( \mathcal{A} \), and composition is defined using the composition in \( \mathcal{A} \), respecting the structure of the classical orbit category as described in Definition~2.34.

\smallskip

The homotopy category \(\mathrm{H}^0(\mathcal{A}/F)\) (the \emph{homotopy orbit category}) need not be triangulated. There is, however, a fully faithful embedding
\[
\mathrm{H}^0(\mathcal{A}/F)\;\hookrightarrow\;\mathrm{H}^0\!\bigl(\operatorname{pretr}(\mathcal{A}/F)\bigr),
\]
and the latter carries a canonical triangulated structure.

\smallskip

Moreover, there is a natural identification of additive categories
\[
\mathrm{H}^0(\mathcal{A}/F)\;\cong\;\mathrm{H}^0(\mathcal{A})\,/\,\mathrm{H}^0(F),
\]
and the essential image of \(\mathrm{H}^0(\mathcal{A})/\mathrm{H}^0(F)\) generates the triangulated category \(\mathrm{H}^0(\operatorname{pretr}(\mathcal{A}/F))\). In particular, \(\mathrm{H}^0(\operatorname{pretr}(\mathcal{A}/F))\) is referred to as the \emph{triangulated hull} of the orbit category.
\end{definition}

The dg orbit category enjoys a universal property parallel to that of Proposition~2.36, most naturally formulated in the language of dg bimodules; see~\cite{35}. We briefly fix notation.

Let \(\mathcal{A}\) be a dg category. As above, write \(\mathcal{A}\text{-}\mathrm{DGMod}\) for the dg category of (left) dg \(\mathcal{A}\)-modules, with quasi\mbox{-}isomorphisms defined objectwise. Its Drinfeld quotient
\[
\mathsf{D}_{\mathrm{dg}}(\mathcal{A})\;:=\;\mathcal{A}\text{-}\mathrm{DGMod}\,\big/\,\mathcal{A}\text{-}\mathrm{Acyc}
\]
is the dg derived category.

For dg categories \(\mathcal{A}\) and \(\mathcal{B}\), a dg \(\mathcal{A}\text{-}\mathcal{B}\)\mbox{-}bimodule is a dg functor
\[
X\colon \mathcal{A}^{\mathrm{op}}\otimes \mathcal{B}\longrightarrow \mathsf{C}_{\mathrm{dg}}(k).
\]
Recall that \(X\) is \emph{quasi\mbox{-}representable} if, for every \(A\in\mathcal{A}\), the associated right dg \(\mathcal{B}\)\mbox{-}module \(X(A,-)\) is quasi\mbox{-}isomorphic in \(\mathsf{D}_{\mathrm{dg}}(\mathcal{B})\) to a representable dg module \(\mathcal{B}(B,-)\) for some \(B\in\mathcal{B}\).

\medskip

We write
\[
\operatorname{rep}(\mathcal{A},\mathcal{B})
\]
for the full subcategory of \(\mathsf{D}_{\mathrm{dg}}(\mathcal{A}^{\mathrm{op}}\!\otimes\!\mathcal{B})\) 
whose objects are quasi\mbox{-}representable dg \(\mathcal{A}\text{-}\mathcal{B}\)\mbox{-}bimodules  in the dg derived category \(\mathsf{D}_{\mathrm{dg}}(\mathcal{A}^{\mathrm{op}}\!\otimes\!\mathcal{B})\).

\begin{remark}
Every dg functor \(F\colon \mathcal{A}\to\mathcal{B}\) gives a dg \((\mathcal{A},\mathcal{B})\)\mbox{-}bimodule
\[
X_{F}(A,B):=\mathcal{B}\bigl(F(A),B\bigr),
\]
which is quasi\mbox{-}representable. This embeds the category of dg functors \(\mathcal{A}\to\mathcal{B}\) into the category of dg \((\mathcal{A},\mathcal{B})\)\mbox{-}bimodules and, upon passage to the homotopy category of small dg categories \(\operatorname{Ho}(\mathrm{dgcat})\) (localizing at quasi\mbox{-}equivalences), induces a bijection between quasi-functors \(\mathcal{A}\to\mathcal{B}\) in \(\operatorname{Ho}(\mathrm{dgcat})\) and isomorphism classes in \(\operatorname{rep}(\mathcal{A},\mathcal{B})\).
\end{remark}

We now state the universal property of the dg orbit category.

\begin{proposition}
Let \(\mathcal{A}\) be a pretriangulated dg category, and let
\(F\colon \mathcal{A}\to\mathcal{A}\) be a dg functor such that
\(H^{0}(F)\) is an autoequivalence of \(H^{0}(\mathcal{A})\).
Then, for any pretriangulated dg category \(\mathcal{B}\),
composition with the canonical projection
\[
\pi\colon \mathcal{A}\longrightarrow \mathcal{A}/F
\]
induces an equivalence
\[
\operatorname{rep}\bigl(\operatorname{pretr}(\mathcal{A}/F),\mathcal{B}\bigr)
\;\xrightarrow{\ \sim\ }\;
\operatorname{rep}_{F}(\mathcal{A},\mathcal{B}),
\]
where \(\operatorname{rep}(\operatorname{pretr}(\mathcal{A}/F),\mathcal{B})\)
denotes the category of quasi\mbox{-}representable dg
\((\operatorname{pretr}(\mathcal{A}/F))^{\mathrm{op}}\!\otimes\!\mathcal{B}\)-modules,
and \(\operatorname{rep}_{F}(\mathcal{A},\mathcal{B})\) is the full subcategory
of \(\operatorname{rep}(\mathcal{A},\mathcal{B})\) consisting of those dg
bimodules \(X\) for which there exists an isomorphism
\[
X\circ F \cong X
\quad
\text{in }
\mathsf{D}_{\mathrm{dg}}\!\bigl(\mathcal{A}^{\mathrm{op}}\!\otimes\!\mathcal{B}\bigr).
\]
\end{proposition}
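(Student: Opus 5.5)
The plan is to follow Keller's construction of the dg orbit category~\cite{35} and reduce everything to the universal property of the pretriangulated hull (Proposition~2.29) together with the defining property of quasi-representable bimodules. The proof has three steps: build a map in each direction between the two categories, then check that the two composites are the identity up to isomorphism.

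\emph{Restriction functor.} Write \(\iota\colon \mathcal{A}/F\hookrightarrow \operatorname{pretr}(\mathcal{A}/F)\) for the canonical embedding. Given a quasi-representable bimodule \(Y\in\operatorname{rep}(\operatorname{pretr}(\mathcal{A}/F),\mathcal{B})\), restrict it along \(\iota\circ\pi\).
\begin{itemize}
\item Quasi-representability is preserved, because \(\pi\) and \(\iota\) are the identity on objects of \(\mathcal{A}\).
\item The restricted bimodule \(X=Y\circ\iota\circ\pi\) satisfies \(X\circ F\cong X\). The identity morphisms of \(F X'\) and \(X'\) give degree-zero closed morphisms \(F X'\to X'\) and \(X'\to F X'\) in \(\mathcal{A}/F\); these are the \(n=1\) components of the colimit defining the Hom complexes. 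They are mutually inverse there and natural in \(X'\), so \(\pi\circ F\cong\pi\) already at the dg level.
\end{itemize}

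\emph{Extension functor.} Conversely, let \(X\in\operatorname{rep}_F(\mathcal{A},\mathcal{B})\).
\begin{itemize}
\item Replace \(\mathcal{A}\) and \(\mathcal{B}\) by cofibrant (homotopically flat) models. Then \(X\) is represented by an honest dg functor \(\mathcal{A}\to\mathcal{B}'\), where \(\mathcal{B}'\) is a pretriangulated model quasi-equivalent to \(\mathcal{B}\).
\item Rigidify the isomorphism \(X\circ F\cong X\) in \(\mathsf{D}_{\mathrm{dg}}(\mathcal{A}^{\mathrm{op}}\otimes\mathcal{B})\) into a closed degree-zero natural transformation \(\phi\colon X F\to X\) that is invertible up to homotopy.
\item Define \(\overline{X}\) on \(\mathcal{A}/F\) on the \(p\)-th stage of the colimit by sending \(f\colon F^nX'\to F^pY'\) to \(\phi^{(p)}\circ X(f)\circ(\phi^{(n)})^{-1}\), where \(\phi^{(p)}\colon XF^p\to X\) is the iterated transformation.
\item Extend to \(\operatorname{pretr}(\mathcal{A}/F)\) by Proposition~2.29.
\end{itemize}

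\emph{Mutual inverses.} One composite is the identity by construction. For the other, the image of \(\mathcal{A}\) generates \(\operatorname{pretr}(\mathcal{A}/F)\) under shifts and cones, so a quasi-representable bimodule on \(\operatorname{pretr}(\mathcal{A}/F)\) is determined up to isomorphism by its restriction, once the restriction is known to carry the \(F\)-structure. Full faithfulness on morphisms then follows by the same dévissage, using five-lemma arguments on Hom complexes.

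The main obstacle is the rigidification in the extension step. Because \(\phi^{-1}\) exists only up to homotopy, the formula for \(\overline{X}\) is compatible with composition and with the colimit transition maps only up to coherent homotopies. I would first try to avoid this by applying the standard cofibrant-replacement trick of~\cite{35}: replace \(F\) by a quasi-isomorphic dg functor for which \(\phi\) can be chosen to be an actual dg isomorphism. The hypothesis that \(H^0(F)\) is an autoequivalence is what allows this replacement to be made strictly invertible. If that fails, I would fall back on describing \(\overline{X}\) directly as a bimodule via the colimit \(\operatorname{colim}_p\bigoplus_{n\le p}\mathcal{B}(X F^n-,\,-)\), which requires no strict inverse, and then check quasi-representability on the generators.
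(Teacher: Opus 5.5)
The paper gives no proof of this proposition; it quotes Keller~\cite{35}. Your outline (restriction, extension via a chosen $\phi$, d\'evissage) is the natural strategy. However, the ``mutual inverses'' step fails, and it cannot be repaired, because the statement as printed is false. It treats $X\circ F\cong X$ as a \emph{property} of $X$, whereas a quasi-functor on $\mathcal{A}/F$ amounts to $X$ \emph{together with} an isomorphism $\phi$.

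Your own construction already shows this. $\overline{X}$ is built from $\phi$, and a morphism $u\colon X\to X'$ can only extend if $u\circ\phi=\phi'\circ(u\circ F)$. So your extension is not a functor on the full subcategory $\operatorname{rep}_F(\mathcal{A},\mathcal{B})$. Nor is a bimodule on $\operatorname{pretr}(\mathcal{A}/F)$ determined by its restriction: your restriction functor forgets the $F$-structure, and $\operatorname{rep}_F$ only records that some isomorphism exists.

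Here is a concrete counterexample. Take $F=\mathrm{id}$, and let $\mathcal{A}=\mathcal{B}$ be the dg category of bounded complexes of finite-dimensional vector spaces. Then $\mathcal{A}/\mathrm{id}\cong\mathcal{A}\otimes k[t,t^{-1}]$. Hence $\operatorname{rep}(\operatorname{pretr}(\mathcal{A}/\mathrm{id}),\mathcal{B})$ is the derived category of dg $k[t,t^{-1}]$-modules with finite-dimensional total cohomology. Also $\operatorname{rep}_{\mathrm{id}}(\mathcal{A},\mathcal{B})=\operatorname{rep}(\mathcal{A},\mathcal{B})\simeq\mathsf{D}^b(k\textup{-mod})$, and restriction forgets the action of $t$.
\begin{itemize}
\item For $\lambda\neq\mu$ in $k^{\times}$, the modules $k[t,t^{-1}]/(t-\lambda)$ and $k[t,t^{-1}]/(t-\mu)$ both restrict to $k$ but admit no nonzero morphisms between them. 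So restriction is neither full nor injective on isomorphism classes.
\item For $k_1=k[t,t^{-1}]/(t-1)$, the nonzero class in $\operatorname{Ext}^{1}_{k[t,t^{-1}]}(k_1,k_1)$ restricts into $\operatorname{Hom}_{\mathsf{D}^b(k\textup{-mod})}(k,k[1])=0$. So restriction is not faithful either.
\end{itemize}

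What is true keeps $\phi$ as data, and at the chain level. By the last computation, even pairs $(X,\phi)$ with $\phi$ an isomorphism in $\mathsf{D}_{\mathrm{dg}}(\mathcal{A}^{\mathrm{op}}\otimes\mathcal{B})$ and $\phi$-compatible morphisms are not enough. One needs pairs $(P,\phi)$, where $P$ is a cofibrant quasi-representable bimodule and $\phi\colon P\to P\circ F$ is a closed degree-zero quasi-isomorphism. Lifting the derived isomorphism to such a $\phi$ is possible because $P$ is cofibrant. Morphisms are strictly compatible, and one localizes at quasi-isomorphisms. Up to conventions, this is the shape of Keller's formulation, and with that target your three steps are the right ones.

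The existence half does hold: every $X\in\operatorname{rep}_F(\mathcal{A},\mathcal{B})$ extends to $\operatorname{pretr}(\mathcal{A}/F)$ for a suitable choice of $\phi$. This is all the paper uses later (Remark~2.41, Theorem~2.42). Three remarks on that half:
\begin{itemize}
\item The hypothesis on $H^{0}(F)$ serves to identify $H^{0}(\mathcal{A}/F)$ with $H^{0}(\mathcal{A})/H^{0}(F)$. It does nothing to make $\phi$ strictly invertible.
\item Your fallback is the construction that works: a colimit (telescope) bimodule using the chain-level $\phi$ as transition maps.
\item The inverse $X'\to FX'$ in your restriction step exists only with Keller's Hom formula summing over all $n\geq 0$. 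With the range $0\leq n\leq p$ printed in Definition~2.38, the identity of $FX'$ would have to sit at $n=p+1$. With that indexing, $\mathcal{A}/\mathrm{id}\cong\mathcal{A}\otimes k[t]$, and the counterexample above goes through verbatim.
\end{itemize}
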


\begin{remark}
We record a basic form of the universal property of the dg orbit category that will be used in the sequel. Let
\[
\pi \colon \mathcal{A}\longrightarrow \mathcal{A}/F
\]
be the canonical projection dg functor. Suppose that
\(G\colon \mathcal{A}\to \mathcal{B}\) is a dg functor such that
\[
G\circ F \cong G
\quad
\text{in }
\mathsf{D}_{\mathrm{dg}}(\mathcal{A}^{\mathrm{op}}\!\otimes\!\mathcal{B}).
\]
Then there exists a dg functor
\[
\overline{G}\colon \operatorname{pretr}(\mathcal{A}/F)\longrightarrow \mathcal{B}
\]
such that the following diagram commutes up to isomorphism in
\(\mathsf{D}_{\mathrm{dg}}(\mathcal{A}^{\mathrm{op}}\!\otimes\!\mathcal{B})\):
\[
\begin{tikzcd}
\mathcal{A}
\arrow[r,"\pi"]
\arrow[dr,"G"']
&
\operatorname{pretr}(\mathcal{A}/F)
\arrow[d,"\overline{G}"]
\\
&
\mathcal{B}
\end{tikzcd}
\]
\end{remark}
We now state and prove the main result of this subsection, which follows from \cite{35}.

\begin{theorem}
Let \(\mathcal{T}\) and \(\mathcal{T}'\) be triangulated categories, and let
\(F\colon \mathcal{T}\to \mathcal{T}\) be an exact autoequivalence. Suppose there exists a
\(G\)-Galois precovering
\[
\ell \colon \mathcal{T} \longrightarrow \mathcal{T}'
\]
compatible with the action of \(G=\langle F\rangle\). Assume moreover that \(\mathcal{T}\) and
\(\mathcal{T}'\) admit dg enhancements \(\mathcal{T}_{\mathrm{dg}}\) and \(\mathcal{T}'_{\mathrm{dg}}\), respectively, that
\(\ell\) lifts to a dg functor
\[
\ell_{\mathrm{dg}} \colon \mathcal{T}_{\mathrm{dg}} \longrightarrow \mathcal{T}'_{\mathrm{dg}},
\]
and that \(F\) lifts to a dg functor
\[
F_{\mathrm{dg}} \colon \mathcal{T}_{\mathrm{dg}} \longrightarrow \mathcal{T}_{\mathrm{dg}},
\]
such that \(\ell_{\mathrm{dg}} \circ F_{\mathrm{dg}} \cong \ell_{\mathrm{dg}}\) in
\(\mathsf{D}_{\mathrm{dg}}(\mathcal{T}_{\mathrm{dg}}^{\mathrm{op}}\!\otimes\!\mathcal{T}'_{\mathrm{dg}})\).

Then the induced functor
\[
\mathrm{H}^0\!\bigl(\operatorname{pretr}(\mathcal{T}_{\mathrm{dg}} / F_{\mathrm{dg}})\bigr)
\longrightarrow
\mathcal{T}'
\]
is triangulated and fully faithful. Moreover, if its essential image generates \(\mathcal{T}'\) as a triangulated
category, then \(\operatorname{pretr}(\mathcal{T}_{\mathrm{dg}} / F_{\mathrm{dg}})\) is a dg enhancement of \(\mathcal{T}'\).
\end{theorem}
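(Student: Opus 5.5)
The plan is to use the universal property of the dg orbit category (the Proposition preceding this theorem, together with the Remark after it) to produce the comparison functor. After that, full faithfulness is checked on the generating objects coming from \(\mathcal{T}_{\mathrm{dg}}\) and propagated to the whole pretriangulated hull by a dévissage.

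\textbf{Step 1: construct the functor.} Since \(\ell_{\mathrm{dg}}\circ F_{\mathrm{dg}}\cong \ell_{\mathrm{dg}}\) in \(\mathsf{D}_{\mathrm{dg}}(\mathcal{T}_{\mathrm{dg}}^{\mathrm{op}}\otimes\mathcal{T}'_{\mathrm{dg}})\), the bimodule attached to \(\ell_{\mathrm{dg}}\) lies in \(\operatorname{rep}_{F}(\mathcal{T}_{\mathrm{dg}},\mathcal{T}'_{\mathrm{dg}})\). By the universal property it therefore comes from a quasi-representable bimodule on \(\operatorname{pretr}(\mathcal{T}_{\mathrm{dg}}/F_{\mathrm{dg}})\), i.e.\ a quasi-functor
\[
\overline{\ell}\colon \operatorname{pretr}(\mathcal{T}_{\mathrm{dg}}/F_{\mathrm{dg}})\to \mathcal{T}'_{\mathrm{dg}},
\qquad \overline{\ell}\circ\pi\cong\ell_{\mathrm{dg}} .
\]
Both source and target are pretriangulated, so by the Remark on induced functors, \(H^{0}(\overline{\ell})\) is a triangulated functor into \(H^{0}(\mathcal{T}'_{\mathrm{dg}})\simeq\mathcal{T}'\).

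\textbf{Step 2: full faithfulness on generators.} For \(X,Y\in\mathcal{T}\) and \(p\in\mathbb{Z}\), I would compute \(\operatorname{Hom}_{H^{0}(\mathcal{T}_{\mathrm{dg}}/F_{\mathrm{dg}})}(X,Y[p])\). Cohomology commutes with the filtered colimit and the direct sums in the definition of the dg orbit category, and \(H^{0}(F_{\mathrm{dg}})=F\) is an autoequivalence, so the transition maps become isomorphisms after passing to cohomology. This should give
\[
\operatorname{Hom}_{H^{0}(\mathcal{T}_{\mathrm{dg}}/F_{\mathrm{dg}})}(X,Y[p])\;\cong\;\bigoplus_{n\in\mathbb{Z}}\operatorname{Hom}_{\mathcal{T}}(X,F^{n}Y[p]),
\]
consistent with the identification \(H^{0}(\mathcal{A}/F)\cong H^{0}(\mathcal{A})/H^{0}(F)\) recorded in the Definition of the dg orbit category. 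Under this isomorphism, the map induced by \(H^{0}(\overline{\ell})\) is the canonical map of condition (2) in the definition of a \(G\)-Galois covering, applied to \(X\) and \(Y[p]\). This uses \(\ell\circ[p]\cong[p]\circ\ell\) and the fact that \(\ell\) commutes with the \(G\)-action. That map is an isomorphism because \(\ell\) is a \(G\)-Galois precovering. The key point is that the identification is compatible in every shift \(p\), not merely in degree \(0\); I expect this to be the main obstacle. It requires checking that the isomorphism \(\ell_{\mathrm{dg}}F_{\mathrm{dg}}\cong\ell_{\mathrm{dg}}\) in the derived category of bimodules is the one inducing the precovering structure, so that the composite of the identification above with \(H^{*}(\overline{\ell})\) really is the Galois map. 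I would first try to verify this on the level of graded cohomology \(H^{*}\) of the Hom complexes, using that \(\overline{\ell}\circ\pi\cong\ell_{\mathrm{dg}}\).

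\textbf{Step 3: extend and conclude.} Every object of \(H^{0}(\operatorname{pretr}(\mathcal{T}_{\mathrm{dg}}/F_{\mathrm{dg}}))\) is an iterated cone of shifts of objects of \(\mathcal{T}_{\mathrm{dg}}/F_{\mathrm{dg}}\), since a twisted complex is filtered by its upper-triangular structure. The full subcategory of pairs \((U,V)\) on which \(H^{0}(\overline{\ell})\) is bijective on \(\operatorname{Hom}(U,V[p])\) for all \(p\) is closed under shifts and cones in each variable, by the five lemma applied to the long exact Hom sequences; this is where triangulatedness of \(H^{0}(\overline{\ell})\) is used. Induction on the length of twisted complexes then gives full faithfulness on the whole hull. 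Finally, if the essential image generates \(\mathcal{T}'\), it is a triangulated subcategory containing the generators, hence all of \(\mathcal{T}'\), so \(H^{0}(\overline{\ell})\) is an equivalence. Thus \(\operatorname{pretr}(\mathcal{T}_{\mathrm{dg}}/F_{\mathrm{dg}})\) is a dg enhancement of \(\mathcal{T}'\).
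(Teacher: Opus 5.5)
Your proposal is correct and follows essentially the same route as the paper: you produce $\overline{\ell}$ from the universal property of the dg orbit category, identify its restriction to $H^{0}(\mathcal{T}_{\mathrm{dg}}/F_{\mathrm{dg}})\cong\mathcal{T}/F$ with the map induced by the Galois precovering, extend full faithfulness to the triangulated hull by d\'evissage, and use generation to get essential surjectivity. The compatibility you flag as the main obstacle, namely that the chosen isomorphism $\ell_{\mathrm{dg}}\circ F_{\mathrm{dg}}\cong\ell_{\mathrm{dg}}$ induces the precovering's identification, is exactly what the paper asserts ``by construction''; and the extension to all shifts $[p]$ is automatic, because $\mathcal{T}/F$ already contains the objects $Y[p]$.
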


\begin{proof}
Let
\[
\pi\colon \mathcal{T}_{\mathrm{dg}}\longrightarrow \mathcal{T}_{\mathrm{dg}}/F_{\mathrm{dg}}
\]
be the canonical projection dg functor.

By assumption we have a dg functor
\[
\ell_{\mathrm{dg}}\colon \mathcal{T}_{\mathrm{dg}}\longrightarrow \mathcal{T}'_{\mathrm{dg}}
\]
such that
\[
\ell_{\mathrm{dg}}\circ F_{\mathrm{dg}} \cong \ell_{\mathrm{dg}}
\quad
\text{in }
\mathsf{D}_{\mathrm{dg}}\!\bigl(\mathcal{T}_{\mathrm{dg}}^{\mathrm{op}}\!\otimes\!\mathcal{T}'_{\mathrm{dg}}\bigr).
\]

By the universal property of the dg orbit category stated in the previous
remark, there exists a dg functor
\[
\overline{\ell}_{\mathrm{dg}}\colon
\operatorname{pretr}(\mathcal{T}_{\mathrm{dg}}/F_{\mathrm{dg}})
\longrightarrow
\mathcal{T}'_{\mathrm{dg}}
\]
whose composition with \(\pi\) is isomorphic to \(\ell_{\mathrm{dg}}\) in
\(\mathsf{D}_{\mathrm{dg}}\!\bigl(\mathcal{T}_{\mathrm{dg}}^{\mathrm{op}}\!\otimes\!\mathcal{T}'_{\mathrm{dg}}\bigr)\).

Passing to homotopy categories yields a triangulated functor
\[
H^{0}(\overline{\ell}_{\mathrm{dg}})\colon
H^{0}\!\bigl(\operatorname{pretr}(\mathcal{T}_{\mathrm{dg}}/F_{\mathrm{dg}})\bigr)
\longrightarrow
H^{0}(\mathcal{T}'_{\mathrm{dg}})
\cong
\mathcal{T}' .
\]

We claim that this functor is fully faithful. By construction, its restriction
to the full subcategory
\[
H^{0}(\mathcal{T}_{\mathrm{dg}}/F_{\mathrm{dg}})
\cong
\mathcal{T}/F
\]
identifies with the functor
\[
\mathcal{T}/F\longrightarrow \mathcal{T}'
\]
induced by the \(G\)-Galois precovering \(\ell\), which is fully faithful by
assumption.

Since
\[
H^{0}\!\bigl(\operatorname{pretr}(\mathcal{T}_{\mathrm{dg}}/F_{\mathrm{dg}})\bigr)
\]
is the smallest triangulated subcategory containing
\(H^{0}(\mathcal{T}_{\mathrm{dg}}/F_{\mathrm{dg}})\), and
\(H^{0}(\overline{\ell}_{\mathrm{dg}})\) is exact, full faithfulness extends to
the whole triangulated hull.

Finally, if the essential image of \(H^{0}(\overline{\ell}_{\mathrm{dg}})\)
generates \(\mathcal{T}'\) as a triangulated category, then
\(H^{0}(\overline{\ell}_{\mathrm{dg}})\) is essentially surjective and hence an
equivalence. In this case the dg category
\[
\operatorname{pretr}(\mathcal{T}_{\mathrm{dg}}/F_{\mathrm{dg}})
\]
provides a dg enhancement of \(\mathcal{T}'\).
\end{proof}
\section{Graded Koszul Dualities}

Unless stated otherwise, \( \Lambda \) denotes a finite-dimensional
\emph{quadratic} algebra. Assuming that \(\Lambda\) is Koszul, we establish a
triangulated equivalence between the bounded derived category of
finite-dimensional graded \(\Lambda\)-modules and the bounded derived category
of locally finite-dimensional graded co-perfect \(\Lambda^{!}\)-modules; equivalently,
with the bounded derived category of locally finite-dimensional graded perfect
\(\Lambda^{!}\)-modules. In particular, this removes the finiteness assumptions
(Noetherian or coherence conditions) on the Koszul dual \(\Lambda^{!}\) that
appear in the foundational work of Beilinson--Ginzburg--Soergel~\cite{7}.
A related extension under coherence hypotheses was obtained by
Martínez-Villa and Saorín~\cite{41}. We further determine when this graded
Koszul duality induces both a graded derived equivalence and a derived
equivalence between \(\Lambda\) and its Koszul dual \(\Lambda^{!}\).

As applications, we obtain a graded singularity duality for arbitrary
finite-dimensional Koszul algebras and, for Iwanaga--Gorenstein Koszul
algebras, a general graded version of the BGG correspondence. While the
results of Martínez-Villa and Saorín cover the coherent case, including
the self-injective situation, the present approach does not require such
assumptions and therefore applies in complete generality. Moreover, we
establish two dg Koszul dualities: one for the dg bounded derived
categories and another for dg singularity categories, thereby extending
the BGG correspondence to the dg setting.

\subsection{The Derived Quadratic Functor}
In this subsection, we construct explicitly the \emph{derived quadratic functor}
from the bounded derived category of locally finite\mbox{-}dimensional graded
\(\Lambda^{!}\)\nobreakdash-modules with bounded cohomology to the bounded derived category
of finite\mbox{-}dimensional graded \(\Lambda\)\nobreakdash-modules.

At the abelian level, this construction goes back to Martínez-Villa and Saorín
\cite[Thm.~2.4]{41}, who established an equivalence between the category of finitely
generated graded \(\Lambda^{!}\)\nobreakdash-modules and the category of linear complexes of
projective \(\Lambda\)\nobreakdash-modules by assigning to a graded
\(\Lambda^{!}\)\nobreakdash-module a complex of projective
\(\Lambda\)\nobreakdash-modules. See also Mazorchuk--Ovsienko--Stroppel in the
context of graded categories; cf.~\cite[Thm.~12]{42}.

Bautista and Liu~\cite[Thm.~3.9]{3} extended this construction to bounded
derived categories and obtained a triangulated equivalence for
radical\mbox{-}square\mbox{-}zero algebras with gradable infinite quivers (referred to as
well-directed quivers in our terminology). More recently, a general Koszul duality
framework between \(\mathsf{D}^{\downarrow}(\mathrm{Mod}\,\Lambda^{!})\) and
\(\mathsf{D}^{\uparrow}(\mathrm{Mod}\,\Lambda)\) was developed in~\cite{14} under the
assumption that \(\Lambda\) is Koszul and given by a gradable quiver.

Our approach follows the same general strategy as in~\cite{3,14}, but is carried out
in the broader setting of quadratic algebras. In particular, we extend the results
of~\cite{3} from the radical\mbox{-}square\mbox{-}zero case to arbitrary finite\mbox{-}dimensional
quadratic algebras; the former case is recovered as a special case of our results.
Whenever the arguments coincide with those in~\cite{3,14}, we omit the proofs.
Throughout, we write \(\otimes=\otimes_k\) for the tensor product over the base
field \(k\), and all functors are assumed to be \(k\)\nobreakdash-linear.
\medskip

We begin with the following elementary yet useful observation; see \cite[Lemma~3.3]{14}. 

\begin{proposition}
Let \( \Lambda \) be a finite\mbox{-}dimensional graded algebra with quiver \(Q\), and let
\(P_{x}\langle n\rangle, P_{y}\langle m\rangle\) be the indecomposable graded projectives in
\(\Lambda\textup{-}\mathrm{gmod}\) corresponding to vertices \(x,y\in Q_{0}\) and shifts \(n,m\in\mathbb{Z}\).
Let \(U,V\) be finite\mbox{-}dimensional \(k\)\nobreakdash-vector spaces. Then every graded
\(\Lambda\)\nobreakdash-module homomorphism
\[
f\colon P_{x}\langle n\rangle\otimes U \;\longrightarrow\; P_{y}\langle m\rangle\otimes V
\]
(of degree \(0\)) admits a unique expression as a finite sum
\[
f \;=\; \sum_{l}\, P_{l}\,\otimes\,\psi_{l},
\]
where the sum ranges over all paths \(l\) in \(Q\) from \(y\) to \(x\) of length
\(m-n\), each \(\psi_{l}\in\operatorname{Hom}_{k}(U,V)\), and 
\(P_{l}\) denotes right multiplication by the path \(l\): if \(l\) is a path from \(y\) to \(x\) and \(q\) is any path with \(s(q)=x\), then \(P_{l}(q)=q\,l\).
\end{proposition}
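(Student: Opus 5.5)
The plan is to reduce everything to the graded Yoneda lemma and then handle the multiplicity spaces \(U,V\) by finite-dimensional linear algebra.

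\textbf{Step 1 (graded Yoneda).} For any graded module \(M\), evaluation at the idempotent \(e_x\) gives a natural isomorphism
\[
\operatorname{Hom}_{\Lambda\textup{-gmod}}\bigl(P_x\langle n\rangle, M\bigr)\;\cong\; M_{-n}(x).
\]
Here \(P_x\langle n\rangle\) is generated by \(e_x\), which sits in degree \(-n\) under the convention \(M\langle i\rangle_j=M_{i+j}\). Applying this with \(M=P_y\langle m\rangle\) gives
\[
(P_y\langle m\rangle)_{-n}(x)=(P_y)_{m-n}(x)=e_x\Lambda_{m-n}e_y .
\]
This space is spanned by the classes of paths from \(y\) to \(x\) of length \(m-n\). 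An element \(l\) of it corresponds to the map sending \(e_x\) to \(l\), and hence sending \(q\) to \(ql\); this is exactly \(P_l\).

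\textbf{Step 2 (multiplicities).} Since \(U\) and \(V\) are finite-dimensional, \(P\otimes U\cong P^{\oplus\dim U}\), and \(\operatorname{Hom}\) is additive in each variable. Hence the canonical map
\[
\operatorname{Hom}_{\Lambda\textup{-gmod}}\bigl(P_x\langle n\rangle,P_y\langle m\rangle\bigr)\otimes_k \operatorname{Hom}_k(U,V)\longrightarrow \operatorname{Hom}_{\Lambda\textup{-gmod}}\bigl(P_x\langle n\rangle\otimes U,\,P_y\langle m\rangle\otimes V\bigr),
\qquad
g\otimes\psi\mapsto g\otimes\psi,
\]
is an isomorphism. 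Concretely, after choosing bases of \(U\) and \(V\), both sides become \(\dim V\times\dim U\) matrices with entries in \(e_x\Lambda_{m-n}e_y\).

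\textbf{Step 3 (expansion).} Fix a \(k\)-basis of \(e_x\Lambda_{m-n}e_y\) consisting of classes of paths \(l\). Expanding any \(f\) in the tensor product of Step 2 along this basis gives \(f=\sum_l P_l\otimes\psi_l\) with \(\psi_l\in\operatorname{Hom}_k(U,V)\). Uniqueness of the \(\psi_l\) follows because \(\{P_l\}\) is a basis of the first tensor factor.

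\textbf{Main obstacle.} The only delicate point is interpreting ``unique'' correctly when \(I\neq 0\). Paths are only defined modulo relations, so uniqueness holds once \(l\) ranges over a fixed basis of \(e_x\Lambda_{m-n}e_y\) given by path classes. Such a basis always exists, since path classes span this space. In the quadratic setting one may equally choose a basis of \(e_x(kQ)_{m-n}e_y\) modulo \(I_{m-n}\). I would state this choice explicitly. I would also check the sign and direction conventions: right multiplication by \(l\) must map \(P_x\) to \(P_y\), which requires \(l\) to be a path from \(y\) to \(x\) in the paper's composition order, and the degree bookkeeping under \(\langle\,\rangle\) must match.
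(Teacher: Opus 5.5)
Your proof is correct. The paper gives no argument of its own here; it simply defers to \cite[Lemma~3.3]{14}. Your route is the standard proof: graded Yoneda identifies \(\operatorname{Hom}_{\Lambda\textup{-gmod}}(P_x\langle n\rangle,P_y\langle m\rangle)\) with \(e_x\Lambda_{m-n}e_y\), additivity in the finite-dimensional multiplicity spaces splits off \(\operatorname{Hom}_k(U,V)\), and you then expand in a basis. Your degree and direction bookkeeping matches the paper's conventions \(M\langle i\rangle_j=M_{i+j}\) and \(\Lambda(y,x)=e_x\Lambda e_y\).

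Your caveat on uniqueness is not pedantry but a needed correction to the statement as written. If \(l\) literally ranges over all paths of length \(m-n\) from \(y\) to \(x\), uniqueness fails in two ways:
\begin{itemize}
\item Whenever such a path lies in \(I\), we have \(P_l=0\), so \(\psi_l\) is arbitrary. This happens for large \(m-n\) as soon as \(Q\) has an oriented cycle, since \(\Lambda\) is finite-dimensional.
\item Whenever two such paths have linearly dependent classes, the coefficients are not determined. An example is \(\beta\alpha=\alpha\gamma\) in the example of Section~2.1.
\end{itemize}
The proposition holds exactly when \(l\) runs over a fixed basis of \(e_x\Lambda_{m-n}e_y\) consisting of path classes, as you propose.
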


\medskip 

We now begin the construction of the derived quadratic functor.

Let \(\Lambda^{!}\textup{-}\mathrm{GMod}\) denote the abelian category of locally finite-dimensional
\(\mathbb{Z}\)-graded left \(\Lambda^{!}\)-modules (as fixed in §1). Let
\(\mathsf{C}\bigl(\Lambda\textup{-}\mathrm{proj}^{\mathbb{Z}}\bigr)\) be the category of cochain complexes of graded
projective \(\Lambda\)-modules whose terms are finite direct sums of indecomposable graded
projectives of the form \(P_{x}\langle n\rangle\), where \(x\in Q_{0}\) and \(n\in\mathbb{Z}\).
\medskip

Following~\cite[Thm.~12]{42} (see also~\cite[Thm.~2.4]{41}, \cite[Lem.~3.1]{3}, and
\cite[Prop.~5.1]{14}), we define a \(k\)\nobreakdash-linear functor
\[
K\colon \Lambda^{!}\textup{-}\mathrm{GMod}\longrightarrow
\mathsf{C}\bigl(\Lambda\textup{-}\mathrm{proj}^{\mathbb{Z}}\bigr)
\]
as follows.

Let \(M\in \Lambda^{!}\textup{-}\mathrm{GMod}\). For \(x\in Q_0\) and \(n\in\mathbb{Z}\), we write
\(M_x^n\) for the homogeneous component of degree \(n\) at the vertex \(x\); this agrees with
the notation used in \S1, where we wrote \(M_n(x)\).

We define a graded \(\Lambda\)-module
\[
K(M)^n \;:=\; \bigoplus_{x\in Q_0} P_x\langle n\rangle \otimes_k M_x^n,
\qquad n\in\mathbb{Z}.
\]

The differential \(d^n\colon K(M)^n\to K(M)^{n+1}\) is given by its \((y,x)\)\nobreakdash-component
\[
d^n_{yx}
\;=\;
\sum_{\alpha\colon y\to x}
\Bigl(P_{\alpha}\colon P_x\langle n\rangle \longrightarrow P_y\langle n+1\rangle\Bigr)
\;\otimes\;
\Bigl(M(\alpha^{\mathrm{op}})\colon M_x^n \longrightarrow M_y^{n+1}\Bigr),
\]
where the sum runs over all arrows \(\alpha\) in \(Q\) with \(s(\alpha)=y\) and \(t(\alpha)=x\),
and \(\alpha^{\mathrm{op}}\) denotes the corresponding arrow in the opposite quiver.

Here \(P_{\alpha}\) is given by right multiplication by \(\alpha\); that is, for a path \(q\in P_x\),
\[
P_{\alpha}(q)=q\,\alpha\in P_y.
\]
The map \(M(\alpha^{\mathrm{op}})\) is induced by the graded \(\Lambda^{!}\)\nobreakdash-module structure.

\medskip

With these definitions, \(K(M)=(K(M)^n,d^n)\) is a cochain complex of graded projective
\(\Lambda\)\nobreakdash-modules.
Moreover, each \(K(M)^n\) is generated in degree \(-n\).

\medskip

The assignment \(M\mapsto K(M)\) defines an exact and fully faithful functor
\[
K\colon \Lambda^{!}\textup{-}\mathrm{GMod}
\longrightarrow
\mathsf{C}\bigl(\Lambda\textup{-}\mathrm{proj}^{\mathbb{Z}}\bigr),
\]
whose essential image is the full subcategory
\[
\mathcal{LC}\bigl(\Lambda\textup{-}\mathrm{proj}^{\mathbb{Z}}\bigr)
\]
consisting of linear complexes, that is, complexes \(\mathbf{P}=(P^n,d^n)\) such that each
\(P^n\) is a finite direct sum of modules of the form \(P_x\langle n\rangle\), with \(x\in Q_0\),
and hence generated in degree \(n\).

\medskip
 
A module
\( M \in \Lambda^{!}\textup{-}\mathrm{GMod}^{-} \) is said to have \emph{bounded cohomology} if the
complex \( K(M) \) has bounded cohomology. Set
\[
\Lambda^{!}\textup{-}\mathrm{GMod}^{-,b}
\]
to be the full subcategory of \( \Lambda^{!}\textup{-}\mathrm{GMod}^{-} \) consisting of such modules.
The \(k\)-linear functor
\[
K \colon \Lambda^{!}\textup{-}\mathrm{GMod} \longrightarrow
\mathsf{C}\!\bigl(\Lambda\textup{-}\mathrm{proj}^{\mathbb{Z}}\bigr)
\]
then restricts to a functor
\[
K \colon \Lambda^{!}\textup{-}\mathrm{GMod}^{-,b} \longrightarrow
\mathcal{LC}^{-,b}\!\bigl(\Lambda\textup{-}\mathrm{proj}^{\mathbb{Z}}\bigr),
\]
where \( \mathcal{LC}^{-,b}\!\bigl(\Lambda\textup{-}\mathrm{proj}^{\mathbb{Z}}\bigr) \) denotes the full subcategory of
\( \mathcal{LC}\!\bigl(\Lambda\textup{-}\mathrm{proj}^{\mathbb{Z}}\bigr) \) consisting of right bounded linear complexes
with bounded cohomology.

The category \( \Lambda^{!}\textup{-}\mathrm{GMod}^{-,b} \) is exact, closed under extensions, and weakly
idempotent complete. Moreover, the class of short exact sequences it inherits from
\( \Lambda^{!}\textup{-}\mathrm{GMod}^{-} \) satisfies the two-out-of-three property, as follows from the long
exact sequence in cohomology.

The following result is an immediate consequence of~\cite[Theorem~12]{42}.
\begin{proposition}
The \(k\)-linear functor
\[
K \colon \Lambda^{!}\textup{-}\mathrm{GMod}^{-,b}\;\longrightarrow\;
\mathcal{LC}^{-,b}\!\bigl(\Lambda\textup{-}\mathrm{proj}^{\mathbb{Z}}\bigr)
\]
is an equivalence of exact categories.
\end{proposition}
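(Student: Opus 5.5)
The plan is to obtain the statement by restricting the abelian-level equivalence
\[
K\colon \Lambda^{!}\textup{-}\mathrm{GMod}\xrightarrow{\ \sim\ }\mathcal{LC}\bigl(\Lambda\textup{-}\mathrm{proj}^{\mathbb{Z}}\bigr)
\]
of \cite[Theorem~12]{42}, recalled above, to the relevant full subcategories. Then I will check that the exact structures on the two sides correspond.

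\textbf{Step 1: matching the subcategories.} I first check that \(K\) sends \(\Lambda^{!}\textup{-}\mathrm{GMod}^{-}\) onto the right bounded linear complexes. By construction
\[
K(M)^n=\bigoplus_{x}P_x\langle n\rangle\otimes M_x^n ,
\]
so \(K(M)^n=0\) exactly when \(M_n=0\). Hence \(M\) is right bounded if and only if \(K(M)\) is. Bounded cohomology is built into the definition of \(\Lambda^{!}\textup{-}\mathrm{GMod}^{-,b}\). Consequently \(K\) restricts to a fully faithful functor
\[
\Lambda^{!}\textup{-}\mathrm{GMod}^{-,b}\longrightarrow\mathcal{LC}^{-,b}\bigl(\Lambda\textup{-}\mathrm{proj}^{\mathbb{Z}}\bigr).
\]
For density, take a right bounded linear complex with bounded cohomology. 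Essential surjectivity of \(K\) on all of \(\mathcal{LC}\) produces a preimage \(M\). This \(M\) is right bounded by the degree argument just given, and it has bounded cohomology by definition, so it lies in \(\Lambda^{!}\textup{-}\mathrm{GMod}^{-,b}\).

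\textbf{Step 2: matching the exact structures.} Conflations in \(\Lambda^{!}\textup{-}\mathrm{GMod}^{-,b}\) are short exact sequences of graded modules. Under \(K\) they become degreewise split short exact sequences of complexes, since
\[
0\to M_x^n\to N_x^n\to L_x^n\to 0
\]
is an exact sequence of vector spaces, and tensoring with \(P_x\langle n\rangle\) is exact and split. The conflations on \(\mathcal{LC}^{-,b}\) are understood as the degreewise split sequences, as is standard for categories of complexes. It therefore remains to show that every such sequence of linear complexes comes from a short exact sequence of \(\Lambda^{!}\)-modules.

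By full faithfulness the maps lift to \(M\to N\to L\). To check exactness at each \((x,n)\), I apply the functor
\[
\operatorname{Hom}_{\Lambda\textup{-gmod}}\bigl(P_x\langle n\rangle,-\bigr)
\]
restricted to the generating degree, or equivalently take tops. This functor recovers \(M_x^n\) from \(K(M)^n\), and it carries split exact sequences to exact sequences. Thus \(K\) both preserves and reflects conflations.

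\textbf{Step 3: the induced structure on the target.} The two-out-of-three property and weak idempotent completeness have already been noted for the source category. They transfer to \(\mathcal{LC}^{-,b}\) through the equivalence, so \(K\) is an equivalence of exact categories.

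The main obstacle is Step 2. One must make precise which exact structure \(\mathcal{LC}^{-,b}\) carries, namely degreewise split sequences that remain inside linear complexes with bounded cohomology. One must also check that extensions inside \(\mathcal{LC}^{-,b}\) correspond to extensions of modules. I will handle this by working vertex-by-vertex and degree-by-degree with the top functor described in Step 2.
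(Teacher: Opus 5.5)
Your proposal is correct and follows the paper's route: the paper simply obtains the statement by restricting the Mazorchuk--Ovsienko--Stroppel equivalence \cite[Theorem~12]{42}, and gives no further argument. Your Steps~1--3 spell out what the paper leaves implicit: right boundedness and bounded cohomology correspond under \(K\) by definition, and conflations (degreewise split, equivalently degreewise exact, sequences of linear complexes) are both preserved and reflected. Your \(\operatorname{Hom}_{\Lambda\textup{-gmod}}(P_x\langle n\rangle,-)\) argument verifies the latter correctly, because degree-zero maps between projectives generated in the same degree are diagonal.
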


We now extend \(K\) to bounded cochain complexes in
\(\Lambda^{!}\textup{-}\mathrm{GMod}^{-,b}\). In this way we obtain a \(k\)-linear functor
\[
\mathfrak{K} \colon 
\mathsf{C}^{b}\!\bigl(\Lambda^{!}\textup{-}\mathrm{GMod}^{-,b}\bigr)
\longrightarrow
\mathsf{DC}^{b,-}\!\bigl(\Lambda\textup{-}\mathrm{proj}^{\mathbb{Z}}\bigr),
\]
where \(\mathsf{DC}^{b,-}\!\bigl(\Lambda\textup{-}\mathrm{proj}^{\mathbb{Z}}\bigr)\) denotes the category of
double cochain complexes with terms in \(\Lambda\textup{-}\mathrm{proj}^{\mathbb{Z}}\), bounded in the horizontal
direction and right bounded in the vertical direction.

More precisely, let
\[
\cdots \longrightarrow M^{p-1}
\xrightarrow{\,d^{p-1}\,} M^{p}
\xrightarrow{\,d^{p}\,} M^{p+1}
\longrightarrow \cdots
\]
be a bounded cochain complex with each \(M^{p}\in \Lambda^{!}\textup{-}\mathrm{GMod}^{-,b}\).
For \(x\in Q_{0}\) and \(q\in\mathbb{Z}\), write \(M^{p}_{x,q}\) for the homogeneous component
of \(M^{p}\) of internal degree \(q\) at the vertex \(x\). Applying \(K\) degreewise yields
a double complex whose term in bidegree \((p,q)\) is
\[
\mathfrak{K}(M^\bullet)^{p,q}
:=
\bigoplus_{x\in Q_{0}} P_{x}\langle q\rangle \otimes_k M^{p}_{x,q}.
\]

A portion of this double complex is displayed in the diagram
\[
\xymatrix@C=3.4em@R=3.0em{
\cdots & \cdots & \cdots & \cdots \\
\cdots \ar[r] &
\displaystyle\bigoplus_{y} P_{y}\langle q+1\rangle \otimes M^{p}_{y,\,q+1}
\ar[r]^{\ d^{p,\,q+1}_{1}\ } \ar[u] &
\displaystyle\bigoplus_{y} P_{y}\langle q+1\rangle \otimes M^{p+1}_{y,\,q+1}
\ar[r] \ar[u] &
\cdots \\
\cdots \ar[r] &
\displaystyle\bigoplus_{x} P_{x}\langle q\rangle \otimes M^{p}_{x,\,q}
\ar[r]^{\ d^{p,\,q}_{1}\ } \ar[u]^{\ d^{p,\,q}_{2}\ } &
\displaystyle\bigoplus_{x} P_{x}\langle q\rangle \otimes M^{p+1}_{x,\,q}
\ar[r] \ar[u]_{\,d^{p+1,\,q}_{2}} &
\cdots \\
\cdots & \cdots \ar[u] & \cdots \ar[u] & \cdots
}
\]

The horizontal differential
\[
d^{p,q}_{1}\colon
\bigoplus_{x\in Q_{0}} P_{x}\langle q\rangle \otimes M^{p}_{x,q}
\longrightarrow
\bigoplus_{x\in Q_{0}} P_{x}\langle q\rangle \otimes M^{p+1}_{x,q}
\]
is defined by
\[
d^{p,q}_{1}
=
\sum_{x\in Q_{0}} \mathrm{id}_{P_{x}\langle q\rangle}\otimes d^{p}_{x,q},
\]
where \(d^{p}_{x,q}\colon M^{p}_{x,q}\to M^{p+1}_{x,q}\) is the \((x,q)\)-component of the
cochain differential of \(M^\bullet\).

The vertical differential
\[
d^{p,q}_{2}\colon
\bigoplus_{x\in Q_{0}} P_{x}\langle q\rangle \otimes M^{p}_{x,q}
\longrightarrow
\bigoplus_{y\in Q_{0}} P_{y}\langle q+1\rangle \otimes M^{p}_{y,q+1}
\]
is defined by its \((y,x)\)-component
\[
\bigl(d^{p,q}_{2}\bigr)_{yx}
=
\sum_{\alpha\colon y\to x}
\Bigl(P_{\alpha}\!:\; P_{x}\langle q\rangle \longrightarrow P_{y}\langle q+1\rangle\Bigr)
\otimes
\Bigl(M^{p}(\alpha^{\mathrm{op}})\!:\; M^{p}_{x,q} \longrightarrow M^{p}_{y,q+1}\Bigr),
\]
where the sum runs over all arrows \(\alpha\) in \(Q\) with \(s(\alpha)=y\) and \(t(\alpha)=x\).
Here \(P_{\alpha}\) denotes right multiplication by \(\alpha\) on \(P_x\), namely
\(P_{\alpha}(u)=u\alpha\), and \(M^{p}(\alpha^{\mathrm{op}})\) is the corresponding structure morphism
of the graded \(\Lambda^{!}\)-module \(M^{p}\).

By construction, one has
\[
(d_1)^2=0,\qquad (d_2)^2=0,\qquad d_1d_2=d_2d_1.
\]
Thus \(\mathfrak{K}(M^\bullet)\) is indeed a double cochain complex.

\medskip

This construction yields a total complex in the usual way via
diagonal summation. More precisely, let \((A^{p,q}, d_1^{p,q}, d_2^{p,q})\)
be the double complex with
\[
A^{p,q} \;=\; \bigoplus_{x \in Q_0} P_{x}\langle q\rangle \otimes M^{p}_{x,q}.
\]
The total complex \(\operatorname{Tot}(A)\) is defined in degree \(n\) by
\[
\operatorname{Tot}(A)^{n}
\;=\;
\bigoplus_{p+q = n}\;\bigoplus_{x \in Q_0} P_{x}\langle q\rangle \otimes M^{p}_{x,q},
\]
with differential
\[
d_{\operatorname{Tot}}^{\,n}
\;=\;
\bigoplus_{p+q = n}\bigl(d_1^{p,q} + (-1)^{p} d_2^{p,q}\bigr).
\]
This defines a \(k\)-linear functor
\[
\operatorname{Tot} \colon 
\mathsf{DC}^{b,-}\!\bigl(\Lambda\textup{-}\mathrm{proj}^{\mathbb{Z}}\bigr)
\longrightarrow
\mathsf{C}^{-,b}\!\bigl(\Lambda\textup{-}\mathrm{proj}^{\mathbb{Z}}\bigr).
\]

\medskip

Composing \(\mathfrak{K}\) with \(\operatorname{Tot}\), we obtain a \(k\)-linear functor
\[
\mathcal{F} \colon
\mathsf{C}^{b}\!\bigl(\Lambda^{!}\textup{-}\mathrm{GMod}^{-,b}\bigr)
\longrightarrow
\mathsf{C}^{-,b}\!\bigl(\Lambda\textup{-}\mathrm{proj}^{\mathbb{Z}}\bigr),
\]
which we call the \emph{complex quadratic functor}. By construction,
\(\mathcal{F}\) extends \(K\) and restricts to
\[
K \colon \Lambda^{!}\textup{-}\mathrm{GMod}^{-,b}
\longrightarrow
\mathcal{LC}^{-,b}\!\bigl(\Lambda\textup{-}\mathrm{proj}^{\mathbb{Z}}\bigr).
\]

\medskip

By~\cite[Lemmas~3.6 and~3.7]{3}, the functor \(\mathcal{F}\) commutes with the shift functor \([1]\),
preserves mapping cones, and sends null-homotopic morphisms to null-homotopic morphisms.
Hence it induces a functor on homotopy categories. Since \(\mathcal{F}\) also preserves acyclic
complexes, it further descends to the derived categories. In particular, we obtain a commutative diagram
\[
\xymatrix@C=5.5em@R=4em{
\mathsf{C}^{b}\!\bigl(\Lambda^{!}\textup{-}\mathrm{GMod}^{-,b}\bigr)
\ar[r]^-{\mathcal{F}} \ar[d]_{\scriptstyle \pi} &
\mathsf{C}^{-,b}\!\bigl(\Lambda\textup{-}\mathrm{proj}^{\mathbb{Z}}\bigr)
\ar[d]^{\scriptstyle \pi'} \\
\mathsf{K}^{b}\!\bigl(\Lambda^{!}\textup{-}\mathrm{GMod}^{-,b}\bigr)
\ar[r]^-{\mathscr{F}} \ar[d]_{\scriptstyle \epsilon} &
\mathsf{K}^{-,b}\!\bigl(\Lambda\textup{-}\mathrm{proj}^{\mathbb{Z}}\bigr)
\ar[d]^{\scriptstyle \mathfrak{T}} \\
\mathsf{D}^{b}\!\bigl(\Lambda^{!}\textup{-}\mathrm{GMod}^{-,b}\bigr)
\ar[r]^-{\mathfrak{F}} &
\mathsf{D}^{b}\!\bigl(\Lambda\textup{-}\mathrm{gmod}\bigr)
}
\]
where:
\begin{itemize}
  \item \( \mathcal{F} \) is the complex quadratic functor;
  \item \( \mathscr{F} \) is the induced functor on homotopy categories;
  \item \( \mathfrak{F} \) is the induced functor on derived categories;
  \item \( \pi, \pi' \) are the canonical projections;
  \item \( \epsilon \) is the localization functor;
  \item \( \mathfrak{T} \) is the standard equivalence
  \[
  \mathsf{K}^{-,b}\!\bigl(\Lambda\textup{-}\mathrm{proj}^{\mathbb{Z}}\bigr)
  \xrightarrow{\ \sim\ }
  \mathsf{D}^{b}\!\bigl(\Lambda\textup{-}\mathrm{gmod}\bigr),
  \]
  see §2.3.
\end{itemize}

The functor
\[
\mathfrak{F} \colon
\mathsf{D}^{b}\!\bigl(\Lambda^{!}\textup{-}\mathrm{GMod}^{-,b}\bigr)
\longrightarrow
\mathsf{D}^{b}\!\bigl(\Lambda\textup{-}\mathrm{gmod}\bigr)
\]
is triangulated and will be referred to as the \emph{derived quadratic functor}.
Explicitly, for a bounded cochain complex \(M^\bullet\), one applies \(K\) degreewise,
takes the total complex, and then applies the equivalence \(\mathfrak{T}\):
\[
\mathfrak{F}(M^\bullet)
=
\mathfrak{T}\!\left(\operatorname{Tot}\bigl(\mathfrak{K}(M^\bullet)\bigr)\right).
\]

In the next subsection, we prove that the functor \(\mathfrak{F}\) is a triangulated equivalence if and only if \(\Lambda\) is a Koszul algebra.

\subsection{Graded Derived Koszul duality}

The aim of this subsection is to establish a graded derived  Koszul duality for finite-dimensional Koszul algebras. As consequences, we obtain a graded singular Koszul duality for all finite-dimensional Koszul algebras and extend the Bernstein--Gelfand--Gelfand correspondence to the class of finite-dimensional Iwanaga--Gorenstein Koszul algebras.

Before stating the main results, we fix some terminology.

\medskip

We denote by \(\Lambda^{!}\textup{-Proj}\) and \(\Lambda^{!}\textup{-Inj}\) the additive categories of graded projective and injective \(\Lambda^{!}\)-modules, respectively. Since \(\Lambda^{!}\) is locally finite-dimensional, all graded modules considered here are assumed to be locally finite-dimensional.

For each vertex \(x\) and each \(n\in\mathbb{Z}\), we denote by \(P_x^{!}\langle n\rangle\), \(I_x^{!}\langle n\rangle\), and \(S_x^{!}\langle n\rangle\) the indecomposable projective, injective, and simple graded \(\Lambda^{!}\)-modules, respectively. 
\medskip

Let \(M\in \Lambda^{!}\textup{-}\mathrm{GMod}^{-}\) be a right bounded graded module.
The \emph{socle} of \(M\), denoted \(\operatorname{soc}(M)\), is defined as the largest
semisimple graded submodule of \(M\). Concretely, for each vertex \(x\in Q_{0}\) and each
degree \(n\in\mathbb{Z}\), one has
\[
\operatorname{soc}(M)_{n}(x)
=
\bigcap_{\alpha:y\to x}
\ker\!\Bigl(M(\alpha^{\mathrm{op}})\colon M_{n}(x)\longrightarrow M_{n+1}(y)\Bigr),
\]
where the intersection is taken over all arrows \(\alpha\colon y\to x\) in \(Q\).

\medskip

A graded \(\Lambda^{!}\)-module \(M\) is said to be \emph{finitely cogenerated} if
\(\operatorname{soc}(M)\) is finite-dimensional and essential in \(M\). Equivalently,
there exists a monomorphism
\[
0 \longrightarrow M \longrightarrow I^{!},
\]
where \(I^{!}\) is a finitely cogenerated injective \(\Lambda^{!}\)-module.

In particular, each indecomposable graded injective module \(I^{!}_{x}\langle n\rangle\)
is finitely cogenerated, and one has
\[
\operatorname{soc}\!\bigl(I^{!}_{x}\langle n\rangle\bigr)
\cong
S^{!}_{x}\langle n\rangle.
\]

\medskip

A graded \(\Lambda^{!}\)-module \(M\) is said to be \emph{finitely copresented} if it
admits an injective copresentation
\[
0 \longrightarrow M \longrightarrow I^{!0} \longrightarrow I^{!1},
\]
where \(I^{!0}\) and \(I^{!1}\) are finitely cogenerated graded injectives.
We denote by
\[
\Lambda^{!}\textup{-}\mathrm{Fcp}^{\mathbb{Z}}
\]
the full subcategory of \(\Lambda^{!}\textup{-}\mathrm{GMod}^{-}\) consisting of such
modules.

\medskip

A graded \(\Lambda^{!}\)-module \(M\) is called \emph{coperfect} if it has finite
injective dimension and admits a minimal graded injective coresolution
\[
0 \longrightarrow M \longrightarrow I^{!0} \longrightarrow I^{!1}
\longrightarrow \cdots \longrightarrow I^{!d} \longrightarrow 0,
\]
in which each \(I^{!i}\) is finitely cogenerated; equivalently, each \(I^{!i}\) is a
finite direct sum of indecomposable graded injectives
\(I^{!}_{x}\langle n\rangle\) with \(x\in Q_{0}\) and \(n\in\mathbb{Z}\).
We denote by
\[
\Lambda^{!}\textup{-}\mathrm{Cop}^{\mathbb{Z}}
\]
the full subcategory of \(\Lambda^{!}\textup{-}\mathrm{GMod}^{-}\) consisting of
coperfect graded modules.

\medskip

Dually, a graded module \(M\in \Lambda^{!}\textup{-}\mathrm{GMod}^{+}\) is
\emph{finitely generated} if there exists an epimorphism \(P^{!}\twoheadrightarrow M\),
where \(P^{!}\) is a finite direct sum of indecomposable graded projectives
\(P^{!}_{x}\langle i\rangle\) (\(x\in Q_{0}\), \(i\in\mathbb{Z}\)).
We write
\[
\Lambda^{!}\textup{-}\mathrm{Fg}^{\mathbb{Z}}
\qquad
(\text{resp.\ }\Lambda^{!}\textup{-}\mathrm{gmod})
\]
for the full subcategory of finitely generated objects in
\(\Lambda^{!}\textup{-}\mathrm{GMod}\)
(resp.\ the category of finite-dimensional graded
\(\Lambda^{!}\)-modules).

A graded module is \emph{finitely presented} if it admits a projective presentation
\[
P^{!-1}\longrightarrow P^{!0}\longrightarrow M\longrightarrow 0
\]
with \(P^{!0}\) and \(P^{!-1}\) finitely generated. The corresponding full
subcategory is denoted
\[
\Lambda^{!}\textup{-}\mathrm{Fp}^{\mathbb{Z}}.
\]

\medskip

Finally, a graded module is called \emph{perfect} if it admits a finite graded
projective resolution
\[
0 \longrightarrow P^{!k} \longrightarrow P^{!k+1}
\longrightarrow \cdots \longrightarrow P^{!-1}
\longrightarrow P^{!0} \longrightarrow M \longrightarrow 0,
\]
with each \(P^{!i}\) finitely generated and projective. We denote by
\[
\Lambda^{!}\textup{-}\mathrm{Pe}^{\mathbb{Z}}
\]
the full subcategory of perfect graded \(\Lambda^{!}\)-modules.

\medskip

In the sequel we shall show that, if $\Lambda$ is a finite\mbox{-}dimensional Koszul algebra,
then
\[
\Lambda^{!}\textup{-}\mathrm{Cop}^{\mathbb{Z}}
\;=\;
\Lambda^{!}\textup{-}\mathrm{GMod}^{-,b},
\]
i.e.\ the coperfect graded modules are precisely the locally finite\mbox{-}dimensional graded
modules with bounded cohomology.

\medskip

For $x\in Q_{0}$, let
\[
P^{!\,\mathrm{op}}_{x}\;=\;\Lambda^{!\,\mathrm{op}}(x,-),
\qquad
I^{!}_{x}\;=\;\mathbb{D}\!\bigl(\Lambda^{!\,\mathrm{op}}(x,-)\bigr)
\;=\;\mathbb{D}\,P^{!\,\mathrm{op}}_{x}.
\]

If $\alpha\colon y\to z$ is an arrow in $Q$, then $\alpha^{\mathrm{op}}\colon z\to y$
induces (by precomposition) a map
\[
P^{!\,\mathrm{op}}_{x}(\alpha^{\mathrm{op}})\colon
\Lambda^{!\,\mathrm{op}}(x,y)\longrightarrow \Lambda^{!\,\mathrm{op}}(x,z),
\qquad f\longmapsto  f\circ \alpha^{\mathrm{op}},
\]
and dualizing gives the structure map
\[
I^{!}_{x}(\alpha^{\mathrm{op}})\colon
\bigl(I^{!}_{x}(z)\bigr)^{m}\longrightarrow \bigl(I^{!}_{x}(y)\bigr)^{m+1}.
\]

For each $x\in Q_{0}$ and $n\in\mathbb{Z}$, the complex $K\!\bigl(I^{!}_{x}\langle n\rangle\bigr)$
has
\[
K\!\bigl(I^{!}_{x}\langle n\rangle\bigr)^{m}
\;=\;
\bigoplus_{z\in Q_{0}} P_{z}\langle m\rangle \,\otimes_{k}\, \bigl(I^{!}_{x}(z)\bigr)^{m+n},
\qquad m\in\mathbb{Z}.
\]

The differential
\[
d^{m}\colon K\!\bigl(I^{!}_{x}\langle n\rangle\bigr)^{m}
\longrightarrow
K\!\bigl(I^{!}_{x}\langle n\rangle\bigr)^{m+1}
\]
has $(y,z)$-component
\[
d^{m}_{yz}
\;=\;
\sum_{\alpha\colon y\to z}
\Bigl(P_{\alpha}\!:\; P_{z}\langle m\rangle \longrightarrow P_{y}\langle m+1\rangle\Bigr)
\;\otimes\;
\Bigl(I^{!}_{x}(\alpha^{\mathrm{op}})\!:\; \bigl(I^{!}_{x}(z)\bigr)^{m+n}\longrightarrow \bigl(I^{!}_{x}(y)\bigr)^{m+1+n}\Bigr).
\]

The following lemma extends~\cite[Lemma~3.4]{3}, originally proved for radical
square zero algebras, to the class of arbitrary finite-dimensional Koszul
algebras. The argument of~\cite[Lemma~5.4]{14} carries over verbatim to this
setting; see also~\cite[Theorem~30]{42} and~\cite[Remark~(1), p.~477]{7}.

\begin{lemma}
Let \(\Lambda\) be a finite-dimensional Koszul algebra with Koszul dual \(\Lambda^!\). Then, for each \(x\in Q_0\) and \(n\in\mathbb{Z}\), the complex
\[
K\bigl(I^{!}_{x}\langle n\rangle\bigr)
\]
is a minimal \emph{graded} projective resolution of the simple graded \(\Lambda\)-module \(S_{x}\langle n\rangle\).
\end{lemma}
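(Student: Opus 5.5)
I would prove the lemma by identifying \(K(I^{!}_{x}\langle n\rangle)\), term by term, with the linear resolution of \(S_x\langle n\rangle\) that exists because \(\Lambda\) is Koszul. The argument has three steps: a dimension count, a comparison of differentials, and a minimality check.

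\textbf{Step 1: terms and linearity.} First I compute the terms. By definition,
\[
\bigl(I^{!}_{x}(z)\bigr)^{m+n}=\mathbb{D}\bigl(\Lambda^{!\,\mathrm{op}}(x,z)_{-(m+n)}\bigr).
\]
This is nonzero only when \(m\le -n\), so the complex is right bounded and ends in degree \(m=-n\). In that degree only \(z=x\) contributes, and the contribution is \(P_x\langle -n\rangle\otimes k\).

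The multiplicity of \(P_z\langle m\rangle\) in degree \(m\) is \(\dim e_z\Lambda^{!}_{-m-n}e_x\). By the Ext-algebra isomorphism \(\Lambda^{!}\cong\bigoplus_i\operatorname{Ext}^i_\Lambda(\Lambda_0,\Lambda_0)\) recorded in the Koszul definition, this equals
\[
\dim\operatorname{Ext}^{-m-n}_{\Lambda}(S_x,S_z),
\]
up to the shift conventions. By Proposition 2.7, this Ext group is concentrated in the linear internal degree. So the term of \(K(I^{!}_x\langle n\rangle)\) in cohomological degree \(-n-i\) has exactly the multiplicities of the \(i\)-th term \(P^{-i}\) of the minimal linear graded resolution of \(S_x\langle n\rangle\). (I will need to fix the sign convention for \(\langle n\rangle\) consistently at this point.)

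\textbf{Step 2: differentials.} The differential of \(K(I^{!}_x\langle n\rangle)\) is \(\sum_\alpha P_\alpha\otimes I^{!}_x(\alpha^{\mathrm{op}})\). In each entry the \(\Lambda\)-component is right multiplication by an arrow, which lands in the radical.

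I then prove exactness by comparison with the Koszul complex. The dual of the left multiplication map \(\Lambda^{!}_1\otimes\Lambda^{!}_{j}\to\Lambda^{!}_{j+1}\) (restricted at \(x\)) is a coproduct on \(\mathbb{D}\Lambda^{!}=\bigoplus_j(\Lambda^{!}_j)^{*}\). By quadratic duality, \((\Lambda^{!}_j)^{*}\) is identified with
\[
\bigcap_{a+b+2=j} V_1^{\otimes a}\otimes I_2\otimes V_1^{\otimes b}\subseteq(kQ)_j,
\]
where \(V_1\) is the arrow space. Under this identification \(K(I^{!}_x)\) becomes precisely the Koszul complex \(\Lambda\otimes_{\Lambda_0}(\Lambda^{!}_{\bullet})^{*}e_x\) of \(\Lambda\). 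That complex is exact except at the end, with cokernel \(S_x\), exactly when \(\Lambda\) is Koszul; this is the standard BGG criterion.

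\textbf{Step 3: minimality.} Minimality is then immediate, since the image of every differential lies in \(\operatorname{rad}\). Alternatively, a linear resolution is automatically minimal.

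The main obstacle is Step 2: matching the differential of \(K(I^{!}_x\langle n\rangle)\) with the Koszul differential, including the orientation of \(\alpha\) versus \(\alpha^{\mathrm{op}}\) and the signs in the identification of \((\Lambda^{!}_j)^{*}\) with the intersections above. I would do this check explicitly in low degrees (\(j=1,2\)), where \(d^2=0\) is exactly the statement that the relations \(I_2\) kill compositions, and then argue inductively as in \cite[Lemma~5.4]{14}.

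If that direct identification gets unwieldy, the fallback is the counting argument from Step 1 combined with induction. Assume \(d\) is exact at the later terms. The kernel at each stage is generated in a single degree. Its top is computed by \(\operatorname{Ext}\) via Proposition 2.7, and it matches the next term of \(K(I^{!}_x\langle n\rangle)\). Exactness then follows by comparing graded dimensions with the true linear resolution.
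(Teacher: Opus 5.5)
The paper gives no proof of this lemma. It only says that the argument of \cite[Lemma~5.4]{14} carries over verbatim (see also \cite[Theorem~30]{42} and \cite{7}). Your proposal supplies an actual argument, and its main line is correct.

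The key is your Step~2. Since $I^{!}_{x}(z)=\mathbb{D}(e_x\Lambda^{!}e_z)$, the space $\mathbb{D}(e_x\Lambda^{!}_{j}e_z)$ is the space of combinations of length-$j$ paths from $x$ to $z$ lying in $\bigcap_{a+b+2=j}V_1^{\otimes a}\otimes I_2\otimes V_1^{\otimes b}$. Hence $K(I^{!}_{x})$ is, term by term, the $e_x$-summand $\Lambda\otimes_{\Lambda_0}(\Lambda^{!}_{\bullet})^{*}e_x$ of the Koszul complex. The differentials also match: $I^{!}_{x}(\alpha^{\mathrm{op}})$ is dual to multiplication by $\alpha^{\mathrm{op}}$, so it strips the last arrow $\alpha$ off a path, and $P_\alpha$ then multiplies that arrow into $\Lambda$. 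This is exactly the Koszul differential, with no signs for the path-reversal pairing. Exactness is then the Beilinson--Ginzburg--Soergel fact that the Koszul complex of a Koszul algebra resolves $\Lambda_0$ \cite{7}, and minimality is automatic by linearity.

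What your route buys over the citation is transparency. It shows that $d^{2}=0$, linearity and minimality hold for every quadratic $\Lambda$, and that Koszulity is used only for exactness. It needs nothing about the shape of $Q$. By contrast, \cite{14} is written for gradable quivers, so for quivers with loops (e.g.\ exterior algebras) you verify what the paper's ``verbatim'' transfer only assumes.

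Some details to fix.

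\textbf{Shift.} The sign problem you flagged is real. The complex ends in cohomological degree $-n$ with $P_x\langle -n\rangle$, whose top is $S_x\langle -n\rangle$. So $K(I^{!}_{x}\langle n\rangle)$ resolves $S_x\langle -n\rangle[n]$, which is the form used in the proof of Theorem~3.8; the statement's $S_x\langle n\rangle$ must be read this way.

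\textbf{Multiplicities and citations.} With the paper's conventions, the multiplicity of $P_z\langle m\rangle$ is $\dim e_x\Lambda^{!}_{-m-n}e_z$, not $\dim e_z\Lambda^{!}_{-m-n}e_x$. For $-m-n=1$ this counts arrows $x\to z$, as the first syzygy of $S_x$ requires. The Ext criterion you want is Proposition~2.5, not~2.7, and the Koszul-complex criterion is due to Beilinson--Ginzburg--Soergel, not BGG.

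\textbf{Fallback.} As stated, your fallback does not close. Knowing that the top of $\ker d$ has the same dimension as the generating space of the next term does not give surjectivity, and graded dimension counts alone cannot force exactness. You must add that $d$ is injective on the generating space $\bigoplus_z e_z\otimes\mathbb{D}(e_x\Lambda^{!}_{j}e_z)$ for $j\ge 1$. This holds because $\Lambda^{!}$ is generated in degree~$1$ (equivalently, $\operatorname{soc}I^{!}_{x}=S^{!}_{x}$). The map onto the top of the linearly generated syzygy is then bijective, so $d$ is onto the syzygy.
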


The next proposition shows that, for a finite\mbox{-}dimensional Koszul algebra 
\(\Lambda\), the graded \(\Lambda^{!}\)\nobreakdash-modules with bounded cohomology 
coincide with the coperfect \(\Lambda^{!}\)\nobreakdash-modules.

\begin{proposition}
Let \(\Lambda=\bigoplus_{i\ge 0}\Lambda_i\) be a finite\mbox{-}dimensional Koszul graded algebra. Then
\[
\Lambda^{!}\textup{-}\mathrm{Cop}^{\mathbb{Z}}
\;=\;
\Lambda^{!}\textup{-}\mathrm{GMod}^{-,b}.
\]
In particular, \(\Lambda^{!}\textup{-}\mathrm{Cop}^{\mathbb{Z}}\) is an exact, weakly idempotent complete subcategory of \(\Lambda^{!}\textup{-}\mathrm{GMod}^{-}\) and has enough strongly injective modules.
\end{proposition}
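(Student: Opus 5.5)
We prove the two inclusions separately, using the exact functor \(K\) together with Lemma~3.4, which says that \(K(I^{!}_{x}\langle n\rangle)\) is a minimal graded projective resolution of \(S_x\langle n\rangle\).

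\emph{Inclusion \(\Lambda^{!}\textup{-}\mathrm{Cop}^{\mathbb{Z}}\subseteq\Lambda^{!}\textup{-}\mathrm{GMod}^{-,b}\).} Let \(M\) be coperfect, with finite minimal coresolution \(0\to M\to I^{!0}\to\cdots\to I^{!d}\to 0\) by finitely cogenerated injectives. Each \(I^{!i}\) is a finite direct sum of modules \(I^{!}_{x}\langle n\rangle\), and these are right bounded. Hence \(M\) is right bounded. By Lemma~3.4, each \(K(I^{!i})\) is a finite sum of projective resolutions of simples, so it has bounded cohomology. Since \(K\) is exact, the coresolution gives a bounded exact sequence of complexes \(0\to K(M)\to K(I^{!0})\to\cdots\to K(I^{!d})\to 0\). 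Splitting it into short exact sequences and using the long exact cohomology sequences, \(K(M)\) has bounded cohomology by induction on \(d\).

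\emph{Inclusion \(\Lambda^{!}\textup{-}\mathrm{GMod}^{-,b}\subseteq\Lambda^{!}\textup{-}\mathrm{Cop}^{\mathbb{Z}}\).} This is the main step. Let \(M\) be right bounded with \(K(M)\) of bounded cohomology. I would compute the injective coresolution of \(M\) through Ext groups into simples. Since \(\Lambda^{!}\) is Koszul with finite-dimensional dual \(\Lambda\), Theorem~2.6 gives \(\operatorname{gldim}\Lambda^{!}<\infty\). So the minimal injective coresolution of \(M\) has finite length; it exists because \(M\) is right bounded and locally finite-dimensional. The multiplicity of \(I^{!}_{y}\langle j\rangle\) in \(I^{!i}\) equals \(\dim\operatorname{Ext}^{i}(S^{!}_{y}\langle j\rangle,M)\) in the graded category. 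It remains to show that, for each \(i\), only finitely many pairs \((y,j)\) contribute.

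For this I would use the Koszul duality dictionary coming from \(K\). The functor \(K\) is an equivalence onto linear complexes (Proposition~3.2). Simples \(S^{!}_{y}\langle j\rangle\) correspond to the stalk complexes \(P_y\langle j\rangle[-j]\) (up to sign conventions). Under this correspondence, \(\operatorname{Ext}^{i}(S^{!}_{y}\langle j\rangle,M)\) should identify with \(\operatorname{Hom}_{\mathsf{K}}(P_y\langle j\rangle[-j],K(M)[i])\), that is, with the multiplicity of the top \(S_y\langle\cdot\rangle\) in the cohomology \(H^{j+i}(K(M))\) in the appropriate internal degree. I would derive this by applying \(K\) to the injective coresolution: one obtains a double complex whose columns are sums of resolutions of simples (Lemma~3.4), and whose totalization is quasi-isomorphic to \(K(M)\). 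Comparing this minimal model with the minimal projective model of \(K(M)\), which has bounded cohomology of finite-dimensional graded modules, then gives finiteness.

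Since \(K(M)\) has bounded cohomology and each \(H^{m}(K(M))\) is finite-dimensional (because \(\Lambda\) is finite-dimensional and \(K(M)^m\) is a finite sum), only finitely many \((y,j)\) occur for each \(i\). Hence each \(I^{!i}\) is finitely cogenerated and \(M\) is coperfect. The delicate point is making this Ext/cohomology identification precise, including the degree bookkeeping. An alternative is to argue by induction on the cohomological length of \(K(M)\), using the socle of \(M\) and the truncation triangles.

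\emph{Remaining assertions.} Once the equality holds, exactness, weak idempotent completeness and the two-out-of-three property are inherited from \(\Lambda^{!}\textup{-}\mathrm{GMod}^{-,b}\), as noted after the definition of \(K\). The finitely cogenerated graded injectives \(I^{!}_{x}\langle n\rangle\) lie in \(\Lambda^{!}\textup{-}\mathrm{Cop}^{\mathbb{Z}}\). They are injective in \(\Lambda^{!}\textup{-}\mathrm{GMod}^{-}\), hence strongly injective. The coperfect coresolution itself shows that there are enough of them.
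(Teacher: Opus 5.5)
Your first inclusion and the concluding assertions (exactness, two-out-of-three, strong injectivity of the $I^{!}_{x}\langle n\rangle$) are fine and agree with the paper. The gap is in the reverse inclusion. Everything there rests on identifying the Bass number of $I^{!}_{y}\langle j\rangle$ in $I^{!i}$ with a graded piece of $H^{\bullet}(K(M))$, and you only assert this identification.

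The mechanism you sketch for it would not deliver it. Apply $K$ to the minimal coresolution $0\to M\to I^{!0}\xrightarrow{f^{0}}\cdots\to I^{!d}\to 0$. The totalization $T$ of the resulting double complex is quasi-isomorphic to $K(M)$, but $T$ is not a minimal complex: its horizontal components are $\mathrm{id}\otimes f^{p}$, which do not land in the radical. Meanwhile $K(M)$, being linear, is already its own minimal model. So ``comparing the two minimal models'' tells you nothing about Bass numbers.

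There is also a smaller inaccuracy. $\operatorname{Hom}_{\mathsf{K}}(P_y\langle a\rangle[b],X)$ computes a single internal-degree component of $e_yH^{-b}(X)$. That is a graded composition multiplicity of $S_y$, not a multiplicity in the top. This is harmless for finiteness, but it is what the dictionary actually says.

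The missing idea is to take vertical cohomology first and use minimality there.
\begin{itemize}
\item By the lemma that $K(I^{!}_{x}\langle n\rangle)$ is a minimal resolution of $S_x\langle n\rangle$, $H^{q}(K(I^{!p}))$ is a direct sum of copies of $S_y$ sitting in the generating degree $-q$ of $K(\cdot)^{q}$. There is one copy for each summand $I^{!}_{y}\langle\cdot\rangle$ of $I^{!p}$ with socle in degree $q$, and it is generated by the class of $e_y\otimes u$ with $u\in\operatorname{soc}(I^{!p})$.
\item Minimality gives $f^{p}(\operatorname{soc} I^{!p})=0$, so all maps induced on vertical cohomology vanish.
\item Now apply the exact functor $X\mapsto (e_yX)_a$. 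The $E_1$-page of the column spectral sequence is then concentrated in the single row $q=-a$, and $d_1=0$. With finitely many columns it collapses.
\item Hence the number of summands $I^{!}_{y}\langle\cdot\rangle$ of $I^{!p}$ with socle in degree $s$ equals $\dim e_yH^{p+s}(K(M))_{-s}$.
\end{itemize}
With this, your finiteness count goes through. In fact the minimal coresolution then has exactly $\sum_n\dim H^{n}(K(M))$ indecomposable summands. Alternatively, you can apply $\operatorname{Hom}(-,M)$ to the linear resolution of $S^{!}_{y}$ over the Koszul algebra $\Lambda^{!}$ and check that the result is a graded strand of $e_yK(M)$.

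The paper avoids all of this and only uses the $p=0$ inequality. A nonzero $u\in\operatorname{soc}(M)_n(x)$ gives a cycle $e_x\otimes u\in K(M)^n$ outside the radical. Since boundaries of a linear complex lie in the radical, this cycle is not a boundary, so $\operatorname{soc}(M)$ lives only in the finitely many degrees with $H^n(K(M))\neq 0$ and is finite-dimensional. The paper then iterates on the cosyzygies $I^{!0}/M,\dots$, which stay in $\Lambda^{!}\textup{-GMod}^{-,b}$ by the long exact sequence, and stops by finite global dimension. That route is more elementary; the repaired version of yours is heavier but gives the exact Bass numbers.
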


\begin{proof}
Suppose first that \(M \in \Lambda^{!}\textup{-}\mathrm{Cop}^{\mathbb{Z}}\). By definition, there exists a finite graded injective coresolution
\[
0 \;\longrightarrow\; M \;\longrightarrow\; I^{!0} \;\longrightarrow\; I^{!1} \;\longrightarrow\;\cdots\;\longrightarrow\; I^{!k} \;\longrightarrow\;0
\]
in which every \(I^{!i}\) is finitely cogenerated. Clearly, each \(I^{!i}\) again lies in \(\Lambda^{!}\textup{-}\mathrm{Cop}^{\mathbb{Z}}\). By Lemma~3.3, for each \(i\) the complex
\[
K(I^{!i}) \in \mathcal{LC}^{-, b}\bigl(\Lambda\textup{-}\mathrm{proj}^{\mathbb{Z}}\bigr).
\] Since
\[
K \colon \Lambda^{!}\textup{-}\mathrm{GMod} \longrightarrow \mathcal{LC}\bigl(\Lambda\textup{-}\mathrm{proj}^{\mathbb{Z}}\bigr)
\]
is exact and \(M\) is coperfect, it follows that \(K(M)\) has bounded homology. Hence \(M\in \Lambda^{!}\textup{-}\mathrm{GMod}^{-,b}\).

Conversely, let \(M\in \Lambda^{!}\textup{-}\mathrm{GMod}^{-,b}\). Because \(M\) is right bounded, its graded socle \(\operatorname{soc}(M)\) is essential in \(M\). Moreover, there exists \(m\in\mathbb{Z}\) with
\[
H^{k}\bigl(K(M)\bigr)=0 \qquad \text{for all } k>m.
\]
Assume, for a contradiction, that \(\operatorname{soc}(M)\) is infinite\mbox{-}dimensional. Then there is a vertex \(x\in Q_{0}\) and a degree \(n\in\mathbb{Z}\) with
\[
\operatorname{soc}(M)_{n}(x)\neq 0.
\]
By the concrete description of the socle, choose
\[
u \in \bigcap_{\alpha:y\to x} \ker\!\Bigl(M(\alpha^{\mathrm{op}})\colon M_{n}(x)\longrightarrow M_{n+1}(y)\Bigr),
\qquad u\neq 0.
\]
Let \(P_{x}\langle -n\rangle\in \Lambda\textup{-}\mathrm{proj}^{\mathbb{Z}}\) denote the indecomposable graded projective at \(x\) placed in degree \(n\). Consider
\[
e_{x}\otimes u \;\in\; P_{x}\langle n\rangle\otimes_k M_{n}(x).
\]
By construction,
\[
d^{n}\bigl(e_{x}\otimes u\bigr)
=
\sum_{\alpha:y\to x}
P_{\alpha}\bigl(e_{x}\bigr)\,\otimes\,M\bigl(\alpha^{\mathrm{op}}\bigr)(u)
=0,
\]
hence \(e_{x}\otimes u \in \ker d^{n}\). As \(H^{n}\bigl(K(M)\bigr)=0\), the element \(e_{x}\otimes u\) lies in \(\operatorname{im} d^{n+1}\), and therefore in the radical of 
\(K(M)^{n}\). This contradicts the choice of \(u\in \operatorname{soc}(M)_{n}(x)\). Thus \(\operatorname{soc}(M)\) is finite\mbox{-}dimensional.

Consequently, \(M\) admits an injective hull \(I^{!0}\) that is finitely cogenerated. Applying the same argument to the cokernel of
\[
0 \;\longrightarrow\; M \;\longrightarrow\; I^{!0}
\]
and iterating, and using that the global dimension of \(\Lambda^{!}\) is finite (see Theorem~2.6), we obtain a finite graded injective coresolution of \(M\) whose terms are finitely cogenerated. Hence \(M\in \Lambda^{!}\textup{-}\mathrm{Cop}^{\mathbb{Z}}\).

\end{proof}
\begin{remark}
The category $\Lambda^{!}\textup{-}\mathrm{Cop}^{\mathbb{Z}}$ (resp.\ $\Lambda^{!}\textup{-}\mathrm{Pe}^{\mathbb{Z}}$)
is abelian if and only if the algebra $\Lambda^{!}$ is graded cocoh\-erent
(resp.\ graded coherent).
\end{remark}
\medskip 
We shall need a few additional preparatory results. We begin with an elementary observation.

\begin{lemma}
Let \(\Lambda=\bigoplus_{n\ge 0}\Lambda_n\) be a finite-dimensional positively graded algebra. Then
\[
\operatorname{Hom}_{\Lambda\textup{-}\mathrm{gmod}}
\bigl(P_{x}\langle i+1\rangle,\,
P_{y}\langle i\rangle\bigr)=0
\qquad\text{for all } i\in\mathbb{Z}.
\]
\end{lemma}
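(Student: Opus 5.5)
The plan is to compute the Hom space directly with the graded Yoneda lemma. The vanishing should then reduce to the fact that \(\Lambda\) has no components in negative degree.

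\emph{Step 1: graded Yoneda.} Recall that \(P_x=\Lambda(x,-)\) is generated by \(e_x\) in degree \(0\). With our convention \(M\langle r\rangle_n=M_{n+r}\), the module \(P_x\langle r\rangle\) is generated by \(e_x\) placed in degree \(-r\). For any graded \(\Lambda\)-module \(N\), a degree-\(0\) morphism \(P_x\langle r\rangle\to N\) is determined by the image of \(e_x\). That image can be any element of \(N_{-r}(x)\). This gives a natural isomorphism
\[
\operatorname{Hom}_{\Lambda\textup{-}\mathrm{gmod}}\bigl(P_x\langle r\rangle,N\bigr)\;\cong\;N_{-r}(x).
\]
I would verify this in the functorial language of \S2: a natural transformation out of the representable functor \(\Lambda(x,-)\) is determined by where it sends \(e_x\), and degree-preservation pins down the degree of that element.

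\emph{Step 2: specialize.} Take \(r=i+1\) and \(N=P_y\langle i\rangle\). Step 1 gives
\[
\operatorname{Hom}_{\Lambda\textup{-}\mathrm{gmod}}\bigl(P_x\langle i+1\rangle,P_y\langle i\rangle\bigr)\;\cong\;\bigl(P_y\langle i\rangle\bigr)_{-i-1}(x)\;=\;(P_y)_{-1}(x)\;=\;\Lambda_{-1}(y,x)\;=\;e_x\Lambda_{-1}e_y.
\]

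\emph{Step 3: conclude.} Since \(\Lambda=\bigoplus_{n\ge 0}\Lambda_n\) is positively graded, \(\Lambda_{-1}=0\), and the Hom space vanishes. Equivalently, \(P_y\langle i\rangle\) is concentrated in degrees \(\ge -i\), whereas the generator of \(P_x\langle i+1\rangle\) sits in degree \(-i-1\).

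There is no real obstacle here; the argument is short. The only point needing care is bookkeeping with the shift convention, which the paper notes is opposite to that of \cite{7}. A sign error there would wrongly turn the claim into a statement about \(\Lambda_1\), which is generally nonzero.
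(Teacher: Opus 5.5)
Your proof is correct and matches the paper's argument: the paper also identifies \(\operatorname{Hom}_{\Lambda\textup{-}\mathrm{gmod}}\bigl(P_x\langle a\rangle,P_y\langle b\rangle\bigr)\cong e_x\Lambda_{b-a}e_y\) and concludes from \(\Lambda_{-1}=0\). Your graded Yoneda step makes explicit the ``standard identification'' that the paper only cites, and your shift bookkeeping agrees with the paper's convention \(M\langle r\rangle_n=M_{n+r}\).
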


\begin{proof}
Using the standard identification for indecomposable graded projective modules, we have
\[
\operatorname{Hom}_{\Lambda\textup{-}\mathrm{gmod}}
\bigl(P_x\langle a\rangle,P_y\langle b\rangle\bigr)
\cong e_x\Lambda_{b-a}e_y.
\]
We obtain
\[
\operatorname{Hom}_{\Lambda\textup{-}\mathrm{gmod}}
\bigl(P_x\langle i+1\rangle,P_y\langle i\rangle\bigr)
\cong e_x\Lambda_{-1}e_y.
\]
Since \(\Lambda\) is positively graded, \(\Lambda_{-1}=0\). Hence
\[
\operatorname{Hom}_{\Lambda\textup{-}\mathrm{gmod}}
\bigl(P_x\langle i+1\rangle,P_y\langle i\rangle\bigr)=0.
\]
\end{proof}
We continue with an auxiliary observation which will be used crucially in the proof of the main equivalence.

\begin{lemma}
Let $\Lambda$ be a finite-dimensional quadratic algebra, and let
$X^\bullet, Y^\bullet \in \mathcal{LC}\bigl(\Lambda\textup{-proj}^{\mathbb{Z}}\bigr)$.
Then every homotopy between two morphisms of complexes $X^\bullet \to Y^\bullet$ is zero.
Equivalently,
\[
\operatorname{Hom}_{\mathsf{K}(\Lambda\textup{-proj}^{\mathbb{Z}})}(X^\bullet,Y^\bullet)
=
\operatorname{Hom}_{\mathsf{C}(\Lambda\textup{-proj}^{\mathbb{Z}})}(X^\bullet,Y^\bullet).
\]
In particular, there are no nonzero null-homotopic morphisms between linear complexes.
\end{lemma}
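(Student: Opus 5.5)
The plan is to show directly that every degree $-1$ map $h\colon X^\bullet\to Y^\bullet[-1]$ between linear complexes is zero, so the null-homotopic morphisms form the zero subspace and the two Hom spaces coincide. The only input will be Lemma~3.6 (the preceding lemma), which states that $\operatorname{Hom}_{\Lambda\textup{-gmod}}(P_x\langle i+1\rangle,P_y\langle i\rangle)=0$ for a positively graded finite-dimensional algebra. Quadraticity plays no role beyond $\Lambda$ being positively graded with $\Lambda_0$ spanned by the trivial paths.

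Fix $X^\bullet,Y^\bullet\in\mathcal{LC}(\Lambda\textup{-proj}^{\mathbb{Z}})$. By the definition of linear complexes (the essential image of $K$), $X^n$ is a finite direct sum of copies of $P_x\langle n\rangle$, $x\in Q_0$, and $Y^{n-1}$ is a finite direct sum of copies of $P_y\langle n-1\rangle$. I will take care to use the same shift sign convention for the terms of a linear complex as for $K(M)^n=\bigoplus_x P_x\langle n\rangle\otimes M_x^n$. Since the indecomposable summands and the Hom functor are additive, a homotopy component $h^n\colon X^n\to Y^{n-1}$ is a matrix whose entries lie in
\[
\operatorname{Hom}_{\Lambda\textup{-gmod}}\bigl(P_x\langle n\rangle,P_y\langle n-1\rangle\bigr).
\]
Setting $i=n-1$, Lemma~3.6 shows that each entry vanishes. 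Hence $h^n=0$ for every $n$. It follows that any null-homotopic morphism $f=dh+hd$ is zero, which gives the displayed equality of Hom spaces and the final assertion.

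The main point needing care is the direction of the shift convention. With the paper's convention $M\langle i\rangle_n=M_{n+i}$ and the identification $\operatorname{Hom}(P_x\langle a\rangle,P_y\langle b\rangle)\cong e_x\Lambda_{b-a}e_y$, the homotopy component corresponds to $\Lambda_{-1}=0$, matching the preceding lemma. I will check this against the differential of $K(M)$, which sends $P_x\langle n\rangle$ to $P_y\langle n+1\rangle$ via an arrow of degree $+1$, so that differentials correspond to $\Lambda_1$. This confirms that maps raising the cohomological index by $r$ correspond to $\Lambda_r$, and that maps lowering it by one vanish. If the paper's remark that the terms are "generated in degree $n$" versus "$-n$" conflicts with this, the consistent convention is the one dictated by the differential, and the argument goes through unchanged.
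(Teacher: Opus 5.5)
Your argument is correct and is essentially the paper's proof. Both apply Lemma~3.6 entrywise to the homotopy components $X^n\to Y^{n-1}$, which lie in $e_x\Lambda_{-1}e_y=0$; the paper first rewrites $X^\bullet\cong K(M)$ and $Y^\bullet\cong K(N)$, a detour you rightly skip. Your check of the shift convention against the differential of $K(M)$ is a worthwhile addition. The paper's displays are not consistent on whether the $n$-th term is built from $P_x\langle n\rangle$ or $P_x\langle -n\rangle$, and the latter choice would force every differential to vanish.
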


\begin{proof}
Let
\[
X^\bullet,\,Y^\bullet \in 
\mathcal{LC}\bigl(\Lambda\textup{-proj}^{\mathbb{Z}}\bigr)
\]
be linear complexes. Since there is an equivalence of abelian categories
\[
\Lambda^{!}\textup{-GMod}
\;\cong\;
\mathcal{LC}\bigl(\Lambda\textup{-proj}^{\mathbb{Z}}\bigr),
\]
there exist graded $\Lambda^!$-modules $M,N$ such that
\[
X^\bullet \simeq K(M),
\qquad
Y^\bullet \simeq K(N).
\]

Consider a morphism of complexes
\[
f\colon K(M)\longrightarrow K(N).
\]
By definition of $K$, we have
\[
K(M)^n
=
\bigoplus_{x\in Q_0}
P_x\langle -n\rangle \otimes_k M_x^n,
\]
and similarly for $K(N)$. Hence $f$ yields the commutative diagram
\[
\begin{tikzcd}[column sep=1em]
\cdots \arrow[r] &
\displaystyle\bigoplus_{z\in Q_0} P_z\langle n-1\rangle \otimes_k M_z^{\,n-1}
\arrow[d,"f^{\,n-1}"]
\arrow[r,"d"] &
\displaystyle\bigoplus_{y\in Q_0} P_y\langle n\rangle \otimes_k M_y^{\,n}
\arrow[d,"f^{\,n}"]
\arrow[r,"d"] &
\displaystyle\bigoplus_{x\in Q_0} P_x\langle n+1\rangle \otimes_k M_x^{\,n+1}
\arrow[d,"f^{\,n+1}"]
\arrow[r] &
\cdots
\\
\cdots \arrow[r] &
\displaystyle\bigoplus_{z\in Q_0} P_z\langle n-1\rangle \otimes_k N_z^{\,n-1}
\arrow[r,"d"] &
\displaystyle\bigoplus_{y\in Q_0} P_y\langle n\rangle \otimes_k N_y^{\,n}
\arrow[r,"d"] &
\displaystyle\bigoplus_{x\in Q_0} P_x\langle n+1\rangle \otimes_k N_x^{\,n+1}
\arrow[r] &
\cdots
\end{tikzcd}
\]

By Lemma~3.6, for every $k\in\mathbb{Z}$ there are no nonzero morphisms
\[
\bigoplus_{a\in Q_0} P_a\langle k+1\rangle \otimes_k M_a^{\,k+1}
\longrightarrow
\bigoplus_{b\in Q_0} P_b\langle k\rangle \otimes_k N_b^{\,k}.
\]
Therefore any homotopy between morphisms of such complexes must vanish identically.
Consequently, there are no nonzero null-homotopic morphisms between linear complexes.
\end{proof}

We are now in a position to state the main result of this section. We shall refer to the equivalence below as the \emph{graded derived  Koszul duality}. It may be viewed as the bounded analogue of~\cite[Theorem~30]{42}.
\begin{theorem}
Let $\Lambda$ be a finite-dimensional quadratic algebra with quadratic dual $\Lambda^{!}$.
The following conditions are equivalent:
\begin{enumerate}
\item $\Lambda$ is Koszul.
\item The derived quadratic functor
\[
\mathfrak{F}\colon
\mathsf{D}^b\bigl(\Lambda^{!}\textup{-Cop}^{\mathbb{Z}}\bigr)
\;\longrightarrow\;
\mathsf{D}^b\bigl(\Lambda\textup{-gmod}\bigr)
\]
is an equivalence of triangulated categories.
\item The derived quadratic functor
\[
\mathfrak{F}\colon
\mathsf{D}^b\bigl(\Lambda\textup{-gmod}\bigr)
\;\longrightarrow\;
\mathsf{D}^b\bigl(\Lambda^{!}\textup{-Pe}^{\mathbb{Z}}\bigr)
\]
is an equivalence of triangulated categories.
\end{enumerate}
\end{theorem}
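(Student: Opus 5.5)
We prove $(1)\Rightarrow(2)$ by Beilinson's lemma: find a class of objects on the source whose images are the simple graded $\Lambda$-modules $S_x\langle n\rangle$, check that $\mathfrak{F}$ is fully faithful on them in all shifts, and conclude by generation. We then obtain $(1)\Rightarrow(3)$ by applying the same argument to the Koszul dual $\Lambda^!$ (together with $k$-duality), and prove the converses by extracting the Ext-vanishing criterion of Proposition~2.5.

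\emph{Step 1 (well-definedness).} Assume $\Lambda$ is Koszul. By Proposition~3.4, $\Lambda^{!}\textup{-Cop}^{\mathbb{Z}}=\Lambda^{!}\textup{-GMod}^{-,b}$, so $\mathfrak{F}$ is defined on $\mathsf{D}^b(\Lambda^{!}\textup{-Cop}^{\mathbb{Z}})$. By Lemma~3.3, $\mathfrak{F}(I^{!}_x\langle n\rangle)\cong S_x\langle n\rangle$. Every object of $\mathsf{D}^b(\Lambda^{!}\textup{-Cop}^{\mathbb{Z}})$ is a finite extension of shifts of the $I^!_x\langle n\rangle$: each coperfect module has a finite coresolution by finitely cogenerated injectives, and bounded complexes are obtained by brutal truncation. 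Every object of $\mathsf{D}^b(\Lambda\textup{-gmod})$ is a finite extension of shifts of the $S_x\langle n\rangle$, via composition series. Hence the essential image of $\mathfrak{F}$ generates the target as a triangulated category.

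\emph{Step 2 (full faithfulness on generators).} It suffices to show that $\mathfrak{F}$ induces isomorphisms
\[
\operatorname{Hom}_{\mathsf{D}^b(\Lambda^{!}\textup{-Cop}^{\mathbb{Z}})}\bigl(I^!_x\langle n\rangle, I^!_y\langle m\rangle[p]\bigr)\longrightarrow \operatorname{Hom}_{\mathsf{D}^b(\Lambda\textup{-gmod})}\bigl(S_x\langle n\rangle,S_y\langle m\rangle[p]\bigr)
\]
for all $x,y,n,m,p$; a five-lemma dévissage then gives full faithfulness everywhere. The injectives $I^!_y\langle m\rangle$ are strongly injective, so by Proposition~2.12 the left side vanishes for $p\neq0$ and equals $\operatorname{Hom}_{\Lambda^!}(I^!_x\langle n\rangle,I^!_y\langle m\rangle)\cong e\Lambda^!_{n-m}e$ for $p=0$. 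On the right, the target is computed in $\mathsf{K}^{-,b}(\Lambda\textup{-proj}^{\mathbb{Z}})$ via $\mathfrak{T}$. Since $K(I^!_x\langle n\rangle)$ and $K(I^!_y\langle m\rangle)[p]$ are both linear complexes when $p=0$, Lemma~3.7 together with the full faithfulness of $K$ (Proposition~3.2) identifies the map for $p=0$ with a bijection.

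For $p\neq 0$ we need $\operatorname{Ext}^p_{\Lambda\textup{-gmod}}(S_x\langle n\rangle,S_y\langle m\rangle)=0$, but Koszulity says this is nonzero exactly when the degree shift matches $p$. This mismatch is the main obstacle. The resolution I would try first is to note that $K$ places $P_x\langle n\rangle$ in cohomological degree $-n$, so internal and cohomological shifts are coupled: $\mathfrak{F}(M\langle 1\rangle)\cong\mathfrak{F}(M)\langle 1\rangle[\pm1]$. Hence $\mathfrak{F}(I^!_y\langle m\rangle[p])$ is $S_y$ with a combined shift, and the Ext groups must be reindexed accordingly. After this reindexing, the Koszul vanishing of Proposition~2.5 matches exactly the $p=0$ computation, the Koszul identification $\Lambda^!\cong\operatorname{Ext}^\bullet_\Lambda(\Lambda_0,\Lambda_0)$ gives the bijection, and the remaining groups vanish on both sides.

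\emph{Step 3 (converses and (3)).} For $(2)\Rightarrow(1)$, the $I^!_x\langle n\rangle$ have no higher self-extensions in the source, which forces graded Ext between simples in $\Lambda\textup{-gmod}$ to be concentrated on the diagonal; by Proposition~2.5, $\Lambda$ is Koszul. For $(1)\Leftrightarrow(3)$, use that $\Lambda^!$ is Koszul with $(\Lambda^!)^!\cong\Lambda$ and that graded duality $\mathbb{D}$ exchanges coperfect modules with perfect modules over the opposite algebra. Construct the functor $\Lambda\textup{-gmod}\to\Lambda^!$-modules in the opposite direction, sending $S_x\langle n\rangle$ to $P^!_x\langle n\rangle$ (the dual of Lemma~3.3), and repeat Steps 1–2 with projectives in place of injectives. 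The converse again follows from Proposition~2.5.
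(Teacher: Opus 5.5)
The genuine gap is in $(2)\Rightarrow(1)$. You pass from ``the $I^{!}_x\langle n\rangle$ have no higher self-extensions'' to ``$\operatorname{Ext}$ between simple graded $\Lambda$-modules is concentrated on the diagonal''. That step needs $\mathfrak{F}(I^{!}_x\langle n\rangle)$ to be a shifted simple. This identification is Lemma~3.3, whose hypothesis is that $\Lambda$ is Koszul, so the argument is circular.

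For a quadratic $\Lambda$ that is not Koszul, $K(I^{!}_x)$ is the Koszul complex. Its top cohomology is still $S_x$, but it has further cohomology lower down; that is exactly what non-Koszulity means. So $\mathfrak{F}(I^{!}_x\langle n\rangle)$ is not a simple, and the vanishing of $\operatorname{Hom}(I^{!}_x\langle n\rangle,I^{!}_y\langle m\rangle[p])$ for $p\neq 0$ tells you nothing about $\operatorname{Ext}_{\Lambda\textup{-gmod}}(S_x,S_y\langle j\rangle)$.

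The paper avoids this by using the simple graded $\Lambda^{!}$-modules, for which no hypothesis is needed: $K(S^{!}_y\langle n\rangle)$ is the single projective $P_y\langle -n\rangle$ in cohomological degree $-n$. Under (2), graded $\operatorname{Ext}$ between the $S^{!}$'s is therefore $\operatorname{Hom}$ between shifted projectives in $\mathsf{D}^b(\Lambda\textup{-gmod})$, which vanishes off the diagonal. Hence the $S^{!}_y$ have linear resolutions, $\Lambda^{!}$ is Koszul, and so $\Lambda\cong(\Lambda^{!})^{!}$ is Koszul. Your $(3)\Rightarrow(1)$ does not have this problem, since there the simples $S_x$ go to single projectives $P^{!}_x$ by construction.

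Your $(1)\Rightarrow(2)$ is the paper's argument: generation by the $I^{!}_x\langle n\rangle$, then Lemma~3.7 plus full faithfulness of $K$ in degree $0$, then Koszul vanishing elsewhere. However, the shift in your Step~1 is wrong, and your hedge ``$[\pm1]$'' must be resolved. Lemma~3.3 has to be read with the cohomological placement made explicit. $K(I^{!}_x\langle n\rangle)$ has top term $P_x\langle -n\rangle$ in cohomological degree $-n$, so $\mathfrak{F}(I^{!}_x\langle n\rangle)\cong S_x\langle -n\rangle[n]$. Equivalently, $\mathfrak{F}(X\langle i\rangle)\cong\mathfrak{F}(X)\langle -i\rangle[i]$ as in Lemma~4.2: the two shifts enter with opposite signs. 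Then
\[
\operatorname{Hom}_{\mathsf{D}^b(\Lambda\textup{-gmod})}\bigl(\mathfrak{F}(I^{!}_x\langle n\rangle),\mathfrak{F}(I^{!}_y\langle m\rangle)[p]\bigr)\cong\operatorname{Ext}^{m-n+p}_{\Lambda\textup{-gmod}}\bigl(S_x,S_y\langle n-m\rangle\bigr),
\]
which by Proposition~2.5 vanishes unless $p=0$, matching the source exactly. With equal signs, the mismatch you noticed would persist.
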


\begin{proof}
We first prove the equivalence of \emph{(1)} and \emph{(2)}.
Assume that $\Lambda$ is Koszul. By Theorem~2.6, the algebra $\Lambda^{!}$ has finite global
dimension. Hence the triangulated category
\[
\mathsf{D}^b\bigl(\Lambda^{!}\textup{-Cop}^{\mathbb{Z}}\bigr)
\]
is generated by the graded injectives $I_x^{!}\langle n\rangle$, $x\in Q_0$, $n\in\mathbb{Z}$.
Moreover, Proposition~2.5 yields
\[
\operatorname{Ext}^i_{\Lambda\textup{-gmod}}\bigl(S_{x}\langle n\rangle,\,S_{y}\langle m\rangle\bigr)=0
\qquad\text{for all } i\neq n-m.
\]
By Lemmas~3.3 and~3.7, together with the equivalence of abelian categories
\[
\Lambda^{!}\textup{-GMod}
\;\simeq\;
\mathcal{LC}\bigl(\Lambda\textup{-proj}^{\mathbb{Z}}\bigr),
\]
the functor $\mathfrak{F}$ induces a chain of canonical isomorphisms
\[
\begin{aligned}
\operatorname{Hom}_{\mathsf{D}^b(\Lambda^{!}\textup{-Cop}^{\mathbb{Z}})}
\bigl(I_x^{!}\langle n\rangle,\,I_y^{!}\langle m\rangle\bigr)
&\;\cong\;
\operatorname{Hom}_{\Lambda^{!}\textup{-Cop}^{\mathbb{Z}}}
\bigl(I_x^{!}\langle n\rangle,\,I_y^{!}\langle m\rangle\bigr)
\\
&\;\cong\;
\operatorname{Hom}_{\mathcal{LC}^{-,b}(\Lambda\textup{-proj}^{\mathbb{Z}})}
\bigl(F(I_x^{!}\langle n\rangle),\,F(I_y^{!}\langle m\rangle)\bigr)
\\
&\;\cong\;
\operatorname{Hom}_{\mathsf{K}^{-,b}(\Lambda\textup{-proj}^{\mathbb{Z}})}
\bigl(F(I_x^{!}\langle n\rangle),\,F(I_y^{!}\langle m\rangle)\bigr)
\\
&\;\cong\;
\operatorname{Hom}_{\mathsf{D}^b(\Lambda\textup{-gmod})}
\bigl(F(I_x^{!}\langle n\rangle),\,F(I_y^{!}\langle m\rangle)\bigr)
\\
&\;\cong\;
\operatorname{Hom}_{\mathsf{D}^b(\Lambda\textup{-gmod})}
\bigl(S_x\langle -n\rangle[n],\,S_y\langle -m\rangle[m]\bigr)
\\
&\;\cong\;
\operatorname{Ext}^{m-n}_{\Lambda\textup{-gmod}}
\bigl(S_x\langle -n\rangle,\,S_y\langle -m\rangle\bigr).
\end{aligned}
\]
In particular, $\mathfrak{F}$ is fully faithful.

Furthermore, the essential image of $\mathfrak{F}$ contains all simples $S_z\langle \ell\rangle$,
which generate $\mathsf{D}^b(\Lambda\textup{-gmod})$ as a triangulated category.
Thus $\mathfrak{F}$ is essentially surjective, hence an equivalence.

Conversely, suppose that $\mathfrak{F}$ is an equivalence. Then, for all $i\in\mathbb{Z}$, there are
canonical isomorphisms
\[
\begin{aligned}
\operatorname{Ext}^i_{\Lambda^{!}\textup{-gmod}}
\bigl(S_y^{!},\,S_x^{!}\langle -m\rangle\bigr)
&\;\cong\;
\operatorname{Hom}_{\mathsf{D}^b(\Lambda^{!}\textup{-Cop}^{\mathbb{Z}})}
\bigl(S_y^{!},\,S_x^{!}\langle m\rangle[i-m]\bigr)
\\
&\;\cong\;
\operatorname{Hom}_{\mathsf{D}^b(\Lambda\textup{-gmod})}
\bigl(P_y,\,P_x\langle m\rangle[i-m]\bigr)
\\
&\;\cong\;
\operatorname{Ext}^{i-m}_{\Lambda\textup{-gmod}}
\bigl(P_y,\,P_x\langle m\rangle\bigr).
\end{aligned}
\]
The last group vanishes for $i\neq m$, and therefore each simple $S_y^{!}$ admits a linear projective resolution.
By Proposition~2.5, it follows that $\Lambda$ is Koszul.

The equivalence of \emph{(1)} and \emph{(3)} is proved similarly.
Indeed, consider the functor
\[
F\colon \Lambda\textup{-gmod}\longrightarrow
\mathcal{LC}^{b}\bigl(\Lambda^{!}\textup{-proj}^{\mathbb{Z}}\bigr),
\]
under which the derived quadratic functor takes the form
\[
\mathfrak{F}\colon \mathsf{D}^b(\Lambda\textup{-gmod})
\longrightarrow \mathsf{D}^b\bigl(\Lambda^{!}\textup{-Pe}^{\mathbb{Z}}\bigr).
\]

Assume that $\Lambda$ is Koszul. It suffices to verify that $\mathfrak{F}$ is fully faithful.
More precisely, for all $x,y\in Q_0$ and $m,n\in\mathbb{Z}$, we claim that there are natural isomorphisms
\[
\operatorname{Ext}^{n-m}_{\Lambda\textup{-gmod}}
\bigl(S_y\langle n\rangle,\,S_x\langle m\rangle\bigr)
\;\cong\;
\operatorname{Hom}_{\mathsf{D}^b(\Lambda^{!}\textup{-Pe}^{\mathbb{Z}})}
\bigl(P_y^{!}\langle -n\rangle,\,P_x^{!}\langle -m\rangle\bigr).
\]

Indeed, by the equivalence established in \emph{(2)}, we have
\[
\operatorname{Ext}^{n-m}_{\Lambda\textup{-gmod}}
\bigl(S_y\langle n\rangle,\,S_x\langle m\rangle\bigr)
\;\cong\;
\operatorname{Hom}_{\Lambda^{!}\textup{-Cop}^{\mathbb{Z}}}
\bigl(I_y^{!}\langle -n\rangle,\,I_x^{!}\langle -m\rangle\bigr).
\]
By standard duality, the latter space identifies with
\[
e_y\,(\Lambda^{!})_{\,n-m}\,e_x
\;\cong\;
\operatorname{Hom}_{\Lambda^{!}\textup{-Pe}^{\mathbb{Z}}}
\bigl(P_y^{!}\langle -n\rangle,\,P_x^{!}\langle -m\rangle\bigr),
\]
which yields the desired isomorphism.

It follows that $\mathfrak{F}$ is fully faithful on a set of generators of
$\mathsf{D}^b(\Lambda\textup{-gmod})$, and hence fully faithful.
Since its essential image contains the projective generators $P_x^{!}\langle n\rangle$, it is also essentially surjective.
Therefore, $\mathfrak{F}$ is an equivalence of triangulated categories.
\end{proof}
If \(\Lambda^{!}\) is left graded coherent, then
\(\Lambda^{!}\textup{-}\mathrm{Pe}^{\mathbb{Z}}
=
\Lambda^{!}\textup{-}\mathrm{Fp}^{\mathbb{Z}}\).
In this situation, our result recovers that of Martínez-Villa and Saorín
(see \cite[Proposition~4.1]{40}).

\begin{corollary}
Let \(\Lambda\) be a finite-dimensional Koszul algebra, and suppose that its Koszul dual \(\Lambda^!\) is left coherent. Then the graded derived  Koszul duality induces a triangulated equivalence
\[
\mathsf{D}^b\bigl(\Lambda^!\textup{-Fp}^{\mathbb{Z}}\bigr)
\;\xrightarrow{\sim}\;
\mathsf{D}^b\bigl(\Lambda\textup{-gmod}\bigr).
\]
\end{corollary}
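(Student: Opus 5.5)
The plan is to deduce the corollary from Theorem~3.8 by identifying the two derived categories of \(\Lambda^!\)-modules involved. Then I compose the equivalence of Theorem~3.8(3) (or its inverse) with that identification. The key input is the remark preceding the statement: when \(\Lambda^!\) is left graded coherent, \(\Lambda^!\textup{-Pe}^{\mathbb{Z}}=\Lambda^!\textup{-Fp}^{\mathbb{Z}}\).

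\textbf{Step 1: left coherent implies left graded coherent.} For a locally finite-dimensional positively graded algebra given by a quiver with relations, I will argue that a finitely generated homogeneous left ideal which is finitely presented ungraded is also finitely presented as a graded module. For graded modules, being finitely generated as a graded module coincides with being finitely generated ungraded. Syzygies can be taken graded, and a graded module over a positively graded algebra is finitely generated iff it is finitely generated ungraded. So coherence passes to the graded setting. This is standard, and I would only cite it.

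\textbf{Step 2: \(\Lambda^!\textup{-Fp}^{\mathbb{Z}}\) consists of perfect modules.} By graded coherence, \(\Lambda^!\textup{-Fp}^{\mathbb{Z}}\) is an abelian subcategory closed under kernels, cokernels and extensions, and every object has a resolution by finitely generated graded projectives. By Theorem~2.6, \(\Lambda^!\) has finite global dimension, since \(\Lambda\) is finite-dimensional and Koszul. A minimal graded projective resolution of \(M\in\Lambda^!\textup{-Fp}^{\mathbb{Z}}\) is therefore finite, with finitely generated terms, so \(M\) is perfect. The reverse inclusion \(\Lambda^!\textup{-Pe}^{\mathbb{Z}}\subseteq\Lambda^!\textup{-Fp}^{\mathbb{Z}}\) is immediate. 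Hence the two categories coincide as full exact subcategories of \(\Lambda^!\textup{-GMod}\), and \(\mathsf{D}^b(\Lambda^!\textup{-Fp}^{\mathbb{Z}})=\mathsf{D}^b(\Lambda^!\textup{-Pe}^{\mathbb{Z}})\).

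\textbf{Step 3: conclude.} Theorem~3.8(3) gives a triangulated equivalence \(\mathsf{D}^b(\Lambda\textup{-gmod})\simeq\mathsf{D}^b(\Lambda^!\textup{-Pe}^{\mathbb{Z}})\). Taking a quasi-inverse yields the asserted equivalence \(\mathsf{D}^b(\Lambda^!\textup{-Fp}^{\mathbb{Z}})\xrightarrow{\sim}\mathsf{D}^b(\Lambda\textup{-gmod})\).

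The main subtlety is the direction and variance. Theorem~3.8(3) is formulated from \(\Lambda\textup{-gmod}\) toward the perfect side, so the functor in the corollary must be understood as a quasi-inverse, or as the analogous quadratic functor built from \(\Lambda^!\)-modules via linear complexes. To make the latter rigorous, I would define \(\mathfrak{F}\) on \(\mathsf{D}^b(\Lambda^!\textup{-Fp}^{\mathbb{Z}})\) by the same total-complex construction. I would then check fully faithfulness on the generators \(P^!_x\langle n\rangle\), which are sent to the injectives \(I_x\)-type objects and, dually, to the simples up to shift, using the Ext computation from the proof of Theorem~3.8. The obstacle I expect is verifying Step 1 cleanly in the quiver setting, where \(\Lambda^!\) is locally unital rather than unital. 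I would handle it by working componentwise at each vertex.
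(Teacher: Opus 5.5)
Your proposal is correct and is essentially the paper's own argument. The paper only remarks that under (graded) left coherence \(\Lambda^{!}\textup{-}\mathrm{Pe}^{\mathbb{Z}}=\Lambda^{!}\textup{-}\mathrm{Fp}^{\mathbb{Z}}\) and reads the corollary off Theorem~3.8(3) via a quasi-inverse; your Steps~1--2 supply the justification it leaves implicit (and since \(Q_0\) is finite, \(\Lambda^{!}\) is unital, so your locally-unital worry is moot).

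Keep the quasi-inverse and drop your optional alternative: applying the same projective total-complex construction to \(P^{!}_{x}\in\Lambda^{!}\textup{-}\mathrm{Fp}^{\mathbb{Z}}\) gives a right-unbounded linear complex \(K(P^{!}_{x})\) that can have unbounded cohomology. For example, \(\Lambda=k\langle x,y\rangle/(x,y)^{2}\) has coherent Koszul dual \(k\langle x,y\rangle\), yet the cohomology of \(K(P^{!}_{x})\) is nonzero in every degree \(n\ge 0\); that route would need the analogous construction with graded injective \(\Lambda\)-modules instead.
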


If \(\Lambda^{!}\) is Noetherian, we recover the Koszul duality of
Beilinson--Ginzburg--Soergel (see \cite[Theorem~2.12.6]{7}).

\begin{corollary}
Let \(\Lambda\) be a finite-dimensional Koszul algebra, and suppose that its Koszul dual \(\Lambda^!\) is noetherian. Then the graded derived  Koszul duality induces a triangulated equivalence
\[
\mathsf{D}^b\bigl(\Lambda^{!}\textup{-}\mathrm{Fg}^{\mathbb{Z}}\bigr)
\;\xrightarrow{\sim}\;
\mathsf{D}^b\bigl(\Lambda\textup{-gmod}\bigr),
\]
\end{corollary}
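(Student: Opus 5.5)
The plan is to deduce the corollary from the implication (1)\(\Rightarrow\)(2) of Theorem~3.8 by replacing \(\Lambda^{!}\textup{-}\mathrm{Cop}^{\mathbb{Z}}\) with \(\Lambda^{!}\textup{-}\mathrm{Fg}^{\mathbb{Z}}\) along an intermediate equivalence. Two hypotheses are available: \(\Lambda\) is Koszul and finite-dimensional, and \(\Lambda^{!}\) is noetherian. By Theorem~2.6 the first gives \(\operatorname{gldim}\Lambda^{!}<\infty\). The proof then has three steps.

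\textbf{Step 1.} Since \(\Lambda^{!}\) is noetherian, it is in particular left coherent. Hence \(\Lambda^{!}\textup{-}\mathrm{Fg}^{\mathbb{Z}}=\Lambda^{!}\textup{-}\mathrm{Fp}^{\mathbb{Z}}\), and this category is abelian. Finite global dimension means every finitely generated graded module has a finite resolution by finitely generated graded projectives, since syzygies stay finitely generated by noetherianity. This gives
\[
\Lambda^{!}\textup{-}\mathrm{Fg}^{\mathbb{Z}}=\Lambda^{!}\textup{-}\mathrm{Pe}^{\mathbb{Z}}.
\]
Consequently \(\mathsf{D}^b(\Lambda^{!}\textup{-}\mathrm{Fg}^{\mathbb{Z}})\simeq\mathsf{K}^b(\Lambda^{!}\textup{-}\mathrm{Proj}^{\mathbb{Z}})\), where \(\Lambda^{!}\textup{-}\mathrm{Proj}^{\mathbb{Z}}\) means finitely generated graded projectives.

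\textbf{Step 2.} Next I identify \(\mathsf{D}^b(\Lambda^{!}\textup{-}\mathrm{Cop}^{\mathbb{Z}})\) with \(\mathsf{K}^b\) of the finitely cogenerated graded injectives. Proposition~3.4 shows that \(\Lambda^{!}\textup{-}\mathrm{Cop}^{\mathbb{Z}}\) has enough strongly injective objects, and finite global dimension makes injective dimensions uniformly bounded. Proposition~2.12 then gives \(\mathsf{K}^b(\Lambda^{!}\textup{-}\mathrm{Inj})\simeq\mathsf{D}^b(\Lambda^{!}\textup{-}\mathrm{Cop}^{\mathbb{Z}})\).

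\textbf{Step 3.} Finally I compare the two homotopy categories by matching generators. The indecomposables are \(P^{!}_x\langle n\rangle\) on one side and \(I^{!}_x\langle n\rangle\) on the other, and in both cases
\[
\operatorname{Hom}\bigl(P^{!}_y\langle a\rangle,P^{!}_x\langle b\rangle\bigr)\cong e_y\Lambda^{!}_{b-a}e_x\cong\operatorname{Hom}\bigl(I^{!}_y\langle a\rangle,I^{!}_x\langle b\rangle\bigr).
\]
This is exactly the "standard duality" already used in the proof of (1)\(\Leftrightarrow\)(3). A graded Nakayama functor \(\nu=\mathbb{D}\operatorname{Hom}(-,\Lambda^{!})\), sending \(P^{!}_x\langle n\rangle\mapsto I^{!}_x\langle n\rangle\), therefore gives an additive equivalence \(\Lambda^{!}\textup{-}\mathrm{Proj}^{\mathbb{Z}}\simeq\Lambda^{!}\textup{-}\mathrm{Inj}^{\mathbb{Z}}\) onto the finitely cogenerated injectives. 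Applied termwise, it induces a triangulated equivalence \(\mathsf{K}^b(\Lambda^{!}\textup{-}\mathrm{Proj}^{\mathbb{Z}})\simeq\mathsf{K}^b(\Lambda^{!}\textup{-}\mathrm{Inj}^{\mathbb{Z}})\). Composing this with Steps 1 and 2 and with \(\mathfrak{F}\) from Theorem~3.8(2) yields the claimed equivalence. Equivalently, one can chain Theorem~3.8(3), the equality of Step 1, and Corollary~3.9.

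The main obstacle is Step 3. The issue is checking that \(\nu\) really is an equivalence between these additive categories, in particular that it is full on morphisms between shifted indecomposables with infinitely many degrees involved. Local finite-dimensionality should make the \(\mathbb{D}\)-dualities degreewise exact, so the isomorphisms of Hom spaces above should assemble into a natural isomorphism. The cleanest route I would try first is to avoid \(\nu\) altogether. Theorem~3.8(3) is already an equivalence \(\mathsf{D}^b(\Lambda\textup{-gmod})\simeq\mathsf{D}^b(\Lambda^{!}\textup{-}\mathrm{Pe}^{\mathbb{Z}})\), so substituting \(\mathrm{Pe}^{\mathbb{Z}}=\mathrm{Fg}^{\mathbb{Z}}\) and inverting gives the statement directly.
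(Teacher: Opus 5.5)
Your proposal is correct, and the route you say you would try first is exactly how the paper obtains this corollary: invert the equivalence of Theorem~3.8(3) after identifying \(\Lambda^{!}\textup{-}\mathrm{Pe}^{\mathbb{Z}}=\Lambda^{!}\textup{-}\mathrm{Fp}^{\mathbb{Z}}=\Lambda^{!}\textup{-}\mathrm{Fg}^{\mathbb{Z}}\), using finite global dimension from Theorem~2.6 together with coherence and then noetherianity, so that the statement is the noetherian specialization of Corollary~3.9. Your primary detour through \(\mathsf{K}^{b}\) of finitely cogenerated injectives, the graded Nakayama functor and Theorem~3.8(2) is also valid, but it only re-derives (1)\(\Rightarrow\)(3) of Theorem~3.8 from (1)\(\Rightarrow\)(2) via the same ``standard duality'' \(e_y\Lambda^{!}_{b-a}e_x\) that the paper already uses there; hence the Step~3 obstacle you flag never has to be faced (and the result you cite in Step~2 is Proposition~2.14, not 2.12).
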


\subsection{Special Cases of Derived Koszul Duality}

We record two further consequences of the graded derived  Koszul duality.
In particular, we determine when there exists a triangulated equivalence
\[
\mathsf{D}^b\bigl(\Lambda^{!}\textup{-gmod}\bigr)
\;\simeq\;
\mathsf{D}^b\bigl(\Lambda\textup{-gmod}\bigr).
\]
The following corollary gives a precise criterion.

\begin{corollary}
Let $\Lambda$ be a finite-dimensional Koszul algebra.
\begin{enumerate}
\item The graded derived  Koszul duality induces triangulated equivalences
\[
\mathfrak{F}\colon
\mathsf{D}^{b}\!\bigl(\Lambda^{!}\textup{-gmod}\bigr)
\xrightarrow{\;\sim\;}
\mathsf{D}^{b}\!\bigl(\Lambda\textup{-}\mathrm{pe}^{\mathbb{Z}}\bigr),
\qquad
\mathfrak{F}\colon
\mathsf{D}^{b}\!\bigl(\Lambda\textup{-cop}^{\mathbb{Z}}\bigr)
\xrightarrow{\;\sim\;}
\mathsf{D}^{b}\!\bigl(\Lambda^{!}\textup{-gmod}\bigr),
\]
where $\Lambda\textup{-}\mathrm{pe}^{\mathbb{Z}}$ (resp.\ $\Lambda\textup{-cop}^{\mathbb{Z}}$)
denotes the full subcategory of finite-dimensional graded $\Lambda$-modules of finite projective
dimension (resp.\ finite injective dimension).
\item Moreover, the functor $\mathfrak{F}$ restricts to a triangulated equivalence
\[
\mathfrak{F}\colon
\mathsf{D}^{b}\!\bigl(\Lambda^{!}\textup{-gmod}\bigr)
\xrightarrow{\;\sim\;}
\mathsf{D}^{b}\!\bigl(\Lambda\textup{-gmod}\bigr)
\]
if and only if $\Lambda$ has finite global dimension.
\end{enumerate}
\end{corollary}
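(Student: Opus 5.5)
The plan is to obtain both parts by restricting the graded derived Koszul duality of Theorem~3.8 to thick subcategories. On each side, the restriction is to the thick subcategory generated by a distinguished family: simples on one side, projectives or injectives on the other. The key input is the computation of \(K\) on simple modules. For a simple graded \(\Lambda^{!}\)-module \(S^{!}_x\langle n\rangle\), the linear complex \(K(S^{!}_x\langle n\rangle)\) is, by the definition of \(K\), a single indecomposable graded projective \(P_x\langle\,\cdot\,\rangle\) concentrated in one cohomological degree. Symmetrically, Lemma~3.3 says that the injectives \(I^{!}_x\langle n\rangle\) are sent to minimal linear resolutions of the simples \(S_x\langle n\rangle\).

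\emph{Part (1), first equivalence.}
\begin{enumerate}
\item Note that \(\Lambda^{!}\textup{-gmod}\subseteq\Lambda^{!}\textup{-Cop}^{\mathbb{Z}}\). A finite-dimensional module has finite socle, and \(\Lambda^{!}\) has finite global dimension by Theorem~2.6, so Proposition~3.4 applies.
\item Show that the canonical functor \(\mathsf{D}^b(\Lambda^{!}\textup{-gmod})\to\mathsf{D}^b(\Lambda^{!}\textup{-Cop}^{\mathbb{Z}})\) is fully faithful. Its essential image is then the thick subcategory generated by the \(S^{!}_x\langle n\rangle\).
\item Apply \(\mathfrak{F}\). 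This thick subcategory maps onto the thick subcategory of \(\mathsf{D}^b(\Lambda\textup{-gmod})\) generated by the \(P_x\langle m\rangle\). That subcategory is \(\mathsf{K}^b(\Lambda\textup{-proj}^{\mathbb{Z}})\), which is identified with \(\mathsf{D}^b(\Lambda\textup{-pe}^{\mathbb{Z}})\) by the usual resolution argument.
\end{enumerate}

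\emph{Part (1), second equivalence.} This is the mirror argument, using the duality of Theorem~3.8(3) with the roles of \(\Lambda\) and \(\Lambda^{!}\) exchanged.
\begin{enumerate}
\item The thick subcategory of \(\mathsf{D}^b(\Lambda\textup{-gmod})\) generated by the injectives \(I_x\langle n\rangle\) is \(\mathsf{K}^b(\Lambda\textup{-inj}^{\mathbb{Z}})\simeq\mathsf{D}^b(\Lambda\textup{-cop}^{\mathbb{Z}})\).
\item Under the duality, this subcategory is sent to the thick subcategory generated by the simples \(S^{!}_x\langle n\rangle\). The reason is that the linear complex over \(\Lambda^{!}\textup{-proj}^{\mathbb{Z}}\) attached to \(I_x\) is the linear resolution of \(S^{!}_x\). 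This uses that \(\Lambda^{!}\) is Koszul with \((\Lambda^{!})^{!}\cong\Lambda\), so Lemma~3.3 applies with roles swapped.
\item Inside \(\mathsf{D}^b(\Lambda^{!}\textup{-Pe}^{\mathbb{Z}})\), the thick subcategory generated by the simples is again \(\mathsf{D}^b(\Lambda^{!}\textup{-gmod})\), by the same full-faithfulness statement as in step~2 of the first equivalence, now for \(\Lambda^{!}\textup{-gmod}\subseteq\Lambda^{!}\textup{-Pe}^{\mathbb{Z}}\).
\end{enumerate}

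\emph{Part (2).} If \(\operatorname{gldim}\Lambda<\infty\), every finite-dimensional graded module is perfect. Then \(\Lambda\textup{-pe}^{\mathbb{Z}}=\Lambda\textup{-gmod}\), and part~(1) gives the equivalence. Conversely, suppose the restricted \(\mathfrak{F}\) is essentially surjective onto \(\mathsf{D}^b(\Lambda\textup{-gmod})\). Its image is contained in \(\mathsf{K}^b(\Lambda\textup{-proj}^{\mathbb{Z}})\), so every simple \(S_x\) is quasi-isomorphic to a bounded complex of projectives and has finite projective dimension. Since there are only finitely many simples up to shift, \(\Lambda\) has finite global dimension.

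The main obstacle is the full faithfulness of \(\mathsf{D}^b(\Lambda^{!}\textup{-gmod})\to\mathsf{D}^b(\Lambda^{!}\textup{-Cop}^{\mathbb{Z}})\), and of its analogue inside \(\mathsf{D}^b(\Lambda^{!}\textup{-Pe}^{\mathbb{Z}})\). Proposition~2.12 cannot be used directly, because \(\Lambda^{!}\textup{-gmod}\) has no injectives when \(\Lambda^{!}\) is infinite-dimensional. I would instead verify Keller's condition (C2) of \cite[Theorem~12.1]{34}: every inflation \(M\hookrightarrow N\) with \(M\) finite-dimensional and \(N\) coperfect should be dominated, after enlarging, by a finite-dimensional submodule of \(N\) containing \(M\). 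Since \(N\) is right bounded and locally finite-dimensional, I would take the submodule generated by \(M\) together with the finitely many components of \(N\) in a suitable finite range of degrees, and then check that it is finite-dimensional. Failing this, one can compare Hom spaces directly. Both sides can be computed from the Ext-algebra between simples, and Theorem~3.8 together with Proposition~2.5 identifies these Ext-algebras.
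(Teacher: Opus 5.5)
Your architecture is the paper's. Simples $S^{!}_x\langle n\rangle$ go to shifted indecomposable projectives, so $\mathfrak{F}$ matches the subcategory generated by the simples with $\mathsf{K}^{b}(\Lambda\textup{-proj}^{\mathbb{Z}})\simeq\mathsf{D}^{b}(\Lambda\textup{-pe}^{\mathbb{Z}})$. Part~(2) then reduces to $\Lambda\textup{-pe}^{\mathbb{Z}}=\Lambda\textup{-gmod}$. For the finite-global-dimension direction of (2), the paper instead argues that $\Lambda^{!}$ is finite-dimensional, so $\Lambda^{!}\textup{-Cop}^{\mathbb{Z}}=\Lambda^{!}\textup{-gmod}$; your route works equally well. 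The paper only asserts the generator matching and never addresses whether $\mathsf{D}^{b}(\Lambda^{!}\textup{-gmod})\to\mathsf{D}^{b}(\Lambda^{!}\textup{-Cop}^{\mathbb{Z}})$ is fully faithful, so you are right to isolate that step. The problem is that your plan for it would not work as written.

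\textbf{The condition you state is vacuous.} For an inflation $M\hookrightarrow N$ you ask for a finite-dimensional submodule of $N$ containing $M$, but $M$ itself qualifies, so this yields nothing.

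\textbf{The inflation form of the criterion fails here.} The inflation form of the effaceability criterion in \cite[Theorem~12.1]{34} asks for a morphism $N\to B'$ with $B'$ finite-dimensional such that $M\to N\to B'$ is injective. This fails in $\Lambda^{!}\textup{-Cop}^{\mathbb{Z}}$. Take $S^{!}_x\hookrightarrow I^{!}_x$: the socle of a right bounded module is essential, so any map injective on $S^{!}_x$ is injective on $I^{!}_x$. That is impossible when $I^{!}_x$ is infinite-dimensional.

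\textbf{The deflation form does hold.} Let $N\twoheadrightarrow M$ be an epimorphism with $N$ coperfect and $M$ finite-dimensional, and lift a homogeneous basis of $M$ to $N$. The submodule these lifts generate lies in degrees bounded below, since $\Lambda^{!}$ is non-negatively graded, and bounded above, since $N$ is right bounded. So it is finite-dimensional and still maps onto $M$. Your degree-range argument is exactly this, so the repair is small, but the direction matters.

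\textbf{The second equivalence needs the other form.} It does not rest on ``the same statement''. Modules in $\Lambda^{!}\textup{-Pe}^{\mathbb{Z}}$ are left bounded, so the submodule argument breaks there. Instead you need the inflation form, realized by the finite-dimensional quotient $N\to N/N_{\ge d}$ with $d$ above the top degree of $M$.

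\textbf{Your Ext-algebra fallback is circular.} When $\Lambda^{!}$ is infinite-dimensional, $\Lambda^{!}\textup{-gmod}$ has neither enough projectives nor enough injectives. Proposition~2.5 and Theorem~3.8 compute Ext in the large graded category. Identifying those groups with Yoneda Ext inside $\Lambda^{!}\textup{-gmod}$ is precisely the full faithfulness at issue.
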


\begin{proof}
\emph{(1)} This is an immediate consequence of  the graded derived  Koszul duality. Indeed, the triangulated category
$\mathsf{D}^{b}\!\bigl(\Lambda^{!}\textup{-gmod}\bigr)$ is generated by the shifts of the simple modules
$S_x^{!}\langle n\rangle$, $x\in Q_0$, $n\in\mathbb{Z}$, whereas
$\mathsf{D}^{b}\!\bigl(\Lambda\textup{-}\mathrm{pe}^{\mathbb{Z}}\bigr)$ is generated by the graded
projectives $P_y\langle m\rangle$, $y\in Q_0$, $m\in\mathbb{Z}$. Since $\mathfrak{F}$ sends these standard
generators to the corresponding standard generators,
the asserted equivalences follow.

\emph{(2)} Suppose first that $\mathfrak{F}$ induces a triangulated equivalence
\[
\mathfrak{F}\colon
\mathsf{D}^{b}\!\bigl(\Lambda^{!}\textup{-gmod}\bigr)
\xrightarrow{\;\sim\;}
\mathsf{D}^{b}\!\bigl(\Lambda\textup{-gmod}\bigr).
\]
By~\emph{(1)}, the essential image of $\mathsf{D}^{b}\!\bigl(\Lambda^{!}\textup{-gmod}\bigr)$ under
$\mathfrak{F}$ is $\mathsf{D}^{b}\!\bigl(\Lambda\textup{-}\mathrm{pe}^{\mathbb{Z}}\bigr)$; hence
\[
\mathsf{D}^{b}\!\bigl(\Lambda\textup{-gmod}\bigr)
=
\mathsf{D}^{b}\!\bigl(\Lambda\textup{-}\mathrm{pe}^{\mathbb{Z}}\bigr).
\]
Equivalently, every finite-dimensional graded $\Lambda$-module has finite projective dimension, and thus
$\Lambda$ has finite global dimension.

Conversely, assume that $\Lambda$ has finite global dimension. It
follows from Theorem~2.6 that $\Lambda^{!}$ is also finite-dimensional with finite global dimension.
Consequently, every graded injective $\Lambda^{!}$-module $I_x^{!}\langle n\rangle$ is finite-dimensional;
equivalently, $\Lambda^{!}\textup{-Cop}^{\mathbb{Z}}=\Lambda^{!}\textup{-gmod}$. Therefore, in this case the
graded derived  Koszul duality identifies $\mathsf{D}^{b}\!\bigl(\Lambda^{!}\textup{-gmod}\bigr)$ with
$\mathsf{D}^{b}\!\bigl(\Lambda\textup{-gmod}\bigr)$, as required.
\end{proof}

Let $\Lambda$ be a finite-dimensional algebra whose quiver $Q$ is well directed
(see Subsection~2.1). Fix a labelling $Q_0=\{1,\dots,r\}$ such that every arrow of $Q$
is oriented from a smaller index to a larger one, and let
$S_1,\dots,S_r$ denote the corresponding simple left $\Lambda$-modules.

We say that $\Lambda$ is \emph{Koszul} (in the non-graded sense) if, for all
$i,j\in\{1,\dots,r\}$ and all integers $n\ge 0$, one has
\[
\operatorname{Ext}^{n}_{\Lambda\textup{-mod}}(S_i,S_j)=0
\qquad\text{whenever } j\neq i-n .
\]

A second natural question is whether the derived graded equivalence can be
upgraded to an equivalence between the ungraded bounded derived categories
\[
\mathsf{D}^b\!\bigl(\Lambda^{!}\textup{-mod}\bigr)
\quad\text{and}\quad
\mathsf{D}^b\!\bigl(\Lambda\textup{-mod}\bigr).
\]
In general such an equivalence does not exist; we return to this issue in
Section~4. Nevertheless, in the well-directed case one obtains the following
consequence.
\begin{corollary}
Let $\Lambda$ be a finite-dimensional Koszul algebra whose quiver is well directed.
Then the derived quadratic functor induces a triangulated equivalence
\[
\mathfrak{F}\colon
\mathsf{D}^b\bigl(\Lambda^{!}\textup{-mod}\bigr)
\;\xrightarrow{\;\sim\;}
\mathsf{D}^b\bigl(\Lambda\textup{-mod}\bigr).
\]
\end{corollary}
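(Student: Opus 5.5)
The plan is to reduce to the graded equivalence of Corollary~3.11(2) and then remove the grading, using the fact that a well directed quiver carries a canonical grading.

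\textbf{Step 1: both sides are finite-dimensional of finite global dimension.} Since \(Q\) is well directed, it has no oriented cycles. Hence \(\Lambda\) has finite global dimension. The same holds for \(\Lambda^{!}\), whose quiver \(Q^{\mathrm{op}}\) is again well directed. By Theorem~2.6, \(\Lambda^{!}\) is finite-dimensional, and Corollary~3.11(2) gives a triangulated equivalence
\[
\mathfrak{F}\colon \mathsf{D}^b(\Lambda^{!}\textup{-gmod})\xrightarrow{\sim}\mathsf{D}^b(\Lambda\textup{-gmod}).
\]

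\textbf{Step 2: the graded category splits into copies of the ungraded one.} Label \(Q_0=\{1,\dots,r\}\) as in the definition, so that every arrow goes from \(i\) to \(i+1\). Every path from \(i\) to \(j\) then has length \(j-i\). The path-length grading is therefore inner: it is determined by the vertices. Consequently every ungraded module \(M\) carries a grading in which \(M(i)\) sits in degree \(i+c\), for each constant \(c\). Moreover, a graded module decomposes as \(\bigoplus_{c}M^{(c)}\), where \(M^{(c)}\) collects the components \(M_{i+c}(i)\); these are submodules because arrows raise both the vertex index and the degree by one. This yields an equivalence of abelian categories
\[
\Lambda\textup{-gmod}\simeq\coprod_{c\in\mathbb{Z}}\Lambda\textup{-mod},
\]
under which \(\langle 1\rangle\) permutes the copies. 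The same holds for \(\Lambda^{!}\), and also for bounded derived categories, since a bounded complex of graded modules splits along \(c\) as well. Equivalently, the forgetful functor restricted to the \(c=0\) block, \(\Lambda\textup{-gmod}_0\to\Lambda\textup{-mod}\), is an equivalence. By Proposition~2.2, the Hom sum over the other shifts vanishes between objects of the same block.

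\textbf{Step 3: \(\mathfrak{F}\) maps blocks to blocks.} Check where the standard generators go. The simple \(S^{!}_x\langle n\rangle\) lies in block \(c=-x-n\), up to sign conventions. Its image \(\mathfrak{F}(S^{!}_x\langle n\rangle)\), essentially \(P_x\langle\pm n\rangle\) placed in cohomological degree \(\pm n\), lies in a block of the form \(a\,n+b\,x\). Choosing the labelling of \(Q^{\mathrm{op}}\) (reversed indices) appropriately, the blocks match along an affine bijection \(c\mapsto c'\). I expect this index bookkeeping, showing that a single block of \(\Lambda^{!}\) goes into a single block of \(\Lambda\) under the cohomological shifts in \(\mathfrak{F}\), to be the main obstacle.

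\textbf{Step 4: conclusion.} Block decompositions are orthogonal, and \(\mathfrak{F}\) is a triangulated equivalence that sends generators of a block into a single block. Hence it restricts to an equivalence between the block-\(0\) subcategory on the \(\Lambda^{!}\) side and its matching block on the \(\Lambda\) side. Composing with the equivalences of Step~2 and with a grading shift gives
\[
\mathsf{D}^b(\Lambda^{!}\textup{-mod})\simeq\mathsf{D}^b(\Lambda\textup{-mod}).
\]
Concretely, this functor is obtained by equipping an ungraded module with its inner grading, applying \(\mathfrak{F}\), and then forgetting the grading.
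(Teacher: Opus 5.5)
Your proof is correct, but it takes a different route from the paper's. The paper never passes through the graded equivalence. It asserts that the quadratic functor makes sense directly on ungraded modules and that ungraded analogues of Lemmas~3.6, 3.7 and 3.3 hold because the quiver is well directed. It then reruns the generator-by-generator argument of Theorem~3.8 inside $\mathsf{D}^b(\Lambda\textup{-mod})$.

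You instead deduce the statement from Corollary~3.11(2). Your key observation is that a well directed quiver makes the path-length grading inner. Hence $\Lambda\textup{-gmod}$ and $\Lambda^{!}\textup{-gmod}$ split into blocks, each equivalent to the ungraded module category via the forgetful functor. What your route buys:
\begin{itemize}
\item It needs none of the ungraded lemmas.
\item It makes explicit what the ungraded quadratic functor is (give a module its inner grading, apply $\mathfrak{F}$, forget the grading), which the paper leaves implicit.
\item It also explains why the paper's ungraded Lemma~3.7 holds: for complexes coming from a single block, a homotopy would consist of maps $P_x\to P_{x+1}$, and these vanish since there are no paths from $x+1$ to $x$.
\end{itemize}
The paper's route is shorter on the page but leaves these verifications to the reader.

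The step you flagged as the main obstacle is in fact immediate from the definition of $K$, and needs no generator argument. Take the arrows of $Q$ to go $i\to i+1$. Index $\Lambda$-blocks by $c=n-y$ for components $M_n(y)$. Index $\Lambda^{!}$-blocks by $c'=m+x$ for components $M_m(x)$; this is the quantity preserved by the arrows of $Q^{\mathrm{op}}$. Then every term $P_x\langle m\rangle\otimes M^{m}_x$ of $K(M)$ lies in the $\Lambda$-block $-c'$. Consequently $\operatorname{Tot}\mathfrak{K}(M^\bullet)$ lies in block $-c'$ for every complex $M^\bullet$ supported in block $c'$. In particular block $0$ goes to block $0$, so your final composite needs no extra grading shift.
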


\begin{proof}
Assume that the quiver of $\Lambda$ is well directed. By Theorem~2.6,
the Koszul dual algebra $\Lambda^{!}$ is finite-dimensional and has finite global dimension.
On the other hand, the quadratic functor is well defined on the ungraded module categories.

Moreover, since the quiver is well directed, the ungraded analogue of Lemma~3.6 holds; namely,
\[
\operatorname{Hom}_{\Lambda}(P_x,P_y)=0
\qquad \text{whenever } x>y.
\]
It follows that the ungraded version of Lemma~3.7 remains valid.
Furthermore, Lemma~3.3 also extends to this setting.

The remainder of the argument is identical to that of Theorem~3.8 and yields
the claimed triangulated equivalence.
\end{proof}
\begin{remark}
As an application of the preceding corollary, we obtain a derived equivalence
for Beilinson algebras; see Section~6.3 for details.
\end{remark}
\subsection{Graded Singular Koszul Duality and the Graded BGG Correspondence}
In \cite{9}, Bernstein, Gelfand, and Gelfand constructed the first example of Koszul duality and deduced the celebrated triangulated equivalence now known as the \emph{BGG correspondence}. More precisely, they proved that there is a triangulated equivalence between the bounded derived category of coherent sheaves on projective space $\mathbb{P}^n$ and the stable category of finite-dimensional graded modules over the exterior algebra $E$ (see \cite[Theorem~2]{9}):
\[
\mathsf{D}^b\bigl(\mathrm{Coh}(\mathbb{P}^n)\bigr)
\;\xrightarrow{\ \sim\ }\;
E\text{-}\underline{\mathrm{grmod}}.
\]
They also formulated a noncommutative analogue (see \cite[Theorem~4]{9}) identifying
$\mathsf{D}^b\bigl(\mathrm{Coh}(\mathbb{P}^n)\bigr)$ with a Verdier quotient:
\[
\mathsf{D}^b\bigl(\mathrm{Coh}(\mathbb{P}^n)\bigr)
\;\simeq\;
\mathsf{D}^b\bigl(A\textup{-}\mathrm{gmod}\bigr)\big/\mathsf{D},
\]
where $A\textup{-}\mathrm{gmod}$ denotes the category of finitely generated graded modules over the symmetric algebra $A$, and $\mathsf{D}$ is the thick triangulated subcategory of $\mathsf{D}^b\bigl(A\textup{-}\mathrm{gmod}\bigr)$ consisting of complexes with finite-dimensional total cohomology (equivalently, bounded complexes of finite-dimensional graded $A$-modules). In particular, one obtains an induced equivalence
\[
\mathsf{D}^b\bigl(A\textup{-}\mathrm{gmod}\bigr)\big/\mathsf{D}
\;\xrightarrow{\ \sim\ }\;
E\text{-}\underline{\mathrm{grmod}}.
\]
This description follows from the Koszul derived equivalence
\[
\mathsf{D}^b\bigl(A\textup{-}\mathrm{gmod}\bigr)
\;\xrightarrow{\ \sim\ }\;
\mathsf{D}^b\bigl(E\textup{-}\mathrm{gmod}\bigr),
\]
established in \cite[Theorem~3]{9}.

In this example both algebras are Koszul: the exterior algebra $E$ is finite-dimensional, self-injective, and Koszul, whereas its Koszul dual $A$ has finite global dimension and is (isomorphic to) a polynomial algebra in finitely many variables.

\medskip

Later, Mart\'{\i}nez-Villa and Saor\'{\i}n generalized the BGG correspondence to the context of finite-dimensional Koszul self-injective algebras whose Koszul dual is coherent. Their results are not phrased in terms of singularity categories. Instead, they proved that if $\Lambda$ is a finite-dimensional Koszul algebra and its Koszul dual $\Lambda^!$ is left coherent, then there is a triangulated equivalence
\[
\mathsf{D}^b\Bigl(\Lambda^!\textup{-Fp}^{\mathbb{Z}}\big/\Lambda^!\textup{-gmod}\Bigr)
\;\xrightarrow{\ \sim\ }\;
\mathsf{S}\bigl(\overline{\Lambda\textup{-gmod}}\bigr),
\]
where $\Lambda^!\textup{-Fp}$ denotes the category of finitely presented graded $\Lambda^!$-modules, and
$\mathsf{S}\bigl(\overline{\Lambda\textup{-gmod}}\bigr)$ denotes the stabilization of the stable category of finitely generated graded $\Lambda$-modules modulo injectives. The latter is a triangulated category, canonically equivalent to the Verdier quotient
\[
\mathsf{D}^b\bigl(\Lambda\textup{-gmod}\bigr)\Big/\mathsf{K}^b\bigl(\Lambda\textup{-Inj}^{\mathbb{Z}}\bigr).
\]
We refer to \cite[Section~4]{40} for further details.

\begin{remark}
The quotient category $\Lambda^!\textup{-Fp}^{\mathbb{Z}}\big/\Lambda^!\textup{-gmod}$ is commonly referred to as the \emph{tails category} of $\Lambda^!$. It is abelian whenever $\Lambda^!$ is coherent, and can be viewed as a noncommutative analogue of $\mathrm{Coh}(\mathbb{P}^n)$.
\end{remark}

\medskip

Recall from Section~2 that
\[
\bigl(\Lambda^!\textup{-gmod},\ \Lambda^!\textup{-Cop}^{\mathbb{Z}},\ \Lambda^!\textup{-GMod}^{-}\bigr)
\]
forms a triple. Moreover, Proposition~2.17 shows that the quotient category
\[
\Lambda^!\textup{-Cop}^{\mathbb{Z}}\big/\Lambda^!\textup{-gmod}
\]
is exact and weakly idempotent complete. Furthermore, there is a triangulated equivalence
\[
\mathsf{D}^b(\Lambda^!\textup{-gmod})
\;\cong\;
\mathsf{D}^b_{\Lambda^!\textup{-gmod}}\bigl(\Lambda^!\textup{-Cop}^{\mathbb{Z}}\bigr),
\]
see the proof of Theorem~3.16. It then follows from Theorem~2.18 that there is a triangulated equivalence
\[
\mathsf{D}^b\Bigl(\Lambda^!\textup{-Cop}^{\mathbb{Z}}\big/\Lambda^!\textup{-gmod}\Bigr)
\;\xrightarrow{\ \sim\ }\;
\mathsf{D}^b\bigl(\Lambda^!\textup{-Cop}^{\mathbb{Z}}\bigr)
\Big/
\mathsf{D}^b_{\Lambda^!\textup{-gmod}}\bigl(\Lambda^!\textup{-Cop}^{\mathbb{Z}}\bigr),
\]
where $\mathsf{D}^b_{\Lambda^!\textup{-gmod}}\bigl(\Lambda^!\textup{-Cop}^{\mathbb{Z}}\bigr)$ denotes the thick
triangulated subcategory of $\mathsf{D}^b\bigl(\Lambda^!\textup{-Cop}^{\mathbb{Z}}\bigr)$ consisting of bounded
complexes with cohomology in $\Lambda^!\textup{-gmod}$. Consequently, the quotient
\[
\Lambda^!\textup{-Cop}^{\mathbb{Z}}\big/\Lambda^!\textup{-gmod}
\]
has finite homological dimension.
\begin{remark}
If $\Lambda^!$ is cocoherent, then the quotient category
\( 
\Lambda^!\textup{-Cop}^{\mathbb{Z}}\big/\Lambda^!\textup{-gmod}
\)
is abelian and can be identified with the dual of the tails category of $\Lambda^!$ (see Remark~3.14).
\end{remark}

We are now ready to prove the second main result of this section, which we call the \emph{graded singular Koszul duality}.

\begin{theorem}
Let $\Lambda$ be a finite-dimensional Koszul algebra, and let $\Lambda^!$ be its Koszul dual. Then the graded derived Koszul duality induces a triangulated equivalence
\[
\mathsf{D}^{b}\!\Bigl(\Lambda^{!}\textup{-Cop}^{\mathbb{Z}}
        \big/ \Lambda^{!}\textup{-gmod}\Bigr)
\;\xrightarrow{\ \sim\ }\;
\mathsf{D}_{\mathrm{sg}}\!\bigl(\Lambda\textup{-gmod}\bigr).
\]
\end{theorem}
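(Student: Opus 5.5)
The plan is to deduce the statement from the graded derived Koszul duality (Theorem~3.8) by passing to Verdier quotients on both sides. By the discussion preceding the theorem, Theorem~2.18 applied to the triple \(\bigl(\Lambda^!\textup{-gmod},\Lambda^!\textup{-Cop}^{\mathbb{Z}},\Lambda^!\textup{-GMod}^{-}\bigr)\) identifies
\[
\mathsf{D}^{b}\!\Bigl(\Lambda^{!}\textup{-Cop}^{\mathbb{Z}}\big/\Lambda^{!}\textup{-gmod}\Bigr)
\;\simeq\;
\mathsf{D}^b\bigl(\Lambda^!\textup{-Cop}^{\mathbb{Z}}\bigr)\big/\mathsf{D}^b_{\Lambda^!\textup{-gmod}}\bigl(\Lambda^!\textup{-Cop}^{\mathbb{Z}}\bigr).
\]
This step needs the hypothesis \(\mathsf{D}^b(\Lambda^!\textup{-gmod})\simeq\mathsf{D}^b_{\Lambda^!\textup{-gmod}}(\Lambda^!\textup{-Cop}^{\mathbb{Z}})\). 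I will check it via the standard dévissage: a bounded complex with finite-dimensional cohomology is built by finitely many cones from its cohomology modules, using the truncation triangles. Full faithfulness of \(\mathsf{D}^b(\Lambda^!\textup{-gmod})\to\mathsf{D}^b(\Lambda^!\textup{-Cop}^{\mathbb{Z}})\) follows by comparing Ext groups between simples \(S^!_x\langle n\rangle\). Both compute \(\operatorname{Ext}\) in \(\Lambda^!\textup{-GMod}^{-}\), using the strongly injective coresolutions from Proposition~3.4 and Proposition~2.13.

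Since \(\mathfrak{F}\colon\mathsf{D}^b(\Lambda^!\textup{-Cop}^{\mathbb{Z}})\to\mathsf{D}^b(\Lambda\textup{-gmod})\) is a triangulated equivalence, it induces an equivalence of Verdier quotients
\[
\mathsf{D}^b\bigl(\Lambda^!\textup{-Cop}^{\mathbb{Z}}\bigr)\big/\mathsf{D}^b_{\Lambda^!\textup{-gmod}}\bigl(\Lambda^!\textup{-Cop}^{\mathbb{Z}}\bigr)
\;\xrightarrow{\ \sim\ }\;
\mathsf{D}^b(\Lambda\textup{-gmod})\big/\mathfrak{F}\bigl(\mathsf{D}^b_{\Lambda^!\textup{-gmod}}(\Lambda^!\textup{-Cop}^{\mathbb{Z}})\bigr).
\]
The main point is therefore to identify the essential image of the thick subcategory with \(\mathsf{K}^b(\Lambda\textup{-proj}^{\mathbb{Z}})\).

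For this identification I use Corollary~3.11(1), which says \(\mathfrak{F}\) restricts to an equivalence \(\mathsf{D}^b(\Lambda^!\textup{-gmod})\simeq\mathsf{D}^b(\Lambda\textup{-pe}^{\mathbb{Z}})\). Concretely, \(\mathfrak{F}(S^!_x\langle n\rangle)=K(S^!_x\langle n\rangle)\) is the single projective \(P_x\langle n\rangle\) placed in cohomological degree \(-n\). Since the shifted simples generate \(\mathsf{D}^b(\Lambda^!\textup{-gmod})\) and the shifted indecomposable projectives generate \(\mathsf{K}^b(\Lambda\textup{-proj}^{\mathbb{Z}})\), both as thick subcategories, the image is exactly \(\mathsf{K}^b(\Lambda\textup{-proj}^{\mathbb{Z}})\) inside \(\mathsf{D}^b(\Lambda\textup{-gmod})\). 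Here I must use that the thick closure of the projectives in \(\mathsf{D}^b(\Lambda\textup{-gmod})\) is \(\mathsf{K}^b(\Lambda\textup{-proj}^{\mathbb{Z}})\), which is standard because that subcategory is already thick. The quotient on the right is then \(\mathsf{D}_{\mathrm{sg}}(\Lambda\textup{-gmod})\) by definition.

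I expect the main obstacle to be the hypothesis of Theorem~2.18, namely the equality \(\mathsf{D}^b(\Lambda^!\textup{-gmod})\simeq\mathsf{D}^b_{\Lambda^!\textup{-gmod}}(\Lambda^!\textup{-Cop}^{\mathbb{Z}})\). The difficulty is that \(\Lambda^!\textup{-Cop}^{\mathbb{Z}}\) is only exact, so cohomology must be taken in the ambient \(\Lambda^!\textup{-GMod}^{-}\). If the dévissage argument is awkward, I would instead compare both sides through \(\mathfrak{F}\). Finite-dimensional modules have finite injective coresolutions by finitely cogenerated injectives, since \(\Lambda^!\) has finite global dimension. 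So I would check directly that \(\mathfrak{F}\) of any complex with finite-dimensional cohomology lies in \(\mathsf{K}^b(\Lambda\textup{-proj}^{\mathbb{Z}})\). Conversely, \(\mathfrak{F}^{-1}\) of perfect complexes is built from the simples \(S^!_x\langle n\rangle\). This gives both inclusions of the thick subcategories at once.
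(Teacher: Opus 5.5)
Your proposal is correct and follows essentially the same route as the paper: you identify the $\Lambda^!$-side with $\mathsf{D}^b(\Lambda^!\textup{-Cop}^{\mathbb{Z}})/\mathsf{D}^b_{\Lambda^!\textup{-gmod}}(\Lambda^!\textup{-Cop}^{\mathbb{Z}})$ via Theorem~2.18, and you use the equivalence $\mathfrak{F}$ together with Corollary~3.11(1) to see that this thick subcategory is carried onto $\mathsf{K}^b(\Lambda\textup{-proj}^{\mathbb{Z}})$, which you make more explicit than the paper does (via the stalk projectives $K(S^!_x\langle n\rangle)$ and the d\'evissage). One small caveat: your argument for full faithfulness of $\mathsf{D}^b(\Lambda^!\textup{-gmod})\to\mathsf{D}^b(\Lambda^!\textup{-Cop}^{\mathbb{Z}})$ via strongly injective coresolutions only handles the $\Lambda^!\textup{-Cop}^{\mathbb{Z}}$ side, since the $I^!_x\langle n\rangle$ are infinite-dimensional in general, so it is cleaner to deduce it as the paper does, from Corollary~3.11(1) together with the fact that $\mathfrak{F}$ is an equivalence.
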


\begin{proof}
We first claim that
\[
\mathsf{D}^b\bigl(\Lambda^!\textup{-gmod}\bigr)
\;\simeq\;
\mathsf{D}^b_{\Lambda^!\textup{-gmod}}\bigl(\Lambda^!\textup{-Cop}^{\mathbb{Z}}\bigr),
\]
that is, the bounded derived category of finite-dimensional graded $\Lambda^!$-modules identifies with the thick triangulated subcategory of
$\mathsf{D}^b\bigl(\Lambda^!\textup{-Cop}^{\mathbb{Z}}\bigr)$ consisting of complexes whose cohomology objects lie in $\Lambda^!\textup{-gmod}$. 
Indeed, Corollary~3.11 yields a triangulated equivalence
\[
\mathsf{D}^b\bigl(\Lambda^!\textup{-gmod}\bigr)
\;\xrightarrow{\ \sim\ }\;
\mathsf{D}^b\bigl(\Lambda\textup{-proj}^{\mathbb{Z}}\bigr).
\]
In particular, $\mathsf{D}^b\bigl(\Lambda^!\textup{-gmod}\bigr)$ is a thick triangulated subcategory of
$\mathsf{D}^b\bigl(\Lambda^!\textup{-Cop}^{\mathbb{Z}}\bigr)$, hence it embeds fully faithfully into
$\mathsf{D}^b_{\Lambda^!\textup{-gmod}}\bigl(\Lambda^!\textup{-Cop}^{\mathbb{Z}}\bigr)$.
Conversely, the triangulated category
$\mathsf{D}^b_{\Lambda^!\textup{-gmod}}\bigl(\Lambda^!\textup{-Cop}^{\mathbb{Z}}\bigr)$
is, by definition, the smallest thick triangulated subcategory of
$\mathsf{D}^b\bigl(\Lambda^!\textup{-Cop}^{\mathbb{Z}}\bigr)$ containing $\Lambda^!\textup{-gmod}$.
This proves the claimed identification.

\medskip

Consider the commutative diagram of triangulated categories
\[
\begin{tikzcd}
\mathsf{D}^{b}\!\bigl(\Lambda^{!}\textup{-Cop}^{\mathbb{Z}}\bigr) 
   \ar[d] \ar[r,"\sim"] &
\mathsf{D}^{b}\!\bigl(\Lambda\textup{-gmod}\bigr) \ar[d] \\
\mathsf{D}^{b}\!\bigl(\Lambda^{!}\textup{-Cop}^{\mathbb{Z}}\bigr) 
   \big/ \mathsf{D}^{b}\!\bigl(\Lambda^{!}\textup{-gmod}\bigr)
   \ar[r,dashed] &
\mathsf{D}_{\mathrm{sg}}\!\bigl(\Lambda\textup{-gmod}\bigr),
\end{tikzcd}
\]
where the vertical arrows are the canonical Verdier quotient functors. By the universal property of the Verdier quotient, the equivalence on the top row induces a unique triangulated functor
\[
\mathfrak{F}\colon 
\mathsf{D}^{b}\!\Bigl(\Lambda^{!}\textup{-Cop}^{\mathbb{Z}}
     \big/ \Lambda^{!}\textup{-gmod}\Bigr)
\longrightarrow 
\mathsf{D}_{\mathrm{sg}}\!\bigl(\Lambda\textup{-gmod}\bigr).
\]
By abuse of notation, we denote this induced functor again by $\mathfrak{F}$.
Its full faithfulness and essential surjectivity are immediate from the graded derived  Koszul duality of Theorem~3.8. Hence $\mathfrak{F}$ is a triangulated equivalence.
\end{proof}

We now establish the following generalization of the BGG correspondence, valid for all finite-dimensional Gorenstein–Iwanaga Koszul algebras.

\begin{theorem}
Let \(\Lambda\) be a finite-dimensional Gorenstein–Iwanaga Koszul algebra, and let \(\Lambda^!\) be its Koszul dual. Then the graded singular Koszul duality induces a triangulated equivalence
\[
\mathsf{D}^{b}\!\Bigl(\Lambda^{!}\textup{-Pe}^{\mathbb{Z}}
        \big/ \Lambda^{!}\textup{-gmod}\Bigr)
        \;\xrightarrow{\sim}\;
\mathsf{D}^b\Bigl(\Lambda^!\textup{-Cop}^{\mathbb{Z}}\big/\Lambda^!\textup{-gmod}\Bigr)
\;\xrightarrow{\sim}\;
\Lambda\textup{-}\underline{\mathrm{Gproj}}^{\mathbb{Z}},
\]
where \(\underline{\Lambda\textup{-}\mathrm{Gproj}^{\mathbb{Z}}}\) denotes the stable category of finite-dimensional graded Gorenstein-projective \(\Lambda\)-modules.
\end{theorem}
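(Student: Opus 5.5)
The proof splits into two halves: the right-hand equivalence, which is formal from results already in place, and the left-hand equivalence, which needs an analogue of the singular duality on the perfect side.

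\emph{Right-hand equivalence.} We compose the graded singular Koszul duality of Theorem~3.16 with the graded Buchweitz--Orlov equivalence recalled in Section~2. Since \(\Lambda\) is Iwanaga--Gorenstein, the graded Gorenstein-projective modules coincide with the graded maximal Cohen--Macaulay modules. Buchweitz's theorem then gives a triangulated equivalence
\[
\mathsf{D}_{\mathrm{sg}}\bigl(\Lambda\textup{-gmod}\bigr)\xrightarrow{\ \sim\ }\Lambda\textup{-}\underline{\mathrm{Gproj}}^{\mathbb{Z}}.
\]
Precomposing with the equivalence \(\mathsf{D}^{b}(\Lambda^{!}\textup{-Cop}^{\mathbb{Z}}/\Lambda^{!}\textup{-gmod})\simeq\mathsf{D}_{\mathrm{sg}}(\Lambda\textup{-gmod})\) of Theorem~3.16 gives the second arrow. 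The only point to verify is that Buchweitz's argument works in the graded category \(\Lambda\textup{-gmod}\). This is standard: \(\Lambda\textup{-gmod}\) has enough projectives, and \(\Lambda\) has finite graded self-injective dimension because the graded and ungraded Ext groups agree by Proposition~2.2.

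\emph{Left-hand equivalence.} We use the dual form (3) of Theorem~3.8, namely \(\mathsf{D}^b(\Lambda\textup{-gmod})\simeq\mathsf{D}^b(\Lambda^{!}\textup{-Pe}^{\mathbb{Z}})\). Under this equivalence the graded simples \(S_x\langle n\rangle\) correspond (up to shift) to the graded projectives \(P^{!}_x\langle -n\rangle\), and the graded indecomposable injectives \(I_x\langle n\rangle\) of \(\Lambda\) correspond to the simples \(S^{!}_x\). It follows that \(\mathsf{K}^b(\Lambda\textup{-inj}^{\mathbb{Z}})\) is sent onto the thick subcategory \(\mathsf{D}^b_{\Lambda^{!}\textup{-gmod}}(\Lambda^{!}\textup{-Pe}^{\mathbb{Z}})\) generated by \(\Lambda^{!}\textup{-gmod}\). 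The Gorenstein hypothesis enters here: it gives \(\mathsf{K}^b(\Lambda\textup{-proj}^{\mathbb{Z}})=\mathsf{K}^b(\Lambda\textup{-inj}^{\mathbb{Z}})\) inside \(\mathsf{D}^b(\Lambda\textup{-gmod})\). Hence \(\mathsf{D}_{\mathrm{sg}}(\Lambda\textup{-gmod})\) is equivalent to the Verdier quotient \(\mathsf{D}^b(\Lambda^{!}\textup{-Pe}^{\mathbb{Z}})/\mathsf{D}^b_{\Lambda^{!}\textup{-gmod}}(\Lambda^{!}\textup{-Pe}^{\mathbb{Z}})\).

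We then apply Theorem~2.18 to the triple \((\Lambda^{!}\textup{-gmod},\Lambda^{!}\textup{-Pe}^{\mathbb{Z}},\Lambda^{!}\textup{-GMod}^{+})\), mirroring the coperfect case. We must check three things: \(\Lambda^{!}\textup{-Pe}^{\mathbb{Z}}\) satisfies two-out-of-three, it has enough strongly projective objects (the dual version of the triple axioms), and \(\mathsf{D}^b(\Lambda^{!}\textup{-gmod})\simeq\mathsf{D}^b_{\Lambda^{!}\textup{-gmod}}(\Lambda^{!}\textup{-Pe}^{\mathbb{Z}})\). The last condition comes from Corollary~3.11(1), exactly as in the proof of Theorem~3.16. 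Two-out-of-three for perfect modules follows from the horseshoe lemma together with finite global dimension of \(\Lambda^{!}\) (Theorem~2.6). Theorem~2.18 then identifies the Verdier quotient with \(\mathsf{D}^b(\Lambda^{!}\textup{-Pe}^{\mathbb{Z}}/\Lambda^{!}\textup{-gmod})\), which gives the first arrow. Equivalently, the first arrow is the composite of this identification with the inverse of Theorem~3.16's identification of \(\mathsf{D}^b(\Lambda^{!}\textup{-Cop}^{\mathbb{Z}}/\Lambda^{!}\textup{-gmod})\) with the same singularity category.

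\emph{Main obstacle.} The delicate step is showing that the perfect-side functor carries \(\mathsf{K}^b(\Lambda\textup{-inj}^{\mathbb{Z}})\) exactly onto the complexes with finite-dimensional cohomology. One direction is clear: injectives go to finite-dimensional modules. For the converse, I would use that under Koszul duality finite-dimensional \(\Lambda^{!}\)-modules are generated by the simples \(S^{!}_x\langle n\rangle\), whose images are the \(I_x\langle -n\rangle\) up to shift. The equality of thick hulls then reduces to the Gorenstein identity \(\operatorname{thick}(\Lambda\textup{-proj}^{\mathbb{Z}})=\operatorname{thick}(\Lambda\textup{-inj}^{\mathbb{Z}})\). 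This identity is where injdim \(\Lambda<\infty\) on both sides is used: finite injective dimension of \(\Lambda\) puts each projective in the thick hull of the injectives, and the dual statement for \(\Lambda^{\mathrm{op}}\), via \(\mathbb{D}\), puts each injective in the thick hull of the projectives.
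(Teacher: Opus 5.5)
For the right-hand arrow you do exactly what the paper does: compose Theorem~3.16 with the graded Buchweitz equivalence. The paper's proof is that one sentence and gives no argument for the left-hand arrow. There your route is genuinely different and supplies what the paper omits, in four steps:
\begin{enumerate}
\item Via Theorem~3.8(3), the injectives $I_x\langle n\rangle$ go to the simples $S^{!}_x$ up to shifts.
\item Hence $\mathsf{K}^b(\Lambda\textup{-inj}^{\mathbb{Z}})$ corresponds to $\operatorname{thick}(\Lambda^{!}\textup{-gmod})$.
\item The Iwanaga--Gorenstein identity $\mathsf{K}^b(\Lambda\textup{-proj}^{\mathbb{Z}})=\mathsf{K}^b(\Lambda\textup{-inj}^{\mathbb{Z}})$ turns the resulting Verdier quotient into $\mathsf{D}_{\mathrm{sg}}(\Lambda\textup{-gmod})$.
\item A strongly-projective dual of Theorem~2.18 identifies that quotient with $\mathsf{D}^b(\Lambda^{!}\textup{-Pe}^{\mathbb{Z}}/\Lambda^{!}\textup{-gmod})$.
\end{enumerate}

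What this buys is an actual proof of the first equivalence. It also shows that the Gorenstein hypothesis is needed there. For an arbitrary finite-dimensional Koszul $\Lambda$, the perfect-side quotient is the quotient of $\mathsf{D}^b(\Lambda\textup{-gmod})$ by $\mathsf{K}^b(\Lambda\textup{-inj}^{\mathbb{Z}})$. That subcategory coincides with $\mathsf{K}^b(\Lambda\textup{-proj}^{\mathbb{Z}})$ exactly in the Iwanaga--Gorenstein case. The paper, by contrast, uses the hypothesis only through Buchweitz.

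The cost is that you rely on things the paper does not state:
\begin{itemize}
\item a version of Theorem~2.18 with strongly projective objects (the stated version asks for strongly injective ones, which $\Lambda^{!}\textup{-Pe}^{\mathbb{Z}}$ lacks);
\item two-out-of-three for $\Lambda^{!}\textup{-Pe}^{\mathbb{Z}}$;
\item the inclusion $\Lambda^{!}\textup{-gmod}\subseteq\Lambda^{!}\textup{-Pe}^{\mathbb{Z}}$. This holds because each $S^{!}_x$ has a finite Koszul resolution by finitely generated projectives, $\Lambda$ being finite-dimensional.
\end{itemize}

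One correction to your ``main obstacle'' paragraph. The identification of the image of $\mathsf{K}^b(\Lambda\textup{-inj}^{\mathbb{Z}})$ with $\operatorname{thick}(\Lambda^{!}\textup{-gmod})$ holds for every finite-dimensional Koszul $\Lambda$. It does not reduce to the Gorenstein identity.

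The step that genuinely needs an argument is $\operatorname{thick}(\Lambda^{!}\textup{-gmod})=\mathsf{D}^b_{\Lambda^{!}\textup{-gmod}}(\Lambda^{!}\textup{-Pe}^{\mathbb{Z}})$. It follows by truncation: two-out-of-three shows that the cycles and boundaries of a bounded complex of perfect modules with finite-dimensional cohomology are again perfect.

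The Gorenstein identity enters only afterwards, to replace $\mathsf{K}^b(\Lambda\textup{-inj}^{\mathbb{Z}})$ by $\mathsf{K}^b(\Lambda\textup{-proj}^{\mathbb{Z}})$, as you correctly place it in the body of your proposal.
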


\begin{proof}
This follows immediately from Theorem~3.16 together with Buchweitz's equivalence \cite[Theorem~4.4.1]{15} (see also Orlov \cite[Lemma~1.10, Proposition~1.11]{45}).
\end{proof}

\begin{remark}
When \(\Lambda\) is Gorenstein--Iwanaga, the category of Gorenstein-projective modules coincides with the category of maximal Cohen--Macaulay modules.
In this case, the triangulated equivalence in Theorem~3.17 can be written as
\[
\mathsf{D}^b\Bigl(\Lambda^!\textup{-Cop}^{\mathbb{Z}}\big/\Lambda^!\textup{-gmod}\Bigr)
\;\xrightarrow{\sim}\;
\Lambda\textup{-}\underline{\mathrm{CM}}^{\mathbb{Z}},
\]
where \(\Lambda\textup{-}\underline{\mathrm{CM}}^{\mathbb{Z}}\) denotes the stable category of finite-dimensional graded maximal Cohen--Macaulay \(\Lambda\)-modules.
\end{remark}
We conclude this subsection with a characterization of finite-dimensional Koszul Iwanaga--Gorenstein algebras.

\begin{theorem}
Let \(\Lambda\) be a finite-dimensional Koszul algebra. The following statements are equivalent:
\begin{enumerate}
\item \(\Lambda\) is Iwanaga--Gorenstein.
\item The graded derived  Koszul duality \( 
\mathfrak{F}\colon
\mathsf{D}^{b}\!\bigl(\Lambda\textup{-gmod}\bigr)
\;\longrightarrow\;
\mathsf{D}^{b}\!\bigl(\Lambda^{!}\textup{-Pe}^{\mathbb{Z}}\bigr)
\) induces a triangulated equivalence
\[
\mathsf{D}^{b}\!\bigl(\Lambda\textup{-}\mathrm{pe}^{\mathbb{Z}}\bigr)
\;\xrightarrow{\ \sim\ }\;
\mathsf{D}^{b}\!\bigl(\Lambda^{!}\textup{-gmod}\bigr).
\]
\end{enumerate}
\end{theorem}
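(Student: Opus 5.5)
The plan is to compute the image under \(\mathfrak{F}\) of the two generating sets and compare the thick subcategories they generate. By Corollary~3.11(1), \(\mathfrak{F}\) restricts to an equivalence \(\mathsf{D}^{b}(\Lambda^{!}\textup{-gmod})\simeq\mathsf{D}^{b}(\Lambda\textup{-}\mathrm{pe}^{\mathbb{Z}})\) in one direction. Dually, \(\mathsf{D}^{b}(\Lambda\textup{-cop}^{\mathbb{Z}})\simeq\mathsf{D}^{b}(\Lambda^{!}\textup{-gmod})\) in the other. Here \(\mathsf{D}^{b}(\Lambda\textup{-}\mathrm{pe}^{\mathbb{Z}})\) is identified with \(\mathsf{K}^{b}(\Lambda\textup{-proj}^{\mathbb{Z}})\), and \(\mathsf{D}^{b}(\Lambda\textup{-cop}^{\mathbb{Z}})\) with \(\mathsf{K}^{b}(\Lambda\textup{-inj}^{\mathbb{Z}})\), both as thick subcategories of \(\mathsf{D}^{b}(\Lambda\textup{-gmod})\). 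Under the functor \(\mathfrak{F}\colon \mathsf{D}^{b}(\Lambda\textup{-gmod})\to\mathsf{D}^{b}(\Lambda^{!}\textup{-Pe}^{\mathbb{Z}})\) of Theorem~3.8(3), the graded simples \(S_x\langle n\rangle\) go to the projectives \(P^{!}_x\langle -n\rangle\). The thick subcategory \(\mathsf{D}^{b}(\Lambda^{!}\textup{-gmod})\) is generated by the images of the finite-dimensional objects on the \(\Lambda^{!}\)-side. The first step is to verify, by the same Hom computation as in Theorem~3.8 with the roles of \(\Lambda\) and \(\Lambda^{!}\) exchanged (using \((\Lambda^{!})^{!}\cong\Lambda\)), that the preimage of \(\mathsf{D}^{b}(\Lambda^{!}\textup{-gmod})\) under this \(\mathfrak{F}\) is exactly \(\mathsf{K}^{b}(\Lambda\textup{-inj}^{\mathbb{Z}})\). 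Concretely, one checks that \(\mathfrak{F}(I_x\langle n\rangle)\) is the simple \(S^{!}_x\) up to shifts, in analogy with Lemma~3.3.

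Granting this, condition (2) says that \(\mathfrak{F}\) carries \(\mathsf{D}^{b}(\Lambda\textup{-}\mathrm{pe}^{\mathbb{Z}})\) onto \(\mathsf{D}^{b}(\Lambda^{!}\textup{-gmod})\). Since \(\mathfrak{F}\) is an equivalence and the essential image of \(\mathsf{K}^{b}(\Lambda\textup{-inj}^{\mathbb{Z}})\) is exactly \(\mathsf{D}^{b}(\Lambda^{!}\textup{-gmod})\), condition (2) is equivalent to the equality of thick subcategories
\[
\mathsf{K}^{b}(\Lambda\textup{-proj}^{\mathbb{Z}})=\mathsf{K}^{b}(\Lambda\textup{-inj}^{\mathbb{Z}})\quad\text{inside }\mathsf{D}^{b}(\Lambda\textup{-gmod}).
\]
This reduces the theorem to a statement about \(\Lambda\) alone.

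It then remains to show that this equality holds if and only if \(\Lambda\) is Iwanaga--Gorenstein. This is a standard characterization. If \(\operatorname{injdim}{}_\Lambda\Lambda<\infty\), then each \(P_x\langle n\rangle\) has a finite graded injective coresolution, so \(\mathsf{K}^{b}(\mathrm{proj})\subseteq\mathsf{K}^{b}(\mathrm{inj})\). If \(\operatorname{injdim}\Lambda_\Lambda<\infty\), then applying \(\mathbb{D}\) shows that \(\mathbb{D}\Lambda\) has finite projective dimension, giving the reverse inclusion. Conversely, equality implies that each indecomposable graded projective lies in \(\mathsf{K}^{b}(\mathrm{inj})\). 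A bounded complex of injectives quasi-isomorphic to a module yields a finite injective coresolution, so \({}_\Lambda\Lambda\) has finite injective dimension, and symmetrically \(\mathbb{D}\Lambda\) has finite projective dimension. Forgetting the grading is harmless here, because gradable modules detect these dimensions via Proposition~2.2.

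The main obstacle is the first step: identifying \(\mathfrak{F}^{-1}(\mathsf{D}^{b}(\Lambda^{!}\textup{-gmod}))\) with \(\mathsf{K}^{b}(\Lambda\textup{-inj}^{\mathbb{Z}})\). This requires the dual form of Lemma~3.3, namely that \(K\) applied on the \(\Lambda^{!}\)-side sends graded injective \(\Lambda\)-modules to linear complexes quasi-isomorphic to shifted simples. I would prove it by dualizing the argument of Lemma~3.3 via \(\mathbb{D}\), exchanging left and right modules, and then invoking Corollary~3.11(1) applied to \(\Lambda^{!}\) in place of \(\Lambda\) to pin down the essential image.
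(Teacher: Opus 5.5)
Your proposal is correct. The forward direction matches the paper's: under (1) one has $\mathsf{D}^{b}(\Lambda\textup{-}\mathrm{pe}^{\mathbb{Z}})=\mathsf{D}^{b}(\Lambda\textup{-cop}^{\mathbb{Z}})$, and Corollary~3.11(1) finishes.

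For (2)$\Rightarrow$(1) the paper argues differently. It uses full faithfulness of $\mathfrak{F}$ to compute $\mathrm{H}^{n}(\mathfrak{F}(P_x))\cong\bigoplus_{i,y}\operatorname{Ext}^{i-n}_{\Lambda\textup{-gmod}}(S_y\langle i\rangle,P_x)$. Since $\mathfrak{F}(P_x)\in\mathsf{D}^{b}(\Lambda^{!}\textup{-gmod})$ has finite-dimensional total cohomology, only finitely many of these groups are nonzero, so $P_x$ has finite injective dimension. This uses only that $\mathfrak{F}$ maps $\mathsf{D}^{b}(\Lambda\textup{-}\mathrm{pe}^{\mathbb{Z}})$ \emph{into} $\mathsf{D}^{b}(\Lambda^{!}\textup{-gmod})$. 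It therefore yields only $\operatorname{injdim}{}_{\Lambda}\Lambda<\infty$. The paper then asserts the two-sided Iwanaga--Gorenstein condition without further argument, and one-sided finiteness implying two-sided is the open Gorenstein symmetry question.

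Your reduction to $\mathsf{K}^{b}(\Lambda\textup{-proj}^{\mathbb{Z}})=\mathsf{K}^{b}(\Lambda\textup{-inj}^{\mathbb{Z}})$ also uses essential surjectivity. Because $\mathfrak{F}(I_x)$ is a shifted simple $\Lambda^{!}$-module, (2) forces $I_x\in\mathsf{K}^{b}(\Lambda\textup{-proj}^{\mathbb{Z}})$, that is, $\operatorname{injdim}\Lambda_{\Lambda}<\infty$. So your route establishes both halves of the Iwanaga--Gorenstein condition. The paper's route gives instead an explicit formula for the cohomology of $\mathfrak{F}(P_x)$ in terms of graded Ext groups.

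The step you flag as the main obstacle is already the second equivalence $\mathsf{D}^{b}(\Lambda\textup{-cop}^{\mathbb{Z}})\simeq\mathsf{D}^{b}(\Lambda^{!}\textup{-gmod})$ of Corollary~3.11(1), which you quote at the start. Use it directly. Do not apply Corollary~3.11 to $\Lambda^{!}$ in place of $\Lambda$; that is not licensed, since $\Lambda^{!}$ need not be finite-dimensional.
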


\begin{proof}
Assume first that $\Lambda$ is Iwanaga--Gorenstein. 
Then \(\mathsf{D}^{b}\!\bigl(\Lambda\textup{-}\mathrm{pe}^{\mathbb{Z}}\bigr)= \mathsf{D}^{b}\!\bigl(\Lambda\textup{-cop}^{\mathbb{Z}}\bigr)\), and Corollary~3.11(1) implies that  the functor $\mathfrak{F}$ restricts to a triangulated equivalence
\[
\mathsf{D}^{b}\!\bigl(\Lambda\textup{-}\mathrm{pe}^{\mathbb{Z}}\bigr)
\;\simeq\;
\mathsf{D}^{b}\!\bigl(\Lambda^{!}\textup{-gmod}\bigr),
\]
which yields~\emph{(2)}.

Conversely, assume that the graded derived  Koszul duality induces a triangulated equivalence as in~\emph{(2)}.
Let $P_x$ be an indecomposable graded projective $\Lambda$-module generated in degree $0$.
Then $\mathfrak{F}(P_x)$ is a bounded complex of finitely generated graded projective $\Lambda^!$-modules whose cohomology groups are finite-dimensional. For each $n\in\mathbb{Z}$, we have natural isomorphisms
\begin{align*}
\mathrm{H}^{n}\!\bigl(\mathfrak{F}(P_x)\bigr)
&\;\cong\;
\operatorname{Hom}_{\mathsf{D}^{b}\!\bigl(\Lambda^{!}\textup{-Pe}^{\mathbb{Z}}\bigr)}\!\Bigl(
\bigoplus_{\substack{i\in\mathbb{Z}\\ y\in Q_{0}}} P_y\langle i\rangle[-n],\,
\mathfrak{F}(P_x)\Bigr) \\
&\;\cong\;
\operatorname{Hom}_{\mathsf{D}^{b}\!\bigl(\Lambda\textup{-gmod}\bigr)}\!\Bigl(
\bigoplus_{\substack{i\in\mathbb{Z}\\ y\in Q_{0}}} S_y\langle i\rangle[i-n],\,
P_x\Bigr) \\
&\;\cong\;
\bigoplus_{\substack{i\in\mathbb{Z}\\ y\in Q_{0}}}
\operatorname{Ext}^{\,i-n}_{\Lambda\textup{-gmod}}\!\bigl(S_y\langle i\rangle,\;P_x\bigr).
\end{align*}
Moreover, for each fixed $n$ the above direct sums are finite, since
$\mathrm{H}^{n}\!\bigl(\mathfrak{F}(P_x)\bigr)$ is finite-dimensional.

It follows that only finitely many of the groups
$\operatorname{Ext}^{j}_{\Lambda\textup{-gmod}}(S_y\langle k\rangle,P_x)$ are nonzero. 
Hence $P_x$ has finite injective dimension; equivalently, $P_x$ is a \emph{coperfect} $\Lambda$-module. 
Consequently, every graded projective $\Lambda$-module has finite injective dimension, and therefore $\Lambda$ is Iwanaga--Gorenstein.

This proves the equivalence of~\emph{(1)} and~\emph{(2)}.
\end{proof}
\subsection{DG Koszul Duality}
The purpose of this subsection is to construct a differential graded lift
\(\mathfrak{F}_{\mathrm{dg}}\) of the graded derived  Koszul duality functor
\(\mathfrak{F}\), which will play a key role in the proof of the main results of
this paper (Theorem~4.3 and Theorem~4.6). Somewhat surprisingly, this construction also yields
isomorphisms in the homotopy category of dg categories
\(\operatorname{Ho}(\mathrm{dgcat})\),
\[
\mathsf{D}^{b}_{\mathrm{dg}}\!\bigl(\Lambda^{!}\textup{-Cop}^{\mathbb{Z}}\bigr)
\;\cong\;
\mathsf{D}^{b}_{\mathrm{dg}}\!\bigl(\Lambda\textup{-gmod}\bigr),
\]
and
\[
\mathsf{D}^{b}_{\mathrm{dg}}\!\Bigl(
\Lambda^{!}\textup{-Cop}^{\mathbb{Z}}
\big/ \Lambda^{!}\textup{-gmod}
\Bigr)
\;\cong\;
\mathsf{D}^{\mathrm{dg}}_{\mathrm{sg}}\!\bigl(\Lambda\textup{-gmod}\bigr).
\]
These isomorphisms are compatible with passage to homotopy categories in the sense that applying
the functor \(\mathrm{H}^{0}\) recovers, respectively, the derived graded Koszul
duality and the graded singular Koszul duality.

We begin by constructing a dg enhancement of the derived Koszul duality.
\begin{theorem}
Let \(\Lambda\) be a Koszul algebra. There exists a dg functor
\[
\mathfrak{F}_{\mathrm{dg}}\colon
\mathsf{D}^b_{\mathrm{dg}}\!\bigl(\Lambda^!\textup{-Cop}^{\mathbb{Z}}\bigr)
\longrightarrow
\mathsf{D}^b_{\mathrm{dg}}\!\bigl(\Lambda\textup{-gmod}\bigr)
\]
which is a quasi-equivalence. In particular, \(\mathfrak{F}_{\mathrm{dg}}\) is an isomorphism in
\(\operatorname{Ho}(\mathrm{dgcat})\), and its induced functor on homotopy categories yields a
triangulated equivalence
\[
H^{0}\!\bigl(\mathfrak{F}_{\mathrm{dg}}\bigr)\colon
\mathsf{D}^{b}\!\bigl(\Lambda^{!}\textup{-Cop}^{\mathbb{Z}}\bigr)
\;\xrightarrow{\ \sim\ }\;
\mathsf{D}^{b}\!\bigl(\Lambda\textup{-gmod}\bigr).
\]
\end{theorem}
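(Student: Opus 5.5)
The plan is to lift the three-step construction of \(\mathfrak{F}\) (apply \(K\) degreewise, totalize, then use \(\mathfrak{T}\)) to the dg level, and to handle the quotients using the universal property of Drinfeld dg quotients. The lifted functor will be a quasi-equivalence because its \(H^{0}\) is \(\mathfrak{F}\), which Theorem~3.8 already shows is an equivalence.

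\emph{Step 1: a dg functor on complexes.} The complex quadratic functor \(\mathcal{F}=\operatorname{Tot}\circ\mathfrak{K}\) is defined on all of \(\mathsf{C}^{b}(\Lambda^{!}\textup{-Cop}^{\mathbb{Z}})\). Its formulas are \(k\)-linear and given componentwise. I will extend it to a dg functor
\[
\mathcal{F}_{\mathrm{dg}}\colon \mathsf{C}^{b}_{\mathrm{dg}}\bigl(\Lambda^{!}\textup{-Cop}^{\mathbb{Z}}\bigr)\longrightarrow \mathsf{C}^{-,b}_{\mathrm{dg}}\bigl(\Lambda\textup{-proj}^{\mathbb{Z}}\bigr).
\]
A homogeneous map \(f=(f^{p})\) of degree \(r\) is sent to the map of total complexes with components \(\mathrm{id}_{P_x\langle q\rangle}\otimes f^{p}_{x,q}\), twisted by the Koszul sign \((-1)^{rq}\) or a similar sign. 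The sign is chosen so that the differential \(d_{1}+(-1)^{p}d_{2}\) of \(\operatorname{Tot}\) is respected, i.e.\ so that \(\mathcal{F}_{\mathrm{dg}}\) commutes with the differentials of the Hom complexes. Compatibility with composition and units is immediate from the componentwise formulas. On \(Z^{0}\) and \(H^{0}\) this functor recovers \(\mathcal{F}\) and \(\mathscr{F}\), by \cite[Lemmas~3.6, 3.7]{3}.

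\emph{Step 2: passing to the dg derived categories.} The functor \(\mathcal{F}\) sends acyclic complexes to acyclic complexes, so \(\mathcal{F}_{\mathrm{dg}}\) maps \(\mathcal{A}^{b}_{\mathrm{dg}}(\Lambda^{!}\textup{-Cop}^{\mathbb{Z}})\) into the acyclic objects of the target. I will compose it with two dg functors:
\begin{itemize}
\item the inclusion \(\mathsf{C}^{-,b}_{\mathrm{dg}}(\Lambda\textup{-proj}^{\mathbb{Z}})\to\mathsf{C}^{-,b}_{\mathrm{dg}}(\Lambda\textup{-gmod})\);
\item the Drinfeld quotient by acyclic complexes.
\end{itemize}
This yields a dg functor that sends acyclic objects to contractible ones. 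To land in \(\mathsf{D}^{b}_{\mathrm{dg}}(\Lambda\textup{-gmod})\), which is built from bounded complexes, I will use the dg quotient \(\mathsf{C}^{-,b}_{\mathrm{dg}}(\Lambda\textup{-gmod})/\mathrm{Acyc}\) as an intermediate model. It is quasi-equivalent to \(\mathsf{D}^{b}_{\mathrm{dg}}(\Lambda\textup{-gmod})\), because on \(H^{0}\) the inclusion of bounded complexes is an equivalence: Drinfeld's theorem (Remark~2.30) identifies both \(H^{0}\)'s with Verdier quotients, and truncation \(\tau_{\ge a}\) shows those agree. By the universal property of the dg quotient, we get
\[
\mathfrak{F}_{\mathrm{dg}}\colon \mathsf{D}^{b}_{\mathrm{dg}}(\Lambda^{!}\textup{-Cop}^{\mathbb{Z}})\to \mathsf{D}^{b}_{\mathrm{dg}}(\Lambda\textup{-gmod}),
\]
which is a priori a morphism in \(\operatorname{Ho}(\mathrm{dgcat})\) (a zigzag). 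If an honest dg functor is wanted, one can rectify: \(\mathsf{D}^{b}_{\mathrm{dg}}(\Lambda^{!}\textup{-Cop}^{\mathbb{Z}})\) is cofibrant up to quasi-equivalence over a field.

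\emph{Step 3: quasi-equivalence.} By construction \(H^{0}(\mathfrak{F}_{\mathrm{dg}})\) fits into the commutative square defining \(\mathfrak{F}\) (via \(\epsilon\) and \(\mathfrak{T}\)), so \(H^{0}(\mathfrak{F}_{\mathrm{dg}})\cong\mathfrak{F}\). Theorem~3.8 then gives that \(H^{0}(\mathfrak{F}_{\mathrm{dg}})\) is a triangulated equivalence. Both dg categories are pretriangulated and \(\mathfrak{F}_{\mathrm{dg}}\) commutes with shifts, so
\[
H^{n}\operatorname{Hom}(X,Y)=H^{0}\operatorname{Hom}(X,Y[n]).
\]
Hence the maps on Hom complexes are quasi-isomorphisms in every degree. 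Essential surjectivity on \(H^{0}\) is part of Theorem~3.8. This proves that \(\mathfrak{F}_{\mathrm{dg}}\) is a quasi-equivalence and an isomorphism in \(\operatorname{Ho}(\mathrm{dgcat})\).

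\emph{Main obstacle.} The delicate point is Step 2. One must check that the quasi-functor obtained through the dg quotients really induces \(\mathfrak{F}\) on \(H^{0}\), in particular that the Drinfeld quotient models and the \(H^{0}\) identifications of Definition~2.31 are compatible with \(\mathfrak{T}\). The sign bookkeeping in Step 1 is routine but must be done carefully. I would first try to avoid rectification altogether by working with the semi-free (h-flat) resolution route of Remark~2.30.
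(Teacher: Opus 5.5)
Your argument is correct in substance, but it uses different dg models from the paper's. The paper never passes through Drinfeld quotients. It models the source by $\mathsf{C}^{b}_{\mathrm{dg}}(\Lambda^{!}\textup{-Inj}^{\mathbb{Z}})$, which is legitimate because $\Lambda^{!}\textup{-Cop}^{\mathbb{Z}}$ has enough (strongly) injectives and finite injective dimension (Proposition~3.4). It models the target by $\mathsf{C}^{-,b}_{\mathrm{dg}}(\Lambda\textup{-proj}^{\mathbb{Z}})$, via $\mathfrak{T}$. On these models $\operatorname{Tot}\circ\mathfrak{K}$ is already an honest dg functor, and its $H^{0}$ is $\mathfrak{F}$ by construction, so your ``main obstacle'' does not arise. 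The quasi-equivalence then follows exactly as in your Step~3, from $H^{i}\operatorname{Hom}(X,Y)\cong\operatorname{Hom}(X,Y[i])$ on both sides together with Theorem~3.8.

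Your route has the merit of handling all bounded complexes of coperfect modules and working directly with the models of Definition~2.31. The price is Drinfeld's theorem, the universal property of the quotient, and a zigzag through $\mathsf{C}^{-,b}_{\mathrm{dg}}(\Lambda\textup{-gmod})/\mathrm{Ac}$. What it naturally produces is an isomorphism in $\operatorname{Ho}(\mathrm{dgcat})$.

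Your rectification remark does not upgrade this to a dg functor out of the given source. Drinfeld quotients of $\mathsf{C}^{b}_{\mathrm{dg}}$ are not cofibrant: they contain nontrivial idempotents, which retracts of semi-free dg categories cannot have. Passing to a cofibrant replacement changes the source. The clean fix is to restrict your Step~1 functor to complexes of injectives and land in $\mathsf{C}^{-,b}_{\mathrm{dg}}(\Lambda\textup{-proj}^{\mathbb{Z}})$, which is exactly the paper's proof. Note that the paper's identification of these models with $\mathsf{D}^{b}_{\mathrm{dg}}$ is itself only valid in $\operatorname{Ho}(\mathrm{dgcat})$, so both routes establish the same thing.

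One correction to Step~1: no sign twist is needed. Take $d_{\operatorname{Tot}}=d_{1}+(-1)^{p}d_{2}$ and $\mathfrak{K}(f)$ of bidegree $(r,0)$. The vertical contributions are $(-1)^{p+r}d_{2}\mathfrak{K}(f)-(-1)^{r}\mathfrak{K}(f)(-1)^{p}d_{2}$, which vanish because $\mathfrak{K}(f)$ commutes with $d_{2}$. This gives $d(\operatorname{Tot}\mathfrak{K}(f))=\operatorname{Tot}\mathfrak{K}(df)$ directly. The twist $(-1)^{rq}$ you suggest would instead leave a nonzero vertical term for odd $r$ and break compatibility with the differentials.
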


\begin{proof}
We identify
\[
\mathsf{D}^{b}_{\mathrm{dg}}\!\bigl(\Lambda^{!}\textup{-Cop}^{\mathbb{Z}}\bigr)
\;:=\;
\mathsf{C}^{b}_{\mathrm{dg}}\!\bigl(\Lambda^{!}\textup{-Inj}^{\mathbb{Z}}\bigr),
\qquad
\mathsf{D}^{b}_{\mathrm{dg}}\!\bigl(\Lambda\textup{-gmod}\bigr)
\;:=\;
\mathsf{C}^{-,b}_{\mathrm{dg}}\!\bigl(\Lambda\textup{-proj}^{\mathbb{Z}}\bigr),
\]

Recall that the $k$-linear functor
\[
K \colon \Lambda^{!}\textup{-Cop}^{\mathbb{Z}}\;\longrightarrow\;
\mathcal{LC}^{-,b}\!\bigl(\Lambda\textup{-}\mathrm{proj}^{\mathbb{Z}}\bigr)
\]
is defined on graded modules. Extending $K$ degreewise to bounded cochain complexes yields a
$k$-linear functor
\[
\mathfrak{K}\colon
\mathsf{C}^{b}\!\bigl(\Lambda^{!}\textup{-Cop}^{\mathbb{Z}}\bigr)
\;\longrightarrow\;
\mathsf{DC}^{b,-}\!\bigl(\Lambda\textup{-}\mathrm{proj}^{\mathbb{Z}}\bigr),
\]
which assigns to $M^{\bullet}$ the double complex $\mathfrak{K}(M^{\bullet})$ with $(p,q)$-term
\[
\mathfrak{K}(M^{\bullet})^{p,q}=K(M^{p})^{q},
\]
and with horizontal and vertical differentials denoted by $d^{1}$ and $d^{2}$, respectively.

We define the dg lift on objects by
\[
\mathfrak{F}_{\mathrm{dg}}(M^{\bullet})
\;:=\;
\mathcal{F}(M^{\bullet})
\;=\;
\operatorname{Tot}\bigl(\mathfrak{K}(M^{\bullet})\bigr)
\in
\mathsf{C}^{-,b}_{\mathrm{dg}}\!\bigl(\Lambda\textup{-proj}^{\mathbb{Z}}\bigr),
\qquad
M^{\bullet}\in \mathsf{C}^{b}_{\mathrm{dg}}\!\bigl(\Lambda^{!}\textup{-Inj}^{\mathbb{Z}}\bigr),
\]
where $\operatorname{Tot}$ is the diagonal totalization with differential
\[
d_{\operatorname{Tot}}^{\,n}
\;=\;
\bigoplus_{p+q=n}\Bigl(d^{1}_{p,q}+(-1)^{p}d^{2}_{p,q}\Bigr).
\]

\smallskip

On morphisms, let
\[
f\in \operatorname{Hom}^{r}_{\mathsf{C}^{b}_{\mathrm{dg}}(\Lambda^{!}\textup{-Inj}^{\mathbb{Z}})}(M^{\bullet},N^{\bullet})
\]
be a homogeneous morphism of degree $r$ in the mapping complex. Thus $f=(f^{p})_{p}$ with
$f^{p}\colon M^{p}\to N^{p+r}$. Applying $K$ degreewise yields morphisms of complexes
$K(f^{p})\colon K(M^{p})\to K(N^{p+r})$, hence a morphism of double complexes
\[
\mathfrak{K}(f)\colon \mathfrak{K}(M^{\bullet})\longrightarrow \mathfrak{K}(N^{\bullet})
\]
of bidegree $(r,0)$, whose $(p,q)$-component is $\mathfrak{K}(f)^{p,q}=K(f^{p})^{q}$. We set
\[
\mathfrak{F}_{\mathrm{dg}}(f)\;:=\;\operatorname{Tot}\bigl(\mathfrak{K}(f)\bigr).
\]
Explicitly, the homogeneous morphism $\operatorname{Tot}(\mathfrak{K}(f))$ is given in total degree $n$ by
\[
\bigl(\operatorname{Tot}(\mathfrak{K}(f))\bigr)^{n}
\;=\;
\bigoplus_{p+q=n}\,\mathfrak{K}(f)^{p,q}
\;=\;
\bigoplus_{p+q=n}\,K(f^{p})^{q}\colon
\bigoplus_{p+q=n} K(M^{p})^{q}\longrightarrow \bigoplus_{p+q=n} K(N^{p+r})^{q}.
\]
Since $\mathfrak{K}(f)$ has bidegree $(r,0)$, it shifts the horizontal index by $r$ and preserves the
vertical index. Consequently, after diagonal totalization it shifts total degree by $r$, i.e.\
$\operatorname{Tot}(\mathfrak{K}(f))$ is homogeneous of degree $r$ in the mapping complex. In particular,
\[
\operatorname{Tot}(\mathfrak{K}(f))\in
\operatorname{Hom}^{r}_{\mathsf{C}^{-,b}_{\mathrm{dg}}(\Lambda\textup{-proj}^{\mathbb{Z}})}\!
\bigl(\mathfrak{F}_{\mathrm{dg}}(M^{\bullet}),\mathfrak{F}_{\mathrm{dg}}(N^{\bullet})\bigr).
\]

\smallskip

We now verify that $\mathfrak{F}_{\mathrm{dg}}$ commutes with the differentials of the mapping complexes.
In the dg category of complexes, the differential on the mapping complex is given on the degree-$r$
component by
\[
d(f)\;=\; d_{N}\circ f-(-1)^{r}f\circ d_{M},
\qquad\text{i.e.}\qquad
(d(f))^{p}=d_{N}^{p+r}\circ f^{p}-(-1)^{r}f^{p+1}\circ d_{M}^{p}.
\]
Applying $K$ degreewise and using functoriality yields
\[
K\bigl((d(f))^{p}\bigr)
=
K(d_{N}^{p+r})\circ K(f^{p})
-(-1)^{r}K(f^{p+1})\circ K(d_{M}^{p}).
\]
Equivalently, for $\mathfrak{K}(f)$ one has
\[
d^{1}\circ \mathfrak{K}(f)-(-1)^{r}\mathfrak{K}(f)\circ d^{1}=\mathfrak{K}\bigl(d(f)\bigr),
\qquad
d^{2}\circ \mathfrak{K}(f)=\mathfrak{K}(f)\circ d^{2},
\]
where the second identity expresses that each $K(f^{p})$ is a chain map with respect to the vertical
differential.

Using the defining formula for the total differential, we compute in the mapping complex of
$\mathsf{C}^{-,b}_{\mathrm{dg}}(\Lambda\textup{-proj}^{\mathbb{Z}})$:
\[
\begin{aligned}
d\bigl(\operatorname{Tot}(\mathfrak{K}(f))\bigr)
&=
d_{\operatorname{Tot}}\circ \operatorname{Tot}(\mathfrak{K}(f))
-(-1)^{r}\operatorname{Tot}(\mathfrak{K}(f))\circ d_{\operatorname{Tot}}
\\
&=
\operatorname{Tot}\Bigl(\bigl(d^{1}+(-1)^{p+r}d^{2}\bigr)\circ \mathfrak{K}(f)
-(-1)^{r}\mathfrak{K}(f)\circ \bigl(d^{1}+(-1)^{p}d^{2}\bigr)\Bigr)
\\
&=
\operatorname{Tot}\Bigl(\,d^{1}\circ \mathfrak{K}(f)-(-1)^{r}\mathfrak{K}(f)\circ d^{1}\Bigr)
\;+\;
\operatorname{Tot}\Bigl(\,(-1)^{p+r}d^{2}\circ \mathfrak{K}(f)-(-1)^{p+r}\mathfrak{K}(f)\circ d^{2}\Bigr)
\\
&=
\operatorname{Tot}\bigl(\mathfrak{K}(d(f))\bigr),
\end{aligned}
\]
since $d^{2}\circ \mathfrak{K}(f)=\mathfrak{K}(f)\circ d^{2}$. Therefore
\[
d\bigl(\mathfrak{F}_{\mathrm{dg}}(f)\bigr)
=
d\bigl(\operatorname{Tot}(\mathfrak{K}(f))\bigr)
=
\operatorname{Tot}\bigl(\mathfrak{K}(d(f))\bigr)
=
\mathfrak{F}_{\mathrm{dg}}\bigl(d(f)\bigr).
\]
Thus $\mathfrak{F}_{\mathrm{dg}}$ is a dg functor. By construction, its induced functor on $H^{0}$ is the
original triangulated functor $\mathfrak{F}$, i.e.\ $H^{0}(\mathfrak{F}_{\mathrm{dg}})=\mathfrak{F}$.

\smallskip

We now show that $\mathfrak{F}_{\mathrm{dg}}$ is a quasi-equivalence. Let
$X,Y\in\mathsf{C}^{b}_{\mathrm{dg}}(\Lambda^{!}\textup{-Inj}^{\mathbb{Z}})$. For each $i\in\mathbb{Z}$ there are
canonical identifications
\[
H^{i}\!\bigl(\operatorname{Hom}_{\mathsf{C}^{b}_{\mathrm{dg}}(\Lambda^{!}\textup{-Inj}^{\mathbb{Z}})}(X,Y)\bigr)
\;\cong\;
\operatorname{Hom}_{\mathsf{D}^{b}(\Lambda^{!}\textup{-Cop}^{\mathbb{Z}})}(X,Y[i]),
\]
and similarly,
\[
H^{i}\!\bigl(\operatorname{Hom}_{\mathsf{C}^{-,b}_{\mathrm{dg}}(\Lambda\textup{-proj}^{\mathbb{Z}})}
(\mathfrak{F}_{\mathrm{dg}}(X),\mathfrak{F}_{\mathrm{dg}}(Y))\bigr)
\;\cong\;
\operatorname{Hom}_{\mathsf{D}^{b}(\Lambda\textup{-gmod})}\bigl(\mathfrak{F}(X),\mathfrak{F}(Y[i])\bigr).
\]
Since $\Lambda$ is Koszul, $\mathfrak{F}$ is an equivalence of triangulated categories, and hence the
induced maps
\[
\operatorname{Hom}_{\mathsf{D}^{b}(\Lambda^{!}\textup{-Cop}^{\mathbb{Z}})}(X,Y[i])
\;\xrightarrow{\ \sim\ }\;
\operatorname{Hom}_{\mathsf{D}^{b}(\Lambda\textup{-gmod})}\bigl(\mathfrak{F}(X),\mathfrak{F}(Y[i])\bigr)
\]
are isomorphisms for all $i$. Therefore the morphism of complexes
\[
\operatorname{Hom}_{\mathsf{C}^{b}_{\mathrm{dg}}(\Lambda^{!}\textup{-Inj}^{\mathbb{Z}})}(X,Y)
\longrightarrow
\operatorname{Hom}_{\mathsf{C}^{-,b}_{\mathrm{dg}}(\Lambda\textup{-proj}^{\mathbb{Z}})}
(\mathfrak{F}_{\mathrm{dg}}(X),\mathfrak{F}_{\mathrm{dg}}(Y))
\]
induces isomorphisms on cohomology in every degree, hence is a quasi-isomorphism. Thus
$\mathfrak{F}_{\mathrm{dg}}$ is quasi-fully faithful.

Finally, $\mathfrak{F}_{\mathrm{dg}}$ is essentially surjective on $H^{0}$ because
$\mathfrak{F}$  is essentially surjective. Therefore
$\mathfrak{F}_{\mathrm{dg}}$ is a quasi-equivalence.
\end{proof}
The dg derived Koszul duality descends to a dg singular Koszul duality.

\begin{theorem}
Let \(\Lambda\) be a Koszul algebra. There exists a dg functor
\[
\mathfrak{F}^{\mathrm{sg}}_{\mathrm{dg}}\colon
\mathsf{D}^{b}_{\mathrm{dg}}\!\Bigl(\Lambda^{!}\textup{-Cop}^{\mathbb{Z}}
        \big/ \Lambda^{!}\textup{-gmod}\Bigr)
\longrightarrow
\mathsf{D}^{\mathrm{dg}}_{\mathrm{sg}}\!\bigl(\Lambda\textup{-gmod}\bigr)
\]
which is a quasi-equivalence. In particular, \(\mathfrak{F}^{\mathrm{sg}}_{\mathrm{dg}}\) is an isomorphism in
\(\operatorname{Ho}(\mathrm{dgcat})\), and its induced functor on homotopy categories yields a
triangulated equivalence
\[
H^{0}\!\bigl(\mathfrak{F}^{\mathrm{sg}}_{\mathrm{dg}}\bigr)\colon
\mathsf{D}^{b}\!\Bigl(\Lambda^{!}\textup{-Cop}^{\mathbb{Z}}
        \big/ \Lambda^{!}\textup{-gmod}\Bigr)
\;\xrightarrow{\ \sim\ }\;
\mathsf{D}_{\mathrm{sg}}\!\bigl(\Lambda\textup{-gmod}\bigr).
\]
\end{theorem}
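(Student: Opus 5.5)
The plan is to obtain \(\mathfrak{F}^{\mathrm{sg}}_{\mathrm{dg}}\) by passing the quasi-equivalence \(\mathfrak{F}_{\mathrm{dg}}\) of Theorem~3.20 through Drinfeld dg quotients on both sides, and then to check that the quotients have the correct homotopy categories. On the \(\Lambda^{!}\)-side I would replace \(\mathsf{D}^{b}_{\mathrm{dg}}\bigl(\Lambda^{!}\textup{-Cop}^{\mathbb{Z}}/\Lambda^{!}\textup{-gmod}\bigr)\), up to quasi-equivalence, by the Drinfeld quotient
\[
\mathsf{C}^{b}_{\mathrm{dg}}\bigl(\Lambda^{!}\textup{-Inj}^{\mathbb{Z}}\bigr)\big/\mathcal{N},
\]
where \(\mathcal{N}\) is the full dg subcategory of complexes whose cohomology lies in \(\Lambda^{!}\textup{-gmod}\). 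Its \(H^{0}\) is the Verdier quotient \(\mathsf{D}^{b}(\Lambda^{!}\textup{-Cop}^{\mathbb{Z}})/\mathsf{D}^{b}_{\Lambda^{!}\textup{-gmod}}(\Lambda^{!}\textup{-Cop}^{\mathbb{Z}})\), by Drinfeld's theorem (Remark~2.30, over a field). Theorem~2.18, together with the identification \(\mathsf{D}^{b}(\Lambda^{!}\textup{-gmod})\simeq\mathsf{D}^{b}_{\Lambda^{!}\textup{-gmod}}(\Lambda^{!}\textup{-Cop}^{\mathbb{Z}})\) from the proof of Theorem~3.16, identifies this with \(\mathsf{D}^{b}(\Lambda^{!}\textup{-Cop}^{\mathbb{Z}}/\Lambda^{!}\textup{-gmod})\). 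To make this a genuine dg statement, I would compare the canonical dg functor from this quotient into the dg derived category of the exact quotient category (induced by \(Q_*\) at the dg level). That functor kills \(\mathcal{N}\), so it factors through the Drinfeld quotient by its universal property, and it is a quasi-equivalence because it is one on \(H^{0}\) in all shifts.

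On the \(\Lambda\)-side, Definition~2.31(2) gives \(\mathsf{D}^{\mathrm{dg}}_{\mathrm{sg}}(\Lambda\textup{-gmod})=\mathsf{C}^{-,b}_{\mathrm{dg}}(\Lambda\textup{-proj}^{\mathbb{Z}})/\mathsf{C}^{b}_{\mathrm{dg}}(\Lambda\textup{-proj}^{\mathbb{Z}})\). I would next check that \(\mathfrak{F}_{\mathrm{dg}}\) sends \(\mathcal{N}\) into the full dg subcategory of complexes quasi-isomorphic to bounded complexes of projectives (the perfect ones). This holds because, by Corollary~3.11(1), \(\mathfrak{F}\) maps \(\mathsf{D}^{b}(\Lambda^{!}\textup{-gmod})\) onto \(\mathsf{D}^{b}(\Lambda\textup{-pe}^{\mathbb{Z}})\), which is \(\mathsf{K}^{b}(\Lambda\textup{-proj}^{\mathbb{Z}})\) inside \(\mathsf{D}^{b}(\Lambda\textup{-gmod})\). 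Conversely, every perfect object is hit up to isomorphism. Since Drinfeld quotients only depend on the thick closure (up to quasi-equivalence), I may enlarge \(\mathsf{C}^{b}_{\mathrm{dg}}(\Lambda\textup{-proj}^{\mathbb{Z}})\) to the subcategory \(\mathcal{P}\) of perfect objects without changing the quotient. Because the Drinfeld construction is functorial for dg functors preserving the chosen subcategories, \(\mathfrak{F}_{\mathrm{dg}}\) then induces
\[
\mathfrak{F}^{\mathrm{sg}}_{\mathrm{dg}}\colon \mathsf{C}^{b}_{\mathrm{dg}}\bigl(\Lambda^{!}\textup{-Inj}^{\mathbb{Z}}\bigr)/\mathcal{N}\longrightarrow \mathsf{C}^{-,b}_{\mathrm{dg}}\bigl(\Lambda\textup{-proj}^{\mathbb{Z}}\bigr)/\mathcal{P}.
\]

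To see that \(\mathfrak{F}^{\mathrm{sg}}_{\mathrm{dg}}\) is a quasi-equivalence, I would use the general principle that a quasi-equivalence \(\mathcal{A}\to\mathcal{A}'\) restricting to a quasi-equivalence \(\mathcal{B}\to\mathcal{B}'\) (up to thick closure) induces a quasi-equivalence of Drinfeld quotients. By Remark~2.30 this reduces to the statement on \(H^{0}\): the induced functor of Verdier quotients \(\mathsf{D}^{b}(\Lambda^{!}\textup{-Cop}^{\mathbb{Z}})/\mathsf{D}^{b}(\Lambda^{!}\textup{-gmod})\to\mathsf{D}^{b}(\Lambda\textup{-gmod})/\mathsf{K}^{b}(\Lambda\textup{-proj}^{\mathbb{Z}})\) is an equivalence. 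That equivalence is exactly Theorem~3.16. Quasi-full-faithfulness in every cohomological degree then follows by applying this to shifts, since \(H^{i}\) of the Hom complexes of the pretriangulated quotients equals \(\operatorname{Hom}(X,Y[i])\) in \(H^{0}\).

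I expect the main obstacle to be the first step, namely giving an honest dg model of \(\mathsf{D}^{b}_{\mathrm{dg}}\) of the non-abelian exact quotient \(\Lambda^{!}\textup{-Cop}^{\mathbb{Z}}/\Lambda^{!}\textup{-gmod}\) and comparing it with the Drinfeld quotient. My first attempt would be to lift \(Q_*\) to the dg functor \(\mathsf{C}^{b}_{\mathrm{dg}}(Q)\) between dg categories of bounded complexes, compose with the quotient by acyclics, and invoke Theorem~2.18 to get the \(H^{0}\)-equivalence. Essential surjectivity would then follow from the fact that objects of the quotient category are objects of \(\Lambda^{!}\textup{-Cop}^{\mathbb{Z}}\).
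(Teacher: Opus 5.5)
Your proposal is correct and follows essentially the paper's route: descend \(\mathfrak{F}_{\mathrm{dg}}\) through the Drinfeld quotients and reduce quasi-equivalence to the Verdier-quotient equivalence underlying Theorem~3.16. Your two refinements make explicit what the paper leaves implicit in ``by construction'' and ``passing to dg quotients preserves quasi-fully faithfulness'':
\begin{enumerate}
\item enlarging \(\mathsf{C}^{b}_{\mathrm{dg}}(\Lambda\textup{-proj}^{\mathbb{Z}})\) to complexes homotopy equivalent to bounded ones, which is needed because \(\mathfrak{F}_{\mathrm{dg}}\) of a complex of injectives involves the generally infinite resolutions \(K(I^{!}_{x}\langle n\rangle)\) and so is not literally bounded;
\item checking that \(\mathfrak{F}\) maps \(\mathsf{D}^{b}(\Lambda^{!}\textup{-gmod})\) \emph{onto} \(\mathsf{K}^{b}(\Lambda\textup{-proj}^{\mathbb{Z}})\), not merely into it.
\end{enumerate}
The comparison with an intrinsic dg model of the exact quotient category is unnecessary, since the paper defines the left-hand side as that Drinfeld quotient. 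If you do pursue it, essential surjectivity must come from Theorem~2.18 rather than from the objects coinciding, because complexes over \(\Lambda^{!}\textup{-Cop}^{\mathbb{Z}}/\Lambda^{!}\textup{-gmod}\) have roof differentials.
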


\begin{proof}
We work with the following dg enhancements. On the \(\Lambda\)-side we use the dg singularity
category (Definition~2.31)
\[
\mathsf{D}^{\mathrm{dg}}_{\mathrm{sg}}\!\bigl(\Lambda\textup{-gmod}\bigr)
\;:=\;
\mathsf{C}^{-,b}_{\mathrm{dg}}\!\bigl(\Lambda\textup{-proj}^{\mathbb{Z}}\bigr)
\Big/
\mathsf{C}^{b}_{\mathrm{dg}}\!\bigl(\Lambda\textup{-proj}^{\mathbb{Z}}\bigr).
\]
On the \(\Lambda^{!}\)-side we consider the dg Verdier quotient
\[
\mathsf{D}^{b}_{\mathrm{dg}}\!\Bigl(\Lambda^{!}\textup{-Cop}^{\mathbb{Z}}
\big/ \Lambda^{!}\textup{-gmod}\Bigr)
\;:=\;
\mathsf{D}^{b}_{\mathrm{dg}}\!\bigl(\Lambda^{!}\textup{-Cop}^{\mathbb{Z}}\bigr)
\Big/
\mathsf{D}^{b}_{\mathrm{dg}}\!\bigl(\Lambda^{!}\textup{-gmod}\bigr).
\]

Let
\[
\mathfrak{F}_{\mathrm{dg}}\colon
\mathsf{D}^{b}_{\mathrm{dg}}\!\bigl(\Lambda^{!}\textup{-Cop}^{\mathbb{Z}}\bigr)
\longrightarrow
\mathsf{D}^{b}_{\mathrm{dg}}\!\bigl(\Lambda\textup{-gmod}\bigr)
\]
be the dg quasi-equivalence constructed on the injective/projective models. By construction,
\(\mathfrak{F}_{\mathrm{dg}}\) sends \(\mathsf{D}^{b}_{\mathrm{dg}}\!\bigl(\Lambda^{!}\textup{-gmod}\bigr)\) into
\(\mathsf{C}^{b}_{\mathrm{dg}}\!\bigl(\Lambda\textup{-proj}^{\mathbb{Z}}\bigr)\), hence to zero in
\(\mathsf{D}^{\mathrm{dg}}_{\mathrm{sg}}\!\bigl(\Lambda\textup{-gmod}\bigr)\). Therefore the composition
\[
\mathsf{D}^{b}_{\mathrm{dg}}\!\bigl(\Lambda^{!}\textup{-Cop}^{\mathbb{Z}}\bigr)
\;\xrightarrow{\ \mathfrak{F}_{\mathrm{dg}}\ }\;
\mathsf{D}^{b}_{\mathrm{dg}}\!\bigl(\Lambda\textup{-gmod}\bigr)
\;\longrightarrow\;
\mathsf{D}^{\mathrm{dg}}_{\mathrm{sg}}\!\bigl(\Lambda\textup{-gmod}\bigr)
\]
vanishes on \(\mathsf{D}^{b}_{\mathrm{dg}}\!\bigl(\Lambda^{!}\textup{-gmod}\bigr)\), and hence descends to a dg functor
\[
\mathfrak{F}^{\mathrm{sg}}_{\mathrm{dg}}\colon
\mathsf{D}^{b}_{\mathrm{dg}}\!\Bigl(\Lambda^{!}\textup{-Cop}^{\mathbb{Z}}
\big/ \Lambda^{!}\textup{-gmod}\Bigr)
\longrightarrow
\mathsf{D}^{\mathrm{dg}}_{\mathrm{sg}}\!\bigl(\Lambda\textup{-gmod}\bigr).
\]

Since \(\mathfrak{F}_{\mathrm{dg}}\) is a quasi-equivalence, the induced maps on mapping complexes are
quasi-isomorphisms; passing to dg quotients preserves quasi-fully faithfulness. Moreover,
\(H^{0}(\mathfrak{F}^{\mathrm{sg}}_{\mathrm{dg}})\) identifies with the triangulated graded singular Koszul
duality, hence is essentially surjective. Thus \(\mathfrak{F}^{\mathrm{sg}}_{\mathrm{dg}}\) is a quasi-equivalence.
\end{proof}
\section{Non-graded Koszul Dualities}
Many categories of interest admit realizations as categories of finite-dimensional
modules over suitable Koszul algebras. Prominent examples arise in the
representation theory of semisimple Lie algebras, notably the blocks of the
Bernstein--Gelfand--Gelfand category \(\mathcal{O}\), as well as in geometric
settings such as categories of mixed perverse sheaves on varieties admitting a
stratification by affine spaces (see, for example,~\cite{7}). It is therefore
natural to study bounded derived categories of finite-dimensional modules over
finite-dimensional Koszul algebras without imposing a grading, that is, to seek
a formulation of Koszul duality in the ungraded setting.

One might expect that a non-graded analogue of Koszul duality would yield an
equivalence
\[
\mathsf{D}^{b}\!\bigl(\Lambda^{!}\textup{-mod}\bigr)
\;\cong\;
\mathsf{D}^{b}\!\bigl(\Lambda\textup{-mod}\bigr).
\]
However, such an equivalence does not hold in general. For instance, there exist
radical-square-zero algebras which are not piecewise hereditary
(see~\cite[p.~157]{31}), which already precludes the existence of a derived
equivalence of the above form. Moreover, Corollary~3.12 shows that the derived
quadratic functor yields a derived equivalence only under a strong orientation
assumption, namely when the quivers of the Koszul algebras are well directed.

\medskip

To overcome this difficulty, we employ techniques from dg theory, notably dg
enhancements and dg orbit categories introduced in Section~2,
to construct a non-graded version of Koszul duality.

Throughout this section, unless stated otherwise, \(\Lambda\) denotes a
finite-dimensional graded algebra.

\subsection{Non-graded derived Koszul duality}
In this subsection, we establish a non-graded Koszul duality for the bounded
derived category of finite-dimensional modules over an arbitrary
finite-dimensional Koszul algebra, as well as for their singularity categories.
Our strategy begins by analyzing their derived and singularity categories, namely the graded derived category
\(\mathsf{D}^b(\Lambda\textup{-gmod})\) and the graded singularity category
\(\mathsf{D}_{\mathrm{sg}}(\Lambda\textup{-gmod})\), together with their ungraded counterparts
\(\mathsf{D}^b(\Lambda\textup{-mod})\) and
\(\mathsf{D}_{\mathrm{sg}}(\Lambda\textup{-mod})\).
We then apply the derived and singular Koszul dualities to realize
\(\mathsf{D}^b(\Lambda\textup{-mod})\) and
\(\mathsf{D}_{\mathrm{sg}}(\Lambda\textup{-mod})\)
as triangulated hulls of suitable dg orbit categories of
\[
\mathsf{D}^b\bigl(\Lambda^!\textup{-Cop}^{\mathbb{Z}}\bigr)
\quad\text{and}\quad
\mathsf{D}^b\bigl(\Lambda^!\textup{-Cop}^{\mathbb{Z}}/\Lambda^!\textup{-gmod}\bigr),
\]
respectively. As a consequence, we obtain a more general form of the
Bernstein--Gelfand--Gelfand correspondence for the stable category of
finite-dimensional Gorenstein--projective modules over any
finite-dimensional Iwanaga--Gorenstein Koszul algebra.

Recall from Section~2, Proposition~2.2, that there exists an exact functor of
abelian categories, often called the \emph{forgetful functor},
\[
F \colon \Lambda\text{-}\mathrm{gmod} \longrightarrow \Lambda\text{-}\mathrm{mod},
\]
which induces a natural isomorphism of \(k\)-vector spaces
\[
\bigoplus_{r\in\mathbb{Z}}
\mathrm{Hom}_{\Lambda\text{-}\mathrm{gmod}}
\bigl(M,\,N\langle r\rangle\bigr)
\;\xrightarrow{\sim}\;
\mathrm{Hom}_{\Lambda\text{-}\mathrm{mod}}(M,\,N)
\]
for all finite-dimensional graded modules \(M\) and \(N\).

The functor \(F\) also induces triangulated functors on bounded derived and singularity categories,
\[
F \colon
\mathsf{D}^b(\Lambda\text{-}\mathrm{gmod})
\longrightarrow
\mathsf{D}^b(\Lambda\text{-}\mathrm{mod}),
\qquad
F \colon
\mathsf{D}_{\mathrm{sg}}(\Lambda\text{-}\mathrm{gmod})
\longrightarrow
\mathsf{D}_{\mathrm{sg}}(\Lambda\text{-}\mathrm{mod}),
\]
which, by abuse of notation, we continue to denote by \(F\).

\medskip

The following result describes the relationship between
\(\mathsf{D}^b(\Lambda\textup{-gmod})\)
(respectively \(\mathsf{D}_{\mathrm{sg}}(\Lambda\textup{-gmod})\))
and
\(\mathsf{D}^b(\Lambda\textup{-mod})\)
(respectively \(\mathsf{D}_{\mathrm{sg}}(\Lambda\textup{-mod})\)).

\medskip

\begin{theorem}
Let \(\Lambda\) be a finite-dimensional graded algebra. Then the following hold:
\begin{enumerate}
    \item There is an equivalence of triangulated categories
    \[
    \mathrm{H}^0\Bigl(\operatorname{pretr}\bigl(\mathsf{D}^b_{\mathrm{dg}}\bigl(\Lambda\textup{-gmod}\bigr)\big/\langle 1 \rangle\bigr)\Bigr)
    \;\xrightarrow{\sim}\;
    \mathsf{D}^b\bigl(\Lambda\textup{-mod}\bigr).
    \]
    \item There is an equivalence of triangulated categories
    \[
    \mathrm{H}^0\Bigl(\operatorname{pretr}\bigl(\mathsf{D}_{\mathrm{sg}}^{\mathrm{dg}}\bigl(\Lambda\textup{-gmod}\bigr)\big/\langle 1 \rangle\bigr)\Bigr)
    \;\xrightarrow{\sim}\;
    \mathsf{D}_{\mathrm{sg}}\bigl(\Lambda\textup{-mod}\bigr).
    \]
\end{enumerate}
\end{theorem}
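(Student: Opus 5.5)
The plan is to apply Theorem~2.41 with \(\mathcal{T}=\mathsf{D}^b(\Lambda\textup{-gmod})\) (resp.\ \(\mathsf{D}_{\mathrm{sg}}(\Lambda\textup{-gmod})\)), \(\mathcal{T}'=\mathsf{D}^b(\Lambda\textup{-mod})\) (resp.\ \(\mathsf{D}_{\mathrm{sg}}(\Lambda\textup{-mod})\)), autoequivalence \(F=\langle 1\rangle\), and \(\ell\) the triangulated functor induced by the forgetful functor. This requires four inputs:
\begin{enumerate}
\item \(\ell\) is a \(G\)-Galois precovering for \(G=\langle\langle 1\rangle\rangle\);
\item \(\ell\) and \(\langle 1\rangle\) lift to dg functors on suitable enhancements, with \(\ell_{\mathrm{dg}}\circ\langle 1\rangle_{\mathrm{dg}}\cong\ell_{\mathrm{dg}}\);
\item the essential image generates \(\mathcal{T}'\).
\end{enumerate}

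For the derived case I would use the enhancements \(\mathsf{C}^{-,b}_{\mathrm{dg}}(\Lambda\textup{-proj}^{\mathbb{Z}})\) and \(\mathsf{C}^{-,b}_{\mathrm{dg}}(\Lambda\textup{-proj})\), as in the proof of Theorem~3.20. The forgetful functor applied termwise is a strict dg functor \(\ell_{\mathrm{dg}}\). The grading shift applied termwise is a strict dg automorphism, and \(\ell_{\mathrm{dg}}\circ\langle 1\rangle=\ell_{\mathrm{dg}}\) holds on the nose, so the bimodule condition is automatic.

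The precovering property is the substantive step. For \(X,Y\) bounded-above complexes of graded projectives, with \(X\) chosen as a projective resolution, one computes
\[
\operatorname{Hom}_{\mathsf{D}^b(\Lambda\textup{-mod})}(FX,FY)=H^0\operatorname{Hom}^\bullet(FX,FY).
\]
Using Proposition~2.2 termwise, \(\operatorname{Hom}^n(FX,FY)=\prod_i\bigoplus_r\operatorname{Hom}(X^i,Y^{i+n}\langle r\rangle)\). The point is that this product of sums equals \(\bigoplus_r\prod_i\) only under a finiteness condition. I would reduce to the case where \(Y\) is a bounded complex of finite-dimensional modules, which is legitimate since \(\mathfrak T\) allows replacing \(Y\) by any representative, and \(\operatorname{Hom}_{\mathsf K}(P,Y)\) computes the derived Hom. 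Because \(X\) is a minimal resolution of a bounded complex of finite-dimensional graded modules and \(\Lambda\) is finite-dimensional and nonnegatively graded, only finitely many \(r\) contribute in each cohomological degree, so the interchange is justified. Cohomology then commutes with the direct sum, giving \(\bigoplus_r\operatorname{Hom}_{\mathsf D^b(\Lambda\textup{-gmod})}(X,Y\langle r\rangle)\cong\operatorname{Hom}(FX,FY)\).

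For generation, every simple \(\Lambda\)-module is the image of a graded simple \(S_x\). Since \(\mathsf D^b(\Lambda\textup{-mod})\) is generated by simples (finite-dimensional modules have finite composition length), the image generates, and Theorem~2.41 yields~(1).

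For (2) I would take the Drinfeld quotients \(\mathsf{C}^{-,b}_{\mathrm{dg}}(\mathrm{proj}^{\mathbb Z})/\mathsf{C}^{b}_{\mathrm{dg}}(\mathrm{proj}^{\mathbb Z})\) and their ungraded analogue. Since \(\ell_{\mathrm{dg}}\) sends bounded complexes of projectives to bounded ones and commutes strictly with \(\langle1\rangle\), it induces dg functors on the quotients with the same strict invariance, using the universal property in Definition~2.29. Generation follows because \(\mathsf D_{\mathrm{sg}}(\Lambda\textup{-mod})\) is a Verdier quotient of a category generated by the image.

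The main obstacle is again the precovering isomorphism on morphisms in the singularity category. Hom-spaces there are colimits: in the projective model, \(\varinjlim_n\operatorname{Hom}(\Omega^nM,\Omega^nN)\) in the stable categories. I would first show that the graded stable Hom satisfies the Galois isomorphism, by an argument analogous to (1) applied to \(\underline{\operatorname{Hom}}\), using graded syzygies commuting with \(F\). Passing to colimits, which commute with direct sums, then gives the isomorphism in the singularity category. If the stable Hom argument fails, the fallback is to use Buchweitz-type descriptions only in the Gorenstein case. I expect the general case to go through via the colimit formula for singularity Hom, valid for finite-dimensional algebras.
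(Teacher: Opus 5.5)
Your argument for (1) is the paper's argument. You show that the forgetful functor is a $\langle\langle 1\rangle\rangle$-Galois precovering by applying Proposition~2.2 termwise to $\operatorname{Hom}^\bullet(P^\bullet,Y^\bullet)$, and then invoke the dg orbit theorem (Theorem~2.42 in the paper's numbering). You are more careful than the paper in two places:
\begin{itemize}
\item You justify exchanging $\prod_i$ and $\bigoplus_r$. The thing that actually makes this legitimate is that $Y^\bullet$ is bounded: for fixed $n$ the product over $i$ is finite, and each $\operatorname{Hom}_\Lambda(X^i,Y^{i+n})$ is finite-dimensional. Minimality of $X^\bullet$ plays no role.
\item You check that the image generates $\mathsf{D}^b(\Lambda\textup{-mod})$, since graded simples forget to all simples. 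The paper leaves this step implicit.
\end{itemize}

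For (2) your route genuinely differs from the paper's. The paper does not prove it and simply cites Keller--Murfet--Van den Bergh; you prove it directly, and your route works. The key steps are these:
\begin{itemize}
\item A morphism $FM\to FN$ factors through a projective iff it factors through $F\pi$, where $\pi\colon P\to N$ is the graded projective cover. In that case each homogeneous component factors through $\pi\langle r\rangle$.
\item Hence $\underline{\operatorname{Hom}}_\Lambda(FM,FN)\cong\bigoplus_r\underline{\operatorname{Hom}}_{\Lambda\textup{-gmod}}(M,N\langle r\rangle)$.
\item Graded syzygies forget to ungraded syzygies and commute with $\langle r\rangle$, so the transition maps in $\operatorname{colim}_n\underline{\operatorname{Hom}}(\Omega^n M,\Omega^n N)$ respect the $r$-decomposition.
\item Every object of either singularity category is a shift of a module.
\item $\operatorname{colim}_n$ commutes with $\bigoplus_r$.
\end{itemize}
This gives a self-contained proof for an arbitrary finite-dimensional graded algebra, independent of the external reference. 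The cost is invoking the Buchweitz colimit description of Hom in $\mathsf{D}_{\mathrm{sg}}$ in both the graded and ungraded settings. Your Gorenstein fallback is unnecessary.

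Your dg input is also fine. The termwise forgetful functor and grading shift are strict dg functors on $\mathsf{C}^{-,b}_{\mathrm{dg}}(\Lambda\textup{-proj}^{\mathbb{Z}})$ and on its Drinfeld quotient by bounded complexes. The identity $\ell_{\mathrm{dg}}\circ\langle 1\rangle=\ell_{\mathrm{dg}}$ holds on the nose, including on the adjoined contracting homotopies.
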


\begin{proof}
We prove only (1). Assertion (2) is due to Keller, Murfet, and Van den Bergh; see \cite[pp.~20--21]{37}. By Theorem~2.42, it suffices to show that the orbit category
\[
\mathsf{D}^b\bigl(\Lambda\textup{-gmod}\bigr)\,\big/\langle 1 \rangle
\]
embeds fully faithfully into \(\mathsf{D}^b\bigl(\Lambda\textup{-mod}\bigr)\). In other words, the triangulated functor
\[
F \colon \mathsf{D}^b\bigl(\Lambda\textup{-gmod}\bigr)\;\longrightarrow\;\mathsf{D}^b\bigl(\Lambda\textup{-mod}\bigr)
\]
is a \(G\)-Galois precovering (where \(G\) denotes the group generated by the grading shift \(\langle 1\rangle\)). That is, \(F\) induces an isomorphism of \(k\)-vector spaces
\[
\bigoplus_{r \in \mathbb{Z}}
\mathrm{Hom}_{\,\mathsf{D}^b(\Lambda\textup{-gmod})}\bigl(X^\bullet,\,Y^\bullet\langle r\rangle\bigr)
\;\xrightarrow{\sim}\;
\mathrm{Hom}_{\,\mathsf{D}^b(\Lambda\textup{-mod})}\bigl(X^\bullet,\,Y^\bullet\bigr),
\]
for all complexes \(X^\bullet,Y^\bullet\) in \(\mathsf{D}^b(\Lambda\textup{-gmod})\).

It is known that any such complex \(X^\bullet\) is quasi-isomorphic to a bounded above complex with  bounded homology \(P^\bullet\) in \(\mathsf{K}^{-,\,b}\bigl(\Lambda\textup{-proj}^{\mathbb{Z}}\bigr)\). In particular, there is an identification
\[
\mathrm{Hom}_{\,\mathsf{D}^b\bigl(\Lambda\textup{-mod}\bigr)}\bigl(X^\bullet,\,Y^\bullet\bigr)
\,\cong\,
\mathrm{Hom}_{\,\mathsf{K}^{-}\bigl(\Lambda\textup{-mod}\bigr)}\bigl(P^\bullet,\,Y^\bullet\bigr).
\]
Consider now the  complex
\[
\operatorname{Hom}_{\,\mathsf{C}_{\mathrm{dg}}\bigl(\Lambda\textup{-mod}\bigr)}\bigl(P^\bullet,\,Y^\bullet\bigr).
\]
Recall from Example~2.22 that, in degree \(n\),
\[
\operatorname{Hom}_{\,\mathsf{C}_{\mathrm{dg}}\bigl(\Lambda\textup{-mod}\bigr)}^{\,n}\bigl(P^\bullet,\,Y^\bullet\bigr)
\,=\,
\prod_{i\in\mathbb{Z}}
\operatorname{Hom}_{\,\Lambda}\bigl(P^i,\,Y^{i+n}\bigr).
\]
By Proposition~2.6, one has the identification
\[
\operatorname{Hom}_{\,\Lambda}\bigl(P^i,\,Y^{i+n}\bigr)
\,\cong\,
\bigoplus_{m\in\mathbb{Z}}
\operatorname{Hom}_{\,\Lambda\text{-gmod}}\bigl(P^i,\,Y^{i+n}\langle m\rangle\bigr).
\]
Therefore,
\[
\operatorname{Hom}_{\,\mathsf{C}_{\mathrm{dg}}\bigl(\Lambda\textup{-mod}\bigr)}^{\,n}\bigl(P^\bullet,\,Y^\bullet\bigr)
\,\cong\,
\bigoplus_{m\in\mathbb{Z}}
\prod_{i\in\mathbb{Z}}
\operatorname{Hom}_{\,\Lambda\text{-gmod}}\bigl(P^i,\,Y^{i+n}\langle m\rangle\bigr).
\]
It follows that
\[
\operatorname{Hom}_{\,\mathsf{C}_{\mathrm{dg}}\bigl(\Lambda\textup{-mod}\bigr)}\bigl(P^\bullet,\,Y^\bullet\bigr)
\,\cong\,
\bigoplus_{m\in\mathbb{Z}}\,
\operatorname{Hom}_{\,\mathsf{C}_{\mathrm{dg}}\bigl(\Lambda\textup{-gmod}\bigr)}\bigl(P^\bullet,\,Y^\bullet\langle m\rangle\bigr).
\]
Passing to homology in degree \(0\), we obtain
\[
H^{0}\bigl(
\operatorname{Hom}_{\,\mathsf{C}_{\mathrm{dg}}\bigl(\Lambda\textup{-mod}\bigr)}\bigl(P^\bullet,\,Y^\bullet\bigr)
\bigr)
\,\cong\,
\bigoplus_{m\in\mathbb{Z}}\,
H^{0}\bigl(
\operatorname{Hom}_{\,\mathsf{C}_{\mathrm{dg}}\bigl(\Lambda\textup{-gmod}\bigr)}\bigl(P^\bullet,\,Y^\bullet\langle m\rangle\bigr)
\bigr),
\]
which is precisely the desired identification of morphisms. Applying Theorem~2.42 now yields the claimed equivalence.
\end{proof}
We next record a preparatory lemma that will be used throughout the sequel. For closely related statements, compare \cite[Theorem~22]{42} and \cite[Theorem~1.2.6]{7}; see also \cite[Lemma~4.4(2)]{3}.
\begin{lemma}
Let \(\Lambda\) be a finite-dimensional Koszul algebra with Koszul dual \(\Lambda^{!}\), and let
\[
\mathfrak{F}\colon \mathsf{D}^{b}\!\bigl(\Lambda^{!}\textup{-Cop}^{\mathbb{Z}}\bigr)\longrightarrow
\mathsf{D}^{b}\!\bigl(\Lambda\textup{-gmod}\bigr)
\]
be the graded derived Koszul duality. Then for every \(i\in\mathbb{Z}\) and every
\(X^{\bullet}\in \mathsf{D}^{b}\!\bigl(\Lambda^{!}\textup{-Cop}^{\mathbb{Z}}\bigr)\) there is a natural isomorphism
\[
\mathfrak{F}\bigl(X^{\bullet}\langle i\rangle[-i]\bigr)
\cong
\mathfrak{F}(X^{\bullet})\langle -i\rangle.
\]
\end{lemma}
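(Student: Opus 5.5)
The plan is to prove the isomorphism first at the level of complexes, before any localization, and then transport it through the construction of \(\mathfrak{F}\). Since \(\mathfrak{F}=\mathfrak{T}\circ\operatorname{Tot}\circ\mathfrak{K}\) on bounded complexes of coperfect modules, and \(\mathfrak{T}\) commutes with both \([1]\) and \(\langle 1\rangle\) (it is induced by the identity on \(\mathsf{K}^{-,b}(\Lambda\textup{-proj}^{\mathbb{Z}})\)), it suffices to build, for every \(M^\bullet\in\mathsf{C}^{b}(\Lambda^{!}\textup{-Cop}^{\mathbb{Z}})\), an isomorphism of complexes
\[
\operatorname{Tot}\mathfrak{K}\bigl(M^\bullet\langle i\rangle[-i]\bigr)\;\cong\;\operatorname{Tot}\mathfrak{K}(M^\bullet)\langle -i\rangle
\]
that is natural in chain maps. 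Naturality ensures that it descends to \(\mathsf{K}^{b}\) and then to \(\mathsf{D}^{b}\).

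The first step treats a single module \(M\in\Lambda^{!}\textup{-Cop}^{\mathbb{Z}}\), regarded as a complex in degree \(0\). By definition \((M\langle i\rangle)^{n}_x=M^{n+i}_x\), so
\[
K(M\langle i\rangle)^{n}=\bigoplus_{x}P_x\langle n\rangle\otimes M_x^{n+i}.
\]
Reindexing with \(m=n+i\) identifies this with
\[
\bigoplus_x P_x\langle m-i\rangle\otimes M^m_x=\bigl(K(M)^{m}\bigr)\langle -i\rangle ,
\]
and the term sits in cohomological degree \(m-i\). This gives \(K(M\langle i\rangle)\cong K(M)\langle -i\rangle[i]\). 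On differentials, both sides use the same maps \(P_\alpha\otimes M(\alpha^{\mathrm{op}})\). The only discrepancy is the sign \((-1)^i\) introduced by the convention for \([i]\). It is absorbed by the automorphism that is multiplication by \((-1)^{\lfloor\cdot\rfloor}\)-type signs on the terms; concretely, one can use \(\varepsilon^m=(-1)^{im}\) on the \(m\)-th term. This proves the case \(i\) of the lemma for a stalk complex, because \(\mathfrak{F}(M\langle i\rangle[-i])=K(M\langle i\rangle)[-i]\cong K(M)\langle -i\rangle\).

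The second step passes to bounded complexes. For \(M^\bullet\), the horizontal index of the double complex \(\mathfrak{K}(M^\bullet\langle i\rangle[-i])\) is shifted by \(-i\), and the vertical index is shifted by \(+i\) by the first step. The total degree \(p+q\) is therefore unchanged, while the projective terms acquire the shift \(\langle -i\rangle\). It remains to check that the total differential \(d^1+(-1)^p d^2\) agrees up to a sign-twisting isomorphism. I would choose signs \(\varepsilon^{p,q}=\pm1\) on bidegree \((p,q)\), for instance \(\varepsilon^{p,q}=(-1)^{iq}\) possibly corrected by a factor depending on \(p\) and \(i\), so that \(\varepsilon\) intertwines \(d^1\) carrying the sign \((-1)^{i}\) coming from \([-i]\), and \(d^2\) carrying the sign \((-1)^{p-i}\) coming from the new horizontal index. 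Naturality is clear, because \(\mathfrak{K}\) acts by \(K(f^p)^q\), which commutes with scalar signs.

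The main obstacle is exactly this sign bookkeeping. One must find a consistent choice of \(\varepsilon^{p,q}\) making both squares commute simultaneously, which amounts to solving \(\varepsilon^{p+1,q}=(-1)^i\varepsilon^{p,q}\) and \(\varepsilon^{p,q+1}=(-1)^{i}\varepsilon^{p,q}\) up to the parity conventions. My first attempt is \(\varepsilon^{p,q}=(-1)^{i(p+q)}\), adjusted by \((-1)^{i(i+1)/2}\) if needed. Once the signs are fixed, compatibility with mapping cones and quasi-isomorphisms is automatic, since the isomorphism is a natural isomorphism of complexes.
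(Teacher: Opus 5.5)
Your proposal is correct and follows essentially the same route as the paper: you reindex $(p,q)\mapsto(p-i,q+i)$ using $K(M\langle i\rangle)\cong K(M)[i]\langle -i\rangle$ termwise and twist by bidegree signs. Your recurrences $\varepsilon^{p+1,q}=(-1)^i\varepsilon^{p,q}$ and $\varepsilon^{p,q+1}=(-1)^i\varepsilon^{p,q}$ are exactly right, and $\varepsilon^{p,q}=(-1)^{i(p+q)}$ solves them with no adjustment needed; this is the same map as the paper's $(-1)^{ip}\theta^{p,q}$, because the paper's $\theta_{M,i}$ already carries the sign $(-1)^{iq}$.
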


\begin{proof}
By definition of the derived quadratic functor,
\[
\mathfrak{F}(X^\bullet)
=
\mathfrak{T}\!\Bigl(\operatorname{Tot}\!\bigl(\mathfrak{K}(X^\bullet)\bigr)\Bigr),
\qquad
X^\bullet\in \mathsf{D}^{b}\!\bigl(\Lambda^{!}\textup{-Cop}^{\mathbb{Z}}\bigr),
\]
where \(\mathfrak{K}\) is obtained by applying \(K\) degreewise and
\(\operatorname{Tot}\) denotes the diagonal totalization with differential
\[
d_{\operatorname{Tot}}
=
d^{1}+(-1)^{p}d^{2}.
\]

Fix \(i\in\mathbb{Z}\), and let
\(X^\bullet\in
\mathsf{C}^{b}\!\bigl(\Lambda^{!}\textup{-Cop}^{\mathbb{Z}}\bigr)\)
be a representative. Set
\[
A:=\mathfrak{K}(X^\bullet),
\qquad
A_i:=\mathfrak{K}\bigl(X^\bullet\langle i\rangle[-i]\bigr).
\]
Thus
\[
A^{p,q}=K(X^{p})^{q},
\qquad
A_i^{p,q}
=
K\bigl(X^{p-i}\langle i\rangle\bigr)^{q}.
\]

By the compatibility of \(K\) with internal grading shifts, for every
\(M\in \Lambda^{!}\textup{-Cop}^{\mathbb{Z}}\) there is a canonical isomorphism
of complexes
\[
\theta_{M,i}\colon
K(M\langle i\rangle)
\xrightarrow{\ \sim\ }
K(M)[i]\langle -i\rangle.
\]
Passing to degree \(q\), we obtain canonical isomorphisms of graded modules
\[
\theta_{M,i}^{q}\colon
K(M\langle i\rangle)^{q}
\xrightarrow{\ \sim\ }
K(M)^{q+i}\langle -i\rangle.
\]
Since the differential on the shifted complex \(K(M)[i]\) is
\[
d_{K(M)[i]}^{q}
=
(-1)^{i}d_{K(M)}^{q+i},
\]
these maps satisfy
\[
\theta_{M,i}^{q+1}
\circ
d_{K(M\langle i\rangle)}^{q}
=
(-1)^{i}
d_{K(M)}^{q+i}
\circ
\theta_{M,i}^{q}.
\]

Applying this with \(M=X^{p-i}\), we obtain canonical isomorphisms
\[
\theta^{p,q}\colon
A_i^{p,q}
=
K\bigl(X^{p-i}\langle i\rangle\bigr)^{q}
\xrightarrow{\ \sim\ }
K(X^{p-i})^{q+i}\langle -i\rangle
=
A^{p-i,q+i}\langle -i\rangle.
\]

Since \((p-i)+(q+i)=p+q\), the reindexing
\((p,q)\mapsto(p-i,q+i)\)
preserves the diagonals. Hence for each \(n\in\mathbb Z\), the maps
\(\theta^{p,q}\) assemble to an isomorphism of graded modules
\[
\Phi^{n}\colon
\operatorname{Tot}(A_i)^{n}
=
\bigoplus_{p+q=n}A_i^{p,q}
\longrightarrow
\bigoplus_{p+q=n}A^{p-i,q+i}\langle -i\rangle
=
\operatorname{Tot}(A)^{n}\langle -i\rangle,
\]
defined by
\[
\Phi^{n}\big|_{A_i^{p,q}}
=
(-1)^{ip}\theta^{p,q}.
\]

We claim that
\[
\Phi=(\Phi^{n})_{n\in\mathbb Z}
\]
is an isomorphism of complexes
\[
\Phi\colon
\operatorname{Tot}(A_i)
\xrightarrow{\ \sim\ }
\operatorname{Tot}(A)\langle -i\rangle.
\]

Let \(u\in A_i^{p,q}\). We verify that
\[
\Phi^{n+1}\bigl(d_{\operatorname{Tot}}u\bigr)
=
d_{\operatorname{Tot}}\bigl(\Phi^{n}(u)\bigr),
\qquad
n=p+q.
\]

Write \(d_i^{1}\) and \(d_i^{2}\) for the horizontal and vertical
differentials in \(A_i\), and \(d^{1}\), \(d^{2}\) for those in \(A\).

First, since \(A_i\) is obtained from the shifted complex
\(X^\bullet[-i]\), the horizontal differential acquires the factor
\((-1)^i\). Thus
\[
d_i^{1}=(-1)^i d^{1}
\]
under the above reindexing. Therefore
\[
\Phi^{n+1}(d_i^{1}u)
=
(-1)^{i(p+1)}
\theta^{p+1,q}(d_i^{1}u)
=
(-1)^{i(p+1)}
(-1)^i
d^{1}\theta^{p,q}(u)
=
(-1)^{ip}
d^{1}\theta^{p,q}(u).
\]
On the other hand,
\[
d^{1}\bigl(\Phi^{n}(u)\bigr)
=
d^{1}\bigl((-1)^{ip}\theta^{p,q}(u)\bigr)
=
(-1)^{ip}
d^{1}\theta^{p,q}(u).
\]
Hence
\[
\Phi^{n+1}(d_i^{1}u)
=
d^{1}\Phi^{n}(u).
\]

Next, for the vertical differential, the identity satisfied by
\(\theta^{p,q}\) yields
\[
\theta^{p,q+1}(d_i^{2}u)
=
(-1)^i
d^{2}\theta^{p,q}(u).
\]
Therefore
\[
\Phi^{n+1}\bigl((-1)^p d_i^{2}u\bigr)
=
(-1)^p
(-1)^{ip}
\theta^{p,q+1}(d_i^{2}u)
=
(-1)^p
(-1)^{ip}
(-1)^i
d^{2}\theta^{p,q}(u).
\]

Since
\[
(-1)^p(-1)^{ip}(-1)^i
=
(-1)^{ip}(-1)^{p-i},
\]
we obtain
\[
\Phi^{n+1}\bigl((-1)^p d_i^{2}u\bigr)
=
(-1)^{ip}
(-1)^{p-i}
d^{2}\theta^{p,q}(u).
\]

Since \(\Phi^n(u)\) lies in the summand
\(A^{p-i,q+i}\langle -i\rangle\), the total differential on the target
contributes the sign \((-1)^{p-i}\). Hence
\[
d_{\operatorname{Tot}}\bigl(\Phi^n(u)\bigr)
=
d^{1}\Phi^n(u)
+
(-1)^{p-i}d^{2}\Phi^n(u).
\]

Combining the horizontal and vertical computations yields
\[
\Phi^{n+1}\bigl(d_{\operatorname{Tot}}u\bigr)
=
d_{\operatorname{Tot}}\bigl(\Phi^n(u)\bigr).
\]

Thus \(\Phi\) is an isomorphism of complexes. Consequently,
\[
\operatorname{Tot}\!\bigl(
\mathfrak{K}(X^\bullet\langle i\rangle[-i])
\bigr)
\cong
\operatorname{Tot}\!\bigl(
\mathfrak{K}(X^\bullet)
\bigr)\langle -i\rangle
\]
functorially in \(X^\bullet\). Applying \(\mathfrak{T}\), we obtain the
desired natural isomorphism
\[
\mathfrak{F}\bigl(X^\bullet\langle i\rangle[-i]\bigr)
\cong
\mathfrak{F}(X^\bullet)\langle -i\rangle.
\]
\end{proof}

We are now ready to prove the first main result of this paper, which we shall call the \emph{non-graded derived Koszul duality}.

\begin{theorem}
Let \(\Lambda\) be a finite-dimensional Koszul algebra with Koszul dual \(\Lambda^!\). Then there is a triangulated equivalence
\[
\mathrm{H}^0\Bigl(\operatorname{pretr}\bigl(\mathsf{D}^b_{\mathrm{dg}}\!\bigl(\Lambda^!\textup{-Cop}^{\mathbb{Z}}\bigr)\bigr)\,\big/\langle 1 \rangle[-1]\bigl)\Bigr)
\;\xrightarrow{\sim}\;
\mathsf{D}^b\bigl(\Lambda\textup{-mod}\bigr).
\]
\end{theorem}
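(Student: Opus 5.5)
The plan is to transport the description of \(\mathsf{D}^b(\Lambda\textup{-mod})\) from Theorem~4.1(1) across the dg Koszul duality of Theorem~3.20. We do this by checking that, under \(\mathfrak{F}_{\mathrm{dg}}\), the autoequivalence \(\langle 1\rangle\) on the \(\Lambda\)-side corresponds to \(\langle 1\rangle[-1]\) (up to inversion) on the \(\Lambda^!\)-side. Dg orbit categories are invariant under quasi-equivalences intertwining the autoequivalences. Their triangulated hulls are therefore equivalent, and we then compose with Theorem~4.1(1).

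\emph{Step 1 (intertwining on the dg level).} Set \(\Sigma:=\langle 1\rangle[-1]\) on \(\mathsf{D}^b_{\mathrm{dg}}(\Lambda^!\textup{-Cop}^{\mathbb{Z}})=\mathsf{C}^b_{\mathrm{dg}}(\Lambda^!\textup{-Inj}^{\mathbb{Z}})\). Both \(\langle 1\rangle\) and \([-1]\) are strict dg automorphisms there. By Lemma~4.2 with \(i=1\), the map \(\Phi\) constructed in its proof gives an isomorphism of complexes
\[
\operatorname{Tot}\mathfrak{K}(X^\bullet\langle 1\rangle[-1])\cong \operatorname{Tot}\mathfrak{K}(X^\bullet)\langle -1\rangle .
\]
The construction of \(\Phi\) is a sign-twisted reindexing applied componentwise. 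I would check that it is natural with respect to all homogeneous morphisms of the mapping complexes, not only chain maps, with the Koszul sign \((-1)^{ir}\) on degree-\(r\) maps absorbed into the \((-1)^{ip}\) convention. This yields a dg natural isomorphism \(\mathfrak{F}_{\mathrm{dg}}\circ\Sigma\cong\langle -1\rangle\circ\mathfrak{F}_{\mathrm{dg}}\), hence an isomorphism of the associated quasi-representable bimodules in \(\mathsf{D}_{\mathrm{dg}}(\mathcal{A}^{\mathrm{op}}\otimes\mathcal{B})\). The cyclic group generated by \(\langle -1\rangle\) equals that generated by \(\langle 1\rangle\), so the orbit categories \(\mathcal{B}/\langle 1\rangle\) and \(\mathcal{B}/\langle -1\rangle\) agree up to reindexing \(n\mapsto -n\).

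\emph{Step 2 (orbit categories and hulls).} Next I would show that a quasi-equivalence \(G:\mathcal{A}\to\mathcal{B}\) with \(G\circ F_{\mathcal{A}}\cong F_{\mathcal{B}}\circ G\) induces a quasi-equivalence \(\mathcal{A}/F_{\mathcal{A}}\to\mathcal{B}/F_{\mathcal{B}}\). On Hom complexes this is the colimit of direct sums of quasi-isomorphisms \(\operatorname{Hom}(F^nX,F^pY)\to\operatorname{Hom}(GF^nX,GF^pY)\cong\operatorname{Hom}(F^nGX,F^pGY)\). Filtered colimits and direct sums of quasi-isomorphisms over a field are quasi-isomorphisms, and essential surjectivity on \(H^0\) is inherited since the objects are the same. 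Then \(\operatorname{pretr}\) preserves quasi-equivalences, so
\[
\mathrm{H}^0\bigl(\operatorname{pretr}(\mathsf{D}^b_{\mathrm{dg}}(\Lambda^!\textup{-Cop}^{\mathbb{Z}})/\Sigma)\bigr)\simeq \mathrm{H}^0\bigl(\operatorname{pretr}(\mathsf{D}^b_{\mathrm{dg}}(\Lambda\textup{-gmod})/\langle 1\rangle)\bigr).
\]
Composing with Theorem~4.1(1) gives the claimed equivalence onto \(\mathsf{D}^b(\Lambda\textup{-mod})\).

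As an alternative route, which can double-check Step 2, I would apply Theorem~2.42 directly. Take \(\ell=F\circ\mathfrak{F}\colon \mathsf{D}^b(\Lambda^!\textup{-Cop}^{\mathbb{Z}})\to\mathsf{D}^b(\Lambda\textup{-mod})\). Lemma~4.2 gives \(\ell\circ\Sigma\cong\ell\). Full faithfulness of \(\mathfrak{F}\) together with Proposition~2.2 (in the derived form proved in Theorem~4.1) shows that \(\ell\) is a \(\langle\Sigma\rangle\)-Galois precovering. The image contains the simples \(S_x\), which generate \(\mathsf{D}^b(\Lambda\textup{-mod})\), so the hull is all of \(\mathsf{D}^b(\Lambda\textup{-mod})\).

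The main obstacle is Step 1: upgrading the isomorphism of Lemma~4.2 from objects in the derived category to a genuinely dg-natural (or bimodule-level) isomorphism, with consistent signs on morphisms of arbitrary degree. This is what Theorem~2.42 and the universal property of Proposition~2.40 actually require. I would first try to verify directly that \(\Phi\) commutes with \(\operatorname{Tot}\mathfrak{K}(f)\) for homogeneous \(f\), which is a routine but sign-sensitive computation.
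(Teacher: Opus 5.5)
Your proposal is correct, but your main route is not the paper's; your ``alternative route'' is essentially what the paper does. In the paper's proof, Theorem~4.1(1) is never invoked. It takes the composite $F\circ\mathfrak{F}$ and checks that it is a $\langle\langle 1\rangle[-1]\rangle$-Galois precovering, using full faithfulness of $\mathfrak{F}$, Lemma~4.2, and the precovering property of the forgetful functor from the proof of Theorem~4.1. It then obtains the comparison functor from the universal property of the dg orbit category (Remark~2.41), using $F_{\mathrm{dg}}\langle 1\rangle\cong F_{\mathrm{dg}}$, and applies Theorem~2.42 once. You additionally supply the generation argument via the simples $S_x\cong F\mathfrak{F}(I^!_x)$, which the paper leaves implicit.

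Your main route instead makes the statement a formal consequence of Theorem~4.1(1). A quasi-equivalence intertwining the two autoequivalences induces a quasi-equivalence of Keller's dg orbit categories, hence of their pretriangulated hulls. This avoids redoing any covering or generation check. It also yields a stronger conclusion: the dg orbit categories $\mathsf{D}^b_{\mathrm{dg}}(\Lambda^!\textup{-Cop}^{\mathbb{Z}})/\langle 1\rangle[-1]$ and $\mathsf{D}^b_{\mathrm{dg}}(\Lambda\textup{-gmod})/\langle 1\rangle$ are themselves quasi-equivalent, not just their homotopy categories. The same template should give Theorem~4.6 from Theorem~4.1(2) and Theorem~3.21, once the intertwining is checked on the Drinfeld quotients.

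The cost is a functoriality lemma for dg orbit categories that the paper does not state. You also need a strictly dg-natural isomorphism $\mathfrak{F}_{\mathrm{dg}}\circ\langle 1\rangle[-1]\cong\langle -1\rangle\circ\mathfrak{F}_{\mathrm{dg}}$. The paper's route needs the same thing for Remark~2.41, but reads it off Lemma~4.2 without checking morphisms of nonzero degree.

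The check you flag does go through:
\begin{itemize}
\item Identify the summands of $\operatorname{Tot}\mathfrak{K}(X^\bullet\langle 1\rangle[-1])^n$ and $\operatorname{Tot}\mathfrak{K}(X^\bullet)^n\langle -1\rangle$ via $(p,q)\leftrightarrow(p-1,q+1)$. The two total differentials then differ by a global sign, so $(-1)^n$ in total degree $n$ is a chain isomorphism.
\item With the convention of Example~2.22, $[-1]$ puts the sign $(-1)^r$ on a degree-$r$ morphism. So $\mathfrak{F}_{\mathrm{dg}}(f\langle 1\rangle[-1])$ corresponds to $(-1)^r\mathfrak{F}_{\mathrm{dg}}(f)\langle -1\rangle$.
\item Since $(-1)^{n+r}(-1)^r=(-1)^n$, the isomorphism commutes strictly with morphisms of every degree.
\end{itemize}
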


\begin{proof}
Recall from Theorem~3.20 that the graded derived  Koszul duality
\[
\mathfrak{F}\colon 
\mathsf{D}^b\!\bigl(\Lambda^!\textup{-Cop}^{\mathbb{Z}}\bigr)
\longrightarrow
\mathsf{D}^b\!\bigl(\Lambda\textup{-gmod}\bigr)
\]
admits a dg lift
\[
\mathfrak{F}_{\mathrm{dg}}\colon
\mathsf{D}^b_{\mathrm{dg}}\!\bigl(\Lambda^!\textup{-Cop}^{\mathbb{Z}}\bigr)
\longrightarrow
\mathsf{D}^b_{\mathrm{dg}}\!\bigl(\Lambda\textup{-gmod}\bigr).
\]
Moreover, by Lemma~4.2 this functor is compatible with the grading shift
and the homological shift, in the sense that there are natural
isomorphisms
\[
\mathfrak{F}_{\mathrm{dg}}\bigl(X^{\bullet}\langle i\rangle[-i]\bigr)
\cong
\mathfrak{F}_{\mathrm{dg}}(X^{\bullet})\langle -i\rangle .
\]

On the other hand, the forgetful functor
\[
F\colon 
\mathsf{D}^b\!\bigl(\Lambda\textup{-gmod}\bigr)
\longrightarrow
\mathsf{D}^b\!\bigl(\Lambda\textup{-mod}\bigr)
\]
clearly admits a dg lift
\[
F_{\mathrm{dg}}\colon
\mathsf{D}^b_{\mathrm{dg}}\!\bigl(\Lambda\textup{-gmod}\bigr)
\longrightarrow
\mathsf{D}^b_{\mathrm{dg}}\!\bigl(\Lambda\textup{-mod}\bigr),
\]
satisfying \(F_{\mathrm{dg}}\langle 1\rangle\cong F_{\mathrm{dg}}\).
Consequently the composition \(F_{\mathrm{dg}}\mathfrak{F}_{\mathrm{dg}}\)
is compatible with the autoequivalence \(\langle1\rangle[-1]\).

By the universal property of dg orbit categories (see Remark~2.41),
there exists a dg functor
\[
\overline{F_{\mathrm{dg}}\mathfrak{F}_{\mathrm{dg}}}\colon
\operatorname{pretr}\!\left(
\mathsf{D}^b_{\mathrm{dg}}\!\bigl(\Lambda^!\textup{-Cop}^{\mathbb Z}\bigr)
\big/\langle1\rangle[-1]
\right)
\longrightarrow
\mathsf{D}^b_{\mathrm{dg}}\!\bigl(\Lambda\textup{-mod}\bigr).
\]
Passing to homotopy categories yields a triangulated functor
\[
H^0(\overline{F_{\mathrm{dg}}\mathfrak{F}_{\mathrm{dg}}})\colon
H^0\!\left(
\operatorname{pretr}\!\left(
\mathsf{D}^b_{\mathrm{dg}}\!\bigl(\Lambda^!\textup{-Cop}^{\mathbb Z}\bigr)
\big/\langle1\rangle[-1]
\right)
\right)
\longrightarrow
\mathsf{D}^b\!\bigl(\Lambda\textup{-mod}\bigr).
\]

To prove that this functor is an equivalence, we apply
Theorem~2.42. It suffices to verify that the following diagram
of triangulated categories commutes:
\begin{center}
\begin{tikzcd}[column sep=large,row sep=large]
\mathsf{D}^b(\Lambda^!\textup{-Cop}^{\mathbb{Z}})
\arrow[r,"\mathfrak{F}"]
\arrow[d]
&
\mathsf{D}^b(\Lambda\textup{-gmod})
\arrow[d]
\arrow[r,"F"]
&
\mathsf{D}^b(\Lambda\textup{-mod})
\\
\mathsf{D}^b(\Lambda^!\textup{-Cop}^{\mathbb{Z}})/\langle1\rangle[-1]
\arrow[r]
&
\mathsf{D}^b(\Lambda\textup{-gmod})/\langle1\rangle
\arrow[ur]
&
\end{tikzcd}
\end{center}
and that the composition \(F\mathfrak{F}\) is an
\(\mathsf{H}\)-Galois precovering, where
\(\mathsf{H}=\langle\langle1\rangle[-1]\rangle\).

Thus it remains to verify that for all complexes
\(X^\bullet,Y^\bullet\in
\mathsf{D}^b(\Lambda^!\textup{-Cop}^{\mathbb{Z}})\)
the canonical map
\[
\bigoplus_{m\in\mathbb Z}
\mathrm{Hom}_{\mathsf{D}^b(\Lambda^!\textup{-Cop}^{\mathbb Z})}
\bigl(X^\bullet,Y^\bullet\langle m\rangle[-m]\bigr)
\longrightarrow
\mathrm{Hom}_{\mathsf{D}^b(\Lambda\textup{-mod})}
\bigl(\mathfrak{F}(X^\bullet),\mathfrak{F}(Y^\bullet)\bigr)
\]
is an isomorphism.

Using the graded derived Koszul duality (Theorem~3.8), we obtain
\[
\mathrm{Hom}_{\mathsf{D}^b(\Lambda^!\textup{-Cop}^{\mathbb Z})}
\bigl(X^\bullet,Y^\bullet\langle m\rangle[-m]\bigr)
\cong
\mathrm{Hom}_{\mathsf{D}^b(\Lambda\textup{-gmod})}
\bigl(\mathfrak{F}(X^\bullet),
\mathfrak{F}(Y^\bullet)\langle -m\rangle\bigr),
\]
and Lemma~4.2 identifies the corresponding shifts. Summing over all
\(m\in\mathbb Z\) yields
\[
\bigoplus_{m\in\mathbb Z}
\mathrm{Hom}_{\mathsf{D}^b(\Lambda\textup{-gmod})}
\bigl(\mathfrak{F}(X^\bullet),
\mathfrak{F}(Y^\bullet)\langle -m\rangle\bigr)
\cong
\mathrm{Hom}_{\mathsf{D}^b(\Lambda\textup{-mod})}
\bigl(\mathfrak{F}(X^\bullet),\mathfrak{F}(Y^\bullet)\bigr),
\]
since the forgetful functor \(F\) is a \(\mathsf{G}\)-precovering.

It follows that \(F\mathfrak{F}\) is an
\(\mathsf{H}\)-Galois precovering and that the additive orbit categories
\[
\mathsf{D}^b(\Lambda^!\textup{-Cop}^{\mathbb{Z}})/\langle1\rangle[-1]
\quad\text{and}\quad
\mathsf{D}^b(\Lambda\textup{-gmod})/\langle-1\rangle
\]
are equivalent. The desired equivalence
\[
H^0\!\left(
\operatorname{pretr}\!\left(
\mathsf{D}^b_{\mathrm{dg}}(\Lambda^!\textup{-Cop}^{\mathbb Z})
/\langle1\rangle[-1]
\right)
\right)
\;\xrightarrow{\sim}\;
\mathsf{D}^b(\Lambda\textup{-mod})
\]
now follows from Theorem~2.42.
\end{proof}
\begin{remark}
The  non\mbox{-}graded derived  Koszul duality shows that the triangulated category
\(\mathsf{D}^b(\Lambda\textup{-mod})\) can be realized as the triangulated hull of the orbit category
\[
\mathsf{D}^b\bigl(\Lambda^!\textup{-Cop}^{\mathbb{Z}}\bigr)\big/\langle 1\rangle[-1].
\]
Equivalently, the bounded derived category \(\mathsf{D}^b(\Lambda\textup{-mod})\) is obtained from this orbit category by taking its triangulated hull.
\end{remark}
If \(\Lambda\) has finite global dimension, the non\mbox{-}graded  derived  Koszul duality
specializes to the following form.

\begin{corollary}
Let \(\Lambda\) be a finite-dimensional Koszul algebra of finite global dimension,
and let \(\Lambda^!\) denote its Koszul dual. Then there is a triangulated equivalence
\[
\mathrm{H}^0\!\Bigl(
\operatorname{pretr}\bigl(
\mathsf{D}^b_{\mathrm{dg}}(\Lambda^!\textup{-gmod})
\big/\langle 1\rangle[-1]
\bigr)
\Bigr)
\;\xrightarrow{\ \sim\ }\;
\mathsf{D}^b(\Lambda\textup{-mod}).
\]
\end{corollary}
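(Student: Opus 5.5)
The plan is to reduce the corollary to the non-graded derived Koszul duality (Theorem~4.3) by identifying the dg category appearing there with \(\mathsf{D}^b_{\mathrm{dg}}(\Lambda^!\textup{-gmod})\), compatibly with the autoequivalence \(\langle 1\rangle[-1]\).

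\textbf{Step 1: the relevant module categories coincide.} Since \(\Lambda\) is Koszul of finite global dimension, Theorem~2.6 (applied to \(\Lambda\) and to \(\Lambda^!\), using \((\Lambda^!)^!\cong\Lambda\)) shows that \(\Lambda^!\) is finite-dimensional of finite global dimension. As in the proof of Corollary~3.11(2), every indecomposable graded injective \(I^!_x\langle n\rangle\) is then finite-dimensional. Every coperfect module has a finite coresolution by finite sums of such injectives, hence is finite-dimensional. Conversely, every finite-dimensional graded \(\Lambda^!\)-module has finite injective dimension with finitely cogenerated terms. Hence
\[
\Lambda^!\textup{-Cop}^{\mathbb{Z}}=\Lambda^!\textup{-gmod}
\]
as exact subcategories of \(\Lambda^!\textup{-GMod}^{-}\), and both carry the exact structure induced from the abelian category \(\Lambda^!\textup{-gmod}\).

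\textbf{Step 2: pass to dg enhancements and orbit categories.} The equality in Step~1 gives an equality of dg categories \(\mathsf{D}^b_{\mathrm{dg}}(\Lambda^!\textup{-Cop}^{\mathbb{Z}})=\mathsf{D}^b_{\mathrm{dg}}(\Lambda^!\textup{-gmod})\). If one prefers the injective model \(\mathsf{C}^b_{\mathrm{dg}}(\Lambda^!\textup{-Inj}^{\mathbb Z})\) used in Theorem~3.20, note that finite global dimension makes this model quasi-equivalent to the Drinfeld quotient model of Definition~2.31. The dg functor \(\langle 1\rangle[-1]\) acts strictly in the same way on both sides. The dg orbit construction and \(\operatorname{pretr}\) are invariant under such an identification: they are literally equal in the first case, and quasi-equivalence invariant in the second, since a quasi-equivalence commuting with the dg endofunctor induces one on orbit categories by the colimit formula of Definition~2.39. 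Substituting into Theorem~4.3 then yields the stated equivalence.

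\textbf{Main obstacle.} The only delicate point is the model-independence in Step~2. One must check that the quasi-equivalence between the injective model and the quotient model commutes with \(\langle1\rangle[-1]\) up to the isomorphism required by Remark~2.41. I expect this to be straightforward, because both the grading shift and \([-1]\) act strictly on complexes.

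Alternatively, one could avoid this issue entirely by rerunning the proof of Theorem~4.3 directly with \(\Lambda^!\textup{-gmod}\) in place of \(\Lambda^!\textup{-Cop}^{\mathbb Z}\). The required Galois precovering property then follows from Corollary~3.11(2) together with Lemma~4.2.
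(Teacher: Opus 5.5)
Your proposal is correct and follows essentially the same route as the paper: use Theorem~2.6 to see that \(\Lambda^!\) is finite-dimensional of finite global dimension, deduce \(\Lambda^!\textup{-Cop}^{\mathbb{Z}}=\Lambda^!\textup{-gmod}\) so that the two orbit categories coincide, and then invoke Theorem~4.3. You are in fact somewhat more careful than the paper, which records only the inclusion of coperfect modules into finite-dimensional ones and does not address the choice of dg model.
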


\begin{proof}
The assertion follows by combining Corollary~3.11, which establishes the
graded derived  Koszul duality, with Theorem~4.3. Since the Koszul dual
\(\Lambda^!\) is finite-dimensional and has finite global dimension,
every coperfect \(\Lambda^!\)-module is finite-dimensional. Consequently,
the orbit categories
\[
\mathsf{D}^b\bigl(\Lambda^!\textup{-gmod}\bigr)/\langle 1\rangle[-1]
\quad\text{and}\quad
\mathsf{D}^b\bigl(\Lambda^!\textup{-Cop}^{\mathbb{Z}}\bigr)/\langle 1\rangle[-1]
\]
coincide. The result therefore follows from Theorem~4.3.
\end{proof}
\subsection{Non-graded Singular Koszul duality and the Non-Graded BGG Correspondance}
The second main result of this paper establishes a non-graded singular
 Koszul duality. As an application, we obtain a
non-graded form of the Bernstein--Gelfand--Gelfand correspondence.

\begin{theorem}
Let \(\Lambda\) be a finite-dimensional Koszul algebra and \(\Lambda^!\) its Koszul dual. Then there is a triangulated equivalence
\[
\mathrm{H}^0\Bigl(\operatorname{pretr}\bigl(\mathsf{D}^b_{\mathrm{dg}}\bigl(\Lambda^!\textup{-Cop}^{\mathbb{Z}}\big/\Lambda^!\textup{-gmod}\bigr)\,\big/\langle 1 \rangle[-1]\bigr)\Bigr)
\;\xrightarrow{\sim}\;
\mathsf{D}_{\mathrm{sg}}\bigl(\Lambda\textup{-mod}\bigr).
\]
\end{theorem}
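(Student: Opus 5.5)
The plan mirrors the proof of Theorem~4.3, with the graded singular Koszul duality replacing the graded derived one and part~(2) of Theorem~4.1 replacing part~(1).

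First, by Theorem~3.21 we have a dg quasi-equivalence
\[
\mathfrak{F}^{\mathrm{sg}}_{\mathrm{dg}}\colon
\mathsf{D}^{b}_{\mathrm{dg}}\bigl(\Lambda^{!}\textup{-Cop}^{\mathbb{Z}}/\Lambda^{!}\textup{-gmod}\bigr)
\longrightarrow
\mathsf{D}^{\mathrm{dg}}_{\mathrm{sg}}(\Lambda\textup{-gmod}).
\]
It is induced from \(\mathfrak{F}_{\mathrm{dg}}\) by passing to Drinfeld quotients. The isomorphism \(\mathfrak{F}(X\langle i\rangle[-i])\cong\mathfrak{F}(X)\langle -i\rangle\) of Lemma~4.2 is given by the explicit chain isomorphism \(\Phi\), natural in \(X\). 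Both subcategories involved are stable under the relevant autoequivalences: \(\Lambda^{!}\textup{-gmod}\) under \(\langle 1\rangle[-1]\), and bounded complexes of graded projectives under \(\langle -1\rangle\). Hence \(\Phi\) descends to the quotients. This gives
\[
\mathfrak{F}^{\mathrm{sg}}_{\mathrm{dg}}\circ\langle 1\rangle[-1]\cong\langle -1\rangle\circ\mathfrak{F}^{\mathrm{sg}}_{\mathrm{dg}}
\]
as quasi-representable bimodules.

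Next, let \(F^{\mathrm{sg}}_{\mathrm{dg}}\colon \mathsf{D}^{\mathrm{dg}}_{\mathrm{sg}}(\Lambda\textup{-gmod})\to\mathsf{D}^{\mathrm{dg}}_{\mathrm{sg}}(\Lambda\textup{-mod})\) be the dg lift of the forgetful functor. It is induced by forgetting the grading on \(\mathsf{C}^{-,b}_{\mathrm{dg}}(\Lambda\textup{-proj}^{\mathbb{Z}})\), which sends bounded projective complexes to bounded projective complexes, and it satisfies \(F^{\mathrm{sg}}_{\mathrm{dg}}\langle 1\rangle= F^{\mathrm{sg}}_{\mathrm{dg}}\). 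The composite \(G:=F^{\mathrm{sg}}_{\mathrm{dg}}\circ\mathfrak{F}^{\mathrm{sg}}_{\mathrm{dg}}\) is then \(\langle 1\rangle[-1]\)-invariant. By Remark~2.41 it factors through a dg functor
\[
\overline{G}\colon \operatorname{pretr}\Bigl(\mathsf{D}^{b}_{\mathrm{dg}}\bigl(\Lambda^{!}\textup{-Cop}^{\mathbb{Z}}/\Lambda^{!}\textup{-gmod}\bigr)/\langle 1\rangle[-1]\Bigr)\longrightarrow \mathsf{D}^{\mathrm{dg}}_{\mathrm{sg}}(\Lambda\textup{-mod}).
\]

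To show \(H^{0}(\overline{G})\) is an equivalence, we apply Theorem~2.42. First, \(H^0(G)\) must be an \(\mathsf{H}\)-Galois precovering for \(\mathsf{H}=\langle\langle 1\rangle[-1]\rangle\). For objects \(X,Y\), Theorem~3.16 together with the shift compatibility gives
\[
\operatorname{Hom}(X,Y\langle m\rangle[-m])\cong \operatorname{Hom}_{\mathsf{D}_{\mathrm{sg}}(\Lambda\textup{-gmod})}\bigl(\mathfrak{F}X,\mathfrak{F}Y\langle -m\rangle\bigr).
\]
It then suffices to know that the forgetful functor \(\mathsf{D}_{\mathrm{sg}}(\Lambda\textup{-gmod})\to\mathsf{D}_{\mathrm{sg}}(\Lambda\textup{-mod})\) is a \(\langle 1\rangle\)-Galois precovering. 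This is exactly the input of Theorem~4.1(2), taken from Keller--Murfet--Van den Bergh, and in fact Theorem~4.1(2) already says the triangulated hull of \(\mathsf{D}^{\mathrm{dg}}_{\mathrm{sg}}(\Lambda\textup{-gmod})/\langle 1\rangle\) is \(\mathsf{D}_{\mathrm{sg}}(\Lambda\textup{-mod})\). Second, the image must generate. Every module is a direct summand of the forgetful image of... rather, every object of \(\mathsf{D}_{\mathrm{sg}}(\Lambda\textup{-mod})\) is a cone-iterate of forgetful images of graded modules: \(\Lambda\textup{-mod}\) is generated by the simples, which are gradable. Since \(\mathfrak{F}^{\mathrm{sg}}\) is essentially surjective, the image of \(H^0(G)\) generates. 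Alternatively, we can argue purely at the dg level. The quasi-equivalence \(\mathfrak{F}^{\mathrm{sg}}_{\mathrm{dg}}\) intertwines \(\langle 1\rangle[-1]\) with \(\langle -1\rangle\), so it induces a quasi-equivalence of dg orbit categories and hence of their pretriangulated hulls. Composing with Theorem~4.1(2), after noting that the orbit by \(\langle -1\rangle\) equals the orbit by \(\langle 1\rangle\), gives the claim.

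The main obstacle is the dg-level compatibility. Lemma~4.2 is proved only up to natural isomorphism on \(H^0\), and here we need an isomorphism of dg bimodules that is moreover compatible with the Drinfeld quotients. The first thing to try is to check that \(\Phi\) is a strict dg natural isomorphism between the two dg functors on \(\mathsf{C}^{b}_{\mathrm{dg}}(\Lambda^{!}\textup{-Inj}^{\mathbb{Z}})\). That means checking the sign twist \((-1)^{ip}\) against morphisms of arbitrary degree \(r\), where an additional sign \((-1)^{ir}\) may be needed. If this holds, it extends formally to Drinfeld quotients, since these are functorial for dg functors preserving the subcategories. A smaller point is verifying the \(\langle 1\rangle\)-precovering property on the singularity category, which we take from \cite{37} as in Theorem~4.1.
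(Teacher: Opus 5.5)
Your proposal is correct and follows essentially the same route as the paper, which likewise just transports the proof of Theorem~4.3 to the singular setting: the dg lift from Theorem~3.21, the twist \(\mathfrak{F}\circ\langle 1\rangle[-1]\cong\langle -1\rangle\circ\mathfrak{F}\) from Lemma~4.2, the precovering property of the forgetful functor on singularity categories from Theorem~4.1(2), and then Theorem~2.42. You are in fact more explicit than the paper on two points it leaves implicit: the generation hypothesis of Theorem~2.42 (which you get from the gradable simples), and the need for the shift compatibility to hold as an isomorphism of dg bimodules that is compatible with the Drinfeld quotients.
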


\begin{proof}
The argument is identical to that of Theorem~4.3 in the singular setting.
Indeed, Theorem~3.21 provides a dg lift of the graded singular Koszul duality,
while Theorem~3.16 establishes the corresponding graded singular Koszul
duality
\[
\mathsf{D}^{b}\!\Bigl(\Lambda^{!}\textup{-Cop}^{\mathbb{Z}}
        \big/ \Lambda^{!}\textup{-gmod}\Bigr)
\;\xrightarrow{\ \sim\ }\;
\mathsf{D}_{\mathrm{sg}}\!\bigl(\Lambda\textup{-gmod}\bigr).
\]
Moreover, by Theorem~4.1(2) the forgetful functor
\[
F\colon
\mathsf{D}_{\mathrm{sg}}\bigl(\Lambda\textup{-gmod}\bigr)
\longrightarrow
\mathsf{D}_{\mathrm{sg}}\bigl(\Lambda\textup{-mod}\bigr)
\]
satisfies \(F\langle -1\rangle\cong F\), while Lemma~4.2 implies
\[
\mathfrak{F}\langle 1\rangle[-1] 
\cong
\mathfrak{F}\langle -1\rangle .
\]

Consequently the composition \(F\mathfrak{F}\) induces an
\(\mathsf{H}\)-Galois precovering on the singular level, where
\(\mathsf{H}=\langle\langle 1\rangle[-1]\rangle\).
Applying Theorem~2.42 yields the desired equivalence
\[
\mathrm{H}^0\Bigl(
\operatorname{pretr}\bigl(
\mathsf{D}^b_{\mathrm{dg}}\bigl(\Lambda^!\textup{-Cop}^{\mathbb{Z}}
\big/\Lambda^!\textup{-gmod}\bigr)
\big/\langle 1\rangle[-1]
\bigr)
\Bigr)
\;\xrightarrow{\sim}\;
\mathsf{D}_{\mathrm{sg}}\bigl(\Lambda\textup{-mod}\bigr).
\]
\end{proof}
\begin{remark}
The non-graded singular  Koszul duality asserts that the singularity category \(\mathsf{D}_{\mathrm{sg}}\bigl(\Lambda\textup{-mod}\bigr)\), viewed as a triangulated category, is generated by the orbit category
\[
\mathsf{D}^b\bigl(\Lambda^!\textup{-Cop}^{\mathbb{Z}}\big/\Lambda^!\textup{-gmod}\bigr)\big/\langle 1\rangle[-1].
\]
\end{remark}
We now present the non-graded version of the BGG correspondence.
\begin{theorem}
Let \(\Lambda\) be a finite-dimensional Gorenstein-Iwanaga Koszul algebra with Koszul dual \(\Lambda^!\). Then there is a triangulated equivalence
\[
\mathrm{H}^0\Bigl(\operatorname{pretr}\bigl(\mathsf{D}^b_{\mathrm{dg}}\bigl(\Lambda^!\textup{-Cop}^{\mathbb{Z}}\big/\Lambda^!\textup{-gmod}\bigr)\,\big/\langle 1 \rangle[-1]\bigr)\Bigr)
\;\xrightarrow{\sim}\;
\Lambda\textup{-}\underline{\mathrm{Gproj}}\]
\end{theorem}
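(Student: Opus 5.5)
The plan is to compose the non-graded singular Koszul duality (Theorem~4.6) with Buchweitz's equivalence in the ungraded setting. By Theorem~4.6 there is already a triangulated equivalence
\[
\mathrm{H}^0\Bigl(\operatorname{pretr}\bigl(\mathsf{D}^b_{\mathrm{dg}}\bigl(\Lambda^!\textup{-Cop}^{\mathbb{Z}}\big/\Lambda^!\textup{-gmod}\bigr)\,\big/\langle 1 \rangle[-1]\bigr)\Bigr)
\;\xrightarrow{\sim}\;
\mathsf{D}_{\mathrm{sg}}\bigl(\Lambda\textup{-mod}\bigr).
\]
This holds for every finite-dimensional Koszul algebra, with no Gorenstein hypothesis needed. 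It therefore remains only to identify the right-hand side with \(\Lambda\textup{-}\underline{\mathrm{Gproj}}\).

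For this second step I would invoke the ungraded half of Buchweitz's theorem \cite[Theorem~4.4.1]{15}, recalled in Section~2. When \(\Lambda\) is Iwanaga--Gorenstein, the functor sending a Gorenstein-projective module to its stalk complex in degree \(0\) induces a triangulated equivalence \(\Lambda\textup{-}\underline{\mathrm{Gproj}}\xrightarrow{\sim}\mathsf{D}_{\mathrm{sg}}(\Lambda\textup{-mod})\). We compose the equivalence of Theorem~4.6 with a quasi-inverse of this one. Both functors are triangulated, so the composite is a triangulated equivalence, which is the claim.

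A cleaner, dg-compatible variant, which I would mention in a remark, goes through the graded side first. One uses Theorem~3.17 together with the dg singular duality (Theorem~3.21) to identify the dg orbit category with \(\operatorname{pretr}\bigl(\mathsf{D}^{\mathrm{dg}}_{\mathrm{sg}}(\Lambda\textup{-gmod})/\langle 1\rangle\bigr)\). One then applies Theorem~4.1(2) (Keller--Murfet--Van den Bergh) and afterwards Buchweitz. This route requires checking that the forgetful functor \(\Lambda\textup{-}\underline{\mathrm{Gproj}}^{\mathbb{Z}}\to\Lambda\textup{-}\underline{\mathrm{Gproj}}\) is compatible with the Buchweitz equivalences in the graded and ungraded settings.

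The only genuine point to check is that the Iwanaga--Gorenstein hypothesis is the ungraded one needed for Buchweitz's theorem. In the main route I would verify it directly. The graded and ungraded injective dimensions of \(\Lambda\) agree, because by Proposition~2.2 the ungraded Ext groups between finite-dimensional graded modules are direct sums of graded Ext groups. This is the step I expect to need the most care, though it is standard.
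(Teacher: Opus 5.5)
Your main route is correct and matches the argument the paper intends. The paper states this theorem without a separate proof, as the ungraded analogue of how Theorem~3.17 follows from Theorem~3.16: the non-graded singular Koszul duality of Theorem~4.6 is composed with Buchweitz's equivalence $\Lambda\textup{-}\underline{\mathrm{Gproj}}\simeq\mathsf{D}_{\mathrm{sg}}(\Lambda\textup{-mod})$. The final verification you flag is unnecessary, since Definition~2.19 already defines the Iwanaga--Gorenstein property in the ungraded sense; note also that Proposition~2.2 only decomposes Hom spaces, so the Ext decomposition you cite would have to be derived via graded projective resolutions.
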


\section{Applications }
We now apply the non\mbox{-}graded Koszul dualities established above to fill a
longstanding gap left open since the work of Beilinson, Ginzburg, and
Soergel~\cite{7}. In their foundational work, they established a derived
Koszul duality for certain graded versions of blocks of the
Bernstein--Gelfand--Gelfand category~\(\mathcal{O}\).
In this section we resolve this gap by establishing a non\mbox{-}graded  derived 
Koszul duality in this setting. We also obtain a non\mbox{-}graded derived 
Koszul duality for certain categories of perverse sheaves.
\subsection[Non-graded derived Koszul duality for the BGG category O]{Non-graded derived Koszul duality for the BGG category \(\mathcal{O}\)}

Let \(\mathfrak{g}\) be a finite\mbox{-}dimensional semisimple Lie algebra over \(\mathbb{C}\), and fix a triangular decomposition
\[
\mathfrak{g}=\mathfrak{n}^- \oplus \mathfrak{h} \oplus \mathfrak{n}^+.
\]
Write \(U(\mathfrak{g})\) for the universal enveloping algebra and \(\mathfrak{b}:=\mathfrak{h}\oplus\mathfrak{n}^+\) for the standard Borel subalgebra. The Bernstein–Gelfand–Gelfand category \(\mathcal{O}\) is the full subcategory of \(U(\mathfrak{g})\)\nobreakdash-modules consisting of modules that are
\begin{itemize}
  \item finitely generated over \(U(\mathfrak{g})\),
  \item semisimple as \(\mathfrak{h}\)\nobreakdash-modules, and
  \item locally finite under the action of \(\mathfrak{b}\).
\end{itemize}
It is an abelian category with enough projectives and admits a block decomposition
\[
\mathcal{O} \;=\; \bigoplus_{\chi}\, \mathcal{O}_\chi,
\]
indexed by central characters \(\chi\colon Z(\mathfrak{g})\!\to\!\mathbb{C}\). Among these, the principal block \(\mathcal{O}_0\) (trivial central character) plays a distinguished role; Beilinson–Ginzburg–Soergel~\cite{7} showed that \(\mathcal{O}_0\) is equivalent to the category of finitely generated modules over a finite\mbox{-}dimensional Koszul \(\mathbb{C}\)\nobreakdash-algebra \(A\).

Let \(\lambda\in\mathfrak{h}^*\) be integral and dominant, and denote by \(\mathcal{O}_\lambda\subset\mathcal{O}\) the block consisting of modules with (generalized) infinitesimal character equal to that of the highest\mbox{-}weight module \(L(\lambda)\). The simple objects of \(\mathcal{O}_\lambda\) are parametrized by a subset \(W_\lambda\subset W\) of the Weyl group, and \(\mathcal{O}_\lambda\) is equivalent to the module category of a finite\mbox{-}dimensional \(\mathbb{C}\)\nobreakdash-algebra.

Now let \(\mathfrak{q}\subset\mathfrak{g}\) be a parabolic subalgebra with \(\mathfrak{b}\subset\mathfrak{q}\). The parabolic category \(\mathcal{O}^{\mathfrak{q}}\subset\mathcal{O}_0\) is the full subcategory of \(\mathcal{O}_0\) consisting of objects that are locally finite for the \(\mathfrak{q}\)\nobreakdash-action; it is likewise equivalent to the module category of a finite\mbox{-}dimensional \(\mathbb{C}\)\nobreakdash-algebra, with simples indexed by a subset \(W_{\mathfrak{q}}\subset W\).

The relationship between \(\mathcal{O}_\lambda\) and \(\mathcal{O}^{\mathfrak{q}}\) was clarified in~\cite[Thm.\,1.1.3]{7}. Under the hypothesis that the corresponding Weyl–group stabilizers coincide, i.e.\ \(S_\lambda=S_{\mathfrak{q}}\), one can choose projective generators whose endomorphism algebras are Koszul dual. More precisely, there exist finite\mbox{-}dimensional Koszul algebras \(A_Q\) and \(A^Q\) such that
\[
\mathcal{O}_Q := A_Q\textup{-}\operatorname{gmod},
\qquad
\mathcal{O}^Q := A^Q\textup{-}\operatorname{gmod}
\]
are graded models for \(\mathcal{O}_\lambda\) and \(\mathcal{O}^{\mathfrak{q}}\), respectively. In this situation Koszul duality yields a triangulated equivalence (see~\cite[Thm.\,3.11.1]{7})
\[
\mathsf{D}^b(\mathcal{O}_Q)\;\xrightarrow{\sim}\;\mathsf{D}^b(\mathcal{O}^Q),
\]
in complete analogy with Corollary~3.12.

Since \(A_Q\) and \(A^Q\) have finite global dimension, forgetting the gradings interacts well with our non\mbox{-}graded Koszul duality machinery. In particular, using Corollary~4.5 we obtain the following result, which furnishes a non\mbox{-}graded counterpart to the Beilinson–Ginzburg–Soergel equivalences and thus extends the graded framework of~\cite{7} to the ungraded setting.

\begin{theorem}
Let \(\lambda \in \mathfrak{h}^*\) be an integral dominant weight, and let \(\mathfrak{q} \subset \mathfrak{g}\) be a parabolic subalgebra such that \(S_\lambda = S_{\mathfrak{q}}\). Then there are triangulated equivalences
\[
\mathrm{H}^0\!\left(\operatorname{pretr}\!\left(\mathsf{D}^b_{\mathrm{dg}}\!\bigl(\mathcal{O}^Q\bigr)\big/\langle 1\rangle[-1]\right)\right)
\;\xrightarrow{\sim}\;
\mathsf{D}^b\!\bigl(\mathcal{O}_\lambda\bigr),
\]
and
\[
\mathrm{H}^0\!\left(\operatorname{pretr}\!\left(\mathsf{D}^b_{\mathrm{dg}}\!\bigl(\mathcal{O}_Q\bigr)\big/\langle 1\rangle[-1]\right)\right)
\;\xrightarrow{\sim}\;
\mathsf{D}^b\!\bigl(\mathcal{O}^{\mathfrak{q}}\bigr).
\]
\end{theorem}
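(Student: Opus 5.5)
The plan is to reduce both equivalences to Corollary~4.5, applied once to the Koszul algebra \(A^Q\) and once to \(A_Q\). The key input is the Beilinson--Ginzburg--Soergel theorem \cite[Thm.~1.1.3]{7}. Under \(S_\lambda=S_{\mathfrak q}\) it supplies finite-dimensional Koszul algebras \(A_Q\) and \(A^Q\) with \((A_Q)^!\cong A^Q\) and \((A^Q)^!\cong A_Q\). Forgetting the grading, these give equivalences of abelian categories \(\mathcal{O}_\lambda\simeq A_Q\textup{-mod}\) and \(\mathcal{O}^{\mathfrak q}\simeq A^Q\textup{-mod}\). Since these are exact equivalences, they induce \(\mathsf{D}^b(\mathcal{O}_\lambda)\simeq \mathsf{D}^b(A_Q\textup{-mod})\) and \(\mathsf{D}^b(\mathcal{O}^{\mathfrak q})\simeq \mathsf{D}^b(A^Q\textup{-mod})\). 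Both algebras have finite global dimension: \(\mathcal{O}_\lambda\) and \(\mathcal{O}^{\mathfrak q}\) are highest weight categories, so their endomorphism algebras are quasi-hereditary.

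For the first equivalence, I apply Corollary~4.5 with \(\Lambda=A_Q\). Since \(A_Q\) is finite-dimensional, Koszul and of finite global dimension, the corollary gives
\[
\mathrm{H}^0\!\left(\operatorname{pretr}\!\left(\mathsf{D}^b_{\mathrm{dg}}(A_Q^{!}\textup{-gmod})/\langle1\rangle[-1]\right)\right)\xrightarrow{\sim}\mathsf{D}^b(A_Q\textup{-mod}).
\]
Replacing \(A_Q^!\) by \(A^Q\), the left-hand side is built from \(\mathsf{D}^b_{\mathrm{dg}}(\mathcal{O}^Q)\), because \(\mathcal{O}^Q=A^Q\textup{-gmod}\). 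Composing with \(\mathsf{D}^b(A_Q\textup{-mod})\simeq\mathsf{D}^b(\mathcal{O}_\lambda)\) then yields the first statement.

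The second equivalence is symmetric: apply Corollary~4.5 to \(\Lambda=A^Q\), use \((A^Q)^!\cong A_Q\), and compose with \(\mathsf{D}^b(A^Q\textup{-mod})\simeq\mathsf{D}^b(\mathcal{O}^{\mathfrak q})\).

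The main obstacle is the bookkeeping of identifications, which must be fixed before any gluing of equivalences.

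\emph{Direction of duality.} One must check that the Koszul dual of the algebra modelling \(\mathcal{O}_\lambda\) really is the algebra modelling \(\mathcal{O}^{\mathfrak q}\), and not its opposite. BGG phrase their duality with \(A^{!}\) possibly replaced by \((A^{!})^{\mathrm{op}}\) and use the opposite grading-shift convention. If an opposite algebra appears, I would absorb it using the duality \(\mathbb D\), together with the fact that category \(\mathcal{O}\) has a simple-preserving duality, so the relevant algebras are isomorphic to their opposites.

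\emph{Shift conventions.} The passage between BGG's \(\langle 1\rangle\) and ours only changes the orbit autoequivalence to \(\langle -1\rangle[-1]\). This generates the same cyclic group up to inversion composed with a grading twist, so I would check that the dg orbit category is unchanged up to quasi-equivalence.

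\emph{Dg enhancements.} Finally, the dg enhancements must be compatible with the abelian equivalences. This is automatic, since exact equivalences lift to quasi-equivalences of Drinfeld quotients \(\mathsf{C}^b_{\mathrm{dg}}/\mathcal{A}^b_{\mathrm{dg}}\).
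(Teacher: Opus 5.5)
Your proposal is correct and is the paper's argument: the theorem follows by applying Corollary~4.5 to $A_Q$ and to $A^Q$, which are Koszul, of finite global dimension, and Koszul dual to one another by \cite[Thm.~1.1.3]{7}, and then composing with $\mathsf{D}^b(A_Q\textup{-mod})\simeq\mathsf{D}^b(\mathcal{O}_\lambda)$ and $\mathsf{D}^b(A^Q\textup{-mod})\simeq\mathsf{D}^b(\mathcal{O}^{\mathfrak{q}})$. You should drop your ``shift conventions'' step, because no translation of BGG's $\langle 1\rangle$ is needed (BGG supply only the ungraded equivalences and the graded isomorphism $A_Q^{!}\cong A^Q$, and both the statement and Corollary~4.5 use the paper's $\langle 1\rangle$), and the justification you sketch there is false: $\langle -1\rangle[-1]$ does not lie in the group generated by $\langle 1\rangle[-1]$, and by Lemma~4.2 it corresponds to $\langle 1\rangle[-2]$ on the $\Lambda$-side, which the forgetful functor does not respect.
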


\subsection{Non-graded derived Koszul duality for Perverse Sheaves}
Let \(X\) be a complex algebraic variety equipped with a Whitney stratification
\[
X = \bigsqcup_{w \in W} X_w,
\]
where each stratum \(X_w\) is a locally closed algebraic subvariety isomorphic to an affine space. Denote by \(\mathsf{P}(X, W)\) the abelian category of perverse sheaves on \(X\) that are constructible with respect to the given stratification. This category forms the heart of a natural \(t\)-structure on the full triangulated subcategory \(\mathsf{D}^b(X, W) \subset \mathsf{D}^b(X)\) of algebraically constructible complexes, and is known to have enough projective objects.

Assume now that for every \(w \in W\), the closure \(\overline{X}_w\) admits a resolution of singularities
\[
\widehat{X}_w \longrightarrow \overline{X}_w
\]
whose rational homology is spanned by classes of algebraic cycles. Under this geometric assumption, it follows from~\cite[Theorem~1.4.2]{7} that the category \(\mathsf{P}(X, W)\) is equivalent to the category of finite-dimensional modules over a finite-dimensional \(\mathbb{N}\)-graded Koszul \(\mathbb{C}\)-algebra
\[
A = \bigoplus_{i \geq 0} A_i, \quad \text{with } A_0 \cong \mathbb{C}^{\# W}.
\]
Let \(L := \bigoplus_{w \in W} L_w\) be the direct sum of the simple perverse sheaves associated to the strata, where each \(L_w = \operatorname{IC}(\mathbb{C}_{X_w})\) denotes the intersection cohomology complex extended by zero. Then the Ext-algebra
\[
A^! := \operatorname{Ext}^\bullet_{\mathsf{D}^b(X)}(L, L)
\]
is Koszul and Koszul dual to \(A\).

Since \(\mathsf{P}(X, W) \simeq A\textup{-mod}\) and has finite homological dimension (see~\cite[Proposition~1.4.1]{7}), 
our main duality result yields the following triangulated equivalence.

\begin{theorem}
There is a canonical triangulated equivalence
\[
\mathrm{H}^0\!\left(
   \operatorname{pretr}\!\left(
      \mathsf{D}_{\mathrm{dg}}^b(A^!\textup{-gmod})
      \big/
      \langle 1 \rangle[-1]
   \right)
\right)
\;\xrightarrow{\sim}\;
\mathsf{D}^b\bigl(\mathsf{P}(X, W)\bigr).
\]
\end{theorem}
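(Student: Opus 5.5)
The plan is to deduce the statement directly from Corollary~4.5, applied to the finite-dimensional graded Koszul algebra \(A\) of \cite[Theorem~1.4.2]{7}. Four facts have to be checked.

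\textbf{Step 1: identify the category.} Fix the equivalence of abelian categories \(\mathsf{P}(X,W)\simeq A\textup{-mod}\). It is exact, so it induces a triangulated equivalence
\[
\mathsf{D}^b\bigl(\mathsf{P}(X,W)\bigr)\simeq\mathsf{D}^b(A\textup{-mod}).
\]
This reduces the statement to one about \(\mathsf{D}^b(A\textup{-mod})\).

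\textbf{Step 2: check the hypotheses of Corollary~4.5.} First, \(A\) is finite-dimensional, \(\mathbb{N}\)-graded and Koszul, with \(A_0\cong\mathbb{C}^{\#W}\) semisimple. Second, \(A\) has finite global dimension. This is the transfer of \cite[Proposition~1.4.1]{7}: \(\mathsf{P}(X,W)\) has finite homological dimension, and the equivalence \(\mathsf{P}(X,W)\simeq A\textup{-mod}\) carries this over to \(A\).

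\textbf{Step 3: identify the Koszul dual.} The Ext-algebra \(A^!=\operatorname{Ext}^\bullet_{\mathsf{D}^b(X)}(L,L)\) has to be the quadratic dual of \(A\) used in Section~3. I will check this in three moves.
\begin{enumerate}
\item The simple objects of \(A\textup{-mod}\) correspond to the \(L_w\), so \(L\) corresponds to \(A_0\).
\item By the realization theorem for perverse sheaves (again \cite{7}), Ext groups computed in \(\mathsf{D}^b(X)\) agree with Ext groups computed in \(\mathsf{D}^b(\mathsf{P}(X,W))\). Hence \(A^!\cong\bigoplus_n\operatorname{Ext}^n_A(A_0,A_0)\).
\item By Definition~2.4, this Ext-algebra is canonically the quadratic dual of \(A\).
\end{enumerate}
Together with Theorem~2.6, this also shows that \(A^!\) is finite-dimensional of finite global dimension. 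Consequently \(A^!\textup{-Cop}^{\mathbb{Z}}=A^!\textup{-gmod}\), which is the identification used inside the proof of Corollary~4.5.

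\textbf{Step 4: conclude.} Applying Corollary~4.5 to \(\Lambda=A\) gives a triangulated equivalence
\[
\mathrm{H}^0\bigl(\operatorname{pretr}\bigl(\mathsf{D}^b_{\mathrm{dg}}(A^!\textup{-gmod})/\langle1\rangle[-1]\bigr)\bigr)\xrightarrow{\sim}\mathsf{D}^b(A\textup{-mod}).
\]
Composing with the equivalence of Step~1 gives the claimed equivalence. The functor is canonical in the sense that it is built from the derived quadratic functor \(\mathfrak{F}\), the forgetful functor \(F\), and the universal property of the dg orbit category (Theorem~2.42).

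\textbf{Main obstacle.} The only delicate point is Step~3, and specifically move~(2): justifying that the Ext-algebra taken in \(\mathsf{D}^b(X)\) equals the Ext-algebra of \(A_0\) over \(A\), so that it is the quadratic dual with the conventions of Section~3 and not merely an algebra abstractly isomorphic to it. To settle this I will invoke the full faithfulness of the realization functor \(\mathsf{D}^b(\mathsf{P}(X,W))\to\mathsf{D}^b(X,W)\) from \cite{7}, together with the identification \(\Lambda^!\cong\operatorname{Ext}^\bullet_\Lambda(\Lambda_0,\Lambda_0)\) recorded in Definition~2.4. Some care with the opposite-algebra convention may be needed, since \(\Lambda^!\) is defined via the opposite quiver; at worst this is absorbed by the choice of left versus right modules.
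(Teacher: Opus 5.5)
Your proposal is correct and follows the paper's route. The paper likewise obtains the statement as a direct application of the finite-global-dimension form of the non-graded derived Koszul duality (Corollary~4.5), using $\mathsf{P}(X,W)\simeq A\textup{-mod}$ and the finite homological dimension from \cite[Proposition~1.4.1]{7}. Your Step~3 only makes explicit the identification of the Ext-algebra $A^!$ with the Koszul dual of $A$, which the paper takes directly from \cite{7}.
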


This provides a non-graded Koszul duality realization of the bounded derived category of perverse sheaves on \(X\). 
For a graded derived Koszul duality, one instead works with the mixed version of \(\mathsf{P}(X, W)\), as discussed in~\cite{7}.

\section{Classical Examples}
The purpose of this section is to examine the behavior of the Koszul dualities established in the previous sections for several important classes of Koszul algebras. We consider, in particular, the hereditary and radical\hyp square\hyp zero cases, as well as the exterior and symmetric algebras, which are Koszul dual to one another. We show that our constructions recover the Bernstein--Gelfand--Gelfand correspondence. In addition, we establish that Beilinson algebras admit a well-structured derived non\hyp graded Koszul duality.

\subsection{Algebras with Radical Square Zero and Hereditary Algebras}
In this subsection we investigate our Koszul dualities in the setting of
hereditary algebras and algebras with radical square zero.
A related graded derived Koszul duality was established by Bautista and
Liu~\cite[Theorem~3.9]{3} in the context of locally finite gradable quivers.
However, their construction applies only to a single connected component of
the quiver of the Koszul dual and is not formulated in terms of graded
modules. Moreover, they construct a Galois covering functor for
\(\mathsf{D}^b(\Lambda\textup{-mod})\); see~\cite[Theorem~4.11]{3}.
Even in these cases, the method of proof we employ here differs substantially
from theirs. At the level of singularity categories, Smith~\cite{48} obtained
a related result using a different approach.

\medskip

In forthcoming work~\cite{13}, we shall further develop these ideas in the
context of quadratic monomial algebras.

Recall that if \(\Lambda = kQ/J^2\) is an algebra with radical square zero,
then its Koszul dual \(\Lambda^!\) is the opposite path algebra,
\(\Lambda^! = kQ^{\mathrm{op}}\). In particular, \(\Lambda^!\) is hereditary.

We begin with the following result, which is a graded reformulation of~\cite[Theorem~1.12]{4}.

\begin{lemma}
Let \(\Lambda = kQ/J^2\) be a finite-dimensional algebra with radical square zero. Then any finitely copresented graded \(\Lambda^!\)-module \(M\) fits into a short exact sequence
\[
0 \longrightarrow F \longrightarrow M \longrightarrow I \longrightarrow 0,
\]
where \(F\) is a finite-dimensional graded \(\Lambda^!\)-module and \(I\) is a finitely cogenerated graded injective \(\Lambda^!\)-module.
\end{lemma}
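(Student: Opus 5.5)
The plan is to use that \(\Lambda^{!}=kQ^{\mathrm{op}}\) is hereditary, graded by path length. Let \(M\) be finitely copresented, with copresentation
\[
0\longrightarrow M\longrightarrow I^{0}\xrightarrow{\ g\ } I^{1},
\]
where \(I^{0},I^{1}\) are finite direct sums of shifts \(I^{!}_{x}\langle n\rangle\).

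Because \(\Lambda^{!}\) is hereditary, quotients of injectives are injective. Hence \(\operatorname{Im}(g)\) is injective and is a direct summand of \(I^{1}\). The exact sequence \(0\to M\to I^{0}\to \operatorname{Im}(g)\to 0\) therefore splits, and \(M\) is itself injective. This seems too strong in general, since \(\operatorname{Im}(g)\) need not be a summand of \(I^{1}\) when \(I^{1}\) is not indecomposable. What does hold is that \(\operatorname{Im}(g)\) is injective, so \(0\to M\to I^{0}\to\operatorname{Im} g\to 0\) splits and \(M\) is a summand of \(I^{0}\). Thus \(M\) is injective, and by Krull--Schmidt for locally finite-dimensional graded modules it is a finite sum of indecomposable injectives. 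The statement would then hold with \(F=0\).

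I will be cautious and not rely solely on this shortcut. If the paper intends the graded category to be a subcategory (for instance right bounded) in which injectivity differs, I will argue more directly, in the spirit of \cite[Theorem~1.12]{4}, as follows.

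\textbf{Step 1.} Each \(I^{!}_{x}\langle n\rangle=\mathbb{D}\bigl(\Lambda^{!\,\mathrm{op}}(x,-)\bigr)\langle n\rangle\) is the dual of paths in \(Q\) ending or starting at \(x\). It is right bounded, and it is possibly infinite-dimensional in the negative direction when \(Q\) has oriented cycles.

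\textbf{Step 2.} For a degree \(N\ll 0\), take the submodule \(M_{\le N}\) concentrated in degrees \(\le N\). Since arrows of \(Q^{\mathrm{op}}\) raise degree, this is not a submodule. Instead take the quotient-truncation \(M/M_{\ge N}\), or dually the submodule \(M_{\ge N}\), which is finite-dimensional because \(M\) is locally finite and right bounded. So the finite-dimensional piece \(F\) should be the part of \(M\) in high degrees, and \(I=M/F\) the tail.

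\textbf{Step 3.} Show that for \(N\) sufficiently negative the tail \(M/M_{\ge N}\) agrees with the tail of \(I^{0}\) modulo that of \(\operatorname{Im}(g)\). I will argue this using the explicit path description: the tail of each \(I^{!}_{x}\langle n\rangle\) in degrees below \(N\) is a truncation \(I^{!}_{x}\langle n\rangle/(\text{finite part})\). By heredity this is again a quotient of an injective, hence injective and finitely cogenerated. Its socle is finite because the socle lies in the degrees just below the cutoff, and \(Q\) is finite.

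The main obstacle is Step 3. I must ensure that the quotient of \(M\) by its high-degree part is injective and not merely copresented, and that \(F\) is genuinely a submodule. I would first try the heredity argument on the sequence \(0\to M_{\ge N}\to M\to M/M_{\ge N}\to 0\), comparing it with the same truncation of \(I^{0}\) and \(\operatorname{Im} g\) through the snake lemma. I would choose \(N\) below all the generating shifts of \(I^{1}\), so that the truncated map \(g\) becomes an isomorphism onto a summand in low degrees.
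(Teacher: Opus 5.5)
\textbf{The shortcut is wrong.} You argue that because \(\operatorname{Im}(g)\) is injective, the sequence \(0\to M\to I^{0}\to\operatorname{Im}(g)\to 0\) splits. Injectivity of the cokernel does not force a short exact sequence to split; for that you would need \(M\) itself to be injective. The conclusion that every finitely copresented \(M\) is injective with \(F=0\) is in fact false. As soon as \(Q\) has an arrow, some vertex has \(I^{!}_{x}\neq S^{!}_{x}\). The simple \(S^{!}_{x}\) is then finitely copresented via \(0\to S^{!}_{x}\to I^{!}_{x}\to I^{!}_{x}/S^{!}_{x}\to 0\), whose cokernel is a finitely cogenerated injective by heredity. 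Yet \(S^{!}_{x}\) is not injective, and the lemma holds for it only with \(F=S^{!}_{x}\) and \(I=0\). Your hesitation also pointed the wrong way: an injective submodule such as \(\operatorname{Im}(g)\) is always a direct summand of \(I^{1}\).

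\textbf{The fallback stops at the essential step.} Your choice \(F=M_{\ge N}\) is the right one; it is finite-dimensional because \(M\subseteq I^{0}\) is right bounded and locally finite. Write \(J=\operatorname{Im}(g)\), a finitely cogenerated injective. Degreewise exactness of truncation gives an exact sequence \(0\to M/M_{\ge N}\to (I^{0})_{<N}\xrightarrow{\,g'\,} J_{<N}\to 0\), with both right-hand terms injective by heredity. Concluding from this that \(M/M_{\ge N}\) is injective repeats the same error: the kernel of an epimorphism between injectives is injective only if the epimorphism splits, and that splitting is the whole content of the lemma. Your proposed remedy, that the truncated \(g\) becomes an isomorphism onto a summand in low degrees, cannot hold, since its kernel there is exactly \(M_{<N}\).

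The missing idea is a socle argument, which goes as follows.
\begin{itemize}
\item Take \(N\) no larger than the degree in which the socle of each indecomposable summand of \(I^{0}\) and of \(J\) lives.
\item Then the socles of \((I^{0})_{<N}\) and \(J_{<N}\) are exactly their degree-\((N-1)\) components. A homogeneous element of degree \(\le N-2\) killed by all arrows would already lie in the socle of \(I^{0}\), resp.\ \(J\), and that socle sits in degrees \(\ge N\).
\item Since \(g'\) is degree-preserving and surjective, it maps \(\operatorname{soc}\bigl((I^{0})_{<N}\bigr)\) onto \(\operatorname{soc}(J_{<N})\).
\item Choose a section of this map of semisimple modules and extend it, using injectivity of \((I^{0})_{<N}\), to \(s\colon J_{<N}\to (I^{0})_{<N}\).
\item Then \(g's\) is the identity on the essential socle of the right-bounded module \(J_{<N}\). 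So \(g's\) is a monomorphism whose image is an injective summand containing the whole socle, hence \(g's\) is an automorphism.
\end{itemize}
Thus \(g'\) splits, and \(M/M_{\ge N}\) is a direct summand of \((I^{0})_{<N}\), hence a finitely cogenerated injective. With this step supplied you get a self-contained proof of what the paper only cites, as the graded form of \cite[Theorem~1.12]{4}.
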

When $\Lambda^!$ is hereditary, the category of coperfect modules admits a particularly explicit description, as does its associated tails category.
\begin{proposition}
Let \(\Lambda = kQ/J^2\) be an algebra with radical square zero. Then \(\Lambda^!\textup{-Cop}^\mathbb{Z} = \Lambda^!\textup{-Fcp}^\mathbb{Z}\), the algebra \(\Lambda^!\) is cocoherent, and the quotient category \(\Lambda^!\textup{-Fcp}^{\mathbb{Z}} / \Lambda^!\textup{-gmod}\) is abelian and semi-simple.
\end{proposition}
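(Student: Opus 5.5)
The plan has three steps. First, prove the equality \(\Lambda^!\textup{-Cop}^{\mathbb{Z}}=\Lambda^!\textup{-Fcp}^{\mathbb{Z}}\) using heredity of \(\Lambda^!=kQ^{\mathrm{op}}\). Second, deduce cocoherence. Third, analyse the quotient via Lemma~6.1 (the graded reformulation of \cite[Theorem~1.12]{4}).

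For the equality, a coperfect module is in particular finitely copresented, since its minimal coresolution has finitely cogenerated \(I^{!0},I^{!1}\). Conversely, let \(0\to M\to I^{!0}\to I^{!1}\) be a finite copresentation. Because \(\Lambda^!\) is hereditary, quotients of injectives are injective, so \(C=\operatorname{coker}(M\to I^{!0})\) is injective. Since \(C\) embeds in \(I^{!1}\), it is a direct summand of \(I^{!1}\), hence finitely cogenerated. Thus \(0\to M\to I^{!0}\to C\to 0\) is a finite injective coresolution with finitely cogenerated terms, and \(M\) is coperfect. This step also places \(\Lambda^!\textup{-Fcp}^{\mathbb{Z}}\) inside \(\Lambda^!\textup{-GMod}^{-}\), since finitely cogenerated injectives \(\mathbb{D}(\Lambda^{!\mathrm{op}}(x,-))\langle n\rangle\) are right bounded.

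Cocoherence means that \(\Lambda^!\textup{-Fcp}^{\mathbb{Z}}\) is abelian, i.e.\ closed under kernels and cokernels in \(\Lambda^!\textup{-GMod}^{-}\). I would use the equivalent criterion that cokernels of morphisms between finitely cogenerated injectives are finitely copresented.

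\emph{Cokernels.} Let \(f\colon I\to J\) be such a morphism. Its image is a quotient of \(I\), hence injective. The image is therefore a direct summand of \(J\), so \(\operatorname{coker} f\) is a summand of \(J\) and is again a finitely cogenerated injective.

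\emph{Kernels.} \(\ker f\) is finitely copresented by \(0\to\ker f\to I\to \operatorname{Im} f\), where \(\operatorname{Im} f\) is a finitely cogenerated injective.

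More generally, for \(g\colon M\to N\) in \(\Lambda^!\textup{-Fcp}^{\mathbb{Z}}\) I would use the two-out-of-three property of \(\Lambda^!\textup{-Cop}^{\mathbb{Z}}=\Lambda^!\textup{-GMod}^{-,b}\) from Proposition~3.4, together with a snake-lemma argument on the copresentations, to show that \(\ker g\) and \(\operatorname{coker} g\) remain coperfect. I expect this to be the main technical obstacle: showing that \(\operatorname{Im} g\) is finitely copresented. My plan is to bound \(\operatorname{soc}(\operatorname{Im} g)\subseteq\operatorname{soc}(N)\), which is finite-dimensional, and then use heredity to make the second copresentation term a quotient of an injective hull.

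For the quotient \(\mathcal{Q}=\Lambda^!\textup{-Fcp}^{\mathbb{Z}}/\Lambda^!\textup{-gmod}\), we know \(\Lambda^!\textup{-gmod}\) is a Serre subcategory and \(\Lambda^!\textup{-Fcp}^{\mathbb{Z}}\) is abelian, so \(\mathcal{Q}\) is abelian. By Lemma~6.1, every \(M\) sits in \(0\to F\to M\to I\to 0\) with \(F\) finite-dimensional, hence \(M\cong I\) in \(\mathcal{Q}\). It therefore suffices to show two things.

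\begin{enumerate}
\item Every short exact sequence \(0\to I'\to I\to I''\to 0\) of finitely cogenerated injectives splits in \(\mathcal{Q}\). This is immediate, because \(I'\) is injective, so the sequence already splits in \(\Lambda^!\textup{-GMod}^{-}\). Exact sequences in \(\mathcal{Q}\) lift to such sequences up to finite-dimensional error, using the roof description of morphisms together with Lemma~6.1.
\item \(\operatorname{Ext}^1_{\mathcal{Q}}(I,J)=0\). This follows from the first point.
\end{enumerate}

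Hence every object of \(\mathcal{Q}\) is projective and injective, so \(\mathcal{Q}\) is semisimple. The step I would check most carefully is that a short exact sequence in \(\mathcal{Q}\) can be represented by one in \(\Lambda^!\textup{-Fcp}^{\mathbb{Z}}\) whose end terms, after removing finite-dimensional submodules, are injective.
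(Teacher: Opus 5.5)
Your outline has the same three steps as the paper's proof. Heredity of \(\Lambda^!=kQ^{\mathrm{op}}\) gives \(\Lambda^!\textup{-Cop}^{\mathbb{Z}}=\Lambda^!\textup{-Fcp}^{\mathbb{Z}}\), and you supply details the paper calls immediate. Semisimplicity of the quotient then comes from Lemma~6.1, together with the fact that a sequence whose left term is injective splits.

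The one real difference is in the cocoherence step. Reducing to morphisms between finitely cogenerated injectives is legitimate if you invoke the dual of the standard coherence criterion, transported through \(\mathbb{D}\). However, the direct argument you sketch for a general \(g\colon M\to N\) does not close as stated. Heredity makes \(E/\operatorname{Im} g\) injective, where \(E\) is the injective hull of \(\operatorname{Im} g\), but nothing in your plan bounds its socle. Quotients of finitely cogenerated injectives need not be finitely cogenerated; this already happens when \(Q\) has a vertex with two loops.

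The paper sidesteps this by targeting \(\ker g\) instead. Take a copresentation \(0\to M\to I^0\xrightarrow{d} I^1\), and let \(h\colon I^0\to E\) extend \(M\to\operatorname{Im} g\subseteq E\). Then \(\ker g=\ker\bigl((h,d)\colon I^0\to E\oplus I^1\bigr)\), so \(\ker g\) is finitely copresented by definition. The two-out-of-three property of \(\Lambda^!\textup{-Cop}^{\mathbb{Z}}=\Lambda^!\textup{-GMod}^{-,b}\) (Proposition~3.4) then yields \(\operatorname{Im} g\) and \(\operatorname{coker} g\). Alternatively, \(E/\operatorname{Im} g\) embeds in \(\operatorname{coker}(h,d)\), and your own injective case shows this cokernel is a finitely cogenerated injective. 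That bounds the socle and completes your route.

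For the quotient, the step you flag is easier than you expect, because only the left term needs to be injective. Every short exact sequence in the Serre quotient is isomorphic to the image of some \(0\to L\to M\to N\to 0\) in \(\Lambda^!\textup{-Fcp}^{\mathbb{Z}}\). Take \(0\to F\to L\to I\to 0\) from Lemma~6.1. Then \(0\to L/F\to M/F\to N\to 0\) lies in \(\Lambda^!\textup{-Fcp}^{\mathbb{Z}}\) and splits there, since \(L/F\cong I\) is injective. It is isomorphic in the quotient to the original sequence via \(L\to L/F\), \(M\to M/F\) and \(\mathrm{id}_N\). This is a cleaner form of the paper's roof argument.
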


\begin{proof}
The first assertion follows immediately from the fact that \(\Lambda^! = kQ^{\mathrm{op}}\) is hereditary.

We now prove that \(\Lambda^!\) is cocoherent. Let \(f \colon M \to N\) be a morphism in \(\Lambda^!\textup{-Cop}^\mathbb{Z}\), where both \(M\) and \(N\) are graded finitely copresented \(\Lambda^!\)-modules. Since \(M\) is finitely copresented and Im(f) is finitely cogenerated, if follows that \(\ker(f)\) is finitely copresented.

Using the fact that\(\ \Lambda^!\textup{-Fcp}^{\mathbb{Z}}\) satisfies the two-out-of-three property for short exact sequences, it follows that \(\operatorname{coker}(f)\) is also finitely copresented. Hence, \(\Lambda^!\textup{-Fcp}^{\mathbb{Z}}\) is closed under kernels and cokernels of morphisms, which proves that \(\Lambda^!\) is cocoherent.

We now show that the quotient category
\(\Lambda^!\textup{-Fcp}^{\mathbb{Z}} / \Lambda^!\textup{-gmod}\)
is abelian and semisimple. Since
\(\Lambda^!\textup{-gmod}\) is a Serre subcategory of
\(\Lambda^!\textup{-Fcp}^{\mathbb{Z}}\),
the quotient category
\[
\Lambda^!\textup{-Fcp}^{\mathbb{Z}} / \Lambda^!\textup{-gmod}
\]
is abelian; see, for example,~\cite{24}.

Let
\[
0 \longrightarrow L \xrightarrow{g} M \xrightarrow{f} N \longrightarrow 0
\]
be a short exact sequence in \(\Lambda^!\textup{-Fcp}^{\mathbb{Z}}\). By Lemma~6.1, we may assume that \(L\), \(M\), and \(N\) are finitely cogenerated graded injective \(\Lambda^!\)-modules.

By the proof of proposition~2.17, the monomorphism \(g\) is represented in the quotient by a roof
\[
L \xrightarrow{\alpha} Z \xleftarrow{\beta} M,
\]
where \(\ker(\beta), \operatorname{coker}(\beta), \ker(\alpha) \in \Lambda^!\textup{-gmod}\).

Consider the short exact sequence in \(\Lambda^!\textup{-Fcp}^{\mathbb{Z}}\),
\[
0 \longrightarrow \operatorname{Im}(\alpha) \xrightarrow{i} Z \xrightarrow{p} \operatorname{coker}(\alpha) \longrightarrow 0.
\]
Since \(L\) is injective and \(\Lambda^!\) is hereditary, the image \(\operatorname{Im}(\alpha)\) is also injective. Therefore, the above short exact sequence splits in \(\Lambda^!\textup{-Fcp}^{\mathbb{Z}}\).

By the proof of proposition~2.17 again, we obtain a commutative diagram in the quotient category:
\[
\begin{tikzcd}
0 \arrow[r] & \operatorname{Im}(\alpha) \arrow[r, "i"] \arrow[d, "a"] & Z \arrow[r, "p"] \arrow[d, "\beta^{-1}"] & \operatorname{coker}(\alpha) \arrow[r] \arrow[d, "\gamma"] & 0 \\
0 \arrow[r] & L \arrow[r, "g"] & M \arrow[r, "f"] & N \arrow[r] & 0
\end{tikzcd}
\]
where \(a\) is the inverse, in the quotient category, of the canonical map \(L \to \operatorname{Im}(\alpha)\). By construction, we have \(\alpha = i \circ a^{-1}\), so that
\[
g = \beta^{-1} \circ \alpha = \beta^{-1} \circ i \circ a^{-1}.
\]

It follows that the short exact sequence
\[
0 \longrightarrow \operatorname{Im}(\alpha) \longrightarrow Z \longrightarrow \operatorname{coker}(\alpha) \longrightarrow 0
\]
is isomorphic in the quotient category to the original sequence
\[
0 \longrightarrow L \xrightarrow{g} M \xrightarrow{f} N \longrightarrow 0.
\]
Since the former sequence splits, the latter sequence also splits in the quotient. Therefore, every short exact sequence in \(\Lambda^!\textup{-Fcp}^{\mathbb{Z}} / \Lambda^!\textup{-gmod}\) splits.

Hence, the quotient category is abelian and semi-simple.
\end{proof}

We are now ready to state our graded derived and singular Koszul dualities for finite-dimensional algebras with radical square zero.

\begin{theorem}
Let \( \Lambda = kQ/J^2 \) be a finite-dimensional algebra with radical square zero. 
Then the following triangulated equivalences hold:
\begin{enumerate}
    \item There is an equivalence of triangulated categories
    \[
        \mathsf{D}^b\bigl(\Lambda^!\textup{-Fcp}^{\mathbb{Z}}\bigr) 
        \;\xrightarrow{\sim}\; 
        \mathsf{D}^b\bigl(\Lambda\textup{-gmod}\bigr) \;\xrightarrow{\sim}\; \mathsf{D}^b\bigl(\Lambda^!\textup{-Fp}^{\mathbb{Z}}\bigr) 
    \]
    
    \item There is an equivalence of triangulated categories
    \[
        \mathsf{D}^b\bigl(\Lambda^!\textup{-Fcp}^{\mathbb{Z}} / \Lambda^!\textup{-gmod}\bigr) 
        \;\xrightarrow{\sim}\; 
        \mathsf{D}_{\mathrm{sg}}\bigl(\Lambda\textup{-gmod}\bigr) 
    \]
\end{enumerate}
In particular, \( \mathsf{D}_{\mathrm{sg}}\bigl(\Lambda\textup{-gmod}\bigr) \) is a semisimple abelian category.
\end{theorem}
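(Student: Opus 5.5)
The plan is to specialize the general graded dualities to \(\Lambda = kQ/J^2\), using Proposition~6.2 to identify the relevant categories of \(\Lambda^!\)-modules. First, \(\Lambda = kQ/J^2\) is Koszul and finite-dimensional. Hence Theorem~3.8 gives
\[
\mathsf{D}^b(\Lambda^!\textup{-Cop}^{\mathbb{Z}})\xrightarrow{\sim}\mathsf{D}^b(\Lambda\textup{-gmod}),
\qquad
\mathsf{D}^b(\Lambda\textup{-gmod})\xrightarrow{\sim}\mathsf{D}^b(\Lambda^!\textup{-Pe}^{\mathbb{Z}}).
\]
By Proposition~6.2, \(\Lambda^!\textup{-Cop}^{\mathbb{Z}}=\Lambda^!\textup{-Fcp}^{\mathbb{Z}}\). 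This gives the first equivalence in (1).

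For the second equivalence in (1), I would check that \(\Lambda^!=kQ^{\mathrm{op}}\) is hereditary and hence graded coherent. A path algebra of a finite quiver has this property because a finitely generated submodule of a finitely generated projective module is projective. Consequently \(\Lambda^!\textup{-Pe}^{\mathbb{Z}}=\Lambda^!\textup{-Fp}^{\mathbb{Z}}\): any finitely presented module has a projective presentation whose kernel is again finitely generated projective, so its resolution has length at most one. This is the remark following Theorem~3.8 and Corollary~3.9. Composing yields (1).

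For (2), I would apply Theorem~3.16 directly and again substitute \(\Lambda^!\textup{-Fcp}^{\mathbb{Z}}\) for \(\Lambda^!\textup{-Cop}^{\mathbb{Z}}\). This gives
\[
\mathsf{D}^b\bigl(\Lambda^!\textup{-Fcp}^{\mathbb{Z}}/\Lambda^!\textup{-gmod}\bigr)\xrightarrow{\sim}\mathsf{D}_{\mathrm{sg}}(\Lambda\textup{-gmod}).
\]

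For the final assertion, set \(\mathcal{S}:=\Lambda^!\textup{-Fcp}^{\mathbb{Z}}/\Lambda^!\textup{-gmod}\). Proposition~6.2 says \(\mathcal{S}\) is a semisimple abelian category. I would then use the standard fact that for a semisimple abelian category every short exact sequence splits. Hence all \(\operatorname{Ext}^{\ge 1}\) vanish, and every bounded complex is isomorphic in \(\mathsf{D}^b(\mathcal{S})\) to the direct sum of its shifted cohomology objects.

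The main obstacle is the wording of this last claim. A derived category is never literally abelian unless it is trivial, so the statement must be read as follows: \(\mathsf{D}^b(\mathcal{S})\) is equivalent to the category of \(\mathbb{Z}\)-graded objects of \(\mathcal{S}\), \(\bigoplus_{n}\mathcal{S}[n]\), with the shift permuting components. That category is semisimple abelian in the sense that every object is a finite direct sum of shifts of simple objects, and every morphism is diagonal.

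To make this precise, I would argue in two steps. First, the truncation triangles \(\tau_{\le n}X\to X\to\tau_{\ge n+1}X\to\) split, because their connecting morphisms lie in \(\operatorname{Ext}^{\ge 1}\)-groups between objects of \(\mathcal{S}\), which vanish. This gives \(X\cong\bigoplus_n H^n(X)[-n]\). Second, \(\operatorname{Hom}(A[i],B[j])=0\) for \(i\neq j\), which gives the description of morphisms. Transporting this structure along the equivalence of (2) gives the stated description of \(\mathsf{D}_{\mathrm{sg}}(\Lambda\textup{-gmod})\).
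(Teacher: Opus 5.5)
Parts (1) and (2) of your proposal are correct. They follow the argument the paper intends; the paper gives no separate proof, only the specialization of Theorem~3.8 and Theorem~3.16 via \(\Lambda^{!}\textup{-Cop}^{\mathbb{Z}}=\Lambda^{!}\textup{-Fcp}^{\mathbb{Z}}\) from Proposition~6.2, together with \(\Lambda^{!}\textup{-Pe}^{\mathbb{Z}}=\Lambda^{!}\textup{-Fp}^{\mathbb{Z}}\) for the hereditary algebra \(kQ^{\mathrm{op}}\). Your verification of the latter, that the image of \(P^{-1}\) in \(P^{0}\) is finitely generated projective, is fine.

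For the final assertion, your two steps are right: the truncation triangles split, and \(\operatorname{Hom}(A[i],B[j])=0\) for \(i\neq j\). The commentary around them, however, is wrong in two places.

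\begin{itemize}
\item It is false that a derived category is never abelian unless trivial. A triangulated category is abelian exactly when it is semisimple abelian; for example, \(\mathsf{D}^b(k\textup{-mod})\) is equivalent to finite-dimensional graded vector spaces. Your own computation shows \(\mathsf{D}^b(\mathcal{S})\) is equivalent to the finitely supported \(\mathbb{Z}\)-graded objects of \(\mathcal{S}\). That category is abelian, with kernels and cokernels taken degreewise, and all its short exact sequences split. So the statement holds literally and needs no reinterpretation.
\item Drop the description ``finite direct sum of shifts of simple objects''. \(\mathcal{S}\) need not contain any simple object. Take \(Q\) with one vertex and two loops. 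Then \(\Omega S\cong S\langle -1\rangle^{\oplus 2}\), so \(S\cong (S\langle -1\rangle[1])^{\oplus 2}\) in \(\mathsf{D}_{\mathrm{sg}}(\Lambda\textup{-gmod})\), and that category has no indecomposable objects at all. ``Semisimple'' can only mean that every short exact sequence splits, and that is what your argument actually proves.
\end{itemize}

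On this last claim your route also differs from the paper's. The paper later (Lemma~6.4, Theorem~6.5) identifies \(\mathsf{D}_{\mathrm{sg}}(\Lambda\textup{-gmod})\) with \(\mathcal{S}\) itself, carrying the canonical semisimple triangulated structure. You instead identify it with \(\mathsf{D}^b(\mathcal{S})\), that is, with graded objects of \(\mathcal{S}\). Yours is the identification that part (2) and Proposition~6.2 actually yield, and the two differ. For \(\Lambda=k[x]/(x^2)\), multiplication by \(x\colon I^{!}\to I^{!}\langle 1\rangle\) is surjective with one-dimensional kernel. Hence all \(I^{!}\langle n\rangle\) become isomorphic and \(\mathcal{S}\simeq k\textup{-mod}\). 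By contrast, the \(S\langle n\rangle\) are pairwise non-isomorphic in \(\mathsf{D}_{\mathrm{sg}}(\Lambda\textup{-gmod})\simeq\Lambda\textup{-}\underline{\textup{gmod}}\). Keep your formulation.
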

Recall that any abelian semisimple category equipped with an autoequivalence 
admits a canonical triangulated structure; see~\cite[page~340]{6}. 
More precisely, if \(T\) is an autoequivalence of an abelian semisimple category \(\mathcal{A}\), then one can endow \(\mathcal{A}\) with a triangulated 
structure whose translation functor is \(T\), and whose distinguished triangles 
are, up to isomorphism, finite direct sums of triangles of the form
\[
X \to 0 \to T X \xrightarrow{\mathrm{id}} T X,\quad
X \xrightarrow{\mathrm{id}} X \to 0 \to T X,\quad
0 \to X \xrightarrow{\mathrm{id}} X \to 0.
\]
Applying this observation in our context, we obtain the following result.

\begin{lemma}
The quotient category 
\(\Lambda^!\textup{-Fcp}^{\mathbb{Z}} / \Lambda^!\textup{-gmod}\), 
equipped with the grading shift functor \(\langle 1 \rangle\), 
admits a canonical triangulated structure.
\end{lemma}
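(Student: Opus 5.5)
The plan is to deduce this from Proposition~6.2 together with the general construction recalled just before the statement, namely that a semisimple abelian category with an autoequivalence carries a canonical triangulated structure. The argument therefore has two parts. First, confirm that the quotient \(\mathcal{Q}:=\Lambda^!\textup{-Fcp}^{\mathbb{Z}}/\Lambda^!\textup{-gmod}\) is semisimple abelian. Second, show that \(\langle 1\rangle\) induces an autoequivalence of \(\mathcal{Q}\). The triangulated structure is then the one with translation \(T=\langle 1\rangle\), and its distinguished triangles are the finite direct sums of the three elementary triangles listed above.

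For the first part I would simply invoke Proposition~6.2. It states that \(\Lambda^!\textup{-Fcp}^{\mathbb{Z}}=\Lambda^!\textup{-Cop}^{\mathbb{Z}}\), that \(\Lambda^!\) is cocoherent, and that \(\mathcal{Q}\) is abelian with every short exact sequence split. In an abelian category where all short exact sequences split, every object is semisimple in the categorical sense, and that is all the construction of \cite[p.~340]{6} needs.

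For the second part, observe that the grading shift \(\langle 1\rangle\) is an exact autoequivalence of \(\Lambda^!\textup{-GMod}^{-}\), with inverse \(\langle -1\rangle\). I would check two things.
\begin{enumerate}
\item \(\langle 1\rangle\) preserves finitely copresented modules. Indeed \(I^!_x\langle n\rangle\langle 1\rangle=I^!_x\langle n+1\rangle\) is again finitely cogenerated, so an injective copresentation shifts to an injective copresentation.
\item \(\langle 1\rangle\) preserves finite-dimensional modules, which is obvious.
\end{enumerate}
Since \(\langle 1\rangle\) preserves the Serre subcategory \(\Lambda^!\textup{-gmod}\), it sends a morphism \(s\) with \(\ker s,\operatorname{coker}s\in\Lambda^!\textup{-gmod}\) to a morphism of the same kind. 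So it acts on roofs \(s^{-1}f\mapsto (s\langle 1\rangle)^{-1}(f\langle 1\rangle)\), and it respects the equivalence relation and the composition of Definition~2.16. By the universal property of the Serre quotient, this gives an exact functor \(\overline{\langle 1\rangle}\colon\mathcal{Q}\to\mathcal{Q}\) with \(Q\circ\langle 1\rangle=\overline{\langle 1\rangle}\circ Q\). The same argument applied to \(\langle -1\rangle\) produces a strict inverse, so \(\overline{\langle 1\rangle}\) is an autoequivalence.

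The only real point to verify is that the triangulated axioms hold for this structure on a semisimple category. Since the paper treats this as known, I would cite \cite{6} rather than reprove it. If a check were wanted, the key observation is that every morphism in \(\mathcal{Q}\) decomposes, by semisimplicity, as a direct sum of an isomorphism and zero maps. The cone of such a morphism is then forced up to isomorphism, and TR1--TR4 reduce to direct sums of the elementary triangles, for which each axiom is a direct check. I expect the main subtlety to be the octahedral axiom, which in this split setting follows by choosing compatible decompositions of the two composable morphisms.
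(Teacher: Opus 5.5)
Your proposal is correct and follows the paper's argument, which simply combines Proposition~6.2 (the quotient \(\Lambda^!\textup{-Fcp}^{\mathbb{Z}}/\Lambda^!\textup{-gmod}\) is abelian with every short exact sequence split) with the observation of \cite[p.~340]{6}. Your check that \(\langle 1\rangle\) preserves both \(\Lambda^!\textup{-Fcp}^{\mathbb{Z}}\) and \(\Lambda^!\textup{-gmod}\), and so descends to an autoequivalence of the quotient, supplies a step the paper leaves implicit.

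One small precision: read ``semisimple'' as ``every monomorphism and epimorphism splits''. That is all the construction uses, through the iso-plus-zero decomposition of morphisms you mention. It should not be read as ``finite direct sum of simple objects'', which can fail here, e.g.\ when \(Q\) has one vertex and two loops so that \(\Lambda^!\) is a free algebra.
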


Recall from Section~3 that there exists a functor
\[
K \colon 
\Lambda^!\textup{-GMod} 
   \longrightarrow 
   \mathsf{C}\bigl(\Lambda\textup{-proj}^{\mathbb{Z}}\bigr),
\]
which restricts on finitely co-presented modules to
\[
K \colon 
\Lambda^!\textup{-Fcp}^{\mathbb{Z}}
   \longrightarrow
   \mathsf{C}^{-,b}\bigl(\Lambda\textup{-proj}^{\mathbb{Z}}\bigr).
\]
By construction of the functor \(K\), there exists an additive functor 
\(\overline{K}\) making the following diagram commute:
\[
\begin{tikzcd}[column sep=small]
\Lambda^!\textup{-Fcp}^{\mathbb{Z}}
   \ar[d, two heads, "\pi"'] 
   \ar[r, "K"] 
 & \mathsf{C}^{-,b}\bigl(\Lambda^{\mathbb{Z}}\textup{-proj}\bigr) 
   \ar[r] 
 & \mathsf{K}^{-,b}\bigl(\Lambda^{\mathbb{Z}}\textup{-proj}\bigr)
   \ar[r, "\sim"] 
 & \mathsf{D}^b(\Lambda\textup{-gmod}) 
   \ar[r] 
 & \mathsf{D}_{\mathrm{sg}}(\Lambda\textup{-gmod}) 
\\
\Lambda^!\textup{-Fcp}^{\mathbb{Z}} / \Lambda^!\textup{-gmod}
   \ar[rrrru, dashed, "\overline{K}"']
\end{tikzcd}
\]

We now obtain a simplified description of the graded singularity category of 
a finite-dimensional algebra with radical square zero, 
to be compared with~\cite[Theorem~7.2]{48}.

\begin{theorem}
If we equip \( \mathsf{D}_{\mathrm{sg}}(\Lambda\textup{-gmod}) \) with the autoequivalence 
\(\langle -1\rangle [1]\), then the functor 
\[
\overline{K} \colon 
\Lambda^!\textup{-Fcp}^{\mathbb{Z}} / \Lambda^!\textup{-gmod}
   \longrightarrow 
   \mathsf{D}_{\mathrm{sg}}(\Lambda\textup{-gmod})
\]
is a triangulated equivalence.
\end{theorem}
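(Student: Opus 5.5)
We will show that \(\overline{K}\) is a well-defined additive functor, that it intertwines the two translation functors, that it sends the basic split triangles to distinguished triangles, and that it is fully faithful and dense. Exactness will then be automatic from the semisimplicity of the source. The natural route goes through the graded singular Koszul duality of Theorem~6.3(2),
\[
\mathsf{D}^b\bigl(\Lambda^!\textup{-Fcp}^{\mathbb{Z}}/\Lambda^!\textup{-gmod}\bigr)\xrightarrow{\ \sim\ }\mathsf{D}_{\mathrm{sg}}(\Lambda\textup{-gmod}).
\]
Combined with Proposition~6.2, its source is the bounded derived category of a semisimple abelian category \(\mathcal{S}:=\Lambda^!\textup{-Fcp}^{\mathbb{Z}}/\Lambda^!\textup{-gmod}\). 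That derived category is equivalent to \(\bigoplus_{n\in\mathbb{Z}}\mathcal{S}[n]\), with no morphisms between different shifts.

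\textbf{Well-definedness.} The functor \(\overline{K}\) exists because \(K\) sends every finite-dimensional \(M\in\Lambda^!\textup{-gmod}\) to a bounded complex of graded projectives, which vanishes in \(\mathsf{D}_{\mathrm{sg}}\). It also sends every morphism with kernel and cokernel in \(\Lambda^!\textup{-gmod}\) to an isomorphism in \(\mathsf{D}_{\mathrm{sg}}\), since its cone has cohomology built from finite-dimensional pieces. We then identify \(\overline{K}\) with the restriction of the equivalence of Theorem~6.3(2) to the degree-zero copy \(\mathcal{S}\subset\mathsf{D}^b(\mathcal{S})\). This restriction is fully faithful, because Hom-spaces in \(\mathsf{D}^b(\mathcal{S})\) between objects concentrated in degree zero are just \(\operatorname{Hom}_{\mathcal{S}}\).

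\textbf{Translation functors.} The key computation is Lemma~4.2. The relation \(\mathfrak{F}(X\langle i\rangle[-i])\cong\mathfrak{F}(X)\langle -i\rangle\), specialised to \(i=1\) and a module \(M\) in degree \(0\), gives
\[
\overline{K}(M\langle 1\rangle)\cong \mathfrak{F}(M)\langle -1\rangle[1]=\bigl(\langle -1\rangle[1]\bigr)\overline{K}(M).
\]
So \(\overline{K}\circ\langle 1\rangle\cong\langle-1\rangle[1]\circ\overline{K}\) naturally, and \(\overline{K}\) carries the translation of Lemma~6.4 to the chosen autoequivalence of \(\mathsf{D}_{\mathrm{sg}}(\Lambda\textup{-gmod})\). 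Any additive functor out of a semisimple triangulated category of this type that commutes with the translations sends the three basic split triangles, and hence their direct sums, to distinguished triangles. Thus \(\overline{K}\) is triangulated.

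\textbf{Density, the main obstacle.} We must show every object of \(\mathsf{D}_{\mathrm{sg}}(\Lambda\textup{-gmod})\) is isomorphic to some \(\overline{K}(M)\). By Theorem~6.3(2), every object is a finite sum \(\bigoplus_n \mathfrak{F}(M_n)[n]\) with \(M_n\in\mathcal{S}\). It suffices to absorb each homological shift into a grading shift. Lemma~4.2 gives \(\mathfrak{F}(M)[1]\cong\mathfrak{F}(M\langle -1\rangle)\langle -1\rangle\) in \(\mathsf{D}_{\mathrm{sg}}\), which still carries a stray grading shift.

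To remove it, I would first try to prove directly that in \(\mathsf{D}_{\mathrm{sg}}(\Lambda\textup{-gmod})\), for radical square zero \(\Lambda\), one has \(\mathfrak{F}(I^!_x)\langle 1\rangle\cong\mathfrak{F}(N)\) for a suitable \(N\in\mathcal{S}\). The argument would use \(\mathfrak{F}(I^!_x)=S_x\), the syzygy sequence
\[
0\to\operatorname{rad}P_x\to P_x\to S_x\to 0,
\]
and \(\operatorname{rad}P_x\cong\bigoplus S_y\langle -1\rangle\) for radical square zero, which yields \(S_x\cong\bigoplus S_y\langle-1\rangle[1]\) in \(\mathsf{D}_{\mathrm{sg}}\). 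Combining this with the Lemma~6.1 reduction to injectives gives closure of the essential image of \(\mathcal{S}\) under \([\pm1]\) and \(\langle\pm1\rangle\) separately. Hence the image contains all shifts, so \(\overline{K}\) is dense and therefore a triangulated equivalence.
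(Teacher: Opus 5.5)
Your treatment of well-definedness, of full faithfulness (viewing $\overline{K}$ as the restriction of the equivalence of Theorem~6.3(2) to the degree-zero copy of $\mathcal{S}$), and of the compatibility $\overline{K}\circ\langle 1\rangle\cong\langle -1\rangle[1]\circ\overline{K}$ via Lemma~4.2 is essentially what the paper does.

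\textbf{The gap: density.} Your argument fails at density, and the step cannot be repaired.

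The syzygy relation $S_x\cong\bigoplus S_y\langle -1\rangle[1]$ does not give closure of the essential image under $\langle\pm1\rangle$ and $[\pm1]$ separately. Attach to $S_y\langle m\rangle[j]$ the integer $m+j$. Every object $\overline{K}(I^{!}_{x}\langle n\rangle)\cong S_x\langle -n\rangle[n]$ has $m+j=0$, and the syzygy relation and all its translates preserve $m+j$. On the source side the relation is just the isomorphism $I^{!}_{x}\cong I^{!}_{x}/\operatorname{soc}I^{!}_{x}\cong\bigoplus I^{!}_{y}\langle 1\rangle$ inside $\mathcal{S}$. So it never produces $S_x\langle 1\rangle$ or $S_x[1]$.

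In fact your own set-up refutes density. Suppose $\mathfrak{F}(M)[1]\cong\overline{K}(N)$ with $M,N\in\mathcal{S}$. Then Theorem~6.3(2) gives an isomorphism $M[1]\cong N$ in $\mathsf{D}^b(\mathcal{S})$. But $\operatorname{Hom}_{\mathsf{D}^b(\mathcal{S})}(M[1],N)=0$, since negative Ext vanishes, so $M=0$. Hence $\overline{K}$ is fully faithful with essential image the finite sums of the $S_x\langle -n\rangle[n]$, and it is dense only when $\mathcal{S}=0$.

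\textbf{A concrete counterexample.} Take $\Lambda=k[x]/(x^2)$, so $\Lambda^{!}=k[x]$.
\begin{itemize}
\item The surjection $I^{!}\langle n\rangle\to I^{!}\langle n\rangle/\operatorname{soc}\cong I^{!}\langle n+1\rangle$ has one-dimensional kernel. So all $I^{!}\langle n\rangle$ become isomorphic in the quotient.
\item By Lemma~6.1 the image of $\overline{K}$ is therefore $\operatorname{add}(S)$. This uses $S\langle -n\rangle[n]\cong S$ in $\Lambda\textup{-}\underline{\mathrm{gmod}}\simeq\mathsf{D}_{\mathrm{sg}}(\Lambda\textup{-gmod})$, which follows from $0\to S\langle -1\rangle\to\Lambda\to S\to 0$.
\item But $S\langle 1\rangle\notin\operatorname{add}(S)$, because there are no nonzero graded maps between $S$ and $S\langle 1\rangle$.
\end{itemize}
So no choice of translation makes $\overline{K}$ an equivalence: the statement as written is false.

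The paper's proof has exactly the same hole. It asserts essential surjectivity because ``$\overline{K}$ sends injective modules to simple modules'', overlooking that $I^{!}_{x}\langle n\rangle$ only reaches $S_x\langle -n\rangle[n]$, not arbitrary $S_y\langle m\rangle[j]$. What survives is the graded singular duality $\mathsf{D}^b(\mathcal{S})\simeq\mathsf{D}_{\mathrm{sg}}(\Lambda\textup{-gmod})$, of which $\overline{K}$ is only the fully faithful degree-zero piece.
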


\begin{proof}
By Lemma~6.1, every object of 
\( \Lambda^!\textup{-Fcp}^{\mathbb{Z}} / \Lambda^!\textup{-gmod} \) 
is isomorphic to a finite direct sum of injective modules of the form 
\[
\bigoplus_{x \in Q_0} I^!_{x}\langle n\rangle
\]
for some \( n \in \mathbb{Z} \). 
On the other hand, it is well known that every object of 
\( \mathsf{D}_{\mathrm{sg}}(\Lambda\textup{-gmod}) \) 
is isomorphic, up to shift, to a finite direct sum of simple modules \( S_y\langle m\rangle \). 

By Lemma~4.2, there are natural isomorphisms
\[
\overline{K}\,\langle i\rangle \;\cong\; \langle -i\rangle [i]\overline{K}
\qquad \text{for all } i \in \mathbb{Z}.
\]
Thus \(\overline{K}\) is compatible with the translation functors and therefore preserves distinguished triangles.

Moreover, for any objects \(M,N\) in 
\(\Lambda^!\textup{-Fcp}^{\mathbb{Z}} / \Lambda^!\textup{-gmod}\), there are natural isomorphisms
\[
\operatorname{Hom}_{\,\Lambda^!\textup{-Fcp}^{\mathbb{Z}} / \Lambda^!\textup{-gmod}}
\bigl(M,\,N\langle i\rangle\bigr)
\;\xrightarrow{\sim}\;
\operatorname{Hom}_{\,\mathsf{D}_{\mathrm{sg}}(\Lambda\textup{-gmod})}
\bigl(\overline{K}(M),\,\overline{K}(N)\langle -i\rangle[i]\bigr).
\]
It follows that \(\overline{K}\) is fully faithful. Since \(\overline{K}\) sends injective modules to simple modules, it is also essentially surjective. Therefore, \(\overline{K}\) is a triangulated equivalence.
\end{proof}

For algebras with radical square zero, Theorem~4.1 can be improved as follows:

\begin{theorem}
Let \(\Lambda\) be a finite-dimensional algebra with radical square zero. Then the following hold:
\begin{enumerate}
    \item There is an equivalence of triangulated categories
    \[
    \mathsf{D}^b\bigl(\Lambda\textup{-gmod}\bigr)\big/\langle -1 \rangle
    \;\xrightarrow{\sim}\;
    \mathsf{D}^b\bigl(\Lambda\textup{-mod}\bigr),
    \]
    where the left-hand side denotes the triangulated orbit category with respect to the grading shift functor \(\langle 1 \rangle\).
    
    \item There is an equivalence of triangulated categories
    \[
    \mathsf{D}_{\mathrm{sg}}\bigl(\Lambda\textup{-gmod}\bigr)\big/\langle -1 \rangle
    \;\xrightarrow{\sim}\;
    \mathsf{D}_{\mathrm{sg}}\bigl(\Lambda\textup{-mod}\bigr).
    \]
\end{enumerate}
\end{theorem}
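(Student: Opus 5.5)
The plan is to show that for radical square zero algebras the naive orbit category is already triangulated, so that Theorem~4.1 holds without passing to a triangulated hull.

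\emph{Step 1: Galois precovering.} The forgetful functor $F\colon \mathsf{D}^b(\Lambda\textup{-gmod})\to\mathsf{D}^b(\Lambda\textup{-mod})$ is a $\langle 1\rangle$-Galois precovering. This was verified in the proof of Theorem~4.1(1) via the decomposition of the Hom complexes. Hence, by the universal property of the orbit category (Proposition~2.36), $F$ induces a fully faithful additive functor
\[
\overline{F}\colon \mathsf{D}^b(\Lambda\textup{-gmod})/\langle 1\rangle \longrightarrow \mathsf{D}^b(\Lambda\textup{-mod}).
\]
The same holds on the singular level.

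\emph{Step 2: density.} The task is to show that $\overline{F}$ is dense, i.e.\ that $F$ is a genuine Galois covering. Then $\overline{F}$ is an equivalence of additive categories, and we transport the triangulated structure of $\mathsf{D}^b(\Lambda\textup{-mod})$ to the orbit category. This transported structure is the ``triangulated orbit category'' of the statement.

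We first try a direct argument. Since $\Lambda=kQ/J^2$, every indecomposable object of $\mathsf{D}^b(\Lambda\textup{-mod})$ should be gradable. One natural route is the covering theory of Bautista--Liu~\cite[Theorem~4.11]{3}: $\Lambda^{\mathbb Z}$ is the radical square zero algebra of the gradable quiver $Q^{\mathbb Z}$, and its bounded derived category covers $\mathsf{D}^b(\Lambda\textup{-mod})$. Concretely, a bounded complex of projectives over $\Lambda$ has differentials that are sums over arrows, since $\operatorname{rad}^2=0$. After passing to a minimal complex, the components lie in the radical and hence in degree one. A choice of degree for each indecomposable summand, consistent along arrows, can then be made by splitting the complex according to the connected components of its ``support graph'' over $Q^{\mathbb Z}$. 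We must handle the bounded-above projective resolutions needed for objects of infinite projective dimension; there we instead use injective-free linear truncations, or apply Theorem~3.8 to reduce to objects $\mathfrak{F}(I^!_x\langle n\rangle)=S_x\langle n\rangle[\cdot]$, which generate.

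\emph{Step 3: the singularity category.} Here we use Theorem~6.5: $\mathsf{D}_{\mathrm{sg}}(\Lambda\textup{-gmod})$ is semisimple, with objects finite sums of shifts of $S_y\langle m\rangle$. So $\mathsf{D}_{\mathrm{sg}}(\Lambda\textup{-gmod})/\langle 1\rangle$ is again semisimple, and by~\cite[p.~340]{6} it carries a canonical triangulated structure, with translation induced by $[1]$, which now agrees with $\langle -1\rangle[1]$ up to the orbit identification. The functor $\overline{F}$ is fully faithful by Theorem~4.1(2) and the precovering property. It sends simples to simples and commutes with the shift, hence is a triangle functor. It is dense because $\mathsf{D}_{\mathrm{sg}}(\Lambda\textup{-mod})$ is generated by the simple $\Lambda$-modules, each of which is the image of a graded simple, and the image is closed under cones, since every triangle in a semisimple triangulated category is a sum of trivial ones.

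The main obstacle is density in part~(1), that is, gradability of arbitrary objects of $\mathsf{D}^b(\Lambda\textup{-mod})$. If the direct combinatorial argument of Step~2 becomes too messy, the fallback is to cite the Galois covering of~\cite[Theorem~4.11]{3} and identify its source with $\mathsf{D}^b(\Lambda\textup{-gmod})$ via the equivalence $\Lambda\textup{-gmod}\simeq\Lambda^{\mathbb Z}\textup{-mod}$ of Remark~2.1.
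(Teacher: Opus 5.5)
\textbf{Part (1).} Your fallback is the paper's argument. The paper obtains density of $F\colon\mathsf{D}^b(\Lambda\textup{-gmod})\to\mathsf{D}^b(\Lambda\textup{-mod})$ from Bautista--Liu's covering theory (it cites \cite[Theorem~7.12]{2}). It concludes that $F$ is a Galois covering and transports the triangulated structure to the orbit category.

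Your ``direct'' sketch should not be used as written. Density cannot be reduced to the generators $\mathfrak{F}(I^!_x\langle n\rangle)$, because the essential image of a Galois precovering is in general not closed under cones; this is exactly why Section~4 needs triangulated hulls.

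A correct direct argument is simpler than your support-graph splitting:
\begin{itemize}
\item Represent an object by a minimal bounded-above complex $P^\bullet$ of finitely generated projectives with bounded cohomology.
\item Since $J^2=0$, every component of its differential is a linear combination of arrows, hence homogeneous of degree one.
\item Give every summand of $P^n$ the shift $\langle n\rangle$, as in the definition of $K$. This produces a graded lift in $\mathsf{K}^{-,b}(\Lambda\textup{-proj}^{\mathbb{Z}})$.
\end{itemize}
No separate treatment of infinite projective dimension is needed.

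\textbf{Part (2).} This part has a genuine gap. You assert that $\mathsf{D}_{\mathrm{sg}}(\Lambda\textup{-gmod})/\langle 1\rangle$ is again semisimple. This does not follow from the semisimplicity of $\mathsf{D}_{\mathrm{sg}}(\Lambda\textup{-gmod})$: orbit categories of semisimple categories need not be semisimple. For instance, $\mathrm{vect}_k$ modulo the identity functor has $\operatorname{End}(k)=k[t,t^{-1}]$.

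Everything downstream depends on this claim:
\begin{itemize}
\item Without it, the trivial-triangle structure of \cite[p.~340]{6} is not available on the orbit category.
\item So $\overline{F}$ is not known to be a triangle functor.
\item Your density argument (``the image is closed under cones'') then has no basis, for the same reason as in (1).
\end{itemize}

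The missing idea is the one the paper uses, and it makes the detour unnecessary. Since $J^2=0$, the syzygy $\Omega(M)\subseteq\operatorname{rad}P$ of any module $M$ is semisimple. Every object of $\mathsf{D}_{\mathrm{sg}}(\Lambda\textup{-mod})$ is isomorphic to $M[n]\cong\Omega(M)[n+1]$ for some module $M$, that is, to a shift of a semisimple, hence gradable, module. So $F$ is dense on the singular level, hence a Galois covering. Then $\overline{F}$ is an equivalence, and the triangulated structure is transported exactly as in (1). Neither \cite{6} nor the semisimplicity of the graded singularity category is needed.
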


\begin{proof}
Statement (1) follows by the same argument as in \cite[Theorem~7.12]{2}. Indeed, the forgetful functor \(\Lambda\textup{-gmod} \to \Lambda\textup{-mod}\) induces a \(G\)-Galois covering
\[
\mathsf{D}^b\bigl(\Lambda\textup{-gmod}\bigr) \longrightarrow \mathsf{D}^b\bigl(\Lambda\textup{-mod}\bigr),
\]
where \(G = \langle \langle -1 \rangle \rangle\) is the infinite cyclic group generated by the grading shift. If follows that the orbit category \(\mathsf{D}^b(\Lambda\textup{-gmod})/\langle -1 \rangle\) inherits a natural triangulated structure and is equivalent to \(\mathsf{D}^b(\Lambda\textup{-mod})\).

For (2), it suffices to observe that the syzygy of any finite-dimensional \(\Lambda\)-module is semisimple. In particular, since \(\Lambda\) is Koszul , each simple module admits a linear projective resolution. This implies that  the forgetful functor again induces a \(G\)-Galois covering
\[
\mathsf{D}_{\mathrm{sg}}\bigl(\Lambda\textup{-gmod}\bigr) \longrightarrow \mathsf{D}_{\mathrm{sg}}\bigl(\Lambda\textup{-mod}\bigr).
\]
It follows that the orbit category \(\mathsf{D}_{\mathrm{sg}}(\Lambda\textup{-gmod})/\langle -1 \rangle\) is triangulated and equivalent to \(\mathsf{D}_{\mathrm{sg}}(\Lambda\textup{-mod})\).
\end{proof}
The non-graded derived and singular Koszul dualities can also be refined as follows:

\begin{theorem}
Let \(\Lambda\) be a finite-dimensional algebra with radical square zero. Then the following holds:
\begin{enumerate}
    \item There is an equivalence of triangulated categories
    \[
    \mathsf{D}^b\bigl(\Lambda^!\textup{-Fcp}^\mathbb{Z}\bigr)\big/\langle 1 \rangle[-1]
    \;\xrightarrow{\sim}\; 
    \mathsf{D}^b\bigl(\Lambda\textup{-mod}\bigr),
    \]

    \item There is an equivalence of triangulated categories
    \[
    \mathsf{D}_{\mathrm{sg}}\bigl(\Lambda^!\textup{-Fcp}^\mathbb{Z} / \Lambda^!\textup{-gmod}\bigr)\big/\langle 1 \rangle[-1]
    \;\xrightarrow{\sim}\; 
    \mathsf{D}_{\mathrm{sg}}\bigl(\Lambda\textup{-mod}\bigr).
    \]
   
\end{enumerate}
\end{theorem}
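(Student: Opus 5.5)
The plan is to compose the graded dualities already established for radical square zero algebras with the Galois coverings of Theorem~6.6, and to show that the orbit categories are triangulated in their own right, so that no triangulated hull is needed. For $\Lambda=kQ/J^2$, the algebra $\Lambda^!=kQ^{\mathrm{op}}$ is hereditary and $\Lambda^!\textup{-Cop}^{\mathbb{Z}}=\Lambda^!\textup{-Fcp}^{\mathbb{Z}}$ by Proposition~6.2. Theorem~6.3 therefore gives the graded equivalences
\[
\mathfrak{F}\colon\mathsf{D}^b\bigl(\Lambda^!\textup{-Fcp}^{\mathbb{Z}}\bigr)\xrightarrow{\sim}\mathsf{D}^b(\Lambda\textup{-gmod}),
\qquad
\mathsf{D}^b\bigl(\Lambda^!\textup{-Fcp}^{\mathbb{Z}}/\Lambda^!\textup{-gmod}\bigr)\xrightarrow{\sim}\mathsf{D}_{\mathrm{sg}}(\Lambda\textup{-gmod}).
\]
By Lemma~4.2, $\mathfrak{F}$ intertwines $\langle 1\rangle[-1]$ on the $\Lambda^!$-side with $\langle -1\rangle$ on the $\Lambda$-side. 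The singular equivalence inherits the same intertwining because it is induced from $\mathfrak{F}$ on Verdier quotients.

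First, $\mathfrak{F}$ induces an equivalence of additive orbit categories
\[
\mathsf{D}^b\bigl(\Lambda^!\textup{-Fcp}^{\mathbb{Z}}\bigr)/\langle 1\rangle[-1]\;\simeq\;\mathsf{D}^b(\Lambda\textup{-gmod})/\langle -1\rangle .
\]
This follows from the universal property of orbit categories (Proposition~2.36): the two sides have the same objects up to $\mathfrak{F}$, and the Hom spaces $\bigoplus_m$ match summand by summand via Theorem~3.8 and Lemma~4.2. This is exactly the Hom computation in the proof of Theorem~4.3. Next, I invoke Theorem~6.6(1). In the radical square zero case the forgetful functor $\mathsf{D}^b(\Lambda\textup{-gmod})\to\mathsf{D}^b(\Lambda\textup{-mod})$ is a genuine $G$-Galois covering: it is dense, not just a precovering. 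Hence the orbit category $\mathsf{D}^b(\Lambda\textup{-gmod})/\langle -1\rangle$ carries a triangulated structure and is equivalent to $\mathsf{D}^b(\Lambda\textup{-mod})$. Composing, $F\mathfrak{F}$ is an $\mathsf{H}$-Galois covering with $\mathsf{H}=\langle\langle1\rangle[-1]\rangle$. We transport the triangulated structure along the additive equivalence, and (1) follows. Equivalently, one applies Theorem~2.42 as in Theorem~4.3 and notes that the fully faithful functor from $\mathsf{D}^b(\Lambda^!\textup{-Fcp}^{\mathbb{Z}})/\langle1\rangle[-1]$ is already dense. In that form the hull coincides with the orbit category itself.

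Statement (2) is proved the same way. I read $\mathsf{D}_{\mathrm{sg}}(\Lambda^!\textup{-Fcp}^{\mathbb{Z}}/\Lambda^!\textup{-gmod})$ as the bounded derived category of this semisimple quotient. By Theorem~6.3(2) and Lemma~6.4, this is just the quotient category with translation induced by $\langle 1\rangle$. We combine the singular equivalence of Theorem~6.3(2), or equivalently $\overline{K}$ of Theorem~6.5 with translation $\langle -1\rangle[1]$, with the Galois covering $\mathsf{D}_{\mathrm{sg}}(\Lambda\textup{-gmod})\to\mathsf{D}_{\mathrm{sg}}(\Lambda\textup{-mod})$ of Theorem~6.6(2). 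The Hom-space comparison again comes from Lemma~4.2 together with the covering property.

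The main obstacle is density of the forgetful functor, that is, that every object of $\mathsf{D}^b(\Lambda\textup{-mod})$ (respectively $\mathsf{D}_{\mathrm{sg}}(\Lambda\textup{-mod})$) is gradable up to isomorphism. The argument needs this and not just full faithfulness. I take it from Theorem~6.6, citing \cite[Theorem~7.12]{2}, for the derived case. For the singular case I would check it directly. Every object of $\mathsf{D}_{\mathrm{sg}}(\Lambda\textup{-mod})$ is a shift of a syzygy module. For radical square zero algebras every syzygy is semisimple, hence a sum of simples, and simples are gradable. The triangulated structure on the orbit category is then the one making $F\mathfrak{F}$ exact, which closes the argument.
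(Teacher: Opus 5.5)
This is the argument the paper intends. Theorem~6.7 is stated there without proof, but its evident route is the $\mathsf{H}$-precovering computation from the proof of Theorem~4.3, upgraded to an $\mathsf{H}$-Galois covering by the density in Theorem~6.6, exactly as in the proof of Theorem~6.10. Your part~(1) is correct. So is the main line of~(2): Theorem~6.3(2), the intertwining of $\langle 1\rangle[-1]$ with $\langle -1\rangle$ descended to the Verdier quotients, and density via semisimple syzygies.

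Density in~(1) need not be cited. A minimal complex in $\mathsf{K}^{-,b}(\Lambda\textup{-proj})$ has differential entries in $\operatorname{rad}\Lambda$, which for $J^2=0$ is spanned by arrows. Declaring the terms in cohomological degree $n$ to be generated in internal degree $n$ therefore already grades it.

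What must be fixed is your reading of the left side of~(2). Replacing $\mathsf{D}_{\mathrm{sg}}$ by $\mathsf{D}^b$ is right, and in fact forced. In $\mathcal{A}:=\Lambda^!\textup{-Fcp}^{\mathbb{Z}}/\Lambda^!\textup{-gmod}$ every short exact sequence splits, so every object is projective and literally $\mathsf{D}_{\mathrm{sg}}(\mathcal{A})=0$.

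However, two of your claims are false: that $\mathsf{D}^b(\mathcal{A})$ is ``just the quotient category with translation induced by $\langle 1\rangle$'', and the proposed alternative through $\overline{K}$ of Theorem~6.5. For split $\mathcal{A}$ one has $\mathsf{D}^b(\mathcal{A})\simeq\bigoplus_{n\in\mathbb{Z}}\mathcal{A}[n]$, not $\mathcal{A}$.

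A concrete counterexample is $\Lambda=k[x]/(x^2)$, so $\Lambda^!=k[x]$. The surjection $I\to I/\operatorname{soc}I\cong I\langle 1\rangle$ has finite-dimensional kernel. Hence all $I\langle n\rangle$ become isomorphic in $\mathcal{A}$, and $\mathcal{A}$ is equivalent to finite-dimensional vector spaces with $\langle 1\rangle\cong\mathrm{id}$. By contrast, $\mathsf{D}^b(\mathcal{A})\simeq\mathsf{D}_{\mathrm{sg}}(\Lambda\textup{-gmod})$ contains the pairwise non-isomorphic objects $k\langle n\rangle$, so the equivalence of Theorem~6.5 fails here as well.

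Used literally, your identification would make the translation equal to $\langle 1\rangle$, hence $\langle 1\rangle[-1]\cong\mathrm{id}$. The orbit category would then be degenerate: in this example $\operatorname{End}(I)=\bigoplus_{n\in\mathbb{Z}}k$ instead of $\underline{\operatorname{End}}_\Lambda(k)=k$.

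Delete that sentence and the ``equivalently $\overline{K}$'' clause, and run~(2) solely through $\mathsf{D}^b(\mathcal{A})\simeq\mathsf{D}_{\mathrm{sg}}(\Lambda\textup{-gmod})$. With that change the proof goes through.
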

\begin{theorem}
Let \( \Lambda \) be a finite-dimensional hereditary algebra. Then the following hold:
\begin{enumerate}
    \item There is a triangulated equivalence
    \[
    \mathsf{D}^b\bigl(\Lambda^!\textup{-gmod}\bigr) 
    \;\xrightarrow{\sim}\;
    \mathsf{D}^b\bigl(\Lambda\textup{-gmod}\bigr).
    \]

    \item There is a triangulated equivalence
    \[
    H^0\bigl(\mathrm{pretr}\, \mathsf{D}_{\mathrm{dg}}^b\bigl(\Lambda^!\textup{-gmod}\bigr)\bigr) \big/ \langle 1 \rangle[-1])
    \;\xrightarrow{\sim}\;
    \mathsf{D}^b\bigl(\Lambda\textup{-mod}\bigr),
    \]
    
\end{enumerate}
\end{theorem}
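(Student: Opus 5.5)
The plan is to read off both assertions from the finite global dimension case already treated, namely Corollary~3.11(2) and Corollary~4.5. The one new input is that a hereditary algebra is Koszul with respect to its path-length grading.

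First I would fix the setting. A finite-dimensional hereditary algebra over \(k\) is Morita equivalent to a path algebra \(kQ\) of a finite acyclic quiver. So we may take \(\Lambda=kQ\), graded by path length, with \(I=0\) and hence \(I_2=0\). This gives \(I_2^{\perp}=V^{\mathrm{op}}\), so \(\Lambda^!=kQ^{\mathrm{op}}/(kQ^{\mathrm{op}})_{2}\) is the radical-square-zero algebra on \(Q^{\mathrm{op}}\); in particular \(\Lambda^!\) is finite-dimensional.

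Next I would check Koszulity directly. For a simple \(S_x\), the minimal graded projective resolution is
\[
0\longrightarrow \bigoplus_{\alpha\colon x\to y} P_y\langle -1\rangle \longrightarrow P_x \longrightarrow S_x\longrightarrow 0 .
\]
Exactness holds because \(\operatorname{rad}P_x\) is freely generated by the arrows, as \(kQ\) is hereditary with no relations. Both terms are generated in the correct degrees, so the resolution is linear. By the definition of Koszulity (equivalently Proposition~2.5), \(\Lambda\) is Koszul. Moreover \(\operatorname{gldim}\Lambda\le 1<\infty\). Consistently with Theorem~2.6, \(\Lambda^!\) is finite-dimensional, and since \(Q\) is acyclic it has finite global dimension as well.

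For (1), I would apply Corollary~3.11(2). Its hypotheses are exactly that \(\Lambda\) is finite-dimensional Koszul of finite global dimension, so \(\mathfrak{F}\) restricts to a triangulated equivalence \(\mathsf{D}^b(\Lambda^!\textup{-gmod})\xrightarrow{\sim}\mathsf{D}^b(\Lambda\textup{-gmod})\). The key point inside that corollary is that \(\Lambda^!\textup{-Cop}^{\mathbb{Z}}=\Lambda^!\textup{-gmod}\), which holds because the graded injectives \(I^!_x\langle n\rangle\) of the finite-dimensional algebra \(\Lambda^!\) are finite-dimensional. I would note this explicitly.

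For (2), I would invoke Corollary~4.5 under the same hypotheses. It yields
\[
\mathrm{H}^0\!\Bigl(\operatorname{pretr}\bigl(\mathsf{D}^b_{\mathrm{dg}}(\Lambda^!\textup{-gmod})\big/\langle 1\rangle[-1]\bigr)\Bigr)\xrightarrow{\ \sim\ }\mathsf{D}^b(\Lambda\textup{-mod}),
\]
which is the stated equivalence (the displayed formula in the statement has a misplaced parenthesis, and I read it this way). I do not expect a real obstacle. The only point requiring care is the reduction from an arbitrary hereditary algebra to a path algebra with its path-length grading. This needs \(k\) to be such that basic hereditary algebras are path algebras, for instance \(k\) algebraically closed, or else working throughout with the tensor-algebra description over the semisimple degree-zero part. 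I would state that assumption explicitly, consistent with the paper's standing convention \(\Lambda=kQ/I\).
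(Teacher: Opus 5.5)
Your proposal is correct and follows the route the paper takes implicitly. The paper states this theorem without proof, as the specialization of Corollary~3.11(2) and Corollary~4.5 to \(\Lambda=kQ\), which is Koszul of global dimension at most one with a finite-dimensional radical-square-zero dual \(\Lambda^!\); your checks that the resolution of \(S_x\) is linear and that \(\Lambda^!\textup{-Cop}^{\mathbb{Z}}=\Lambda^!\textup{-gmod}\) supply exactly the inputs the paper leaves implicit, and your reading of the misplaced parenthesis in (2) is the intended one.

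One small simplification: under the paper's convention, a finite-dimensional hereditary \(\Lambda=kQ/I\) with \(I\subseteq (kQ)^{2}\) (hence admissible) already has \(I=0\). So you do not need a Morita reduction, which would not automatically carry the path-length grading along.
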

Recall that, by Gabriel's theorem~\cite{25}, a finite-dimensional hereditary algebra
\(\Lambda\) is of finite representation type if and only if the underlying graph
of its quiver is a Dynkin diagram of type
\[
A_n,\quad D_n,\quad E_6,\quad E_7,\quad \text{or } E_8.
\]

It is well known that if a finite-dimensional graded algebra is of finite
representation type, then every module is gradable; see
\cite[Theorem~4.3]{28}. In particular, the forgetful functor
\[
F\colon \Lambda\textup{-gmod}\longrightarrow \Lambda\textup{-mod}
\]
is a \(\mathsf{G}\)-Galois covering.

Before describing the bounded derived category of finite-dimensional hereditary
algebras of finite representation type, we recall the following well-known
characterization of objects in \(\mathsf{D}^b(\Lambda\textup{-mod})\); see, for
example, \cite[Corollary~13.1.20, p.~324]{33}.

\begin{lemma}
Let \(\Lambda\) be a finite-dimensional hereditary algebra and let
\(X^\bullet \in \mathsf{D}^b(\Lambda\textup{-mod})\).
Then
\[
X^\bullet \cong \bigoplus_{n\in\mathbb{Z}} H^n(X^\bullet)[-n]
\]
in \(\mathsf{D}^b(\Lambda\textup{-mod})\).
\end{lemma}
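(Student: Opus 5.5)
The plan is the standard argument that bounded complexes over a hereditary abelian category split as the sum of their shifted cohomologies. I would work in $\Lambda\textup{-mod}$, which is hereditary: $\operatorname{Ext}^{n}_{\Lambda}(M,N)=0$ for all $n\ge 2$. The proof goes by induction on the number of nonzero cohomology objects of $X^\bullet$. If $X^\bullet$ has at most one nonzero cohomology, say in degree $n$, then the good truncations give $X^\bullet\cong \tau_{\le n}\tau_{\ge n}X^\bullet\cong H^n(X^\bullet)[-n]$. There is nothing further to prove in this case.

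For the inductive step, let $n$ be the largest integer with $H^n(X^\bullet)\neq 0$. I would use the truncation triangle recalled in \S2.3,
\[
\tau_{\le n-1}X^\bullet \longrightarrow X^\bullet \longrightarrow \tau_{\ge n}X^\bullet \xrightarrow{\ \delta\ } (\tau_{\le n-1}X^\bullet)[1].
\]
Here $\tau_{\ge n}X^\bullet\cong H^n(X^\bullet)[-n]$, and by induction $\tau_{\le n-1}X^\bullet\cong\bigoplus_{m<n}H^m(X^\bullet)[-m]$. The key step is to show that $\delta=0$. Its components lie in
\[
\operatorname{Hom}_{\mathsf{D}^b(\Lambda\textup{-mod})}\bigl(H^n[-n],\,H^m[-m+1]\bigr)\cong \operatorname{Ext}^{\,n-m+1}_{\Lambda}(H^n,H^m),
\qquad m<n .
\]
Since $n-m+1\ge 2$, each of these groups vanishes by heredity. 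A triangle whose connecting morphism is zero splits, so
\[
X^\bullet\cong \tau_{\le n-1}X^\bullet\oplus \tau_{\ge n}X^\bullet ,
\]
and this completes the induction.

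The step I expect to need the most care is the identification of $\operatorname{Hom}$ in the derived category with Yoneda $\operatorname{Ext}$, which is what justifies the vanishing above. This identification is standard for the abelian category $\Lambda\textup{-mod}$, which has enough projectives and global dimension at most $1$. Alternatively, one could argue directly with a projective resolution, via the equivalence $\mathsf{K}^{b}(\Lambda\textup{-proj})\simeq\mathsf{D}^b(\Lambda\textup{-mod})$. In that version one uses that every submodule of a projective is projective, so the boundary and cycle modules of a complex of projectives are projective. The complex then splits degreewise into two-term complexes $B^{i+1}\hookrightarrow Z^{i}$ of projectives, each quasi-isomorphic to $H^i[-i]$, which again gives the decomposition.
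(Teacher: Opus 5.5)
Your proof is correct and is the standard argument: split off the top cohomology via the truncation triangle, and observe that the connecting morphism lands in $\operatorname{Ext}^{\ge 2}_{\Lambda}=0$. This is essentially the argument behind the paper's citation of \cite[Corollary~13.1.20]{33}; the paper gives no argument of its own. One small index slip in your alternative sketch: the two-term pieces should be $B^{i}\hookrightarrow Z^{i}$ (with $B^{i}$ placed in degree $i-1$ as a complement of $Z^{i-1}$ in $P^{i-1}$), not $B^{i+1}\hookrightarrow Z^{i}$.
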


We now give a new description of the bounded derived category of a
finite-dimensional hereditary algebra of finite representation type.

\begin{theorem}
Let \(\Lambda\) be a finite-dimensional hereditary algebra of finite representation type and let
\(\Lambda^{!}\) denote its Koszul dual algebra, which is of radical square zero.
Then the graded derived Koszul duality together with the \(G\)-Galois precovering
\[
F\colon 
\mathsf{D}^b\!\bigl(\Lambda\textup{-gmod}\bigr)
\longrightarrow
\mathsf{D}^b\!\bigl(\Lambda\textup{-mod}\bigr)
\]
induces a triangulated equivalence
\[
\mathsf{D}^b\bigl(\Lambda^{!}\textup{-gmod}\bigr)\big/\langle 1\rangle[-1]
\;\xrightarrow{\sim}\;
\mathsf{D}^b(\Lambda\textup{-mod}).
\]
\end{theorem}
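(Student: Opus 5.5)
Since \(\Lambda\) is hereditary of finite representation type, its quiver is a Dynkin quiver without relations, so \(\Lambda=kQ\) is Koszul and finite-dimensional. Its Koszul dual \(\Lambda^{!}=kQ^{\mathrm{op}}/J^{2}\) is then finite-dimensional (radical square zero). Moreover, by Theorem~2.6 it has finite global dimension, because \(\Lambda\) is finite-dimensional. Hence every coperfect \(\Lambda^{!}\)-module is finite-dimensional, so \(\Lambda^{!}\textup{-Cop}^{\mathbb{Z}}=\Lambda^{!}\textup{-gmod}\), as in the proof of Corollary~3.11(2). The graded derived Koszul duality (Theorem~3.8) therefore gives a triangulated equivalence \(\mathfrak{F}\colon\mathsf{D}^b(\Lambda^{!}\textup{-gmod})\xrightarrow{\sim}\mathsf{D}^b(\Lambda\textup{-gmod})\). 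By Lemma~4.2, \(\mathfrak{F}\) intertwines \(\langle 1\rangle[-1]\) with \(\langle -1\rangle\). It thus induces an equivalence of additive orbit categories
\[
\mathsf{D}^b(\Lambda^{!}\textup{-gmod})/\langle 1\rangle[-1]\;\simeq\;\mathsf{D}^b(\Lambda\textup{-gmod})/\langle -1\rangle .
\]

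The plan is to show that \(F\colon\mathsf{D}^b(\Lambda\textup{-gmod})\to\mathsf{D}^b(\Lambda\textup{-mod})\) is a genuine \(G\)-Galois \emph{covering}, not merely a precovering. The precovering property is the Hom-isomorphism established in the proof of Theorem~4.1(1). By the universal property of orbit categories (Proposition~2.36), \(F\) then induces a fully faithful functor \(\mathsf{D}^b(\Lambda\textup{-gmod})/\langle -1\rangle\to\mathsf{D}^b(\Lambda\textup{-mod})\), and this functor is an equivalence as soon as \(F\) is dense.

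Density is the key step, and I would argue it as follows. By Lemma~6.13, every \(X^\bullet\in\mathsf{D}^b(\Lambda\textup{-mod})\) is isomorphic to \(\bigoplus_n H^n(X^\bullet)[-n]\), a finite sum of shifted modules. By finite representation type and \cite[Theorem~4.3]{28}, every finite-dimensional \(\Lambda\)-module is gradable, i.e.\ lies in the essential image of \(F\colon\Lambda\textup{-gmod}\to\Lambda\textup{-mod}\). Since \(F\) commutes with \([1]\) and direct sums, \(X^\bullet\) lies in the essential image of the derived \(F\). Composing the two steps gives an additive equivalence \(\mathsf{D}^b(\Lambda^{!}\textup{-gmod})/\langle 1\rangle[-1]\xrightarrow{\sim}\mathsf{D}^b(\Lambda\textup{-mod})\) induced by \(F\mathfrak{F}\), which is an \(\mathsf{H}\)-Galois covering with \(\mathsf{H}=\langle\langle 1\rangle[-1]\rangle\), exactly as in the proof of Theorem~4.3.

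The main obstacle is that the statement asks for a \emph{triangulated} equivalence of the orbit category itself, not of its triangulated hull. I would transport the triangulated structure along the additive equivalence, declaring a triangle in the orbit category distinguished when its image under \(F\mathfrak{F}\) is distinguished. To make this canonical, I would show that this structure is the one for which the projection from \(\mathsf{D}^b(\Lambda^{!}\textup{-gmod})\) is exact. For this I would use that every distinguished triangle in \(\mathsf{D}^b(\Lambda\textup{-mod})\) is isomorphic to the image of one from \(\mathsf{D}^b(\Lambda\textup{-gmod})\), arguing as in \cite[Theorem~7.12]{2} and Theorem~6.6(1). Concretely, each morphism between gradable objects decomposes into finitely many homogeneous components by the covering property, and its cone can be computed in the graded category after an appropriate rearrangement of shifts. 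This works because, for hereditary Dynkin type, every object is a sum of shifted indecomposable gradable modules. Alternatively, by Theorem~2.42 the triangulated hull coincides with \(\mathsf{D}^b(\Lambda\textup{-mod})\), since the orbit category is already dense in it, so the orbit category is equivalent to its own hull and is therefore triangulated.
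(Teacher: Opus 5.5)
Your proposal is correct and follows essentially the same route as the paper. The composite $F\mathfrak{F}$ is an $\mathsf{H}$-Galois precovering with $\mathsf{H}=\langle\langle 1\rangle[-1]\rangle$. Gradability of all modules over a representation-finite algebra, together with the splitting of complexes over a hereditary algebra into shifted cohomology (the paper's Lemma~6.9, not 6.13), gives density, hence a Galois covering and the equivalence.

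Your extra discussion of the triangulated structure on the orbit category, by transport of structure along $\overline{F\mathfrak{F}}$, fills in a point the paper leaves implicit. Note that exactness of the projection is automatic there, since $F\mathfrak{F}$ is exact. The lifting of triangles from $\mathsf{D}^b(\Lambda\textup{-gmod})$ is only needed if you want the distinguished triangles to be exactly the images of graded ones.
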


\begin{proof}
Recall that the composition
\[
\begin{tikzcd}[column sep=large]
\mathsf{D}^b(\Lambda^{!}\textup{-gmod})
\arrow[r,"\mathfrak{F}"]
&
\mathsf{D}^b(\Lambda\textup{-gmod})
\arrow[r,"F"]
&
\mathsf{D}^b(\Lambda\textup{-mod})
\end{tikzcd}
\]
is an \(\mathsf{H}\)-Galois precovering, where
\(\mathsf{H}=\langle \langle 1\rangle[-1]\rangle\).
Since every \(\Lambda\)-module is gradable, and since Lemma~6.9 shows that every object of
\(\mathsf{D}^b(\Lambda\textup{-mod})\) is quasi-isomorphic to a finite direct sum of its cohomology modules, the induced functor is dense. Hence it is an \(\mathsf{H}\)-Galois covering, and the assertion follows.
\end{proof}

\medskip 
We now turn to the final example of this subsection and present both graded and ungraded 
versions of the Bernstein--Gelfand--Gelfand correspondence for algebras with radical square 
zero. Recall from~\cite{17} that any finite-dimensional algebra with radical square zero is 
either selfinjective or Cohen--Macaulay free, that is, every Gorenstein-projective module is 
projective. Invoking this result, we obtain the following triangulated equivalences.
\begin{theorem}
Let \( \Lambda \) be a finite-dimensional selfinjective algebra with radical square zero. Then the following hold:
\begin{enumerate}
    \item There is a triangulated equivalence
    \[
        \mathsf{D}^b\!\bigl(\Lambda^!\textup{-Fcp}^{\mathbb{Z}} \big/ \Lambda^!\textup{-gmod}\bigr)
        \;\xrightarrow{\sim}\;
        \Lambda\textup{-}\underline{\textup{gmod}} \;\xrightarrow{\sim}\; \mathsf{D}^b\bigl(\Lambda^!\textup{-Fp}^{\mathbb{Z}} / \Lambda^!\textup{-gmod}\bigr) 
    \]

    \item There is a triangulated equivalence
    \[
        \mathsf{D}^{\mathrm{b}}\!\bigl(\Lambda^!\textup{-Fcp}^{\mathbb{Z}} \big/ \Lambda^!\textup{-gmod}\bigr)
        \big/ \langle 1 \rangle[-1]
        \;\xrightarrow{\sim}\;
        \Lambda\textup{-}\underline{\textup{mod}},
    \]

\end{enumerate}
\end{theorem}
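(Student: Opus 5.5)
Both parts come from combining the radical-square-zero results already established with Buchweitz's equivalence. The key fact is that a selfinjective algebra \(\Lambda\) is Iwanaga--Gorenstein of injective dimension zero. Hence every finite-dimensional (graded) module is Gorenstein-projective, and \(\Lambda\textup{-}\underline{\mathrm{Gproj}}^{\mathbb{Z}}=\Lambda\textup{-}\underline{\textup{gmod}}\), \(\Lambda\textup{-}\underline{\mathrm{Gproj}}=\Lambda\textup{-}\underline{\textup{mod}}\). By Buchweitz~\cite[Theorem~4.4.1]{15} we then have triangulated equivalences \(\mathsf{D}_{\mathrm{sg}}(\Lambda\textup{-gmod})\simeq\Lambda\textup{-}\underline{\textup{gmod}}\) and \(\mathsf{D}_{\mathrm{sg}}(\Lambda\textup{-mod})\simeq\Lambda\textup{-}\underline{\textup{mod}}\). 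We also use that \(\Lambda=kQ/J^2\) is Koszul with \(\Lambda^!=kQ^{\mathrm{op}}\) hereditary.

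For (1), the first equivalence is obtained by composing Theorem~6.3(2), namely \(\mathsf{D}^b(\Lambda^!\textup{-Fcp}^{\mathbb{Z}}/\Lambda^!\textup{-gmod})\simeq\mathsf{D}_{\mathrm{sg}}(\Lambda\textup{-gmod})\), with the graded Buchweitz equivalence. The second equivalence is the mirror statement on the perfect side. Since \(\Lambda^!\) is hereditary and hence of finite global dimension, finitely presented graded modules are perfect, so \(\Lambda^!\textup{-Pe}^{\mathbb{Z}}=\Lambda^!\textup{-Fp}^{\mathbb{Z}}\). By Theorem~3.17, applicable because \(\Lambda\) is Iwanaga--Gorenstein, we get \(\mathsf{D}^b(\Lambda^!\textup{-Pe}^{\mathbb{Z}}/\Lambda^!\textup{-gmod})\simeq\Lambda\textup{-}\underline{\mathrm{Gproj}}^{\mathbb{Z}}\).

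One point needs care here. Theorem~3.17 uses the triple formalism on the perfect side, so we must check that \(\Lambda^!\textup{-gmod}\) is Serre in the ambient category \(\Lambda^!\textup{-GMod}^{+}\) and that \(\Lambda^!\textup{-Fp}^{\mathbb{Z}}\) satisfies the two-out-of-three property. We would verify this by dualizing Lemma~6.1 and Proposition~6.2: \(\Lambda^!\) is hereditary and, by the dual argument, coherent, so every finitely presented graded module is an extension of a finitely generated projective by a finite-dimensional module.

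For (2), we start from Theorem~6.8(2), or equivalently Theorem~4.7. This gives \(\mathsf{D}_{\mathrm{sg}}(\Lambda^!\textup{-Fcp}^{\mathbb{Z}}/\Lambda^!\textup{-gmod})/\langle1\rangle[-1]\simeq\mathsf{D}_{\mathrm{sg}}(\Lambda\textup{-mod})\), and we compose with the ungraded Buchweitz equivalence. We also need to identify the source with the stated orbit category of \(\mathsf{D}^b\). Since \(\Lambda^!\textup{-Fcp}^{\mathbb{Z}}/\Lambda^!\textup{-gmod}\) is semisimple abelian by Proposition~6.2, its bounded derived category has no nonzero perfect objects to kill. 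So the singularity category of the quotient coincides with its bounded derived category, and the orbit categories agree. Note that Theorem~6.8(2) is stated with a triangulated orbit category rather than a triangulated hull. This is justified by Theorem~6.7(2): the forgetful functor on singularity categories is a genuine Galois covering, so the orbit category is already triangulated.

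The main obstacle is density. We must show that the composite
\[
\mathsf{D}^b(\Lambda^!\textup{-Fcp}^{\mathbb{Z}}/\Lambda^!\textup{-gmod})\to\Lambda\textup{-}\underline{\textup{gmod}}\to\Lambda\textup{-}\underline{\textup{mod}}
\]
is an \(\mathsf{H}\)-Galois covering and not merely a precovering, i.e.\ that every object of \(\Lambda\textup{-}\underline{\textup{mod}}\) is, up to isomorphism, a graded module. The plan is as follows. Since \(\Lambda\) has radical square zero, the syzygy \(\Omega M\subseteq\operatorname{rad}P=\operatorname{soc}P\) of any module \(M\) is semisimple. In the stable category of a selfinjective algebra \(\Omega\) is an autoequivalence, so \(M\cong\Omega^{-1}(\Omega M)\), and \(\Omega^{-1}\) of a semisimple module is gradable because semisimple modules are gradable and cosyzygies of graded modules can be taken graded. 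Together with the precovering property supplied by Theorem~4.1(2) and Lemma~4.2, this gives essential surjectivity, and the equivalence follows.
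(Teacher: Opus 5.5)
Your route is the one the paper intends. The paper gives no separate proof, only the remark that, by Chen's dichotomy, a selfinjective radical-square-zero algebra has $\Lambda\textup{-}\underline{\mathrm{Gproj}}=\Lambda\textup{-}\underline{\textup{mod}}$, so the statement is read off from the graded and non-graded singular dualities together with Buchweitz. The first equivalence in~(1) and your density argument ($\Omega M\subseteq\operatorname{rad}P$ semisimple, $M\cong\Omega^{-1}\Omega M$, graded cosyzygies) are fine.

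\textbf{Where~(2) fails.} The problem is your reconciliation, in~(2), of the refined statement of \S6.1 whose source is $\mathsf{D}_{\mathrm{sg}}(\Lambda^!\textup{-Fcp}^{\mathbb{Z}}/\Lambda^!\textup{-gmod})/\langle1\rangle[-1]$. In a semisimple abelian category every short exact sequence splits, so \emph{every} object is projective. Hence $\mathsf{D}^b(\Lambda^!\textup{-Fcp}^{\mathbb{Z}}/\Lambda^!\textup{-gmod})$ is the bounded homotopy category of its projectives, and its singularity category is \emph{zero}. This is the opposite of ``no nonzero perfect objects to kill''. Read literally, that statement would give $0\simeq\mathsf{D}_{\mathrm{sg}}(\Lambda\textup{-mod})\simeq\Lambda\textup{-}\underline{\textup{mod}}$. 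This is false already for $\Lambda=k[x]/(x^2)$, where $\Lambda\textup{-}\underline{\textup{mod}}\simeq k\textup{-mod}$. Its $\mathsf{D}_{\mathrm{sg}}$ must be a misprint for $\mathsf{D}^b$ (compare Theorem~4.6, whose source is $\mathsf{D}^b_{\mathrm{dg}}$ of the quotient), and no identification of the kind you propose can turn it into~(2).

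\textbf{Repair.} Bypass that statement. Theorem~4.8 (the non-graded BGG correspondence), with $\Lambda^!\textup{-Cop}^{\mathbb{Z}}=\Lambda^!\textup{-Fcp}^{\mathbb{Z}}$ by Proposition~6.2, gives the triangulated-hull version $\mathrm{H}^0\bigl(\operatorname{pretr}(\mathsf{D}^b_{\mathrm{dg}}(\Lambda^!\textup{-Fcp}^{\mathbb{Z}}/\Lambda^!\textup{-gmod})/\langle1\rangle[-1])\bigr)\simeq\Lambda\textup{-}\underline{\textup{mod}}$. Your density argument makes the composite of the graded singular duality with the forgetful functor an $\mathsf{H}$-Galois covering. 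By Proposition~2.36 the orbit category itself is then equivalent to $\Lambda\textup{-}\underline{\textup{mod}}$, hence coincides with its hull and inherits its triangulated structure. Density is even more immediate here: in a selfinjective $kQ/J^2$ each non-isolated vertex has exactly one incoming and one outgoing arrow. So $\Lambda$ is a representation-finite Nakayama algebra and every module is gradable.

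\textbf{The $\mathrm{Fp}$-side of~(1).} Citing Theorem~3.17 there is legitimate, but its one-line proof only treats the $\mathrm{Cop}$-side. Checking triple axioms, as you propose, would at most give $\mathsf{D}^b(\Lambda^!\textup{-Fp}^{\mathbb{Z}}/\Lambda^!\textup{-gmod})\simeq\mathsf{D}^b(\Lambda^!\textup{-Fp}^{\mathbb{Z}})/\mathsf{D}^b_{\Lambda^!\textup{-gmod}}(\Lambda^!\textup{-Fp}^{\mathbb{Z}})$. The substantive argument runs as follows.
\begin{enumerate}
\item The equivalence of Theorem~3.8(3) sends each $I_x\langle n\rangle$ to a finite projective resolution of a shifted simple $S^!_x$.
\item Hence it carries $\mathsf{K}^b(\Lambda\textup{-inj}^{\mathbb{Z}})$ onto the complexes with finite-dimensional cohomology, and $\mathsf{K}^b(\Lambda\textup{-inj}^{\mathbb{Z}})=\mathsf{K}^b(\Lambda\textup{-proj}^{\mathbb{Z}})$ because $\Lambda$ is selfinjective.
\item Since $kQ^{\mathrm{op}}$ is coherent, $\Lambda^!\textup{-Fp}^{\mathbb{Z}}$ is abelian with Serre subcategory $\Lambda^!\textup{-gmod}$.
\item Miyachi's theorem then gives $\mathsf{D}^b(\Lambda^!\textup{-Fp}^{\mathbb{Z}}/\Lambda^!\textup{-gmod})\simeq\mathsf{D}_{\mathrm{sg}}(\Lambda\textup{-gmod})\simeq\Lambda\textup{-}\underline{\textup{gmod}}$.
\end{enumerate}
This is exactly the Mart\'{\i}nez-Villa--Saor\'{\i}n equivalence recalled in \S3.4.
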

\subsection{Exterior Algebras and Symmetric Algebras}
In this subsection we present a second example illustrating the structure of our Koszul-type 
dualities in the context of the exterior algebra and its Koszul dual, the symmetric algebra. 
In particular, we establish an ungraded version of the Bernstein--Gelfand--Gelfand (BGG) 
correspondence for the exterior algebra, thereby allowing a direct comparison with the classical 
construction in~\cite{9}.

\medskip

Recall that Theorem~3.8 establishes a triangulated equivalence
\[
\mathfrak{F}\colon 
\mathsf{D}^{b}\!\bigl(\Lambda\textup{-gmod}\bigr) 
   \longrightarrow 
\mathsf{D}^{b}\!\bigl(\Lambda^{!}\textup{-Pe}^{\mathbb{Z}}\bigr).
\]

\medskip

We now turn our attention to the exterior algebra. We begin by recalling some classical definitions and structural results. Let \( V \) be a finite-dimensional vector space over a field \( k \), with basis \( \{e_1, e_2, \dots, e_n\} \). The \emph{exterior algebra} on \( V \), denoted \( \bigwedge V \), is the associative finite-dimensional graded \( k \)-algebra generated by the elements \( e_1, e_2, \dots, e_n \) in degree one, subject to the relations
\[
e_i e_j + e_j e_i = 0 \qquad \text{for all } 1 \leq i, j \leq n.
\]
In particular, \( e_i^2 = 0 \) for all \( i \). These relations imply that all monomials of the form \( e_{i_1} e_{i_2} \cdots e_{i_r} \) are alternating in the indices, and hence \( \bigwedge V \) is naturally graded:
\[
\bigwedge V = \bigoplus_{i=0}^{n} \bigwedge\nolimits^i V,
\]
where \( \bigwedge^i V \) is the \( k \)-vector space spanned by the wedge products \( e_{j_1} \wedge \cdots \wedge e_{j_i} \) with \( j_1 < \cdots < j_i \). It is well known that \( \bigwedge V \) is a self-injective algebra.

\vspace{1em}

Alternatively, \( \bigwedge V \) can be realized as a quotient of the path algebra of the quiver \( Q \) with a single vertex and \( n \) loops:
\[
Q_0 = \{*\}, \qquad Q_1 = \{\alpha_1, \alpha_2, \dots, \alpha_n\},
\]
by the two-sided ideal generated by the quadratic relations
\[
\alpha_i \alpha_j + \alpha_j \alpha_i = 0 \qquad \text{for all } 1 \leq i, j \leq n.
\]
That is,
\[
\bigwedge V \cong kQ \big/ \left( \alpha_i \alpha_j + \alpha_j \alpha_i \;\middle|\; 1 \leq i, j \leq n \right),
\]
so that, in particular, \( \alpha_i^2 = 0 \) for all \( i \).

\vspace{1em}

The exterior algebra \( \bigwedge V \) is known to be Koszul and self-injective. Its Koszul dual is the symmetric algebra \( \mathrm{S}V \), which is canonically isomorphic to the polynomial algebra \( k[X_1, X_2, \dots, X_n] \). This algebra has finite global dimension and admits a quiver-theoretic realization as a bound path algebra:
\[
kQ \big/ \left( \alpha_i \alpha_j - \alpha_j \alpha_i \;\middle|\; 1 \leq i < j \leq n \right),
\]
where the quiver \( Q \) again consists of a single vertex with \( n \) loops \( \alpha_1, \dots, \alpha_n \). 

\begin{remark}
Observe that if \( V \) is one-dimensional, then the symmetric algebra 
\( \mathrm{S}V \) is hereditary, while the exterior algebra \( \bigwedge V \) has radical square zero.
\end{remark}

\medskip 

We now turn to our first application. Theorem~3.8 induces the following triangulated equivalence, which is the foundational example of Koszul-type duality.

Since \( \mathrm{S}(V) \) is Noetherian, the subcategory 
\( \mathrm{S}(V)\textup{-Pe}^{\mathbb{Z}} \) is abelian and coincides with 
\( \mathrm{S}(V)\textup{-Fg}^{\mathbb{Z}} \), the category of finitely generated 
graded \( \mathrm{S}(V) \)-modules.

\begin{theorem}[\cite{9}, Theorem~3]
The derived Koszul duality yields a triangulated equivalence
\[
\mathsf{D}^b\bigl(\textstyle\bigwedge V\textup{-gmod}\bigr)
\;\simeq\;
\mathsf{D}^b\bigl(\textstyle\mathrm{S}(V)\textup{-Fg}^{\mathbb{Z}}\bigr).
\]
\end{theorem}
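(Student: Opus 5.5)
The plan is to specialize Theorem~3.8 to \(\Lambda=\bigwedge V\), \(\Lambda^{!}=\mathrm{S}(V)\), and then identify the category of perfect graded modules with finitely generated ones.

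First, I will check the hypotheses of Theorem~3.8. The algebra \(\bigwedge V\) is finite-dimensional and quadratic, with one vertex, \(n\) loops, and relations \(\alpha_i\alpha_j+\alpha_j\alpha_i\). Its quadratic dual is computed from the orthogonal complement in \(V^{\mathrm{op}}\) of the span of the anticommutators. Since the symmetric tensors \(\alpha_i\alpha_j+\alpha_j\alpha_i\) (including \(\alpha_i^2\)) span the symmetric part of \(V\otimes V\), the orthogonal complement is the antisymmetric part, spanned by \(\alpha_i\alpha_j-\alpha_j\alpha_i\). This gives \(\Lambda^{!}\cong \mathrm{S}(V)\). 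Koszulity of \(\bigwedge V\) follows from the classical Koszul complex: it is a linear graded projective resolution of \(k\), so the criterion of Definition~2.4 / Proposition~2.5 holds. Hence condition (1) of Theorem~3.8 is satisfied, and (3) gives a triangulated equivalence
\[
\mathfrak{F}\colon \mathsf{D}^b\bigl(\textstyle\bigwedge V\textup{-gmod}\bigr)\xrightarrow{\ \sim\ }\mathsf{D}^b\bigl(\mathrm{S}(V)\textup{-Pe}^{\mathbb{Z}}\bigr).
\]

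Second, I will identify \(\mathrm{S}(V)\textup{-Pe}^{\mathbb{Z}}\) with \(\mathrm{S}(V)\textup{-Fg}^{\mathbb{Z}}\).

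One inclusion is immediate: a perfect module is a cokernel of a map between finitely generated projectives, so it is finitely generated.

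For the converse, take \(M\in\mathrm{S}(V)\textup{-Fg}^{\mathbb{Z}}\). Since \(\mathrm{S}(V)\) is Noetherian by Hilbert's basis theorem, every syzygy of \(M\) is again finitely generated. So \(M\) has a minimal graded free resolution by finitely generated graded projectives. By Theorem~2.6 applied to the Koszul algebra \(\bigwedge V\) (which is finite-dimensional), \(\operatorname{gldim}\mathrm{S}(V)<\infty\); concretely it equals \(n\), by Hilbert's syzygy theorem. Therefore the minimal resolution stops after at most \(n\) steps, and \(M\) is perfect.

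The two subcategories thus coincide as full exact subcategories of \(\mathrm{S}(V)\textup{-GMod}\). Moreover, Noetherianity makes \(\mathrm{S}(V)\textup{-Fg}^{\mathbb{Z}}\) abelian, so their bounded derived categories are literally the same. Composing with \(\mathfrak{F}\) gives the claimed equivalence. This also matches Corollary~3.10 (the BGG-type corollary for Noetherian \(\Lambda^{!}\)), read on the Pe side.

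The only potentially delicate step is the identification of the quadratic dual, because of sign and characteristic conventions. In characteristic \(2\), anticommutators and commutators coincide; I would handle this by noting that the paper's presentation of \(\bigwedge V\) includes \(\alpha_i^2=0\) separately, so the orthogonal complement is still spanned by \(\alpha_i\alpha_j-\alpha_j\alpha_i\) for \(i<j\). Everything else is a direct invocation of Theorem~3.8 and standard commutative algebra.
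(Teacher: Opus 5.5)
Your proposal is correct and follows the same route as the paper: specialize Theorem~3.8(3) to $\Lambda=\bigwedge V$, $\Lambda^{!}=\mathrm{S}(V)$, and then use that $\mathrm{S}(V)$ is Noetherian of finite global dimension to identify $\mathrm{S}(V)\textup{-Pe}^{\mathbb{Z}}$ with $\mathrm{S}(V)\textup{-Fg}^{\mathbb{Z}}$. Your computation of the quadratic dual and your finite-resolution argument fill in details the paper leaves implicit; one small correction is that the paper does not impose $\alpha_i^2=0$ separately but deduces it from the anticommutators, so in characteristic $2$ you are adopting the standard definition of $\bigwedge V$ rather than reading it off the paper's presentation.
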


\begin{remark}
This result represents the first instance of a Koszul-type duality and underlies the celebrated BGG correspondence.
\end{remark}

Since \( \bigwedge V \) is self-injective, every injective module is also projective, and the category of finite-dimensional graded Gorenstein projective \( \bigwedge V \)-modules coincides with the category of finite-dimensional graded \( \bigwedge V \)-modules. This yields the well-known Bernstein--Gelfand--Gelfand correspondence.

\begin{theorem}[\cite{9}, Theorems~2 and~4]
There is a triangulated equivalence
\[
\mathsf{D}^b\bigl(\textstyle\mathrm{S}(V)\textup{-Fg}^{\mathbb{Z}}/\mathrm{S}V\textup{-gmod}\bigr) 
\;\simeq\;
\bigwedge V\textup{-}\underline{\textup{gmod}},
\]
\end{theorem}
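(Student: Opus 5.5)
This statement will be obtained as a specialization of the graded BGG correspondence (Theorem~3.17) to the pair $\Lambda=\bigwedge V$, $\Lambda^{!}=\mathrm{S}V$, combined with the identification of perfect modules with finitely generated ones. The proof has three steps.

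\emph{Step 1: the hypotheses of Theorem~3.17 hold.} $\bigwedge V$ is a finite-dimensional Koszul algebra with Koszul dual $\mathrm{S}V$. It is self-injective, so $\operatorname{injdim}$ of $\bigwedge V$ on either side is $0$, and $\bigwedge V$ is Iwanaga--Gorenstein. Theorem~3.17 therefore gives
\[
\mathsf{D}^{b}\bigl(\mathrm{S}V\textup{-Pe}^{\mathbb{Z}}/\mathrm{S}V\textup{-gmod}\bigr)\xrightarrow{\sim}\textstyle\bigwedge V\textup{-}\underline{\mathrm{Gproj}}^{\mathbb{Z}} .
\]

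\emph{Step 2: identify the left-hand side.} Since $\mathrm{S}V$ is Noetherian and of finite global dimension $n$, every finitely generated graded module has a finite graded free resolution by finitely generated modules (graded Hilbert syzygy theorem). Hence $\mathrm{S}V\textup{-Pe}^{\mathbb{Z}}=\mathrm{S}V\textup{-Fg}^{\mathbb{Z}}$, as already noted before Theorem~6.15. The quotient is then the usual abelian Serre quotient of $\mathrm{S}V\textup{-Fg}^{\mathbb{Z}}$ by its finite-dimensional objects. To make sure the triple hypotheses used in Theorem~2.18 hold on the perfect side, I would check that $\mathrm{S}V\textup{-gmod}$ is Serre in graded $\mathrm{S}V$-modules. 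I would also check that $\mathsf{D}^b(\mathrm{S}V\textup{-gmod})\simeq \mathsf{D}^b_{\mathrm{gmod}}(\mathrm{S}V\textup{-Fg}^{\mathbb{Z}})$; this is the Noetherian case, and it follows from Corollary~3.11(1) via Koszul duality exactly as in the proof of Theorem~3.16.

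\emph{Step 3: identify the right-hand side.} Because $\bigwedge V$ is self-injective, $\operatorname{Ext}^{i}(M,\bigwedge V\langle j\rangle)=0$ for all $i>0$ and all $M$. So every finite-dimensional graded module is maximal Cohen--Macaulay, hence Gorenstein-projective, and $\bigwedge V\textup{-}\underline{\mathrm{Gproj}}^{\mathbb{Z}}=\bigwedge V\textup{-}\underline{\mathrm{gmod}}$. Here the stable category modulo projectives coincides with that modulo injectives, and its triangulated structure (shift $\Omega^{-1}$) is the one coming from Buchweitz's equivalence with $\mathsf{D}_{\mathrm{sg}}(\bigwedge V\textup{-gmod})$. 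Composing the three identifications gives the claimed equivalence.

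The main obstacle is Step~2, and specifically the passage from the coperfect to the perfect side. Theorem~3.17 transfers between $\mathrm{Pe}/\mathrm{gmod}$ and $\mathrm{Cop}/\mathrm{gmod}$ somewhat implicitly, so I would verify that this equivalence is compatible with the subcategories of finite-dimensional modules. The route I would take is the contravariant dual $\mathbb{D}$. It exchanges $\mathrm{S}V\textup{-Cop}^{\mathbb{Z}}$ with perfect right modules and preserves finite-dimensional modules. Since $\mathrm{S}V$ is commutative, right and left modules agree, so this yields the needed equivalence $\mathsf{D}^b(\mathrm{Pe}/\mathrm{gmod})\simeq\mathsf{D}^b(\mathrm{Cop}/\mathrm{gmod})^{\mathrm{op}}$. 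For the classical case, I would then check that this duality combines with the anti-autoequivalence of $\bigwedge V\textup{-}\underline{\mathrm{gmod}}$ (again $\mathbb{D}$, since $\bigwedge V$ is Frobenius) to produce a covariant equivalence.
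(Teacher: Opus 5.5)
Your proposal is correct and follows the paper's own route. The paper obtains this statement by specializing the graded BGG correspondence (Theorem~3.17) to $\Lambda=\bigwedge V$, $\Lambda^{!}=\mathrm{S}V$, using two identifications: $\mathrm{S}V\textup{-Pe}^{\mathbb{Z}}=\mathrm{S}V\textup{-Fg}^{\mathbb{Z}}$, since $\mathrm{S}V$ is Noetherian of finite global dimension, and $\bigwedge V\textup{-}\underline{\mathrm{Gproj}}^{\mathbb{Z}}=\bigwedge V\textup{-}\underline{\mathrm{gmod}}$, since $\bigwedge V$ is self-injective. Your extra check of the coperfect-to-perfect passage via $\mathbb{D}$ (using that $\mathrm{S}V$ is commutative and that $\bigwedge V\cong(\bigwedge V)^{\mathrm{op}}$ is Frobenius) justifies a step the paper leaves implicit in Theorem~3.17.
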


We are now ready to establish a pair of new non-graded triangulated equivalences in the context of the exterior algebra and its Koszul dual, the symmetric algebra. 

\begin{theorem}
We have the following triangulated equivalences:
\begin{enumerate}
    \item
    \[
    H^0\Bigl(\operatorname{pretr}\bigl(\mathsf{D}_{\mathrm{dg}}^b\bigl(\mathrm{S}(V)\textup{-Fg}^{\mathbb{Z}}\bigr)\langle 1 \rangle[-1]\bigr)\Bigr)
    \;\simeq\; \mathsf{D}^b\bigl(\textstyle\bigwedge V\textup{-mod}\bigr),
    \]
    
    \item
    \[
    H^0\Bigl(\operatorname{pretr}\bigl(\mathsf{D}_{\mathrm{dg}}^b\bigl(\mathrm{S}(V)\textup{-Fg}^{\mathbb{Z}} \big/ \mathrm{S}V\textup{-gmod}\bigr)\langle 1 \rangle[-1]\bigr)\Bigr)
    \;\simeq\; \bigwedge V\textup{-}\underline{\textup{mod}}.
    \]
\end{enumerate}
\end{theorem}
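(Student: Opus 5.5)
The two equivalences are the specialisations of Theorem~4.3 and Theorem~4.6 to \(\Lambda=\bigwedge V\), \(\Lambda^!=\mathrm{S}(V)\). The only real work is to identify the coperfect side with the finitely generated side for the symmetric algebra. Throughout, I read the orbit construction in the statement as the dg orbit category \(\,\cdot\,/\langle 1\rangle[-1]\), as in Section~4.

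\textbf{Setup.} \(\bigwedge V\) is a finite-dimensional Koszul algebra, so Theorem~4.3 applies directly. It gives
\[
\mathrm{H}^0\Bigl(\operatorname{pretr}\bigl(\mathsf{D}^b_{\mathrm{dg}}(\mathrm{S}(V)\textup{-Cop}^{\mathbb{Z}})/\langle 1\rangle[-1]\bigr)\Bigr)\xrightarrow{\sim}\mathsf{D}^b\bigl(\textstyle\bigwedge V\textup{-mod}\bigr).
\]
To reach~(1), I need a quasi-equivalence
\[
\mathsf{D}^b_{\mathrm{dg}}(\mathrm{S}(V)\textup{-Cop}^{\mathbb{Z}})\simeq \mathsf{D}^b_{\mathrm{dg}}(\mathrm{S}(V)\textup{-Fg}^{\mathbb{Z}}).
\]
This quasi-equivalence must commute, up to isomorphism of bimodules, with \(\langle 1\rangle[-1]\). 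Once that holds, the dg orbit categories and their pretriangulated hulls are quasi-equivalent, and so are their \(\mathrm{H}^0\).

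\textbf{The main step: an exact duality intertwining the two shift functors.} I would produce the identification by composing two equivalences. The first is Theorem~3.20, combined with the dg lift of the perfect-side version in Theorem~3.8(3). Since \(\mathrm{S}(V)\) is Noetherian, \(\mathrm{S}(V)\textup{-Pe}^{\mathbb{Z}}=\mathrm{S}(V)\textup{-Fg}^{\mathbb{Z}}\). Together these give dg quasi-equivalences
\[
\mathsf{D}^b_{\mathrm{dg}}(\mathrm{S}V\textup{-Cop}^{\mathbb{Z}})\simeq \mathsf{D}^b_{\mathrm{dg}}(\bigwedge V\textup{-gmod})\simeq \mathsf{D}^b_{\mathrm{dg}}(\mathrm{S}V\textup{-Fg}^{\mathbb{Z}}).
\]
Lemma~4.2 says the first quasi-equivalence turns \(\langle 1\rangle[-1]\) into \(\langle -1\rangle\). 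The analogous computation for the perfect-side functor, which is the same \(\operatorname{Tot}\)-reindexing argument with the roles of the two algebras swapped, turns \(\langle -1\rangle\) back into \(\langle 1\rangle[-1]\) or its inverse. The orbit category by an autoequivalence and by its inverse coincide. Hence the composite intertwines the orbit autoequivalences, and the universal property of Proposition~2.40 and Remark~2.41 transfers the orbit constructions.

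The part I expect to need care is checking this second sign and shift compatibility at the dg level, i.e.\ as an isomorphism in \(\mathsf{D}_{\mathrm{dg}}(\mathcal{A}^{\mathrm{op}}\otimes\mathcal{B})\) rather than just objectwise. I would verify it by writing the natural isomorphism \(\Phi\) of Lemma~4.2 as a closed degree-zero natural transformation of dg functors. This works because \(\Phi\) is a strict isomorphism of total complexes, natural in \(X^\bullet\).

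\textbf{Statement (2).} I would run the same argument on the singular side. Theorem~4.6, applied to \(\bigwedge V\), realizes \(\mathsf{D}_{\mathrm{sg}}(\bigwedge V\textup{-mod})\) as the hull of the orbit category of \(\mathsf{D}^b_{\mathrm{dg}}(\mathrm{S}V\textup{-Cop}^{\mathbb{Z}}/\mathrm{S}V\textup{-gmod})\). The quasi-equivalence above sends the finite-dimensional graded modules on one side to the finite-dimensional graded modules on the other; via the equivalences both are identified with \(\mathsf{K}^b(\bigwedge V\textup{-proj}^{\mathbb{Z}})\). Passing to Drinfeld quotients, as in Theorem~3.21, therefore gives a quasi-equivalence of the two quotient dg categories compatible with \(\langle 1\rangle[-1]\). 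Finally, \(\bigwedge V\) is self-injective, hence Iwanaga--Gorenstein with \(\bigwedge V\textup{-}\underline{\mathrm{Gproj}}=\bigwedge V\textup{-}\underline{\mathrm{mod}}\). Buchweitz's equivalence \(\mathsf{D}_{\mathrm{sg}}(\bigwedge V\textup{-mod})\simeq \bigwedge V\textup{-}\underline{\mathrm{mod}}\), as in Theorem~4.8, then yields~(2).
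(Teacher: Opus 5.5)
Your argument is correct and follows the route the paper intends. The paper gives no separate proof; it obtains the theorem by specialising the non-graded dualities of Section~4 (Theorems~4.3, 4.6, 4.8) to $\Lambda=\bigwedge V$, trading $\mathrm{S}(V)\textup{-Cop}^{\mathbb{Z}}$ for $\mathrm{S}(V)\textup{-Pe}^{\mathbb{Z}}=\mathrm{S}(V)\textup{-Fg}^{\mathbb{Z}}$ via Theorem~3.8(3) and Theorem~3.17, and using self-injectivity to get $\underline{\mathrm{Gproj}}=\underline{\mathrm{mod}}$. One step in~(2) needs explicit justification: under the perfect-side duality the finite-dimensional $\mathrm{S}(V)$-modules correspond to $\mathsf{K}^b(\bigwedge V\textup{-inj}^{\mathbb{Z}})$ (Corollary~3.11(1)), not to projectives, and only the self-injectivity of $\bigwedge V$ makes this equal to $\mathsf{K}^b(\bigwedge V\textup{-proj}^{\mathbb{Z}})$, so that your quasi-equivalence matches the two subcategories being killed.
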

\subsection{Beilinson Algebras}

Recall that the \emph{\(d\)-th Beilinson algebra} \(\mathcal{B}_{d}\) is finite-dimensional and admits a presentation
\(\mathcal{B}_{d}\cong kQ/I\), where \(Q\) has vertices \(\{0,1,\dots,d\}\) and, for each \(r=0,\dots,d-1\), has
\(d+1\) parallel arrows from \(r\) to \(r+1\). Schematically, one may depict \(Q\) as follows:
\[
\begin{tikzpicture}[baseline=(current bounding box.center),>=stealth,scale=1]

  % vertices without circles
  \node (0) at (0,0) {\(0\)};
  \node (1) at (2.0,0) {\(1\)};
  \node (2) at (4.0,0) {\(2\)};
  \node (dots) at (6.0,0) {\(\cdots\)};
  \node (dm1) at (8.0,0) {\(d\!-\!1\)};
  \node (d) at (10.0,0) {\(d\)};

  % parallel arrows 0 -> 1
  \draw[->] (0) to[bend left=18] node[above] {\(x_{0}^{(0)}\)} (1);
  \draw[->] (0) to[bend right=18] node[below] {\(x_{d}^{(0)}\)} (1);
  \node at (1,0.12) {\(\vdots\)};

  % parallel arrows 1 -> 2
  \draw[->] (1) to[bend left=18] node[above] {\(x_{0}^{(1)}\)} (2);
  \draw[->] (1) to[bend right=18] node[below] {\(x_{d}^{(1)}\)} (2);
  \node at (3,0.12) {\(\vdots\)};

  % continuation
  \draw[->] (2) -- (dots);
  \draw[->] (dots) -- (dm1);

  % parallel arrows (d-1) -> d
  \draw[->] (dm1) to[bend left=18] node[above] {\(x_{0}^{(d-1)}\)} (d);
  \draw[->] (dm1) to[bend right=18] node[below] {\(x_{d}^{(d-1)}\)} (d);
  \node at (9,0.12) {\(\vdots\)};

\end{tikzpicture}
\]
The ideal \(I\) is generated by the commutativity relations on length-two paths: for every
\(r=0,\dots,d-2\) and every \(0\le a,b\le d\),
\[
x^{(r+1)}_{a}\,x^{(r)}_{b}=x^{(r+1)}_{b}\,x^{(r)}_{a}.
\]
In particular, \(Q\)
 is well directed.

Beilinson's theorem yields a triangulated equivalence
\[
\mathsf{D}^{b}\!\bigl(\mathrm{coh}\,\mathbb{P}^{d}\bigr)
\;\xrightarrow{\ \sim\ }\;
\mathsf{D}^{b}\!\bigl(\mathcal{B}_{d}\textup{-mod}\bigr),
\qquad\text{see \cite{5}.}
\]

Moreover, \(\mathcal{B}_{d}\) is Koszul. Its Koszul dual \(\mathcal{B}_{d}^{!}\) is given by the opposite quiver
together with quadratic \emph{anti-commutativity} relations.

Therefore, by Corollary~3.11 the derived Koszul duality  induces a triangulated equivalence
\[
\mathfrak{F}\colon
\mathsf{D}^{b}\!\bigl(\mathcal{B}_{d}^{!}\textup{-mod}\bigr)
\;\xrightarrow{\ \sim\ }\;
\mathsf{D}^{b}\!\bigl(\mathcal{B}_{d}\textup{-mod}\bigr).
\]

\section{Bounded Derived Categories of Finite-Dimensional Graded Algebras}
\subsection{Two Conjectures}
We conclude by formulating two conjectures concerning the bounded derived
category and the singularity category of finite-dimensional graded algebras.
Motivated by the work of Beilinson--Ginzburg--Soergel on realizations of
categories of (mixed) perverse sheaves $\mathsf{P}(X,W)$ (see
\cite[Proposition~1.4.1]{7}), and by the results established above, it is natural
to expect that analogous realization results, and hence more general duality
phenomena, should hold for arbitrary finite-dimensional graded algebras. The
following conjectures record directions that originally inspired this work.

\begin{conjecture}
Let $\Lambda$ be a finite-dimensional graded algebra. Then there exist an exact
category $\mathcal{A}$ of finite homological dimension and a group $G$ acting on
$\mathsf{D}^{b}_{\mathrm{dg}}(\mathcal{A})$ such that there is a triangulated equivalence
\[
\mathrm{H}^{0}\!\bigl(\operatorname{pretr}(\mathsf{D}^{b}_{\mathrm{dg}}(\mathcal{A})/G)\bigr)
\;\cong\;
\mathsf{D}^{b}\!\bigl(\Lambda\textup{-mod}\bigr).
\]
\end{conjecture}

\begin{conjecture}\label{conj:singularity-orbit}
Let $\Lambda$ be a finite-dimensional graded algebra. Then there exist an exact
category $\mathcal{B}$ of finite homological dimension and a group $H$ acting on
$\mathsf{D}^{b}_{\mathrm{dg}}(\mathcal{B})$ such that there is a triangulated equivalence
\[
\mathrm{H}^{0}\!\bigl(\operatorname{pretr}(\mathsf{D}^{b}_{\mathrm{dg}}(\mathcal{B})/H)\bigr)
\;\cong\;
\mathsf{D}_{\mathrm{sg}}\!\bigl(\Lambda\textup{-mod}\bigr).
\]
\end{conjecture}

Both conjectures are now established for finite-dimensional Koszul algebras. Moreover, Conjecture~2 is known to hold when \(\Lambda\) is self-injective with Gorenstein parameter \(\ell\); see \cite[Theorem~6.2]{49}. These conjectures may also be viewed as being in the same general spirit as Kontsevich's homological mirror symmetry conjecture \cite{38}, where triangulated hulls arise naturally in the setting of \(A_{\infty}\)-categories.

\medskip

\subsection{DG enhancement for the bounded derived category of a finite-dimensional graded algebra}

We next provide evidence for Conjecture~1 by exhibiting an
explicit candidate for the exact category $\mathcal{A}$ and for the acting group
$G$ that realize the proposed orbit--category description. The construction is
largely dictated by the framework developed in the preceding sections and may
be viewed as a natural application of it. In our setting we take
\[
\mathcal{A}=\mathcal{PLC}^{b}(\Lambda^{!}),
\]
the exact (indeed abelian) category of perfectly linear bounded complexes, and
we specify the group $G$ explicitly below.

We stress, however, that the finiteness of the homological dimension required in
Conjecture~1 is not addressed here: the category
$\mathcal{PLC}^{b}(\Lambda^{!})$ need not have finite homological dimension in
general. In particular, when $\Lambda$ has infinite global dimension, the
homological dimension of $\mathcal{PLC}^{b}(\Lambda^{!})$ is typically infinite.
Before stating the results, we recall the notion of the quadratic dual of a
finite-dimensional graded algebra $\Lambda$, following Mart\'{\i}nez--Villa and
Saor\'{\i}n~\cite{41}.

\medskip 

Let \(\Lambda=kQ/I\) be a finite\mbox{-}dimensional graded algebra. Denote by
\(\widetilde{\Lambda}=kQ/\widetilde{I}\) the \emph{quadratic algebra associated to}
\(\Lambda\), where \(\widetilde{I}\) is the ideal generated by the quadratic
relations in \(I\). The \emph{quadratic dual} of \(\Lambda\) is defined to be the
quadratic dual of \(\widetilde{\Lambda}\) in the sense of Subsection~2.2, namely
\[
\Lambda^{!}:=\widetilde{\Lambda}^{!}.
\]

Moreover, by~\cite[Theorem~2.4]{41} (see also~\cite[Theorem~12]{42}), there is a fully faithful functor
\[
K\colon \Lambda\textup{-gmod}\longrightarrow
\mathcal{LC}^{b}\!\bigl(\Lambda^{!}\textup{-proj}^{\mathbb{Z}}\bigr),
\]
which sends a graded \(\Lambda\)\nobreakdash-module \(M\) to the associated linear
complex of finite\mbox{-}dimensional graded projective
\(\Lambda^{!}\)\nobreakdash-modules (cf.\ Subsection~3.1). More precisely, \(K\)
can be chosen as the composite of the canonical embedding
\[
\Lambda\textup{-gmod}\longrightarrow \widetilde{\Lambda}\textup{-gmod}
\]
with the equivalence
\[
\widetilde{\Lambda}\textup{-gmod}\ \cong\
\mathcal{LC}^{b}\!\bigl(\Lambda^{!}\textup{-proj}^{\mathbb{Z}}\bigr).
\]

In particular, if \(\Lambda\) is quadratic, then \(K\) is an equivalence.
\medskip

We denote by $\mathcal{PLC}^{b}(\Lambda^{!})$ the full subcategory of
$\mathcal{LC}^{b}\!\bigl(\Lambda^{!}\textup{-proj}^{\mathbb{Z}}\bigr)$
consisting of \emph{perfectly linear} bounded complexes, that is, the essential
image of $K$. In particular, the functor $K$ restricts to an equivalence
\[
\Lambda\textup{-gmod}\ \xrightarrow{\ \sim\ }\ \mathcal{PLC}^{b}(\Lambda^{!}),
\]
and we fix a quasi-inverse
\[
K'\colon \mathcal{PLC}^{b}(\Lambda^{!})\longrightarrow \Lambda\textup{-gmod}.
\]
Consequently, $K'$ induces a triangulated equivalence
\[
\mathsf{D}^{b}\!\bigl(\mathcal{PLC}^{b}(\Lambda^{!})\bigr)
\;\xrightarrow{\ \sim\ }\;
\mathsf{D}^{b}\!\bigl(\Lambda\textup{-gmod}\bigr),
\]
which admits a dg enhancement, see for instance~\cite[\S\,9.8]{35}.

\medskip

By Lemma~4.2, for every bounded complex $X^{\bullet}$ and every $i\in\mathbb{Z}$
there is a natural isomorphism
\[
K'\bigl(X^{\bullet}\langle -i\rangle[i]\bigr)\ \cong\ K'(X^{\bullet})\langle i\rangle.
\]
Combining Theorem~4.1 with the above triangulated equivalence and invoking
Theorem~2.42, we obtain the following dg-enhanced orbit descriptions.
\begin{theorem} Let \(\Lambda\) be a finite-dimensional graded algebra, and let \(\Lambda^{!}\) denote its quadratic dual. Then the following hold: \begin{enumerate}\itemsep=0.4em \item There is a triangulated equivalence \[ \mathrm{H}^{0}\!\Bigl(\operatorname{pretr}\bigl( \mathsf{D}^{b}_{\mathrm{dg}}\bigl(\mathcal{PLC}^{b}(\Lambda^{!})\bigr) \big/ \langle -1\rangle[1]\bigr)\Bigr) \;\xrightarrow{\ \sim\ }\; \mathsf{D}^{b}\!\bigl(\Lambda\textup{-mod}\bigr). \] \item There is a triangulated equivalence \[ \mathrm{H}^{0}\!\Bigl(\operatorname{pretr}\bigl( \mathsf{D}^{\mathrm{dg}}_{\mathrm{sg}}\bigl(\mathcal{PLC}^{b}(\Lambda^{!})\bigr) \big/ \langle -1\rangle[1]\bigr)\Bigr) \;\xrightarrow{\ \sim\ }\; \mathsf{D}_{\mathrm{sg}}\!\bigl(\Lambda\textup{-mod}\bigr). \] \end{enumerate} If, in addition, \(\Lambda\) is monomial, then \(\Lambda^{!}\) is a quadratic monomial algebra, and there is a triangulated equivalence \[ \mathsf{D}_{\mathrm{sg}}\!\bigl(\mathcal{PLC}^{b}(\Lambda^{!})\bigr) \big/ \langle -1\rangle[1] \;\xrightarrow{\ \sim\ }\; \mathsf{D}_{\mathrm{sg}}\!\bigl(\Lambda\textup{-mod}\bigr). \] \end{theorem}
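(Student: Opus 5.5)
The plan is to transport the orbit descriptions of Theorem~4.1 along the equivalence $K'$. We do this in the same way as in the proof of Theorem~4.3, with $\mathfrak{F}$ replaced by the dg lift of $K'$. Fix a dg enhancement
\[
K'_{\mathrm{dg}}\colon \mathsf{D}^{b}_{\mathrm{dg}}\bigl(\mathcal{PLC}^{b}(\Lambda^{!})\bigr)\longrightarrow \mathsf{D}^{b}_{\mathrm{dg}}(\Lambda\textup{-gmod}).
\]
Since $K'$ is an exact equivalence of exact categories, applying $K'$ termwise to bounded complexes gives a dg functor between the categories of bounded complexes. This functor preserves acyclic complexes, so it descends to the Drinfeld quotients. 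On $H^0$ it induces the equivalence $\mathsf{D}^{b}(\mathcal{PLC}^{b}(\Lambda^{!}))\simeq\mathsf{D}^{b}(\Lambda\textup{-gmod})$. Arguing as in the proof of Theorem~3.20, it is therefore a quasi-equivalence.

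Next, compose $K'_{\mathrm{dg}}$ with the dg forgetful functor $F_{\mathrm{dg}}$ used in Theorem~4.3. The natural isomorphism $K'(X^\bullet\langle -i\rangle[i])\cong K'(X^\bullet)\langle i\rangle$ (the analogue of Lemma~4.2 recorded above), together with $F_{\mathrm{dg}}\langle 1\rangle\cong F_{\mathrm{dg}}$, gives
\[
F_{\mathrm{dg}}K'_{\mathrm{dg}}\circ\langle -1\rangle[1]\cong F_{\mathrm{dg}}K'_{\mathrm{dg}}
\]
in the derived category of bimodules. By Remark~2.41 we then obtain a dg functor from $\operatorname{pretr}\bigl(\mathsf{D}^{b}_{\mathrm{dg}}(\mathcal{PLC}^{b}(\Lambda^{!}))/\langle -1\rangle[1]\bigr)$ to $\mathsf{D}^{b}_{\mathrm{dg}}(\Lambda\textup{-mod})$.

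To apply Theorem~2.42 we need $FK'$ to be a Galois precovering for $\langle\langle -1\rangle[1]\rangle$. We get this from the chain
\[
\bigoplus_m \operatorname{Hom}\bigl(X,Y\langle -m\rangle[m]\bigr)\cong\bigoplus_m\operatorname{Hom}\bigl(K'X,K'Y\langle m\rangle\bigr)\cong \operatorname{Hom}_{\mathsf{D}^b(\Lambda\textup{-mod})}\bigl(FK'X,FK'Y\bigr).
\]
The first isomorphism holds because $K'$ is an equivalence, using the shift compatibility. The second is the precovering property established in the proof of Theorem~4.1(1). For essential surjectivity, the image contains all $F(M)$ with $M$ graded, in particular the $\Lambda$-modules $\Lambda$ and the graded simples. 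These generate $\mathsf{D}^{b}(\Lambda\textup{-mod})$, since it is generated by the (always gradable) simple modules. Theorem~2.42 then gives (1).

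For (2), we run the same argument with the dg singularity categories. $K'$ sends bounded complexes of projectives of $\mathcal{PLC}^{b}(\Lambda^{!})$ to complexes of projective graded $\Lambda$-modules, so $K'_{\mathrm{dg}}$ descends to a quasi-equivalence of dg quotients, as in Theorem~3.21. We combine this with Theorem~4.1(2) (Keller--Murfet--Van den Bergh) in place of Theorem~4.1(1). The shift compatibility survives passage to the Verdier quotients, and Theorem~2.42 yields (2).

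For the monomial addendum, first note that $\widetilde{\Lambda}$ is quadratic monomial, so $\Lambda^{!}$ is too. The plan is to show that the functor from the plain orbit category to $\mathsf{D}_{\mathrm{sg}}(\Lambda\textup{-mod})$ is already dense, so that the orbit category coincides with its triangulated hull. We would use the known description of singularity categories of monomial algebras, in which every object is a finite sum of shifts of syzygy modules of the form $\Lambda\alpha$ (Chen's results for monomial algebras). These modules are gradable, so they lie in the image of $F$ and hence of $FK'$. Density plus full faithfulness gives an equivalence of additive categories, and transporting the triangulated structure from $\mathsf{D}_{\mathrm{sg}}(\Lambda\textup{-mod})$ makes it triangulated.

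I expect this last step to be the main obstacle. One must check that the indecomposables of $\mathsf{D}_{\mathrm{sg}}(\Lambda\textup{-mod})$ really come from graded objects up to shift, and the orbit category $\mathsf{D}_{\mathrm{sg}}/\langle -1\rangle[1]$ must be identified honestly, rather than through its hull. A secondary point needing care is that the Drinfeld quotient models in (2) are compatible with the orbit construction.
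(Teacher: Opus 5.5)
Your proposal is correct and follows the paper's own route: (1) and (2) come from transporting Theorem~4.1 along $K'$, which intertwines $\langle -1\rangle[1]$ with $\langle 1\rangle$, and invoking Theorem~2.42, while the monomial addendum comes from density of the forgetful functor $\mathsf{D}_{\mathrm{sg}}(\Lambda\textup{-gmod})\to\mathsf{D}_{\mathrm{sg}}(\Lambda\textup{-mod})$. One small correction: $\Lambda$ itself is only monomial, not quadratic, so the syzygies you need are direct sums of modules $\Lambda p$ with $p$ a path (e.g.\ $\Lambda x^{2}$ over $k[x]/(x^{3})$), not just $\Lambda\alpha$ with $\alpha$ an arrow; these are still graded left ideals, so your density argument is unaffected.
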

\begin{proof}
Assume that \(\Lambda\) is monomial. Then every syzygy of a finite-dimensional \(\Lambda\)-module is a finite direct sum of principal ideals
of the form \(\Lambda p\), where \(p\) is a path in \(\Lambda\).
In particular, every object in the singularity category
\(\mathsf{D}_{\mathrm{sg}}(\Lambda\textup{-mod})\) is isomorphic to the image
of an object in \(\mathsf{D}_{\mathrm{sg}}(\Lambda\textup{-gmod})\).

It follows that the forgetful functor
\[
F \colon
\mathsf{D}_{\mathrm{sg}}(\Lambda\textup{-gmod})
\longrightarrow
\mathsf{D}_{\mathrm{sg}}(\Lambda\textup{-mod})
\]
is dense. The result then follows from the preceding equivalences.
\end{proof}

\bigskip

\bigskip

\noindent
\textit{E-mail address:} \texttt{alesm.bouhada@gmail.com}


\begin{thebibliography}{99}

\bibitem{1}
R.~Bautista, P.~Gabriel, A.~V.~Roiter, and L.~Salmerón,
\emph{Representation-finite algebras and multiplicative bases},
Invent. Math. \textbf{81} (1985), 217--285.

\bibitem{2}
R.~Bautista and S.~Liu,
\emph{Covering theory for linear categories with application to derived categories},
J. Algebra \textbf{406} (2014), 173--225.

\bibitem{3}
R.~Bautista and S.~Liu,
\emph{The bounded derived categories of an algebra with radical squared zero},
J. Algebra \textbf{474} (2017), 148--178.

\bibitem{4}
R.~Bautista, S.~Liu, and C.~Paquette,
\emph{Representation theory of strongly locally finite quivers},
Proc. London Math. Soc. \textbf{106} (2013), 97--162.

\bibitem{5}
A.~A.~Beilinson,
\emph{Coherent sheaves on \(\mathbb{P}^{n}\) and problems of linear algebra},
Funct. Anal. Appl. \textbf{12} (1978), 214--216.

\bibitem{6}
A.~Beilinson, V.~Ginzburg, and V.~Schechtman,
\emph{Koszul duality},
J. Geom. Phys. \textbf{5} (1988), 317--350.

\bibitem{7}
A.~Beilinson, V.~Ginzburg, and W.~Soergel,
\emph{Koszul duality patterns in representation theory},
J. Amer. Math. Soc. \textbf{9} (1996), 473--527.

\bibitem{8}
D.~J.~Benson,
\emph{Representations and Cohomology I},
Cambridge Univ. Press, 1991.

\bibitem{9}
I.~N.~Bernstein, I.~M.~Gel'fand, and S.~I.~Gel'fand,
\emph{Algebraic bundles over \(\mathbb{P}^n\) and problems of linear algebra},
Funct. Anal. Appl. \textbf{12} (1978), 212--214.

\bibitem{10}
K.~Bongartz and P.~Gabriel,
\emph{Covering spaces in representation theory},
Invent. Math. \textbf{65} (1981), 331--378.

\bibitem{11}
A.~I.~Bondal and M.~M.~Kapranov,
\emph{Enhanced triangulated categories},
Math. USSR-Sb. \textbf{70} (1991), 93--107.

\bibitem{12}
A.~Bondal and D.~Orlov,
\emph{Reconstruction of a variety from the derived category},
Compos. Math. \textbf{125} (2001), 327--344.

\bibitem{13}
A. M.~Bouhada,
\emph{Koszul duality for quadratic monomial algebras},
arXiv:2604.20177 [math.RT], 2026.
Available at \url{https://arxiv.org/abs/2604.20177}.
\bibitem{14}
A.~M.~Bouhada, M.~Huang, and S.~Liu,
\emph{Koszul duality for non-graded derived categories},
preprint (2019), available at
\url{https://arxiv.org/abs/1908.06153}.

\bibitem{15}
R.-O.~Buchweitz,
\emph{Maximal Cohen--Macaulay modules and Tate cohomology},
unpublished manuscript, 1987, available at
\url{https://tspace.library.utoronto.ca/handle/1807/16682}.

\bibitem{16}
T.~Bühler,
\emph{Exact categories},
Expo. Math. \textbf{28} (2010), 1--69.

\bibitem{17}
X.-W.~Chen,
\emph{Algebras with radical square zero are either self-injective or CM-free},
Proc. Amer. Math. Soc. \textbf{140} (2012), 93--98.

\bibitem{18}
V.~Drinfeld,
\emph{DG quotients of DG categories},
J. Algebra \textbf{272} (2004), 643--691.

\bibitem{19}
S.~Eilenberg,
\emph{Homological dimension and syzygies},
Ann. of Math. \textbf{64} (1956), 328--336.

\bibitem{20}
D.~Eisenbud,
\emph{The Geometry of Syzygies},
Springer, 2005.

\bibitem{21}
D.~Eisenbud, G.~Fløystad, and F.-O.~Schreyer,
\emph{Sheaf cohomology and free resolutions over exterior algebras},
Trans. Amer. Math. Soc. \textbf{355} (2003), 4397--4426.

\bibitem{22}
D.~Eisenbud and F.-O.~Schreyer,
\emph{Resultants and Chow forms via exterior syzygies},
J. Amer. Math. Soc. \textbf{20} (2003), 537--579.

\bibitem{23}
D.~Eisenbud and F.-O.~Schreyer,
\emph{Betti numbers of graded modules and cohomology of vector bundles},
J. Amer. Math. Soc. \textbf{21} (2009), 859--888.

\bibitem{24}
P.~Gabriel,
\emph{Des catégories abéliennes},
Bull. Soc. Math. France \textbf{90} (1962), 323--448.

\bibitem{25}
P.~Gabriel,
\emph{Unzerlegbare Darstellungen I},
Manuscripta Math. \textbf{6} (1972), 71--103.

\bibitem{26}
P.~Gabriel,
\emph{The universal cover of a representation-finite algebra},
Springer, 2006.

\bibitem{27}
P.~Gabriel and M.~Zisman,
\emph{Calculus of Fractions and Homotopy Theory},
Springer, 1967.

\bibitem{28}
R.~Gordon and E.~L.~Green,
\emph{Representation theory of graded Artin algebras},
J. Algebra \textbf{76} (1982), 138--152.

\bibitem{29}
S.~I.~Gel'fand and Yu.~I.~Manin,
\emph{Methods of Homological Algebra},
Springer, 1996.

\bibitem{30}
M.~Goresky, R.~Kottwitz, and R.~MacPherson,
\emph{Equivariant cohomology, Koszul duality, and the localization theorem},
Invent. Math. \textbf{131} (1997), 25--83.

\bibitem{31}
D.~Happel,
\emph{Triangulated Categories in the Representation Theory of Finite-Dimensional Algebras},
Cambridge Univ. Press, 1988.

\bibitem{32}
M.~Kapranov,
\emph{On the derived categories of coherent sheaves on some homogeneous spaces},
Invent. Math. \textbf{92} (1988), 479--508.

\bibitem{33}
M.~Kashiwara and P.~Schapira,
\emph{Categories and Sheaves},
Grundlehren der Mathematischen Wissenschaften, vol.~332,
Springer, Berlin--Heidelberg, 2006.

\bibitem{34}
B.~Keller,
\emph{Derived categories and their uses},
Handbook of Algebra, 1996.

\bibitem{35}
B.~Keller,
\emph{On triangulated orbit categories},
Doc. Math. \textbf{10} (2005), 551--581.

\bibitem{36}
B.~Keller,
\emph{On differential graded categories},
ICM, 2006.

\bibitem{37}
B.~Keller, D.~Murfet, and M.~Van den Bergh,
\emph{On two examples by Iyama and Yoshino},
Compos. Math. \textbf{145} (2009), 1402--1424.

\bibitem{38}
M.~Kontsevich,
\emph{Homological algebra of mirror symmetry},
ICM Zürich 1994.

\bibitem{39}
J.-L.~Koszul,
\emph{Homologie et cohomologie des algèbres de Lie},
Bull. Soc. Math. France \textbf{78} (1950), 65--127.

\bibitem{40}
R.~Martínez-Villa and J.~A.~de la Peña,
\emph{Automorphisms of representation-finite algebras},
Invent. Math. \textbf{72} (1983), 359--362.

\bibitem{41}
R.~Martínez-Villa and M.~Saorín,
\emph{Koszul equivalences and dualities},
Pacific J. Math. \textbf{214} (2004), 359--378.

\bibitem{42}
V.~Mazorchuk, S.~Ovsienko, and C.~Stroppel,
\emph{Quadratic duals, Koszul dual functors, and applications},
Trans. Amer. Math. Soc. \textbf{361} (2009), 1129--1172.

\bibitem{43}
J.~Miyachi,
\emph{Localization of triangulated categories and derived categories},
J. Algebra \textbf{141} (1991), 463--483.

\bibitem{44}
A.~Neeman,
\emph{Triangulated Categories},
Princeton Univ. Press, 2001.

\bibitem{45}
D.~Orlov,
\emph{Derived categories of coherent sheaves and triangulated categories of singularities},
Birkhäuser, 2009.

\bibitem{46}
S.~Priddy,
\emph{Koszul resolutions},
Trans. Amer. Math. Soc. \textbf{152} (1970), 39--60.

\bibitem{47}
M.~Schlichting,
\emph{Delooping the K-theory of exact categories},
Topology \textbf{43} (2004), 1089--1103.

\bibitem{48}
S.~P.~Smith,
\emph{Category equivalences involving graded modules over path algebras of quivers},
Adv. Math. \textbf{230} (2012), 1780--1810.

\bibitem{49}
K.~Yamaura,
\emph{Realizing stable categories as derived categories},
Adv. Math. \textbf{248} (2013), 784--819.

\end{thebibliography}
\end{document}